\numberwithin{equation}{subsection}
\newtheorem{theorem}{Theorem}[section]
\newtheorem{corollary}[theorem]{Corollary}
\newtheorem{lemma}[theorem]{Lemma}
\theoremstyle{definition}
\newtheorem{definition}[theorem]{Definition}
\newtheorem{remark}[theorem]{Remark}
\newtheoremstyle{named}{}{}{\itshape}{}
{\bfseries}{.}{.5em}{\thmnote{#3}#1}
\theoremstyle{named}
\DeclareMathOperator{\End}{End} 
\DeclareMathOperator{\Dom}{Dom}
\DeclareMathOperator{\Spec}{Spec} 
\DeclareMathOperator{\Ker}{Ker} 
\DeclareMathOperator{\supp}{supp}
\DeclareMathOperator{\Ph}{Ph}
\newcommand{\R}{\mathbb{R}}
\newcommand{\C}{\mathbb{C}}
\newcommand{\N}{\mathbb{N}}
\newcommand{\Z}{\mathbb{Z}}
\def\ker{{\rm Ker}~}
\def\Re{\mathrm{Re}}
\def\Im{\mathrm{Im}}
\def\cC{\mathscr{C}^\infty}
\def\cCc{\mathscr{C}^\infty_c}
\newcommand{\abs}[1]{\left\vert#1\right\vert}
\newcommand{\set}[1]{\left\{#1\right\}}
\newcommand{\To}{\rightarrow}
\newcommand{\pr}{\partial}
\newcommand{\ol}{\overline}
\newcommand{\Td}{\widetilde}
\newcommand{\OK}{O\left(k^{-\infty}\right)}
\newcommand{\im}{{\rm Im}~}
\newcommand{\range}{{\rm Range}}
\title[Spectral asymptotics of semi-classical Levi elliptic Toeplitz operators]
{Spectral asymptotics of semi-classical Toeplitz operators on Levi non-degenerate CR manifolds}
\author[Wei-Chuan Shen]{Wei-Chuan Shen}
\address{CY Cergy Paris University, Site Saint-Martin, 2 avenue Adolphe Chauvin, 95300 Pontoise, France.}
\email{wei-chuan.shen@u-cergy.fr or wshen@uni-koeln.de}
\thanks{The author was partially supported by the DFG funded projects
SFB/TRR 191 "Symplectic Structures in Geometry, Algebra and Dynamics"
(281071066-TRR 191) and is supported by ANR funded project "Quantification des variétés de Caractères comme Modèle pour le chaos quantique"  (ANR-23-CE40-0021-01).
}
\date{\today}
\begin{document}

\begin{abstract}
We consider any compact CR manifold whose
Levi form is non-degenerate of constant signature $(n_-,n_+)$, $n_-+n_+=n$. For $\lambda>0$ and $q\in\{0,\cdots,n\}$, we let $\Pi_\lambda^{(q)}$ be the spectral projection of the Kohn Laplacian of $(0,q)$-forms
corresponding to the interval $[0,\lambda]$. For certain classical pseudodifferential operators $P$, we study
a class of generalized elliptic Toeplitz operators
$T_{P,\lambda}^{(q)}:=\Pi_\lambda^{(q)}\circ P\circ \Pi_\lambda^{(q)}$. For any cut-off $\chi\in\cCc(\R\setminus\{0\})$, we establish the full asymptotics of the semi-classical spectral projector  $\chi(k^{-1}T_{P,\lambda}^{(q)})$ as $k\to+\infty$. Our main result conclude that the smooth Schwartz kernel $\chi(k^{-1}T_{P,\lambda}^{(n_-)})(x,y)$ is the sum of two semi-classical oscillatory integrals with complex-valued phase functions.

\end{abstract}
\maketitle
\tableofcontents	
\section{Preliminaries}
\subsection{Introduction and main results}
	The theory of Toeplitz operators is a classical subject in several complex variables and has a deep relation to microlocal analysis: for a bounded strictly pseudoconvex domain $M\subset\C^{n+1}$ with the smooth boundary $\partial M$, $n\geq 1$, we let $H_b^0(\partial M)$ the closure in $L^2(X)$ of the space of boundary values of holomorphic functions on $M$. We call operators of the form $T_P:=\Pi\circ P\circ \Pi$ Toeplitz operators, where $\Pi$ is the orthogonal projection of $L^2(\partial M)$ onto $H_b^0(\partial M)$ and $P$ is a pseudodifferential operator on $\partial M$. Inspired by the earlier results of Melin--Sj\"ostrand \cites{MeSj74} and Boutet de Monvel--Sj\"ostrand \cite{BouSj75} on Fourier integral operators with complex phase and the off-diagonal asymptotic expansion of the singularities of the Szeg\H{o} projection, in \cite{Bou79} Boutet de Monvel proved that these operators admit symbolic calculus as pseudodifferential operators, and he gave a famous result about a variant of the Atiyah--Singer index theorem in this context. We refer the microlocal technique and spectral theory of Toeplitz operators to the monograph of Boutet de Monvel--Guillemin \cite{BG81}. We also mention some works linked to such point of view \cites{BMS94,Zel98,SZ02,ZZ19,CRa23,Shabtai24,Cat99,BPU98,Ch03,Ch24APDE,KS01,IKPS20,HM17CPDE,HHL20,GaHs23,GaHs24,HsMar24,DLM06,MaMar06IJM,MM07,MM12,HHMS23,HHMS24,Hs12AGAG,CHH24,DraLiuMar24,Finski25}. 

 The purpose of this paper is to generalize the semi-classical analysis of Toeplitz operators recently by Herrmann, Hsiao, Marinescu and the author to the case of Levi non-degenerate Cauchy--Riemann manifolds. Semi-classical analysis is a branch of microlocal analysis, and Levi non-degenerate CR manifolds play an important role in the context of microlocal analysis such as \cites{Bou74,Koh86,Hoe04,Hs10,HsHua21}. The foundation of this paper is the microlocal structure of the Szeg\H{o} projection on lower energy forms by \cite{HM17JDG} and the semi-classical approach developed in \cite{HHMS23} for the spectral theory of Toeplitz operators. Different from the calculus appeared in the main text of \cite{BG81}, we treat Toeplitz operators as Fourier integral operators of complex phases \cite{MeSj74}. The main results, appeared later at \eqref{eq:q not in n_- adn n_+ intro} and \eqref{eq:q=n_- expansion intro}, can be interpreted as a form of regularization of Toeplitz operators achieved through semi-classical spectral asymptotics via smooth cut-off functions. We can also see these results as the counterparts within the context of CR manifolds for two well-known theorems on complex manifolds: the Andreotti--Grauert vanishing theorem and the Bergman kernel expansion for high powers of line bundles associated with mixed curvature. We refer the semi-classical analysis of the related subject to \cites{BerSj07,HM14,HM17CPDE,MaMar06IJM,MM07,Ch24APDE}, to quote just a few. It is our hope that the results presented in this paper could contribute to the growing interest in semi-classical analysis in several complex variables, particularly in the study of Berezin–Toeplitz quantization.

From now on, we always consider the compact and non-degenerate CR manifold $(X,T^{1,0}X)$ of real dimension $2n+1$ and $n\geq 1$. We denote by $\bm\alpha$ the contact form on $X$ such that the complex-valued Hermitian form $\mathcal{L}:=\frac{i}{2}d\bm\alpha|_{T^{1,0}X}$, called Levi form, is non-degenerate everywhere. Then the numbers of the negative and positive eigenvalues of $\mathcal{L}$ are always the constant and we denote them by $n_-$ and $n_+$, respectively. The pair $(n_-,n_+)$ is called the signature of (the Levi form of) $X$. For any classical pseudodifferential operator $P\in L^1_{\rm cl}(X;T^{*0,q}X)$ of first order such that $P:\cC(X,T^{*0,q}X)\to \cC(X,T^{*0,q}X)$, we consider the Toeplitz operator
	\begin{equation}
    \label{eq:Toeplitz operator}
	T_{P,\lambda}^{(q)}:=\Pi_\lambda^{(q)}\circ P \circ \Pi_\lambda^{(q)},
	\end{equation}
	where $\lambda>0$ is any number, $\Pi_\lambda^{(q)}:L^2_{0,q}(X)\to E([0,\lambda])$ is the orthogonal projection called \textit{Szeg\H{o} projection on lower energy forms}, $L^2_{0,q}(X)$ is the square integrable $(0,q)$ forms on $X$, and the subspace of lower energy forms $E([0,\lambda]):=\range~\mathds 1_{[0,\lambda]}(\Box_b^{(q)})$ is the image of the spectral projection of the Kohn Laplacian $\Box_b^{(q)}$ (extended by Gaffney extension). We always assume that $P$ is formally self-adjoint. When $q=n_-$ we assume the following \textit{Levi-ellipticity conditions}. We let $\{W_j\}_{j=1}^{n}$
		be an orthonormal frame of $T^{1,0}X$ in a neighborhood of $x$ such that $\mathcal{L}_{x}(W_{j},\overline{W}_{s})=\delta_{j,s}\mu_{j}$, $j,s=1,\cdots,n$, and $\mu_1\leq\cdots\leq\mu_{n_-}<0<\mu_{1+n_-}\leq\cdots\leq\mu_n$. We take the dual basis $\{\omega_j\}_{j=1}^{n}$ of $T^{*0,1}X$ with respect to $\{\ol W_j\}^{n}_{j=1}$ and we consider the subspaces $\mathcal{N}_{x}^{n_-}:=\set{c\omega_{1}(x)\wedge\cdots\wedge \omega_{n_-}(x):c\in\C}\subset T^{*0,n_-}_x X$ and $\mathcal{N}_{x}^{n_+}:=\set{c\omega_{1+n_-}(x)\wedge\cdots\wedge \omega_{n}(x):c\in\C}\subset T^{*0,n_+}_x X$. By the given Hermitian metric on $\C TX$, we define the orthogonal projection
	\begin{align}
&\tau_{x_0}^{n_\mp}:T^{*0,n_\mp}_{x}X\To\mathcal{N}_{x_0}^{n_\mp}.\label{eq:frame indicator mp intro}
	\end{align}
Assuming $P$ is Levi elliptic means that, for the principal symbol $p_0\in\cC\left(T^*X,\End(T^{*0,q}X)\right)$ of $P$, for all $x\in X$ we require
	\begin{equation}
	\label{eq:Levi elliptic n_- intro}
	\tau_x^{n_-} p_0(-\bm\alpha_x)\tau_x^{n_-}> 0\ \ \text{when $q=n_-$},
	\end{equation}
	and we additionally suppose
	\begin{equation}
	\label{eq:Levi elliptic n_+ intro}
	\tau_x^{n_+} p_0(\bm\alpha_x)\tau_x^{n_+}< 0\ \ \text{when $q=n_-=n_+$}.
	\end{equation}
	We will see in Theorem \ref{thm:T_P is self-adjoint} that $T_{P,\lambda}^{(q)}$ has a self-adjoint $L^2$-extension through maximal extension. In fact, for $q\notin\{n_-,n_+\}$, $T_{P,\lambda}^{(q)}$ is a compact operator. When $q=n_-$,  we will see in Theorem \ref{thm:Spec(T_P,lambda^q, q=n_-)} that the set $\Spec T_{P,\lambda}^{(q)}\subset\R$ is also discrete and the accumulation points of the spectrum is the subset of $\{-\infty,+\infty\}$. Roughly speaking, the conditions \eqref{eq:Levi elliptic n_- intro} and \eqref{eq:Levi elliptic n_+ intro} are responsible for the part of eigenvalues accumulated at $+\infty$ and $-\infty$, respectively.
 
Next, for any function $\chi\in\cCc(\R\setminus\{0\})$, our main result describes the operator $\chi(k^{-1}T_{P,\lambda}^{(q)})$ as the sum of two semi-classical Fourier integral operators modulo some $k$-negligible operator when $q=n_-$ and $k\to+\infty$. From the spectral theorem of $T_{P,\lambda}^{(q)}$, with respect to $L^2$-inner product, we find an orthonormal system $\{f_j\}_j$ such that $T_{P,\lambda}^{(q)}f_j=\lambda_j f_j$, $\lambda_j\neq 0$, and
 \begin{equation}
 \label{eq:reproducing kernel}
     \chi(k^{-1}T_{P,\lambda}^{(q)})(x,y)=\sum_{j}\chi(k^{-1}\lambda_j)f_j(x)\otimes f_j^*(y).
 \end{equation}
However, to state the precise semi-classical spectral asymptotics, we need more detail of the fundamental theorem  \cite{HM17JDG}*{Theorem 4.1} about the microlocal structure of $\Pi_\lambda^{(q)}$, cf.~also \S 1.4. For $q=n_-$ and any coordinate patch $(\Omega,x)$ on $X$,
	we have some $\varphi_\mp(x,y)\in\cC(\Omega\times \Omega,\C)$ with
	\begin{align}
	&\operatorname{Im}\varphi_\mp(x,y)\geq 0,\ \ \varphi_\mp(x,y)=0\iff x=y,\ \ d_x\varphi_\mp(x,x)=-d_y\varphi_\mp(x,x)=\mp\bm\alpha(x),\label{eq:varphi intro}
	\end{align}
and some proplery supported H\"ormander symbol $s^\mp(x,y,t)$ with the asymptotic expansion
 \begin{align}
&s^\mp(x,y,t)\sim\sum_{j=0}^{+\infty}s^\mp_j(x,y)t^{n-j}~\text{in}~S^n_{1,0}\left(\Omega\times \Omega\times\R_+,\End(T^{*0,q}X)\right),\label{eq:full symbol intro I}
	\end{align}
 such that for the Fourier integral operator $S_\mp$ determined by
	\begin{equation}
	\label{eq:approximated Szego kernel intro}
	S_\mp(x,y)=\int_0^{+\infty} e^{it\varphi_\mp(x,y)}s^\mp(x,y,t)dt,
	\end{equation}
 we have $S_+=0$ when $n_-\neq n_+$ and
 \begin{equation}
     \Pi^{(q)}_\lambda= S_-+S_+ + F\ \ \text{on $\Omega$},\ \ F(x,y)\in\cC(\Omega\times\Omega).
 \end{equation}
 When $q\notin\{n_-,n_+\}$, $\Pi_\lambda^{(q)}$ is a smoothing operator. Our main result is the following.
	\begin{theorem}
		\label{thm:Main Semi-Classical Expansion}
	We let $(X,T^{1,0}X)$  be a compact, non-degenerate CR manifold, and $\dim_\R X:=2n+1$ for $n\geq 1$. We let $\bm\alpha$ be the contact form on $X$ such that the Levi form has the constant signature $(n_-,n_+)$. For $q\in\{0,\cdots,n\}$ and any formally self-adjoint $P\in L^1_{\rm cl}(X;T^{*0,q}X)$, we assume that when $q=n_-$ we have the Levi-ellipticity conditions \eqref{eq:Levi elliptic n_- intro} and \eqref{eq:Levi elliptic n_+ intro}. For any $\lambda>0$ and any $\chi\in\cCc(\R\setminus\{0\},\C)$, $\chi\not\equiv 0$, we have the semi-classical spectral asymptotics of the Toeplitz operator \eqref{eq:Toeplitz operator}:
		\begin{equation}
  \label{eq:q not in n_- adn n_+ intro}
		\chi(k^{-1}T_{P,\lambda}^{(q)})=0\ \ \text{on}\ X:\ \ q\notin\{n_-,n_+\},\ \ k\gg 1,
		\end{equation}
		and for each coordinate patch $(\Omega,x)$ in $X$ we have the off-diagonal asymptotic expansion of the Schwartz kernel in $\cC$-topology by
		\begin{equation}
          \label{eq:q=n_- expansion intro}
        \begin{split}
            \chi(k^{-1}T_{P,\lambda}^{(q)})(x,y)
		=&\int_0^{+\infty} e^{ikt\varphi_-(x,y)}A^-(x,y,t;k)dt
        +\int_0^{+\infty} e^{ikt\varphi_+(x,y)}A^+(x,y,t;k)dt\\
        +&\OK \text{on}\ \  \Omega\times\Omega:\ \  q=n_-,\ \ k\gg 1,
        \end{split}
		\end{equation}
where $\varphi_\mp(x,y)\in\cC(\Omega\times\Omega,\C)$ satisfies the property \eqref{eq:varphi intro} and we also have
		\begin{align}
&A^\mp(x,y,t;k)\sim\sum_{j=0}^{+\infty}A^\mp_j(x,y,t)k^{n+1-j}\ \ 
		\text{in}~S^{n+1}_{\rm loc}\left(1;\Omega\times\Omega\times\R_+,\End(T^{*0,q}X)\right),\label{eq:A(x,y,t,k) expansion intro}\\
  &\text{$A^+(x,y,t;k)=0$ when $n_-\neq n_+$;\ \ $A_0^-(x,x,t)\neq 0$;\ \ $A_0^+(x,x,t)\neq 0$ when $n_-=n_+$.}
		\end{align}
In fact, when $\supp\chi\cap\R_+\neq\emptyset$, there is an interval $I_-\Subset\R_+$ such that when $A_j^-(x,y,t)\neq 0$ and $A^-(x,y,t)\neq 0$ we have $t\in I_-$ for all $j\in\N_0$; when $n_-=n_+$ and $\supp\chi\cap\R_-\neq\emptyset$, there is also an interval $I_+\Subset\R_+$ such that when $A_j^+(x,y,t)\neq 0$ and $A^+(x,y,t)\neq 0$ we have $t\in I_+$ for all $j\in\N_0$.  Moreover, for any $\tau_1,\tau_2\in\cC(X)$ such that $\supp\tau_1\cap\supp\tau_2=\emptyset$, we have 
		\begin{equation}
		\tau_1\circ\chi_k(T_{P,\lambda}^{(q)})\circ\tau_2=O(k^{-\infty})\ \ \text{on $X$},
		\end{equation}
		where $\tau_1$ and $\tau_2$ are seen as the multiplication operator.
	\end{theorem}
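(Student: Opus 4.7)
For $q\notin\{n_-,n_+\}$ (equation \eqref{eq:q not in n_- adn n_+ intro}), I would invoke \cite{HM17JDG} to note that $\Pi_\lambda^{(q)}$ is already smoothing, so that $T_{P,\lambda}^{(q)}=\Pi_\lambda^{(q)}\circ P\circ\Pi_\lambda^{(q)}$ is smoothing, hence compact and self-adjoint on $L^2_{0,q}(X)$ with discrete spectrum contained in a bounded interval and accumulating only at $0$. Since $\supp\chi$ is at positive distance from $0$, for $k$ sufficiently large the spectrum of $k^{-1}T_{P,\lambda}^{(q)}$ is disjoint from $\supp\chi$, and the spectral theorem gives $\chi(k^{-1}T_{P,\lambda}^{(q)})=0$. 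The disjoint-support estimate is then trivial.

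For $q=n_-$, I would substitute the microlocal decomposition $\Pi_\lambda^{(q)}=S_-+S_++F$ (with $F$ smoothing, and $S_+=0$ when $n_-\neq n_+$) into \eqref{eq:Toeplitz operator}, obtaining $T_{P,\lambda}^{(q)}=\sum_{\sigma_1,\sigma_2\in\{-,+\}}S_{\sigma_1}\circ P\circ S_{\sigma_2}$ modulo smoothing. By \eqref{eq:varphi intro}, the canonical relations of $S_-$ and $S_+$ are supported on the disjoint rays $\set{(x,-\tau\bm\alpha_x):\tau>0}$ and $\set{(x,\tau\bm\alpha_x):\tau>0}$ of $T^*X\setminus 0$; since $P$ is pseudodifferential (preserving base-cones), the cross-terms $S_{\pm}\circ P\circ S_{\mp}$ are smoothing, so $T_{P,\lambda}^{(q)}=T_-+T_+$ modulo $\cC$, with $T_\mp:=S_\mp\circ P\circ S_\mp$. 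Applying the complex stationary phase of Melin--Sj\"ostrand to this triple composition, I expect each $T_\mp$ to admit a complex-phase FIO representation
\[
T_\mp(x,y)=\int_0^{+\infty}e^{it\varphi_\mp(x,y)}a^\mp(x,y,t)\,dt,\quad a^\mp\in S^{n+1}_{1,0},
\]
whose diagonal leading symbol is a positive (resp.\ negative) multiple of $\tau_x^{n_\mp}\,p_0(\mp\bm\alpha_x)\,\tau_x^{n_\mp}$. The Levi-ellipticity conditions \eqref{eq:Levi elliptic n_- intro}--\eqref{eq:Levi elliptic n_+ intro} then guarantee this leading symbol is definite on $\mathcal{N}_x^{n_\mp}$, providing the invertibility needed downstream.

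To pass from $T_\mp$ to $\chi(k^{-1}T_\mp)$, I would use the Helffer--Sj\"ostrand formula with an almost-analytic extension $\tilde\chi\in\cCc(\C)$:
\[
\chi(k^{-1}T_\mp)=-\frac{1}{\pi}\int_\C \bar\partial\tilde\chi(z)\,(z-k^{-1}T_\mp)^{-1}\,dL(z),
\]
and construct a semi-classical parametrix $R_\mp(z,k)$ for $z-k^{-1}T_\mp$ on $\range\Pi_\lambda^{(q)}$ as a complex-phase FIO with semi-classical phase $kt\varphi_\mp(x,y)$ and amplitude in $S^{n+1}_{\rm loc}(1;\Omega\times\Omega\times\R_+,\End(T^{*0,q}X))$, solving the transport hierarchy iteratively using the definiteness from Levi-ellipticity. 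Plugging $R_\mp$ into the formula above and performing the $z$-integration yields \eqref{eq:q=n_- expansion intro} and the expansion \eqref{eq:A(x,y,t,k) expansion intro}, with the nonvanishing principal amplitudes coming from the definiteness. The localization $t\in I_\mp\Subset\R_+$ is forced by $\supp\chi$: the $t$-regime producing eigenvalues of $k^{-1}T_\mp$ inside $\supp\chi\subset\R\setminus\{0\}$ is bounded away from both $0$ and $+\infty$. The off-diagonal $\OK$ statement and the disjoint-support estimate follow from complex stationary phase in $t$, using $\Im\varphi_\mp\geq 0$ with equality only on the diagonal.

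The main obstacle is the rigorous construction of the semi-classical complex-phase FIO parametrix $R_\mp(z,k)$ and the verification that the Helffer--Sj\"ostrand integral genuinely reduces to an oscillatory integral of the claimed form. One must fix the correct semi-classical symbol class, verify its closure under composition by $z-k^{-1}T_\mp$ (in particular, check that the composed operators remain FIOs associated to the same phase $\varphi_\mp$, not some spurious phase produced by the stationary phase), and control the $z$-dependence carefully to perform the final contour integration. This is essentially the technical heart of the argument, adapting the framework of \cite{HHMS23} from the strictly pseudoconvex setting to the Levi non-degenerate CR setting of \cite{HM17JDG}.
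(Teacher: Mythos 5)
Your high-level plan matches the paper's: Helffer--Sj\"ostrand formula, a complex-phase FIO parametrix of the resolvent on lower energy forms (built from the Szeg\H{o} decomposition $\Pi_\lambda^{(q)}\equiv S_-+S_+$ and the Levi-ellipticity conditions), and careful $k$-negligible remainder estimates. The $q\notin\{n_-,n_+\}$ case and the disjoint-support/off-diagonal statements are handled exactly as you describe. So in broad strokes this is the same route as the paper's \S\S 2--3.

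There is, however, one step in your write-up that does not survive scrutiny as literally stated. You propose to compute $\chi(k^{-1}T_-)$ and $\chi(k^{-1}T_+)$ separately via the Helffer--Sj\"ostrand formula applied to $T_\mp:=S_\mp\circ P\circ S_\mp$, and then add the results. But $S_\mp$ are only \emph{approximate} projections (they differ from $\Pi_\lambda^{(q)}$ by smoothing terms and are not even exactly idempotent), so $T_\mp$ need not be self-adjoint and the functional calculus $\chi(k^{-1}T_\mp)$ is not available; moreover, $\chi(k^{-1}(T_-+T_+))\neq\chi(k^{-1}T_-)+\chi(k^{-1}T_+)$ without a rigorous decoupling argument that would itself require controlling the cross-terms in the HS integral. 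The paper avoids this by applying the HS formula to the full self-adjoint extension of $T_{P,\lambda}^{(q)}$ (composed with $\Pi_\lambda^{(q)}$, using $\chi(0)=0$) and constructing a \emph{single} parametrix for $(z-T_{P,\lambda}^{(q)})^{-1}\circ\Pi_\lambda^{(q)}$ whose Schwartz kernel happens to be a sum of two FIOs with phases $\varphi_-$ and $\varphi_+$; the decoupling into the two phases occurs at the parametrix level (Theorems \ref{thm:leading FIO of (z-T_P)^-1 Pi I} and \ref{thm:expansion of (z-T_P)^-1 Pi}) rather than at the operator level. You also do not mention the Toeplitz parametrix (Theorem \ref{thm:parametetrix of Toeplitz operators for lower energy forms}), which is what feeds into the resolvent estimate $\Pi_\lambda^{(q)}(z-T_{P,\lambda}^{(q)})^{-1}=O(|z|^s/|\operatorname{Im}z|)$ of Theorem \ref{thm:resolvent estimate for T_P} and is indispensable for controlling the remainder terms; nor the several distinct classes of $z$-dependent smoothing remainders ($\mathcal{E}_z,\mathcal{F}_z,\mathcal{G}_z,\mathcal{R}_z$) that must each be shown to contribute $O(k^{-\infty})$ after the $z$-integration. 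These are not cosmetic: the paper devotes most of \S\S 2.3 and 4.1 to precisely this bookkeeping, and the proof genuinely cannot be closed without it.
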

 Our method heavily relies on microlocal analysis of \cites{MeSj74,BouSj75,Hs10,HM17JDG,GaHs23} and especially the semi-classical microlocal approaches introduced in \cite{HHMS23}. The study employs the approach of Melin--Sj\"ostrand, Boutet de Monvel--Sj\"ostrand, Hsiao--Marinescu and Galasso--Hsiao, utilizing a calculus of specific complex phase Fourier integral operators, cf.~\S 3. Additionally, it incorporates a semi-classical analysis on a distinct integral, as defined by the Helffer--Sj\"ostrand formula, cf.~\S 4.1. This kind of analysis was well-studied for order zero Toeplitz operators \cites{GaHs23,GaHs24} and for the order one situation \cite{HHMS23}. The main contribution in this work is the semi-classical microlocal analysis under the Levi ellipticity condition of $T_{P,\lambda}^{(n_-)}$, which is inspired by \cite{GaHs23}*{Lemma 4.1}. We notice that this condition is clearly a generalization of the concept of elliptic Toeplitz operators because we allow mild degeneracy of the principal symbol of the pseudodiffential operator $P$ used to define $T_{P,\lambda}^{(n_-)}$. On the other hand, our relatively general ellipticity assumption restrains us from arguing directly as in the case of $(0,0)$-forms by Boutet de Monvel--Guillemin. But we can still construct the parametrix of $T_{P,\lambda}^{(n_-)}$ in the space of lower energy CR harmonic forms in this context. In fact, such parametrix is also in the form of Toeplitz operators we consider, cf.~Theorem \ref{thm:parametetrix of Toeplitz operators for lower energy forms}. Another slight improvement of this work comparing to the existing one is that we do not use the estimates about the numbers of the eigenvalues of Toeplitz operators \cite{BG81}*{Proposition 12.1} as in \cite{HHMS23} to obtain our main semi-classical expansion.

	We have the description of leading term of our main result through the local picture \eqref{eq:frame indicator mp intro}. We let $m(x)dx$ be the given volume form on $X$ and $v(x)dx$ be the volume form induced by the Hermitian metric on $\C TX$ which is compatible with $\bm\alpha$. For the asymptotic expansion \eqref{eq:full symbol intro I} of $s^\mp(x,y,t)$, by \cite{HM17JDG}*{Theorem 3.5}, when $q=n_-$ we have
	\begin{align}
	&s_0^-(x, x)=\frac{\abs{\det\mathcal{L}_{x}}}{2\pi^{n+1}}\frac{v(x)}{m(x)}\tau_{x}^{n_-},\ \ x\in\Omega,\label{eq:s_0^- intro}
	\end{align}
 and when $n_-=n_+$ we also have
 \begin{equation}
     s_0^+(x, x)=\frac{\abs{\det\mathcal{L}_{x}}}{2\pi^{n+1}}\frac{v(x)}{m(x)}\tau_{x}^{n_+},\ \ x\in\Omega.\label{eq:s_0^+ intro}
 \end{equation}
Here $\det\mathcal{L}_x$ is the product $\mu_1(x)\cdots\mu_n(x)$ of the eigenvalues $\{\mu_j(x)\}_{j=1}^n$ of the Levi form $\mathcal{L}_x$. By assumption we have $\mu_j(x)<0$ for $1\leq j\leq n_-$ and $\mu_j(x)>0$ for $n_-+1\leq j\leq n$. We let
	\begin{align}
	&I_0:=\{1,\cdots,n_-\},~J_0:=\{n_-+1,\cdots,n\}.
        \end{align}
	With respect to the orthonormal basis $\{T_j\}_{j=1}^n$ of $T^{*0,1}X$, the principal symbol $p_0(x,\eta)$ of $P$ reads
	\begin{equation}
	\label{eq:system of the principal symbol intro}
	p_0(x,\eta)={\sum_{|{\bf I}|=|{\bf J}|=q}} p_{\bf I, J}(x,\eta) \omega_{\bf I}^\wedge\otimes \omega_{\bf J}^{\wedge,*},
	\end{equation}
	where $p_{\bf I,J}(x,\eta)\in\cC(T^*X,\C)$ and ${\bf I,J}$ are strictly increasing index sets. Then the Levi-ellipticity conditions \eqref{eq:Levi elliptic n_- intro} and \eqref{eq:Levi elliptic n_+ intro} now become
	\begin{equation}
	p_{I_0,I_0}(-\bm\alpha_x)> 0~\text{and}~p_{J_0,J_0}(\bm\alpha_x)< 0,
	\end{equation}
	respectively. We can now formulate the leading coefficient of our expansion.
	\begin{theorem}
		\label{thm:Main Leading Term}
		Following Theorem \ref{thm:Main Semi-Classical Expansion} and the above local picture, if $q=n_-$, for leading term $A_0^-(x,y,t)$ in the expansion \eqref{eq:A(x,y,t,k) expansion intro}  we have
		\begin{equation}
		\label{eq:leading term A_-}
		A_0^-(x, x, t)=t^n\chi(p_{I_0,I_0}(-\bm\alpha_x)t)\frac{\abs{\det\mathcal{L}_{x}}}{2\pi^{n+1}}\frac{v(x)}{m(x)}\tau_x^{n_-}.
		\end{equation}
		In addition, if $n_-=n_+$, for leading term $A_0^+(x,y,t)$ in the expansion \eqref{eq:A(x,y,t,k) expansion intro} we have
		\begin{equation}
		\label{eq:leading term A_+}
		A_0^+(x, x, t)=t^n\chi(p_{J_0,J_0}(\bm\alpha_x)t)\frac{\abs{\det\mathcal{L}_{x}}}{2\pi^{n+1}}\frac{v(x)}{m(x)}\tau_{x}^{n_+}.
		\end{equation}
	\end{theorem}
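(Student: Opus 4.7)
The plan is to take the expansion (\ref{eq:q=n_- expansion intro}) from Theorem \ref{thm:Main Semi-Classical Expansion} as given, and only to identify the leading coefficients $A_0^\mp(x,x,t)$ on the diagonal. This reduces to a symbolic calculation on the symplectic rays $\{\mp\tau\bm\alpha_x:\tau>0\}$.

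First I would compute the principal symbol of $T_{P,\lambda}^{(n_-)}$ on these rays. Modulo smoothing, $\Pi_\lambda^{(n_-)} \equiv S_- + S_+$, and the cross terms $S_\mp P S_\pm$ are smoothing because $\varphi_-$ and $\varphi_+$ concentrate on the disjoint rays $\mp\tau\bm\alpha_x$ by (\ref{eq:varphi intro}). In the Toeplitz symbolic calculus on lower-energy forms (\cite{BG81}), the principal symbol of $T_{P,\lambda}^{(n_-)}$ at the covector $-\tau\bm\alpha_x$ equals
\[
\tau_x^{n_-}\,p_0(-\tau\bm\alpha_x)\,\tau_x^{n_-} \;=\; \tau\,p_{I_0,I_0}(-\bm\alpha_x)\,\tau_x^{n_-},
\]
where the second equality uses homogeneity of $p_0$ and the one-dimensionality of $\mathcal{N}_x^{n_-}$ together with (\ref{eq:system of the principal symbol intro}); the analogous identity holds on $\{+\tau\bm\alpha_x\}$ with $J_0$ and $\tau_x^{n_+}$ when $n_-=n_+$.

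Next I would apply the Helffer--Sj\"ostrand formula
\[
\chi(k^{-1}T_{P,\lambda}^{(n_-)}) \;=\; -\frac{1}{\pi}\int_\C \bar\partial\tilde\chi(z)\,(k^{-1}T_{P,\lambda}^{(n_-)} - z)^{-1}\,dL(z),
\]
with $\tilde\chi$ an almost-analytic extension of $\chi$. The parametrix of $(T_{P,\lambda}^{(n_-)} - kz)^{-1}$ on the lower-energy space is itself a Toeplitz operator in the class under consideration, which is the content of Theorem \ref{thm:parametetrix of Toeplitz operators for lower energy forms}, and the Levi-ellipticity conditions (\ref{eq:Levi elliptic n_- intro}), (\ref{eq:Levi elliptic n_+ intro}) ensure that $\tau\,p_{I_0,I_0}(-\bm\alpha_x) - kz$ does not vanish for $z$ in a strip around $\supp\chi$ and $\tau$ in the relevant compact subset of $(0,\infty)$. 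Its leading symbol on $\{-\tau\bm\alpha_x\}$ is $s_0^-(x,x)\,\tau^n/(\tau\,p_{I_0,I_0}(-\bm\alpha_x)-kz)$; substituting into Helffer--Sj\"ostrand and applying Cauchy's formula fiberwise in $z$ recovers the scalar factor $\chi(k^{-1}\tau\,p_{I_0,I_0}(-\bm\alpha_x))$.

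Finally, the semi-classical rescaling $\tau = kt$ that converts the FIO representation with phase $\tau\varphi_-$ into the one with phase $kt\varphi_-$ of (\ref{eq:q=n_- expansion intro}) transforms the computed symbol into
\[
A_0^-(x,x,t) \;=\; t^n\,\chi\!\bigl(p_{I_0,I_0}(-\bm\alpha_x)\,t\bigr)\,s_0^-(x,x),
\]
and inserting (\ref{eq:s_0^- intro}) yields (\ref{eq:leading term A_-}); the same argument on $\{+\tau\bm\alpha_x\}$ with (\ref{eq:s_0^+ intro}) yields (\ref{eq:leading term A_+}). The compact $t$-support comes from compactness of $\supp\chi$ away from $0$ combined with the strict sign of $p_{I_0,I_0}(-\bm\alpha_x)$, respectively $p_{J_0,J_0}(\bm\alpha_x)$, on the compact $X$. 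The main obstacle is making the resolvent parametrix rigorous under the \emph{Levi}-ellipticity (rather than full ellipticity) of $p_0$: this is the key input of Theorem \ref{thm:parametetrix of Toeplitz operators for lower energy forms} and is the reason one cannot quote the classical Boutet de Monvel--Guillemin calculus verbatim, and it is what forces the inversion to be carried out entirely inside the Toeplitz algebra on lower-energy forms.
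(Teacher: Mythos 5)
Your proposal follows essentially the same route as the paper: compose the Szeg\H{o}-type Fourier integral operator representation of $T_{P,\lambda}^{(n_-)}$ on the two characteristic rays $\mp\tau\bm\alpha_x$ with a resolvent parametrix, feed this into the Helffer--Sj\"ostrand formula, recover the factor $\chi(\cdot)$ via the Cauchy integral formula, and then rescale $\tau=kt$ to read off the leading coefficient with phase $kt\varphi_\mp$. The identification $\tau_x^{n_-}p_0(-\bm\alpha_x)\tau_x^{n_-}=p_{I_0,I_0}(-\bm\alpha_x)\tau_x^{n_-}$ via homogeneity and one-dimensionality of $\mathcal{N}^{n_-}_x$, the cancellation of $p_{I_0,I_0}^{-n-1}$ against $p_{I_0,I_0}^{n+1}$ under the change of variable that converts the $\Psi_-$-phase into the $\varphi_-$-phase, and the final substitution of $s_0^\mp(x,x)$ via \eqref{eq:s_0^- intro}, \eqref{eq:s_0^+ intro} are all exactly what happens in Theorem~\ref{thm:principal term of chi_k(T_P) I} and the comparison with Theorem~\ref{thm:leading FIO of (z-T_P)^-1 Pi I}.

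One attribution in your sketch is off, and it matters for how the argument is organized. You cite Theorem~\ref{thm:parametetrix of Toeplitz operators for lower energy forms} as furnishing the parametrix of $(T_{P,\lambda}^{(n_-)}-kz)^{-1}$ as a Toeplitz operator. That theorem produces a Toeplitz parametrix $T_{Q,\lambda}^{(q)}$ with $T_{Q,\lambda}^{(q)}\circ T_{P,\lambda}^{(q)}\equiv\Pi_\lambda^{(q)}$; it does \emph{not} produce the $z$-dependent resolvent parametrix, and the latter is not a Toeplitz operator $T_{Q,\lambda}^{(q)}$. The actual approximate inverse of $z-T_{P,\lambda}^{(q)}$ on lower-energy forms is the $z$-dependent Szeg\H{o}-type Fourier integral operator $A_{z,0}$ of Theorem~\ref{thm:leading FIO of (z-T_P)^-1 Pi I}, with symbol $s_0^\mp(x,y)t^n/(z-tp_{I_0,I_0}(-\bm\alpha_x))$ (resp.\ $J_0$), which is constructed directly and verified by the Melin--Sj\"ostrand complex stationary phase argument together with the $L^{-\infty}_z$ remainder calculus (Theorems~\ref{thm:expansion of (z-T_P)^-1 Pi} and \ref{thm:negligible estimate for L_z}--\ref{thm:R_(k,N+1) is OK for N large}); Theorem~\ref{thm:parametetrix of Toeplitz operators for lower energy forms} enters elsewhere, for the self-adjoint extension and the resolvent Sobolev estimate (Theorem~\ref{thm:resolvent estimate for T_P}). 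Your ansatz for the leading symbol of the resolvent parametrix is correct, but you wrote it down rather than derived it; the nontrivial part of the paper is precisely the stationary phase verification that this is a genuine parametrix modulo $L^{-\infty}_z$. With this correction, the rest of the sketch is sound and reproduces the paper's computation of $A_0^\mp(x,x,t)$.
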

 \begin{corollary}
		\label{coro:Semi-Classical Expansion}
		Following Theorem \ref{thm:Main Semi-Classical Expansion}, we have the asymptotic expansion for $q=n_-$ that
		\begin{equation}
        \label{eq:on diagonal expansion}
		\chi(k^{-1}T_{P,\lambda}^{(q)})(x,x)\sim\sum_{j=0}^{+\infty}k^{n+1-j} \left(A_j^-(x)+A_j^+(x)\right)~\text{in}~S^{n+1}_{\rm loc}(1;X,\End(T^{*0,q}X)),
		\end{equation}    
		where for all $j\in\N_0$ we have $A_j^\mp(x)=\int_0^{+\infty}A_j^\mp(x,x,t)dt\in\cC(X,\End(T^{*0,q}X))$, and the local description of $A_0^\mp(x)$ is explicit through Theorem~\ref{thm:Main Leading Term}.
	\end{corollary}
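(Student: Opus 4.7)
The plan is to deduce the corollary directly from Theorem~\ref{thm:Main Semi-Classical Expansion} by specializing the off-diagonal expansion to the diagonal $y=x$ and integrating in $t$. The first step is to observe that by \eqref{eq:varphi intro} we have $\varphi_\mp(x,x)=0$, so the phase factor $e^{ikt\varphi_\mp(x,y)}$ trivializes to $1$ on the diagonal. Consequently, restricting the Schwartz kernel expansion \eqref{eq:q=n_- expansion intro} to $y=x$ yields
\begin{equation*}
\chi(k^{-1}T_{P,\lambda}^{(q)})(x,x)=\int_0^{+\infty}A^-(x,x,t;k)\,dt+\int_0^{+\infty}A^+(x,x,t;k)\,dt+\OK,
\end{equation*}
where the $\OK$ remainder is understood in $\cC$-topology on $X$.

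The next step is to verify that these $t$-integrals are well-defined and produce genuine semi-classical symbols in $S^{n+1}_{\rm loc}(1;X,\End(T^{*0,q}X))$. This is where the support statement in Theorem~\ref{thm:Main Semi-Classical Expansion} is crucial: since $A^\mp(x,x,t;k)$ and each $A_j^\mp(x,x,t)$ are supported in a fixed compact interval $I_\mp\Subset\R_+$ (independent of $k$ and $j$), the integration in $t$ reduces to integration over a bounded set bounded away from $0$ and $+\infty$. Thus all the symbol seminorms in $(x,t)$ are uniformly controlled and can be integrated against $dt$ term by term. Setting
\begin{equation*}
A_j^\mp(x):=\int_0^{+\infty}A_j^\mp(x,x,t)\,dt\in\cC(X,\End(T^{*0,q}X))
\end{equation*}
and integrating the expansion \eqref{eq:A(x,y,t,k) expansion intro} in the variable $t$ produces the asymptotic expansion \eqref{eq:on diagonal expansion} in $S^{n+1}_{\rm loc}(1;X,\End(T^{*0,q}X))$, where the remainders of order $k^{n+1-N}$ in the $t$-symbol pass through the (compactly supported) $t$-integration without losing any power of $k$.

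Finally, the explicit form of the leading coefficients $A_0^\mp(x)$ is just the integration over $t$ of the expressions \eqref{eq:leading term A_-} and \eqref{eq:leading term A_+} provided by Theorem~\ref{thm:Main Leading Term}, giving continuous sections of $\End(T^{*0,q}X)$ on $X$. I do not foresee any serious obstacle here; the entire argument is essentially a restriction-to-diagonal plus integration, and the only point requiring care is the uniformity of the $t$-support of the amplitudes, which has already been established in Theorem~\ref{thm:Main Semi-Classical Expansion} and which guarantees both convergence of the integrals and preservation of the semi-classical symbol expansion.
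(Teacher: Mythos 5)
Your reduction to the diagonal and the subsequent $t$-integration are correct, and you are right to point out that the compact $t$-support $I_\mp\Subset\R_+$ established in Theorem~\ref{thm:Main Semi-Classical Expansion} is what makes the $t$-integral harmless and lets the semi-classical expansion pass through $\int_0^{+\infty}\cdot\,dt$ term by term. This is a legitimate observation the paper leaves implicit.

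However, there is a genuine gap in your argument: you conclude that $A_j^\mp(x)=\int_0^{+\infty}A_j^\mp(x,x,t)\,dt$ defines an element of $\cC(X,\End(T^{*0,q}X))$, i.e.\ a \emph{global} section on $X$, but Theorem~\ref{thm:Main Semi-Classical Expansion} only provides the amplitude $A_j^\mp(x,y,t)$ on a coordinate patch $(\Omega,x)$, and the data $\varphi_\mp, A^\mp, A_j^\mp$ a priori depend on the chart (and on the choice of Szeg\H{o} phase and the Melin--Sj\"ostrand calculus applied there). Nothing in your proof explains why the coefficients obtained in two overlapping charts agree on the overlap, so the globality of the $A_j^\mp$ (or at least of the combinations $A_j^-+A_j^+$) is unjustified. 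The paper's proof devotes essentially all of its content to exactly this point: it writes the local expansion on each of two overlapping patches $\Omega_1, \Omega_2$, uses that the left-hand side $\chi(k^{-1}T_{P,\lambda}^{(q)})(x,x)$ is globally defined, and then invokes uniqueness of the coefficients of an asymptotic expansion in $S^{n+1}_{\rm loc}(1;\cdot)$ to conclude $A_j^-+A_j^+=B_j^-+B_j^+$ on $\Omega_1\cap\Omega_2$; this is what glues the local coefficients into global continuous sections. Your proof should include this patching step (or at least explicitly cite the uniqueness of the asymptotic coefficients), since without it the claim that the $A_j^\mp(x)$ live in $\cC(X,\cdot)$ rather than merely $\cC(\Omega,\cdot)$ is not established.
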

By \cite{HM17JDG}*{Theorem 1.12} and \cite{Koh86}, we can apply Theorem~\ref{thm:Main Semi-Classical Expansion} to the case that $(X,T^{1,0}X)$ is compact, strictly pseudoconvex and CR embeddable. In this case,  $\Pi^{(q)}=\Pi_\lambda^{(q)}$ for some $\lambda>0$ at $q=n_-=0$. When $P=-iT$ and the Lie derivative $\mathscr{L}_T dV=0$ (e.g.~$dV=dV_\alpha$), the way of $A_j^-(x,x,t)$ depending on $\chi(t)$, $j\in\N$, and the precise formula of $A_1^-(x,x,t)$ are obtained in \cite{CHH24}. A particular case is the principal circle bundle given by the pair of a complex manifold and a Hermitian line bundle. We will follow the lines in \cites{HHMS23,HHMS24} to give some examples for the specific set-up of CR manifolds with transversal CR circle action. In \S 4.2, we discuss Corollary~\ref{coro:Scaled Spectral Measures} for the case where the circle action is free, and in \S 4.3 we revisit Theorem~\ref{thm:Main Semi-Classical Expansion}  for the case where the circle action is only locally free.
	  
  We have the following Szeg\H{o} type limit theorem, cf.~also \cite{BG81}*{\S 13} and \cite{HHMS23}*{Theorem 1.3}. With respect to \eqref{eq:reproducing kernel}, we consider the scaled spectral measures \(\mu_k\) given by 
\begin{equation}\label{eq:scaledspectraclmeasuredefinition}
\mu_k^{(q)}=k^{-n-1}\sum_{j\in J}\delta\big(t-k^{-1}\lambda_j\big).
\end{equation} 
\begin{corollary}
\label{coro:Scaled Spectral Measures}
In the situation of Theorem~\ref{thm:Main Semi-Classical Expansion}, for $q=n_-$
the scaled spectral measures \(\mu_k\) 
converges weakly as \(k\to+\infty\) to the continuous measure $\mu_\infty^{(q)}$ on $\R\setminus\{0\}$ given by
\begin{equation}
\mu_\infty^{(q)}=\mathcal{C}_P^{(q)}\,t^n dt,\ \ 
\mathcal{C}_P^{(q)}:=\frac{1}{2\pi^{n+1}}\left(
\int_X\frac{|\det\mathcal{L}_x| v(x)dx}{p^{n+1}_{I_0,I_0}(-\bm\alpha)}+\mathds 1_{\{n_+\}}(q)\int_X\frac{|\det\mathcal{L}_x| v(x)dx}{p^{n+1}_{J_0,J_0}(\bm\alpha)}\right),
\end{equation}
where $dt$ is the Lebesgue measure on $\R$.
\end{corollary}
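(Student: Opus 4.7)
The plan is to test the measures $\mu_k^{(n_-)}$ against an arbitrary $\chi\in\cCc(\R\setminus\{0\},\C)$, rewrite the pairing as an operator trace, and then substitute the on-diagonal expansion of Corollary~\ref{coro:Semi-Classical Expansion} to isolate the leading asymptotic. By the spectral theorem applied to $T_{P,\lambda}^{(n_-)}$,
\begin{equation*}
\int_\R\chi(t)\,d\mu_k^{(n_-)}(t)=k^{-n-1}\sum_j\chi(k^{-1}\lambda_j)=k^{-n-1}\operatorname{Tr}\chi\bigl(k^{-1}T_{P,\lambda}^{(n_-)}\bigr).
\end{equation*}
Since $\chi$ is supported away from $0$ and $\Spec T_{P,\lambda}^{(n_-)}$ accumulates only at $\pm\infty$, this operator is of finite rank, and its trace equals the integral of the pointwise trace of its Schwartz kernel against $m(x)\,dx$.

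Using Corollary~\ref{coro:Semi-Classical Expansion}, whose on-diagonal expansion is valid in the $\cC$-topology uniformly on the compact manifold $X$, I term-wise integrate to obtain
\begin{equation*}
k^{-n-1}\operatorname{Tr}\chi\bigl(k^{-1}T_{P,\lambda}^{(n_-)}\bigr)=\int_X\operatorname{tr}\bigl(A_0^-(x)+A_0^+(x)\bigr)m(x)\,dx+O(k^{-1}).
\end{equation*}
I then insert the explicit form of $A_0^\mp(x,x,t)$ provided by Theorem~\ref{thm:Main Leading Term}, use $\operatorname{tr}\tau_x^{n_\mp}=1$ because these are rank-one orthogonal projections, and change variables $s=p_{I_0,I_0}(-\bm\alpha_x)t$ in the $A_0^-$-integral; the Levi-ellipticity condition \eqref{eq:Levi elliptic n_- intro} guarantees $p_{I_0,I_0}(-\bm\alpha_x)>0$, so $s$ ranges over $\R_+$ and
\begin{equation*}
\int_0^{+\infty}t^n\chi\bigl(p_{I_0,I_0}(-\bm\alpha_x)t\bigr)dt=\frac{1}{p_{I_0,I_0}(-\bm\alpha_x)^{n+1}}\int_0^{+\infty}s^n\chi(s)\,ds.
\end{equation*}
When $n_-=n_+$, the analogous substitution $s=p_{J_0,J_0}(\bm\alpha_x)t$ in the $A_0^+$-integral uses \eqref{eq:Levi elliptic n_+ intro}, so $p_{J_0,J_0}(\bm\alpha_x)<0$ and $s$ ranges over $\R_-$, producing the contribution from the negative half-line and the indicator $\mathds 1_{\{n_+\}}(n_-)=1$. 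Combining the two contributions yields
\begin{equation*}
\lim_{k\to+\infty}\int_\R\chi(t)\,d\mu_k^{(n_-)}(t)=\mathcal{C}_P^{(n_-)}\int_{\R\setminus\{0\}}\chi(t)t^n\,dt=\int_\R\chi\,d\mu_\infty^{(n_-)},
\end{equation*}
and a density argument promotes the convergence to arbitrary $\chi\in C_c(\R\setminus\{0\})$.

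The proof is therefore largely a bookkeeping exercise on top of Corollary~\ref{coro:Semi-Classical Expansion}; there is no deep obstacle. The only step that requires care is the sign tracking of $p_{J_0,J_0}(\bm\alpha_x)^{n+1}$ under the orientation-reversing change of variables when $n_-=n_+$, which must match the sign convention in the definition of $\mathcal{C}_P^{(n_-)}$. Continuity of the limit measure $\mu_\infty^{(n_-)}$ is then manifest from the smoothness of its density on $\R\setminus\{0\}$.
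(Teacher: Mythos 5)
Your proof follows the same route as the paper's own one-display argument: pair $\mu_k^{(q)}$ against $\chi\in\cCc(\dot\R)$, rewrite the spectral sum as the trace $\int_X\operatorname{tr}\chi(k^{-1}T_{P,\lambda}^{(q)})(x,x)\,dm(x)$, insert the on-diagonal expansion of Corollary~\ref{coro:Semi-Classical Expansion} together with the explicit leading coefficients from Theorem~\ref{thm:Main Leading Term}, and extract the $k^0$-term via the change of variables $s=p_{I_0,I_0}(-\bm\alpha_x)t$ (resp.\ $s=p_{J_0,J_0}(\bm\alpha_x)t$). You spell out the rank-one identity $\operatorname{tr}\tau_x^{n_\mp}=1$ and the orientation reversal in the negative-half-line substitution, which the paper leaves implicit; the underlying argument is otherwise identical.
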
 
	\subsection{Elements of microlocal and semi-classical analysis}
We use the following notations and conventions throughout this article.
	$\mathbb{Z}$ is the set of integers, 
	$\N=\{1,2,3,\cdots\}$ is the set of natural numbers and we 
	put $\N_0=\N\bigcup\{0\}$; $\mathbb R$ is the set of
	real numbers, $\R_+:=\{x\in\R:x>0\}$ and 
	${\dot\R}:=\R\setminus\{0\}$; $\C$ is the set of complex numbers and $\dot\C:=\C\setminus\{0\}$.
For a multi-index $\alpha=(\alpha_1,\cdots,\alpha_n)\in\N^n_0$ 
	and $x=(x_1,\cdots,x_n)\in\mathbb R^n$, we set
	\begin{align}
	&x^\alpha=x_1^{\alpha_1}\cdots x^{\alpha_n}_n\,,\quad
	 \partial_{x_j}=\frac{\partial}{\partial x_j}\,,\quad
	\partial^\alpha_x=\partial^{\alpha_1}_{x_1}\cdots\partial^{\alpha_n}_{x_n}
	=\frac{\partial^{|\alpha|}}{\partial x^\alpha}\cdot
	\end{align}
We let $z=(z_1,\cdots,z_n)$, $z_j=x_{2j-1}+ix_{2j}$, $j=1,\cdots,n$, 
	be coordinates on $\C^n$. We write
	\begin{align}
	&z^\alpha=z_1^{\alpha_1}\cdots z^{\alpha_n}_n\,,
	\quad\ol z^\alpha=\ol z_1^{\alpha_1}\cdots\ol z^{\alpha_n}_n\,,\\
	&\partial_{z_j}=\frac{\partial}{\partial z_j}=
	\frac{1}{2}\Big(\frac{\partial}{\partial x_{2j-1}}-
	i\frac{\partial}{\partial x_{2j}}\Big)\,,
	\quad\partial_{\ol z_j}=\frac{\partial}{\partial\ol z_j}
	=\frac{1}{2}\Big(\frac{\partial}{\partial x_{2j-1}}+
	i\frac{\partial}{\partial x_{2j}}\Big),\\
	&\partial^\alpha_z=\partial^{\alpha_1}_{z_1}\cdots\partial^{\alpha_n}_{z_n}
	=\frac{\partial^{|\alpha|}}{\partial z^\alpha}\,,\quad
	\partial^\alpha_{\ol z}=\partial^{\alpha_1}_{\ol z_1}
	\cdots\partial^{\alpha_n}_{\ol z_n}
	=\frac{\partial^{|\alpha|}}{\partial\ol z^\alpha}\,.
	\end{align}
 For $j, s\in\mathbb Z$, we set $\delta_{js}=1$ if $j=s$, 
	$\delta_{js}=0$ if $j\neq s$. All the smooth manifolds in this work are assumed to be paracompact.
    
 In this section we recall basic notions of microlocal and semi-classical analysis, and we refer to \cites{GriSj94,DiSj99,MeSj74,Hoe03,Hoe71} for detail.
 
	For a $\cC$-orientable manifold $W$, we let $TW$ and $T^*W$ denote the tangent bundle of $W$ and the cotangent bundle of $W$ respectively.
	The complexified tangent bundle of $W$ and the complexified cotangent bundle of $W$
	will be denoted by $\C TW$ and $\C T^*W$ respectively. We write $\langle\,\cdot\,,\cdot\,\rangle$
	to denote the pointwise duality between $TW$ and $T^*W$.
	We extend $\langle\,\cdot\,,\cdot\,\rangle$ bilinearly to $\C TW\times\C T^*W$. We let $E$ be a $\cC$-vector bundle over $W$. The fiber of $E$ at $x\in W$ will be denoted by $E_x$. With respect to the base manifold $W$, the spaces of smooth sections of $E$ will be denoted by $\cC(W, E)$, and we let $\cCc(W,E)$ be the subspace of $\cC(W, E)$ whose elements have compact support in $W$; the spaces of distribution sections of $E$ will be denoted by $\mathscr D'(W, E)$, and we let $\mathscr E'(W,E)$ be the subspace of $\mathscr D'(W, E)$ whose elements have compact support in $W$. We denote $I$ to be the identity map on $W$. For an open set $V\subset W$, $f\in\cC(V\times V,E)$ and a number $N\in\N$, we write $f=O(|x-y|^{+\infty})$ if $f$ vanishes to infinite order at the diagonal; when $E=\C$ this means that for any $N\in\N$ we have $(\pr_x^\alpha\pr_y^\beta f)(x,x)=0$ for all $x\in V$ and $|\alpha|+|\beta|\leq N$.
 
	 We let $E$ and $F$ be $\cC$-vector
	bundles over orientable $\cC$-manifolds $W_1$ and $W_2$, respectively, equipped with smooth densities of integration. If
	$A:\cCc(W_2,F)\to\mathscr D'(W_1,E)$
	is continuous, we write $A(x,y)$ to denote the Schwartz kernel of $A$. The Schwartz kernel theorem implies that $A$ is continuous: $\mathscr E'(W_2,F)\to\cC(W_1,E)$ and $A(x,y)\in\cC\left(W_1\times W_2,\mathscr L(F,E)\right)$ are equivalent. Here we write $\mathscr L(F,E)$ to denote the vector bundle with fiber over $(x,y)\in W_1\times W_2$ consisting of the linear maps $\mathscr L(F_y,E_x)$ from $F_y$ to $E_x$, and we write $\End (E):=\mathscr L(E,E)$. If $A(x,y)\in\cC(W_1\times W_2,\mathscr{L}(F,E))$ we say that $A$ is smoothing on $W_1 \times W_2$. For  continuous operators $A,B:\cCc(W_2,F)\to\mathscr{D}'(W_1,E)$, we write 
	\begin{equation}
	\label{eq:up to a smoothing operator}
	\mbox{$A\equiv B$ on $W_1\times W_2$} 
	\end{equation}
	if $A-B$ is a smoothing operator. If \eqref{eq:up to a smoothing operator} holds when $W_1=W_2=W$, we simply write $A\equiv B$ on $W$ or just $A\equiv B$.  For an open set $V\subset W$, we say that a distributional section $A(x,y)\in \mathscr D'(V\times V,\mathscr L(E,E))$, which possibly smoothly depends on some other parameter, is properly supported (in the variables $(x,y)$) if the restrictions of the two projections 
	$(x,y)\mapsto x$, $(x,y)\mapsto y$ to ${\rm supp\,}A(x,y)$
	are proper maps, and we say an operator $A$ is properly supported (in $V$) if the Schwartz kernel $A(x,y)$ is properly supported. 

 For a $\cC$-vector bundle $E$ over a $\cC$-orientable compact manifold $W$ and any number $s\in\R$, with respect to the standard $L^2$-norm $\|\cdot\|$ for the section of $E$ we let $H^s(W,E)$ to be the standard Sobolev space of order $s$ for sections of $E$ with the Sobolev norm $\|\cdot\|_s$. We let $H_{\rm comp}^s(W, E)$ be the subspace of $H^s(W,E)$ whose elements have compact support in $W$. For a relatively compact open set $U\Subset W$, we put $H^s_{\rm loc\,}(U, E)=\{u\in\mathscr D'(U, E):\chi u\in H_{\rm comp}^s(U, E)\,,\forall\chi\in\cCc(U)\}$.
For smooth vector bundles $E,F$ over $W$ and an operator $F_z: H^{s_1}_{\rm comp}(W_1, E)\rightarrow H^{s_2}_{\rm loc}(W_2, F)$ smoothly depending on some parameter $z\in\C$, we write 
	\begin{equation}
	F_z=O\left(g(z)\right)~\text{in}~\mathscr L\left(H^{s_{1}}_{\rm comp}(W_{1}, E),H^{s_{2}}_{\rm loc}(W_{2}, F)\right)
	\end{equation}
	if for every $z\in\C$ the operator $F_{z}: H^{s_{1}}_{\rm comp}(W_{1}, E)\rightarrow H^{s_{2}}_{\rm loc}(W_{2}, F)$ is continuous and for any $\chi_j\in\cCc(W_{j})$, $j=1, 2$, $\tau_1\in\cCc(W_1)$, $\tau_1\equiv 1$ on ${\rm supp\,}\chi_1$, there is a constant $c>0$ independent of $z$ such that $\|\chi_2 F_z\chi_1 u\|_{s_{2}}\leq c \cdot|g(z)|\cdot\|\tau_1 u\|_{s_{1}}$ for all $u\in H^{s_{1}}_{\rm loc}(W_{1}, E)$.

For any $m\in\mathbb R$, $0\leq\delta\leq\rho\leq 1$, $N\in\N$ and the smooth vector bundle $E$ over an open set $V\subset \R^n$, we denote by $S^m_{\rho,\delta}\left(V\times\R^N,E\right)$ the H\"ormander symbol space of order $m$ with type $(\rho,\delta)$, and we denote by $S^{m}_{\rm cl}\,\left(V\times\R^N,E\right)\subset S^m_{1,0}\left(V\times\R^N,E\right)$ the classical symbol space. We always use the standard theory of asymptotic sum in this context throughout this paper. 

For open sets $V_1\subset\R^{n_1}$, $V_2\subset\R^{n_2}$, $V:=V_1\times V_2$, for any regular phase function $\varphi(x,\eta)\in\cC(V\times\dot\R^{N})$ and
$a(x,y,\eta)\in S^m_{\rho,\delta}(V\times\R^N,E)$, a continuous operator $A:\cCc(V_2)\to\cC(V_1)$ 
 determined by an oscillatory integral or a Fourier distribution $A(x,y)=\int e^{i\varphi(x,y,\eta)}a(x,y,\eta)d\eta$ is called a Fourier integral operator of order $\left(m+\frac{N}{2}-\frac{n_1+n_2}{4}\right)$. For $U\subset\R^n$ an open set and $E$ be a vector bundle over $U$, by $P\in L^m_{\rho,\delta}\left(U;E\right)$ we mean a pseudodifferential operator $P$ of order $m$ of type $(\rho,\delta)$ sending sections of $E$ to itself, where $\rho+\delta=1$. This means that the operator $P$ is given by the oscillatory integral  $P(x,y):=(2\pi)^{-n}\int_{\R^n}e^{i\langle x-y,\eta\rangle}p(x,y,\eta)d\eta$, where $p(x,y,\eta)\in S^m_{\rho,\delta}(U\times U\times\R^n,\End E)$. One can check that $F:\mathscr E'(U,E)\to\cC(U,E)$ is continuous if and only if $F\in L^{-\infty}(U;E)$, and from now on we also use the notation $L^{-\infty}(U;E)$ for the space of smoothing operator on $U$ acting on sections of $E$. When $P$ is properly supported and the type of $P$ satisfies $\rho>\delta$, by the asymptotic expansion of the complete symbol $\sigma_P(x,\eta)$ we define the principal symbol $p_0(x,\eta)$ by the image of $\sigma_P(x,\eta)$ in the quotient $S^m_{\rho,\delta}/S^{m-(\rho-\delta)}_{\rho,\delta}$. In fact, $p_0(x,\eta)\in\cC(T^*U)$. We denote $L^m_{\rm cl}(U,E)\subset L^m_{1,0}(U,E)$ to be the space of classical pseudodifferential operators, where for $P\in L^m_{\rm cl}(U,E)$ we have $\sigma_P(x,\eta)\in S^m_{\rm cl}(U\times\R^n,E)$ and we may assume that $p_0(x,\eta)$ satisfies $p_0(x,\lambda\eta)=\lambda^m p_0(x,\eta)$ for all $\lambda\geq 1$ in this situation. We will use the standard elliptic estimates and estimates on Sobolev spaces for such kind of pseudodifferential operators in this paper. For pseudodifferential operators of the type $\rho=\delta=\frac{1}{2}$, we will apply the classical theory of Calderon and Vaillancourt for the estimates on Sobolev spaces.

We let $W$ be an open set in $\mathbb{R}^{N}$ and $E=\C$ be a vector bundle over $W$. We consider the space $S^0_{\operatorname{loc}}(1;W,E)$ containing all $a(x,k)\in\cC(W,E)$ with
	real parameter $k$ such that for all multi-index $\alpha\in\mathbb{N}_0^N$, any
	cut-off function $\chi\in\cCc(W)$, we have $\sup_{\substack{k\geq 1}}\sup_{x\in W}|\partial^\alpha_x(\chi(x)a(x,k))|<+\infty$. For general $m\in\mathbb{R}$, we can also consider
	\begin{equation}
	S^m_{\operatorname{loc}}(1;W,E):=\{a(x,k):k^{-m}a(x,k)\in\ S^0_{\operatorname{loc}}(1;W,E)\}.  
	\end{equation}
	For a sequence of $a_j\in S^{m_j}_{\operatorname{loc}}(1;W,E)$ with $m_j\searrow-\infty$ and $a\in S^{m_0}_{\operatorname{loc}}(1;W,E)$, we denote 
	\begin{equation}
	a(x,k)\sim\sum_{j=0}^{+\infty} a_j(x,k)~\text{in}~S^{m_0}_{\operatorname{loc}}(1;W,E)
	\end{equation}
	if for all $\ell\in\mathbb{N}$ we have $a-\sum_{j=0}^{\ell-1} a_j\in S^{m_\ell}_{\operatorname{loc}}(1;W,E)$.	In fact, for all sequence $a_j$ above, there always exists an element $a$ as the asymptotic sum, which is unique up to the elements in $S^{-\infty}_{\operatorname{loc}}(1;W,E):=\bigcap_{m} S^m_{\operatorname{loc}}(1;W,E)$. The above notations can be generalized to any smooth vector bundle $E$.

We recall the concept of $k$-negligible operators.  We let $W_1, W_2$ be bounded open subsets of $\R^{n_1}$ and $\R^{n_2}$, respectively. Let $E$ and $F$ be smooth complex vector bundles over $W_1$ and $W_2$, respectively. Let $s_1, s_2\in\R$ and $n_0\in\Z$. We say a kernel $F_k(x,y)$ is $k$-negligible and write
 \begin{equation}
     F_k(x,y)=O(k^{-\infty})~\text{on}~W_1\times W_2
 \end{equation}
 or just $F_k=O(k^{-\infty})~\text{on}~W_1\times W_2$ if for all $k>0$ large enough, $F_k$ is a smoothing operator,
		and for any compact set $K$ in $W_1\times W_2$, for all multi-index
		$\alpha\in\N_0^{n_1}$, $\beta\in\N_0^{n_2}$ and $N\in\mathbb{N}_0$, 
		there exists a constant $C_{K,\alpha,\beta,N}>0$ such that 
        \begin{equation}
            \left|\partial^\alpha_x\partial^\beta_y F_k(x,y)\right|\leq 
		C_{K,\alpha,\beta,N} k^{-N}:\ \ x,y\in K.
        \end{equation}
         For $k$-dependent operators $F_k$ and $G_k$, sometimes we also write 
  \begin{equation}
      \mbox{$F_k=G_k$\ \ on $W_1\times W_2$}
  \end{equation}
if $F_k-G_k=\OK$ on $W_1\times W_2$.
  
All the notations introduced above can be generalized to the case on smooth manifolds.
\subsection{Non-degenerate Cauchy--Riemann manifolds}
 We let $X$ be a connected, smooth and  orientable manifold of real dimension $2n+1,~n\geq 1$. We say a pair $(X,T^{1,0}X)$ is a hypersurface type Cauchy--Riemann manifold if there is a subbundle
	$T^{1,0}X\subset\mathbb{C}TX$ so that 
 \begin{equation}
\dim_{\mathbb{C}}T^{1,0}_{p}X=n,\ \ T^{1,0}_p X\cap T^{0,1}_p X=\{0\},\ \ [V_1,V_2]\in\mathscr{C}^{\infty}(X,T^{1,0}X),
\end{equation}
 where $p\in X$ is arbitrary, $T^{0,1}_p X:=\overline{T^{1,0}_p X}$,  $V_1, V_2\in \mathscr{C}^{\infty}(X,T^{1,0}X)$ are also arbitrary and
		$[\cdot,\cdot]$ stands for the Lie bracket between vector fields. We will use the phrase \textit{CR manifold} in this work to abbreviate the hypersurface type Cauchy--Riemann manifold. For the above subbundle $T^{1,0}X$, we call it a \textit{CR structure} of the CR manifold $X$. 
  
  From now on, we always discuss on a CR manifold $(X,T^{1,0}X)$ of real dimension $2n+1$, $n\geq 1$. We denote by $T^{*1,0}X$ and $T^{*0,1}X$ the dual bundles of
$T^{1,0}X$ and $T^{0,1}X$, respectively. We define the vector bundle of $(0,q)$-forms by $T^{*0,q}X := \Lambda^q\,T^{*0,1}X$. The Levi distribution $HX$ of the CR manifold $X$ is the real part of 
$T^{1,0}X \oplus T^{0,1}X$ as the unique sub-bundle of $TX$ such that
\begin{equation}
    \C HX=T^{1,0}X \oplus T^{0,1}X.
\end{equation}
We let $J:HX\To HX$ be the complex structure given by 
$J(u+\ol u)=iu-i\ol u$, for every $u\in T^{1,0}X$. 
If we extend $J$ complex linearly to $\C HX$ we have $T^{1,0}X \, = \, \left\{ V \in \C HX \,;\, \, JV \, 
=  \,  iV  \right\}$. Given any auxiliary Riemannian metric on $X$, we have $TX=HX\bigoplus (HX)^\perp$ with respect to $g$. Since $TX$ and $HX$ are orientable,  we have some real and non-vanishing $v\in\cC(X,(HX)^\perp)$ and we can define a real and non-vanishing 1-form $\bm\alpha\in\cC(X,T^*X)$ by $\bm\alpha(u):=g(u,v)$ for all $v\in\cC(X,TX)$. It is clear that 
\begin{equation}
    HX=\ker\bm\alpha.
\end{equation}
 We call such $\bm\alpha$ a \textit{characteristic $1$-form}. It turns out that the restriction of $d\bm\alpha$ on $HX$ is a $(1,1)$-form and we have a symmetric bilinear map $\mathcal{L}_x:H_x X\times H_x X\to\R$,  $\mathcal{L}_x(u,v)
=\frac12d\bm\alpha(u,Jv)$, for all $u, v\in H_x X$. It induces a Hermitian form also denoted by $\mathcal{L}_x$ and called \textit{Levi form} by
\begin{equation}\label{eq:2.12b}
\mathcal{L}_x:T^{1,0}_xX\times T^{1,0}_xX\to\C,
\:\: \mathcal{L}_x(U,V)=\frac{i}{2}d\bm\alpha(U, \ol V) ,
\:\:U, V\in T^{1,0}_xX.
\end{equation}

 A CR manifold $X$ is said to be \textit{non-degenerate} if 
for every $x\in X$ the Levi form
$\mathcal L_x$ is a non-degenerate Hermitian form. One can check that this definition does not depend 
on the choice of the characteristic $1$-form $\bm\alpha$. If $X$ is non-degenerate, one can also check that $\bm\alpha$
is a contact form and the Levi distribution $HX$
is a contact structure. 

Locally, there exists an orthonormal basis $\{\mathcal{Z}_1,\cdots,\mathcal{Z}_n\}$
	of $T^{1,0}X$ with respect to the Hermitian metric $\langle\,\cdot\,|\,\cdot\,\rangle$ such that $\mathcal{L}_p$ is 
	diagonal in this basis, $\mathcal{L}_p(\mathcal{Z}_j,\ol{\mathcal{Z}}_\ell)=\delta_{j,\ell}\mu_j(p)$.
	The entries $\mu_1(p) \cdots, \mu_n(p)$ are called the \textit{eigenvalues of the Levi form}
	at $p\in X$ with respect to $\langle\,\cdot\,|\,\cdot\,\rangle$. We notice that the sign of the eigenvalues does not depend on the choice of the metric 
	$\langle\,\cdot\,|\,\cdot\,\rangle$. From now on, we use $n_-$ to denote the number of negative eigenvalues and $n_+$ for the number of positive eigenvalues of the Levi form on $X$, respectively. In our context, $n_-+n_+=n$ and the pair $(n_-,n_+)$ is called the signature (of the Levi form) of the CR manifold $(X,T^{1,0}X)$, which is also independent of the choice of Hermitian metric on $\C TX$. A strongly pseudoconvex CR manifold of real dimension $2n+1$ has a constant signature $(n_-,n_+)=(0,n)$. It is known that for each $j\in\{1,\cdots,n\}$ the function $p\mapsto \mu_j(p)$ is a continuous function on $X$, so by intermediate value theorem we know that the Levi form on a non-degenerate CR manifold must have the constant signature. 
 
From now on, we always assume our CR manifold $(X,T^{1,0}X)$ is non-degenerate with respect to some characteristic form $\bm\alpha$. Then $d\bm\alpha$ is non-degenerate on $HX$, and since $\dim_\R X$ is odd, by linear algebra we know that the set $W_x:=\{w\in T_x X:d\bm\alpha(w,v)=0,\ \ \forall v\in T_x X\}$ satisfies $\dim_\R W_x=1$. Again by the non-degenerate assumption, we have $T_x X=H_x X\bigoplus W_x$, and we can find a real and non-vanishing  $T\in \cC(X, TX)$ such that 
\begin{equation}
 \iota_T\,d\bm\alpha=0,\ \ \iota_T\,\bm\alpha=-1.  
\end{equation}
Such vector field is called the \textit{Reeb vector field} associated by $\bm\alpha$. From now on, we also let $\langle\cdot|\cdot\rangle$ be a Hermitian metric on $\C TX$
such that the decomposition $\C TX = T^{1,0}X \oplus T^{0,1}X \oplus \C T$ is orthogonal.
	\subsection{Szeg\H{o} projections for lower energy forms}
	We recall some essential material about microlocal analysis on Cauchy--Riemann manifolds. By exterior algebra, the Hermitian metric $\langle\cdot|\cdot\rangle$ on $\C TX$ induces a Hermitian metric on $\Lambda^r\mathbb{C}T^*X$. We take the corresponding orthogonal projection  $\pi^{0,q}:\Lambda^q\mathbb{C}T^*X\to T^{*0,q}X:=\Lambda^q(T^{*0,1}X)$. The \textit{tangential Cauchy--Riemann operator} is defined by $\overline{\partial}_b:=\pi^{0,q+1}\circ d:\mathscr{C}^{\infty}(X,T^{*0,q}X)\to\mathscr{C}^{\infty}(X,T^{*0,q+1}X)$. By exterior algebra, we can check that $\overline{\partial}_b^2=0$. We take the $L^2$-inner product $(\cdot|\cdot)$ on $\mathscr{C}^\infty(X,T^{*0,q}X)$ induced by $\langle\cdot|\cdot\rangle$ via $(f|g):=\int_X\langle f|g\rangle dm(x)$,
	where $f,g\in\mathscr{C}^\infty(X,T^{*0,q}X)$, $dm(x):=m(x)dx$	is the given volume form on $X$. We also recall that there is another volume form $dv(x):=v(x)dx$	induced by the Hermitian metric compatible with $\bm\alpha$ such that $v(x):=\sqrt{\det g}$, $g:=(g_{jk})_{j,k=1}^{2n+1}$, and $g_{jk}:=\langle\frac{\partial}{\partial x_j}|
	\frac{\partial}{\partial x_k}\rangle$. We let $L^2_{0,q}(X):=L^2(X,T^{*0,q}X)$ be the completion of the space $\Omega^{0,q}(X):=\mathscr{C}^\infty(X,T^{*0,q}X)$ with respect to $(\cdot|\cdot)$. We extend the closed and densely-defined operator
	$\bar\partial_{b}$ to $L^2_{0,q}(X)$, $q\in\{0,1,\cdots,n\}$, in the sense of current, and we denote by $\bar\partial_{b,H}^*$ the Hilbert space adjoint of $\bar\partial_{b}$ with respect to $(\cdot|\cdot)$. We let $\Box^{(q)}_{b}$ denote the \textit{Kohn Laplacian} (extended by Gaffney extension) such that $ {\rm Dom\,}\Box^{(q)}_{b}=\{s\in{\rm Dom\,}\bar\partial_{b}\cap{\rm Dom\,}\bar\partial_{b,H}^*:\,
	\bar\partial_{b}s\in{\rm Dom\,}\bar\partial_{b,H}^*,\ \bar\partial_{b,H}^*s\in{\rm Dom\,}\bar\partial_{b}\}$ and $\Box^{(q)}_{b}s=\bar\partial_{b}\bar\partial_{b,H}^*s+\bar\partial_{b,H}^*\bar\partial_{b}s$ for $s\in {\rm Dom\,}\Box^{(q)}_{b}$. For every $q\in\{0,1,\cdots,n\}$, $\Box^{(q)}_{b}$ is a positive self-adjoint operator. We refer this fact to the functional analysis argument \cite{MM07}*{Proposition 3.1.2}. We also notice that $\Box_b^{(q)}$ is never an elliptic differential operator for its principal symbol vanishes on the set $\Sigma$, where $\Sigma:=\Sigma^-\cup\Sigma^+$, $\Sigma^\mp:=\set{(x,\eta)\in T^*X:\sum\eta_j(x)dx_j=c\bm\alpha(x),~c\lessgtr 0}$. From now on, we also assume $X$ is compact. In our context, when $q\notin\{n_-,n_+\}$, $\Box^{(q)}_b$ is hypoelliptic with loss of one derivative and has $L^2$-closed range \cite{Hs10}*{Part I, \S 6}. For the concerning  result in a more general set-up called $Y(q)$ condition, we consult to the \cite{ChSh01}. When $q\in\{n_-,n_+\}$, $\Box^{(q)}_{b}$ may not even be hypoelliptic, i.e, $\Box^{(q)}_{b}u\in\mathscr{C}^\infty(X,T^{*0,q}X)$ may not imply that 
	$u\in\mathscr{C}^\infty(X,T^{*0,q}X)$. When $q\in\{n_-,n_+\}$ and $\Box_b^{(q)}$ has $L^2$-closed range in $L^2_{0,q}(X)$, it is proved in \cite{Hs10}*{Part I, Theorem 1.2} that the Szeg\H{o} projection $\Pi^{(q)}$ on $(0,q)$-forms, which is the orthogonal projection $\Pi^{(q)}:L^2_{0,q}(X)\to\ker\Box_b^{(q)}$,
	is the sum of two Fourier integral operators of order zero with complex-valued phase functions. Moreover, the Schwartz kernel $\Pi^{(q)}(x,y)\in\mathscr D'(X\times X,\End(T^{*0,q}X))$ called Szeg\H{o} kernel has the singularities described by H\"ormander's wavefront set ${\rm WF}(\Pi^{(q)})=\{(x,\eta,x,-\eta):(x,\eta)\in\widehat\Sigma\}$, where $\widehat{\Sigma}:=\Sigma^\mp$ when $q=n_\mp$ and $n_-\neq n_+$, and $\widehat{\Sigma}:=\Sigma$ when $q=n_-=n_+$. This kind of microlocal analysis for Szeg\H{o} projecition was first introduced in Boutet de Monvel--Sj\"ostrand \cite{BouSj75} when $(n_-,n_+)=(0,n)$. Hsiao \cite{Hs10} uses a different approach than \cite{BouSj75}
by developing the microlocal heat equation method (see also \cite{MenSj78}) together with Witten's trick (see also \cite{BerSj07}). We follow the formulation of Hsiao and especially the statement of Hsiao--Marinescu \cite{HM17JDG}*{\S 4}. The following theorem is the core ingredient of our paper.
\begin{theorem}
		\label{thm:Hsiao-Marinescu 17 LEF}
		We let $(Y,T^{1,0}Y)$ be a CR manifold with real dimension $2n+1$, $n\geq 1$, and assume that the Levi form of $Y$ is of constant signature $(n_-,n_+)$ on a relatively compact set $\Omega\Subset Y$ with respect to some characteristic form $\bm\alpha$. We take the orthogonal projection $\Pi^{(q)}_\lambda:L^2_{0,q}(X)\to E([0,\lambda])$, where $E([0,\lambda]):=\range~\mathds 1_{[0,\lambda]}(\Box_b^{(q)})$ is the image of the spectral projection of the self-adjoint and positive operator $\Box_b^{(q)}$. Then when $q=n_-$, there are properly supported operators  $S_-,S_+\in L^{0}_{\frac{1}{2},\frac{1}{2}}(\Omega;T^{*0,q}Y)$ given by the oscillatory integrals $S_-(x,y),S_+(x,y)$ in \eqref{eq:approximated Szego kernel intro} such that
  \begin{equation}
    \mbox{$\Pi^{(q)}_\lambda\equiv S_-+S_+$\ \ on $\Omega$}.
  \end{equation}
When $q\notin\{n_-,n_+\}$, we have $\Pi_\lambda^{(q)}\equiv$ 0 on $\Omega$.
	\end{theorem}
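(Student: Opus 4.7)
The plan is to work microlocally on $\Omega$ near the characteristic variety $\Sigma = \Sigma^- \cup \Sigma^+$ of $\Box_b^{(q)}$, treating the two cases of $q$ separately. When $q \notin \{n_-, n_+\}$, the Levi form on $\Omega$ has both $n_-$ negative and $n_+$ positive eigenvalues, so H\"ormander's condition $Y(q)$ is satisfied there. Consequently $\Box_b^{(q)}$ is microlocally subelliptic with a gain of one derivative on $\Omega$, and a standard iteration of the resulting subelliptic estimate $\|u\|_{H^{s+1/2}(\Omega')}\leq C_s(\|\Box_b^{(q)}u\|_{H^s(\Omega')}+\|u\|_{L^2})$ on the spectral subspace $E([0,\lambda])$, where $\|\Box_b^{(q)}u\|_{L^2}\leq\lambda\|u\|_{L^2}$, forces the Schwartz kernel of $\Pi_\lambda^{(q)}$ to be smooth on $\Omega\times\Omega$; equivalently, $\Pi_\lambda^{(q)}\equiv 0$ on $\Omega$.

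The substantive case is $q = n_-$. Following Boutet de Monvel--Sj\"ostrand and Hsiao, I would construct $S_\mp$ as complex-phase Fourier integral operators adapted to the two components $\Sigma^\mp$. Applying the Melin--Sj\"ostrand almost-analytic extension machinery to the principal symbol of $\Box_b^{(q)}$ produces complex eikonal solutions $\varphi_\mp(x,y)$; the positivity $\Im\varphi_\mp\geq 0$ reflects the signs of the Levi eigenvalues on $\Sigma^\mp$, while the boundary conditions $\varphi_\mp(x,x)=0$ together with $d_x\varphi_\mp(x,x)=\mp\bm\alpha(x)$ encode microlocal concentration of $e^{it\varphi_\mp(x,y)}$ on $\Sigma^\mp$. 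The amplitudes $s^\mp(x,y,t)\sim\sum_j s_j^\mp(x,y)t^{n-j}$ are determined recursively from transport equations along the bicharacteristic flow, with leading term $s_0^\mp(x,x)$ the orthogonal projector onto the one-dimensional fibre $\mathcal{N}_x^{n_\mp}\subset T_x^{*0,n_\mp}X$ given by the kernel of the principal subsymbol of $\Box_b^{(q)}$ at $\mp\bm\alpha(x)$. When $n_-\neq n_+$ the ambient bundle $T^{*0,n_+}X$ of $\mathcal{N}_x^{n_+}$ differs from $T^{*0,n_-}X$, forcing $s^+\equiv 0$ and $S_+\equiv 0$. Stationary phase in $t$ places $S_\mp$ in $L^{0}_{\frac{1}{2},\frac{1}{2}}(\Omega;T^{*0,q}Y)$.

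Setting $S:=S_-+S_+$, direct verification using the eikonal and transport equations gives $\Box_b^{(q)}S\equiv 0$, $S\equiv S^*\equiv S^2$ modulo smoothing on $\Omega$, and the wavefront relation of $S$ is contained in $\{(x,\eta;x,\eta):(x,\eta)\in\widehat\Sigma\}$. Combined with subelliptic regularity of $\Box_b^{(q)}$ off $\Sigma$, a uniqueness argument for microlocal Szeg\H{o}-type projectors then yields that the zero-energy projector $\Pi^{(q)}$ onto $\ker\Box_b^{(q)}$ differs from $S$ by a smoothing operator on $\Omega$. To pass from $\ker\Box_b^{(q)}$ to the full spectral window, standard closed-range considerations provide a positive spectral gap on the orthogonal complement of the kernel, so $\Pi_\lambda^{(q)}-\Pi^{(q)}$ is the spectral projector onto a finite-dimensional space of forms that are smooth on $\Omega$ by the same subelliptic regularity off $\Sigma$, and is therefore smoothing on $\Omega$. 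The main obstacle will be the complex-phase construction of the previous paragraph --- solving the complex eikonal equations with the correct positivity of $\Im\varphi_\mp$ and resolving the transport equations to all orders with the prescribed diagonal symbol --- which constitutes the technical heart of \cite{HM17JDG}.
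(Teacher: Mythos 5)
The paper does not prove this theorem; it is recalled verbatim from Hsiao--Marinescu \cite{HM17JDG}*{\S 4} and used as a black box, so there is no internal proof to compare against. That said, your outline has one genuine gap and one methodological divergence worth flagging.

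The gap is in the last step, where you propose to first produce the Szeg\H{o} projector $\Pi^{(q)}$ onto $\ker\Box_b^{(q)}$ and then ``pass from $\ker\Box_b^{(q)}$ to the full spectral window'' using a spectral gap supplied by ``standard closed-range considerations.'' Closed range is not available here. As the paper itself recalls, Kohn's theorem guarantees $L^2$-closed range only when $|n_- - n_+| \neq 1$, and there are counterexamples when $|n_- - n_+| = 1$. The entire reason the statement is formulated for $\Pi^{(q)}_\lambda$ rather than $\Pi^{(q)}$ is to avoid any closed-range or spectral-gap hypothesis. Without a gap at $0$, the difference $\Pi^{(q)}_\lambda - \Pi^{(q)}$ need not be finite rank, and your argument that it is smoothing by ``the same subelliptic regularity off $\Sigma$'' does not get off the ground, since the forms in its range have eigenvalues accumulating at $0$ and need not individually be smooth in a way controlled uniformly. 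The actual Hsiao--Marinescu construction attacks $\Pi^{(q)}_\lambda$ directly, roughly by approximating $e^{-t\Box_b^{(q)}}$ as $t\to+\infty$, so that $\Pi^{(q)}$ never appears as an intermediate object.

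The methodological divergence is related: you sketch the classical Boutet de Monvel--Sj\"ostrand eikonal/transport route, constructing $\varphi_\mp$ from the characteristic equation of $\Box_b^{(q)}$ and the amplitudes from transport along bicharacteristics. The paper explicitly notes that Hsiao's approach is \emph{different}: the microlocal heat equation method (Menikoff--Sj\"ostrand) together with Witten's trick (Berman--Sj\"ostrand). In the mixed-signature case $n_-\geq 1$, $n_+\geq 1$ the original Boutet de Monvel--Sj\"ostrand construction does not apply off the shelf, and the positivity $\Im\varphi_\mp\geq 0$ together with the degenerate leading symbol $s_0^\mp(x,x)=$ (projector onto the line $\mathcal N_x^{n_\mp}$) is obtained from the long-time asymptotics of the approximate heat kernel rather than from a direct transport-equation analysis. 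Your claim that $S_+\equiv 0$ when $n_-\neq n_+$ is also more subtle than a bundle mismatch: $\mathcal N_x^{n_+}\subset T_x^{*0,n_+}X$ lives in a different degree than the $(0,n_-)$-forms under consideration, but the reason $S_+$ drops out is that the relevant microlocal approximate inverse concentrated on $\Sigma^+$ produces a rank-one projector onto a subbundle of $T^{*0,n_+}X$, which has no nontrivial component in $T^{*0,n_-}X$ when $n_-\neq n_+$; this needs to be run through the heat-kernel construction, not asserted. The $q\notin\{n_-,n_+\}$ case via $Y(q)$ subellipticity is fine.
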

In \eqref{eq:approximated Szego kernel intro}, when $q=0$, for any $m\in\mathbb R$ the symbol space $S^m_{1,0}\left(\Omega\times \Omega\times\mathbb{R}_+,\End(T^{*0,q}X)\right)$ collects all $a(x,y,t)$ in $\cC(\Omega\times \Omega\times\mathbb{R}_+,\End(T^{*0,q}X))$ 
		such that for all compact sets $K\Subset \Omega\times \Omega$, all $\alpha, 
		\beta\in\N^{2n+1}_0$ and $\gamma\in\N_0$, 
		there is a constant $C_{K,\alpha,\beta,\gamma}>0$ satisfying the estimate 
        \begin{equation}
            \left|\partial^\alpha_x\partial^\beta_y\partial^\gamma_t a(x,y,t)\right|\leq 
		C_{K,\alpha,\beta,\gamma}(1+t)^{m-|\gamma|}:\ \ (x,y,t)\in K\times\mathbb R_+,\ \ t\geq 1.
        \end{equation}
        We let $S^{-\infty}(\Omega\times \Omega\times\mathbb{R}_+,\End(T^{*0,q}X))
		:=\bigcap_{m\in\mathbb R}S^m_{1,0}(\Omega\times \Omega\times\mathbb{R}_+,\End(T^{*0,q}X))$. We define the classical symbol space $S^m_{\rm cl}(\Omega\times\Omega\times\R_+,\End(T^{*0,q}X))$  by collecting all the elements of $a(x,y,t)\in\cC(\Omega\times\Omega\times\R_+,\End(T^{*0,q}X))$ with the asymptotic expansion
  \begin{equation}
      a(x,y,t)\sim\sum_{j=0}^{+\infty}a_j(x,y)t^{m-j}\ \ \text{in}~S^m_{1,0}(\Omega\times\Omega\times\R_+,\End(T^{*0,q}X)),
  \end{equation}
where $a_j(x,y)\in\cC(\Omega\times\Omega,\End(T^{*0,q}X))$ for all $j\in\N_0$.  The above definition can be naturally generalized to the case $q\geq 0$. 

We can check that, for Szeg\H{o} phase functions $\varphi_\mp(x,y)$, the complex-valued phase functions $\varphi_\mp(x,y)t$ are regular, and for any $m\in\R$ and the symbol $a(x,y,t)\in S^m_{\rm cl}(\Omega\times \Omega\times\mathbb{R}_+,\End(T^{*0,q}X))$ we also have $ \int_0^{+\infty}e^{it\varphi_\mp(x,y)}a(x,y,t)dt
=\lim_{\epsilon\to 0}\int_0^{+\infty}e^{it(\varphi_\mp(x,y)+i\epsilon)}a(x,y,t)dt$ as the regularization of the oscillatory integral.
	
By \cite{Koh86}, $\Box_b^{(q)}$ automatically has $L^2$-closed range when $|n_- -n_+|\neq 1$ but this is not necessarily true for $|n_- - n_+| = 1$, cf.~the three dimensional counter example \cite{Bou75} and \cite{ChSh01}*{\S 12}. We also recall that when $\Box_b^{(q)}$ has $L^2$-closed range, from the spectral theory for self-adjoint operators \cite{Dav95} and the result \cite{HM17JDG}*{Theorem 1.7} there is some $\lambda>0$ such that $\Pi^{(q)}=\Pi_\lambda^{(q)}$.

By the finite partition of unity of the compact manifold $X$ and a slight modification of the proof of \cite{HM17JDG}*{Theorems 4.6 and 4.7}, we have the following statement which can help us to localize the calculation later.
	\begin{theorem}
		\label{thm:WF of Szego projection on lower energy forms}
		With the notations and assumptions in Theorem \ref{thm:Main Semi-Classical Expansion}, for $q=n_-$ we have ${\rm WF}(\Pi^{(q)}_\lambda)=\{(x,\eta,x,-\eta):(x,\eta)\in\widehat\Sigma\}$. In particular, we get
  \begin{equation}
      \Pi_\lambda^{(q)}(x,y)\in\cC(X\times X\setminus{\rm diag}\left(X\times X),\End(T^{*0,q}X)\right).   \label{eq:Pi_lambda^q is smoothing away from diagonal}   
  \end{equation}
  For $q\notin\{n_-,n_+\}$, we have $\Pi_\lambda^{(q)}\equiv 0$ on $X$.
	\end{theorem}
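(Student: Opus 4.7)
The strategy is to globalize Theorem~\ref{thm:Hsiao-Marinescu 17 LEF} by a finite partition of unity on the compact manifold $X$, essentially repeating the argument of \cite{HM17JDG}*{Theorems 4.6 and 4.7} with the low-energy projector $\Pi_\lambda^{(q)}$ in place of the Szeg\H{o} projector $\Pi^{(q)}$. Those proofs use only two inputs: the local FIO decomposition $\Pi^{(q)}\equiv S_-+S_+$ on each coordinate chart, and partition-of-unity gluing; Theorem~\ref{thm:Hsiao-Marinescu 17 LEF} supplies the first input verbatim for $\Pi_\lambda^{(q)}$, so the second step will go through with only bookkeeping modifications.

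Concretely, I would fix a finite atlas $\{(\Omega_j,x^{(j)})\}_{j=1}^N$ with a subordinate partition of unity $\{\chi_j\}\subset\cCc(\Omega_j)$, refined so that any pair with $\supp\chi_j\cap\supp\chi_k\neq\emptyset$ lies in a common chart, and decompose $\Pi_\lambda^{(q)}=\sum_{j,k}\chi_j\Pi_\lambda^{(q)}\chi_k$. For $q\notin\{n_-,n_+\}$, Theorem~\ref{thm:Hsiao-Marinescu 17 LEF} gives $\chi_j\Pi_\lambda^{(q)}\chi_k\equiv 0$ for every ``near'' pair $(j,k)$. For the remaining ``far'' pairs whose supports do not fit in any single chart, I invoke a microlocal ellipticity argument: the spectral bound $\|(1+\Box_b^{(q)})^N\Pi_\lambda^{(q)}\|\leq(1+\lambda)^N$ holds for every $N\in\N$, while outside the characteristic set $\Sigma$ the operator $1+\Box_b^{(q)}$ is microlocally elliptic of order two; applying a parametrix $N$ times improves the Sobolev order of $\Pi_\lambda^{(q)}$ microlocally by $2N$. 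Since the condition $Y(q)$ holds when $q\notin\{n_-,n_+\}$ (see \cite{Hs10}*{Part I, \S 6}), this forces $\Pi_\lambda^{(q)}\equiv 0$ globally.

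For $q=n_-$, the inclusion ${\rm WF}(\Pi_\lambda^{(q)})\subset\{(x,\eta,x,-\eta):(x,\eta)\in\widehat\Sigma\}$ follows from Melin--Sj\"ostrand complex stationary phase \cite{MeSj74} applied to each local oscillatory integral $S_\mp^{(j)}(x,y)=\int_0^{+\infty}e^{it\varphi_\mp(x,y)}s^\mp(x,y,t)\,dt$: the $t$-critical locus is $\{\varphi_\mp=0\}={\rm diag}(\Omega_j\times\Omega_j)$ by \eqref{eq:varphi intro}, and on the diagonal $d_{(x,y)}(t\varphi_\mp)=t(\mp\bm\alpha(x),\pm\bm\alpha(x))$, producing exactly the claimed diagonal wavefront with characteristic set $\Sigma^\mp$. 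The reverse inclusion is obtained from the non-vanishing of the leading symbols $s_0^\mp(x,x)$ in \eqref{eq:s_0^- intro}--\eqref{eq:s_0^+ intro}, which prevents the principal parts of $S_\mp^{(j)}$ from being smoothing at the claimed characteristic points. The off-diagonal smoothness \eqref{eq:Pi_lambda^q is smoothing away from diagonal} is then an immediate consequence of ${\rm WF}(\Pi_\lambda^{(q)})$ lying on the diagonal of $X\times X$.

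The most delicate step, and the main obstacle, is handling cross-chart contributions $\chi_j\Pi_\lambda^{(q)}\chi_k$ whose supports cannot both be placed in a single chart: the local FIO representation of Theorem~\ref{thm:Hsiao-Marinescu 17 LEF} does not directly supply any control there, so one must rule out by hand any hypothetical off-diagonal wavefront sitting in $\widehat\Sigma\times\widehat\Sigma$. This is precisely where the spectral-theoretic bound on $(1+\Box_b^{(q)})^N\Pi_\lambda^{(q)}$ combined with the microlocal ellipticity of $1+\Box_b^{(q)}$ outside $\widehat\Sigma$ does the work, first confining the wavefront to $\widehat\Sigma\times\widehat\Sigma$ and then cutting it down to its intersection with the diagonal by appealing to the explicit local form of $S_\mp^{(j)}$; this is the ``slight modification'' of \cite{HM17JDG}*{Theorems 4.6 and 4.7} alluded to in the paragraph preceding the statement.
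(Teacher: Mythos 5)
Your overall plan — finite partition of unity, the local FIO decomposition from Theorem~\ref{thm:Hsiao-Marinescu 17 LEF}, Melin--Sj\"ostrand stationary phase for the inclusion $\subset$ near the diagonal, non-vanishing of $s_0^\mp$ for the inclusion $\supset$ — is exactly what the paper alludes to by citing \cite{HM17JDG}*{Theorems 4.6, 4.7}, and the $q\notin\{n_-,n_+\}$ branch is fine (though the cleaner route is that under the $Y(q)$ condition $\Pi_\lambda^{(q)}$ is finite rank with smooth eigenforms, hence smoothing). You also correctly single out the cross-chart pairs $\chi_j\Pi_\lambda^{(q)}\chi_k$ with $\supp\chi_j\cap\supp\chi_k=\emptyset$ as the genuine difficulty.

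However, the resolution you propose for those far pairs does not close. The microlocal ellipticity argument --- iterating $(1+\Box_b^{(q)})^{-1}$ against the uniform bound on $(1+\Box_b^{(q)})^N\Pi_\lambda^{(q)}$ --- only confines ${\rm WF}(\Pi_\lambda^{(q)})$ to the product set $\{(x,\xi,y,-\eta):(x,\xi),(y,\eta)\in\widehat\Sigma\}$; it cannot by itself push that down to the diagonal $\{(x,\xi,x,-\xi)\}$. You then say the explicit local form of $S_\mp^{(j)}$ "cuts it down to the diagonal," but that local form controls $\Pi_\lambda^{(q)}$ only on $\Omega_j\times\Omega_j$. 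For a far pair $(x_0,y_0)$ which does not fit in any single chart, nothing in Theorem~\ref{thm:Hsiao-Marinescu 17 LEF} constrains $\Pi_\lambda^{(q)}(x,y)$ near $(x_0,y_0)$, and a hypothetical singular contribution at $(x_0,\xi_0,y_0,-\eta_0)\in\widehat\Sigma\times\widehat\Sigma$ off the diagonal is not ruled out by anything you have written. (Attempts to exploit $\Pi_\lambda^{(q)}=(\Pi_\lambda^{(q)})^2$ in this naive form run into the same far-pair term and are circular.)

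The actual content of \cite{HM17JDG}*{Theorems 4.6, 4.7} that the paper relies on is a global, properly supported Szeg\H{o}-type parametrix $S$ together with a proof that $\Pi_\lambda^{(q)}-S$ is globally smoothing, obtained from the microlocal Hodge decomposition / heat-kernel method of \cite{Hs10}: because the parametrix is properly supported, the kernel of $\Pi_\lambda^{(q)}$ at far pairs is the smoothing error, which then automatically has empty wavefront there. Your proposal never constructs such a global parametrix and does not invoke the microlocal Hodge decomposition (or any off-diagonal decay estimate), so the far-pair step remains a genuine gap.
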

	
	In the last, we recall some fine results of phase functions. The first is from \cite{HM17JDG}*{Theorem 3.4}.
	\begin{theorem}
		\label{thm:tangential Hessian of phase functions varphi_-}
		With the notations and assumptions of Theorem \ref{thm:Main Semi-Classical Expansion}, for a given point $x_0\in \Omega$, let $\{W_j\}_{j=1}^{n}$
		be an orthonormal frame with respect to $\langle\,\cdot\,|\,\cdot\,\rangle$ of $T^{1, 0}X$ in a neighborhood of $x_0$
		such that
		the Levi form is diagonal at $x_0$, i.e., $\mathcal{L}_{x_{0}}(W_{j},\overline{W}_{s})=\delta_{j,s}\mu_{j}$, $j,s=1,\cdots,n$.
		We can take local coordinates
		$x=(x_1,\cdots,x_{2n+1})$, $z_j=x_{2j-1}+ix_{2j}$, $j=1,\cdots,n$,
		defined on some neighborhood of $x_0$ such that  $x(x_0)=0$, ${\bm\alpha}(x_0)=dx_{2n+1}$, $T=-\pr_{x_{2n+1}}$, and 
  \begin{equation}
      W_j=\frac{\pr}{\pr z_j}-i\mu_j\ol z_j\frac{\pr}{\pr x_{2n+1}}-
		c_jx_{2n+1}\frac{\pr}{\pr x_{2n+1}}+O(\abs{x}^2),\ c_j\in\C,\ j=1,\cdots,n.
  \end{equation}
		We set
		$y=(y_1,\cdots,y_{2n+1})$, $w_j=y_{2j-1}+iy_{2j}$, $j=1,\cdots,n$, then under the above coordinates we have	in some neighborhood of $(0,0)$ that 
  \begin{multline}
      \varphi_-(x, y)
		=-x_{2n+1}+y_{2n+1}+i\sum^{n}_{j=1}\abs{\mu_j}\abs{z_j-w_j}^2+(x_{2n+1}-y_{2n+1})f(x, y)\\
		+\sum^{n}_{j=1}\Bigr(i\mu_j(\ol z_jw_j-z_j\ol w_j)+c_j(-z_jx_{2n+1}+w_jy_{2n+1})+\ol c_j(-\ol z_jx_{2n+1}+\ol w_jy_{2n+1})\Bigr)+O(\abs{(x, y)}^3),
  \end{multline}
  where $f$ is smooth and satisfies $f(0,0)=0$, $f(x, y)=\ol f(y, x)$. 
	\end{theorem}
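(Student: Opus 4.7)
The proof plan begins by establishing the claimed normal form of coordinates. Starting from arbitrary local coordinates near $x_0$, I would first use linear algebra to arrange $\bm\alpha(x_0) = dx_{2n+1}$ and $T(x_0) = -\partial_{x_{2n+1}}$ (choose the last coordinate along $-T$ and the first $2n$ along any basis of $H_{x_0}X$). Then the orthonormal frame $\{W_j\}$ diagonalizing $\mathcal L_{x_0}$ can be complex-linearly combined with coordinates so that $W_j|_{x_0} = \partial_{z_j}$. The precise first-order correction $-i\mu_j\overline z_j\partial_{x_{2n+1}}-c_j x_{2n+1}\partial_{x_{2n+1}}$ is dictated by (i) the identity $\mathcal L_{x_0}(W_j,\overline W_s)=\frac{i}{2}d\bm\alpha(W_j,\overline W_s)|_{x_0}=\delta_{js}\mu_j$, which forces the Reeb-direction coefficient $-i\mu_j\overline z_j$, and (ii) the freedom in how $\bm\alpha$ varies away from $x_0$, which produces the $-c_j x_{2n+1}\partial_{x_{2n+1}}$ term with $c_j\in\mathbb C$ free. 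The residual $O(|x|^2)$ error is harmless because only the first Taylor jets of $W_j$ enter the second-order expansion of $\varphi_-$.

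Next, I would exploit the three boundary conditions of \eqref{eq:varphi intro}. Writing $\varphi_-(x,y)=L(x,y)+Q(x,y)+R(x,y)$ with $L$ linear, $Q$ quadratic and $R=O(|(x,y)|^3)$, the vanishing $\varphi_-(x,x)=0$ together with $d_x\varphi_-(0,0)=-dx_{2n+1}=-d_y\varphi_-(0,0)$ immediately forces $L(x,y)=-x_{2n+1}+y_{2n+1}$. The quadratic part $Q$ is then constrained by the eikonal equations that $\varphi_-$ must satisfy in order for the Fourier integral operator $S_-$ from \eqref{eq:approximated Szego kernel intro} to reproduce the microlocal singularity of $\Pi^{(q)}_\lambda$ on $\Sigma^-$. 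Concretely, from $\Box_b^{(n_-)}S_-\equiv 0$ at the principal symbol level, one obtains that for $j\in J_0$ (respectively $j\in I_0$) the quantities $\overline W_{j,x}\varphi_-$ (resp.~$W_{j,x}\varphi_-$) vanish to second order on the diagonal modulo $O(|x-y|^\infty)$, together with the analogous equations in the $y$-variable and the symmetry $\varphi_-(y,x)=-\overline{\varphi_-(x,y)}$.

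Plugging the explicit form of $W_j$ from Step 1 into these eikonal relations and comparing coefficients at the origin turns them into linear equations in the unknown quadratic coefficients of $Q$. Solving them produces the pure holomorphic/antiholomorphic cross terms $i\mu_j(\overline z_j w_j-z_j\overline w_j)$, the mixed $z$-to-$x_{2n+1}$ couplings $c_j(-z_jx_{2n+1}+w_jy_{2n+1})+\overline c_j(-\overline z_jx_{2n+1}+\overline w_jy_{2n+1})$ arising from the $-c_jx_{2n+1}\partial_{x_{2n+1}}$ piece of $W_j$, and the tangential Hessian $i\sum_j|\mu_j||z_j-w_j|^2$. The remaining indeterminacy at second order is absorbed into the $(x_{2n+1}-y_{2n+1})f(x,y)$ term, and the symmetry $f(x,y)=\overline f(y,x)$ together with $f(0,0)=0$ follows from the identity $\varphi_-(y,x)=-\overline{\varphi_-(x,y)}$ combined with $L$ absorbing the constant part.

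The main obstacle will be the appearance of $|\mu_j|$ rather than $\mu_j$ in the tangential Hessian. The eikonal equation for $\varphi_-$ is a complex quadratic equation and admits two branches; only the branch with $\operatorname{Im}\varphi_-\geq 0$ has the physical meaning as an approximation of the Szegő projection. For the indices $j\in I_0$ where $\mu_j<0$, the condition $q=n_-$ forces the selection of the root that flips the sign, converting $\mu_j$ into $|\mu_j|$. This sign-selection is precisely the lower-energy analogue of the Melin--Sj\"ostrand positivity criterion and is the conceptual heart of the theorem; carrying it through rigorously while keeping track of the interplay between the $n_-$-form character of $T^{*0,q}X$ and the sign distribution of $\{\mu_j\}$ is the delicate point of the proof.
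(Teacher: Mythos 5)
The paper does not actually prove this result: it is quoted verbatim as Hsiao--Marinescu \cite{HM17JDG}*{Theorem 3.4}, whose proof --- going back to Hsiao's memoir \cite{Hs10}*{Part I, \S 3 and Theorem 1.4} --- constructs $\varphi_-$ by the microlocal heat-equation method, solving a complex Hamilton--Jacobi equation for the parabolic operator $\partial_s+\Box_b^{(q)}$ and letting $s\to+\infty$. Your sketch is therefore a genuinely different and more elementary route, based on direct first-order eikonal conditions together with the Hermitian symmetry $\varphi_-(y,x)=-\overline{\varphi_-(x,y)}$. I checked that the mixed eikonal characterization you posit, namely $\overline W_{j,x}\varphi_-=O(|x-y|^2)$ for $j\in J_0$ and $W_{j,x}\varphi_-=O(|x-y|^2)$ for $j\in I_0$, combined with the coordinate normal form for $W_j$ and the linear part fixed by \eqref{eq:varphi intro}, does reproduce exactly the quadratic Taylor expansion in the statement, including the $|\mu_j|$ in the tangential Hessian. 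The direct route is more transparent term-by-term; the heat-kernel route is what actually yields the full symbol and the equivalence-class theory (Theorems \ref{thm:division theorem for equivalent Szego phase function} and \ref{thm:s_0^psi(x,x)}) that the rest of the paper relies on.

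The under-developed step is the derivation of the mixed eikonal conditions themselves: you assert rather than justify that one should use $\overline W_j$ for $j\in J_0$ but $W_j$ for $j\in I_0$. Making this rigorous requires combining (a) the alignment of the leading symbol $s_0^-$ with the rank-one projection $\tau_x^{n_-}$ onto $\mathcal N_x^{n_-}$, cf.\ \eqref{eq:s_0^- intro}; (b) the Leibniz-type observation that $\bar\partial_b$ applied to an $\omega_{I_0}$-aligned form produces only $\overline W_j$ with $j\in J_0$, since $\omega_j\wedge\omega_{I_0}=0$ for $j\in I_0$, while $\bar\partial_{b}^*$ (through the contraction $\iota_{\overline W_j}$) produces only $W_j$ with $j\in I_0$; and (c) the fact that $\Box_b^{(q)}\Pi_\lambda^{(q)}\equiv 0$ modulo smoothing together with the fact that the principal symbol of $\Box_b^{(q)}$ near $\Sigma^-$ is a sum of non-negative quadratic terms, so that vanishing at principal order splits into the two separate first-order transport conditions. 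Once (a)--(c) are in place the $|\mu_j|$ falls out automatically from the switched CR/anti-CR eikonal; there is no separate ``branch of a quadratic'' to select, and your closing paragraph presents as an extra difficulty something that is already encoded in the conditions of your third step, which is where the real work lies.
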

	We remark that in the above theorem, when $q=n_-=n_+$ the function $\varphi_+(x,y)$ has the similar formula, cf.~\cite{Hs10}*{Theorem 1.4}. Also, at $x=x_0$ the volume form $v(x)dx$ induced by Hermitian metric satisfies $v(x_0)=2^n$ and compatible with the normalization of \eqref{eq:s_0^- intro} and \eqref{eq:s_0^+ intro}, cf.~also \cite{Hs10}*{pp.~76-77}. In this context we can check that $|\bm\alpha\wedge(d\bm\alpha)^n|=2^n n!|\det\mathcal{L}_x| v(x)dx$. Moreover, for such small enough coordinate patch, there exist a constant $C>0$ such that 
	\begin{equation}
	\label{eq:Im varphi>C|z-w|^2}
	{\rm Im\,}\varphi_-(x,y)\geq C\sum^{2n}_{j=1}\abs{x_j-y_j}^2,
	\end{equation}
 and so does $\varphi_+(x,y)$ when $q=n_-=n_+$. We refer to \cite{Hs10}*{Part I, Proposition 7.16} for a proof of 
	\eqref{eq:Im varphi>C|z-w|^2}.
	
With the theory of Melin--Sj\"ostrand \cite{MeSj74}*{\S 4} and the proof of \cite{HM17JDG}*{Theorem 5.4 \& \S 8}, we have the following theorem about equivalence class of Szeg\H{o} phase functions. 
	\begin{theorem}
		\label{thm:division theorem for equivalent Szego phase function}
		With the same notations and assumptions in Theorem \ref{thm:Hsiao-Marinescu 17 LEF}, for any $\psi_\mp(x,y)$ of $\cC(\Omega\times\Omega,\C)$ satisfying \eqref{eq:varphi intro}, we have  $s^{\psi_\mp}(x,y,t)\in S^n_{\rm cl}\left(\Omega\times \Omega\times\R_+, \End(T^{*0,q}X)\right)$ such that 
\begin{equation}
    S_\mp(x,y)\equiv\int_0^{+\infty} e^{it\psi_\mp(x,y)} s^{\psi_\mp}(x,y,t)dt
\end{equation}
		on $\Omega\times\Omega$. Moreover, when $S_\mp$ is not smoothing there are some $f_\mp(x,y)\in\cC(\Omega\times\Omega)$ satisfying $f_\mp(x,x)\neq 0$ such that 
  \begin{equation}
      \varphi_\mp(x,y)-f_\mp(x,y)\psi_\mp(x,y)=O(|x-y|^{+\infty}).
  \end{equation}
	\end{theorem}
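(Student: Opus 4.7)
The plan is to first establish the division identity and then use it to transform the oscillatory integral. Both phases $\varphi_\mp$ and $\psi_\mp$ vanish on $\mathrm{diag}(\Omega\times\Omega)$ with identical first-order data $d_x=\mp\bm\alpha(x)=-d_y$, and both satisfy $\mathrm{Im}\geq 0$ with strictly positive transverse Hessian as in \eqref{eq:Im varphi>C|z-w|^2}. This rigidity of positive complex phase functions generating the same canonical relation, in the sense of Melin--Sj\"ostrand \cite{MeSj74}*{\S 4}, will force the two phases to be equivalent modulo functions flat on the diagonal.

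\textbf{Step 1 (formal division).} Fix $x_0\in \Omega$ and work in local coordinates. Since $\varphi_\mp$ and $\psi_\mp$ both vanish on the diagonal, the fundamental theorem of calculus writes each as $\sum_j c_j(x,y)(x_j-y_j)$ with $c_j(x,x)=\mp\bm\alpha_j(x)$, so their transverse first jets coincide. I construct $f_\mp$ by iterated Taylor correction: starting from $f_\mp^{(0)}\equiv 1$, suppose $R^{(N)}:=\varphi_\mp-f_\mp^{(N)}\psi_\mp$ vanishes to order $N$ on the diagonal; solve the linear algebraic system at order $N$ for a correction $g^{(N)}$ vanishing to order $N-1$ such that $R^{(N)}-g^{(N)}\psi_\mp$ vanishes to order $N+1$. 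Solvability at each step uses that the transverse first jet of $\psi_\mp$ is $\mp\bm\alpha\neq 0$. Borel summation then yields $f_\mp\in\cC(\Omega\times\Omega)$ with $f_\mp(x,x)=1$ and $\varphi_\mp-f_\mp\psi_\mp=O(|x-y|^{+\infty})$, following the template of \cite{MeSj74}*{\S 4} as adapted in \cite{HM17JDG}*{Theorem 5.4,\ \S 8}.

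\textbf{Step 2 (transfer of the symbol).} I split
\begin{equation*}
    e^{it\varphi_\mp(x,y)} s^\mp(x,y,t)=e^{it f_\mp\psi_\mp} s^\mp+e^{it f_\mp\psi_\mp}\bigl(e^{it(\varphi_\mp-f_\mp\psi_\mp)}-1\bigr)s^\mp.
\end{equation*}
The second summand contributes a smoothing kernel after integration in $t$: by the flatness $\varphi_\mp-f_\mp\psi_\mp=O(|x-y|^{+\infty})$, expanding the bracket as a power series produces factors $(it)^k|x-y|^{Nk}$ for arbitrarily large $N$, while the identity $\mathrm{Im}(f_\mp\psi_\mp)=\mathrm{Im}\,\varphi_\mp-\mathrm{Im}(\varphi_\mp-f_\mp\psi_\mp)\geq c|x-y|^2$ near the diagonal (from \eqref{eq:Im varphi>C|z-w|^2} and the flatness) provides Gaussian damping $e^{-ct|x-y|^2}$, yielding absolutely convergent and $\cC$ kernels in $(x,y)$. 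For the first summand, the positivity $\mathrm{Re}\,f_\mp(x_0,x_0)=1>0$ persists on a neighborhood, and after truncating the asymptotic expansion of $s^\mp$ to a polynomial in $t$ plus a rapidly decaying remainder, Cauchy's theorem rotates the $t$-contour from $\R_+$ to $\R_+/f_\mp(x,y)$. The substitution $\tau=t\,f_\mp(x,y)$ then yields the desired representation with
\begin{equation*}
    s^{\psi_\mp}(x,y,\tau):=f_\mp(x,y)^{-1} s^\mp\bigl(x,y,\tau/f_\mp(x,y)\bigr)\sim\sum_{j=0}^{+\infty}s_j^\mp(x,y)\,f_\mp(x,y)^{-(n+1-j)}\tau^{n-j}
\end{equation*}
in $S^n_{\rm cl}(\Omega\times\Omega\times\R_+,\End(T^{*0,q}X))$, inherited from the expansion \eqref{eq:full symbol intro I} of $s^\mp$.

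\textbf{Main obstacle.} The delicate point is justifying the $t$-contour rotation when $f_\mp$ is genuinely complex-valued away from the diagonal, combined with precisely quantifying how the flat correction $e^{it(\varphi_\mp-f_\mp\psi_\mp)}-1$ is dominated by the Gaussian damping from $\mathrm{Im}\,\psi_\mp$. Both are standard features of the Melin--Sj\"ostrand complex-phase FIO calculus but require careful bookkeeping of the symbol orders and iterated integration-by-parts to convert $O(|x-y|^{+\infty})$ flatness into full kernel smoothness on $\Omega\times\Omega$.
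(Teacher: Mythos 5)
The main gap is in Step 1. You assert that the iterative Taylor correction is solvable at each order because ``the transverse first jet of $\psi_\mp$ is $\mp\bm\alpha\neq 0$,'' but that is not sufficient. At the $N$-th stage you must write the degree-$N$ Taylor coefficient $P_N$ of $R^{(N)}$ as $g^{(N)}_{N-1}\cdot L$, where $L$ is the linear part of $\psi_\mp$ in $(y-x)$, and this requires $P_N$ to lie in the ideal generated by the single real linear form $L$, i.e.\ to vanish on the hyperplane $\{L=0\}$. For a general $\psi_\mp$ satisfying only \eqref{eq:varphi intro}, this already fails at $N=2$: take
\begin{equation*}
\psi_\mp(x,y):=\varphi_\mp(x,y)+i\sum_{j=1}^{2n}(x_j-y_j)^2 .
\end{equation*}
This $\psi_\mp$ still satisfies all three conditions of \eqref{eq:varphi intro} (the extra term is nonnegative, vanishes exactly on the diagonal, and has vanishing first differential there), yet in the coordinates of Theorem \ref{thm:tangential Hessian of phase functions varphi_-} the second-order part of $\varphi_\mp-\psi_\mp=-i\sum_j(x_j-y_j)^2$ does not vanish on $\{L=0\}=\{x_{2n+1}=y_{2n+1}\}$, so no smooth $f_\mp$ can make $\varphi_\mp-f_\mp\psi_\mp$ flat at the diagonal. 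In other words, \eqref{eq:varphi intro} fixes only the real first-order jet, hence only the real canonical relation; it does not force the two phases to generate the same positive Lagrangian ideal, which is what you would need for your ``rigidity'' claim to apply.

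The paper does not attempt a self-contained argument here; it cites \cite{MeSj74}*{\S 4} and \cite{HM17JDG}*{Theorem 5.4 \& \S 8}, where the missing ingredient is supplied: one uses the hypothesis that both phases represent the \emph{same non-smoothing operator} $S_\mp$ with nonzero classical leading symbols. Comparing the leading singularities $\Gamma(n+1)(-i\varphi_\mp)^{-n-1}s_0^\mp(x,y)$ and $\Gamma(n+1)(-i\psi_\mp)^{-n-1}s_0^{\psi_\mp}(x,y)$ modulo $\cC$ forces $\psi_\mp^{\,n+1}/\varphi_\mp^{\,n+1}$ to extend smoothly and nonvanishingly across the diagonal, and that is precisely what makes each stage of the division solvable. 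Only with this input do the two phases parametrize the same positive Lagrangian in the sense of Melin--Sj\"ostrand, after which your Step 1 and the ``moreover'' conclusion $f_\mp(x,x)\neq 0$ (in fact $=1$) follow. Step 2 — using the flatness of $\varphi_\mp-f_\mp\psi_\mp$ together with the Gaussian damping from $\operatorname{Im}(f_\mp\psi_\mp)\geq c|x'-y'|^2$ and a contour rotation to transfer the symbol — is a correct rendering of the standard Boutet de Monvel--Sj\"ostrand/Hsiao--Marinescu symbol-transfer argument, but it cannot substitute for Step 1, since it presupposes the $f_\mp$ whose existence is exactly what is in question.
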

 \begin{remark}
		\label{rmk:varphi(x,y)=x_2n+1+g(x',y)}
		We have some special choice of Szeg\H{o} phase function which can help us simplify the later calculation. For $\varphi_\mp$ of \eqref{eq:approximated Szego kernel intro} and a coordinate patch $(\Omega,x)$  described in Theorem \ref{thm:tangential Hessian of phase functions varphi_-}, by the Malgrange preparation theorem \cite{Hoe03}*{Theorem 7.5.5} and  Melin--Sj\"ostrand equivalence of phase functions \cite{MeSj74}*{Definition 4.1 \& Theorem 4.2}, we may assume that
		\begin{align}
		&\varphi_\mp(x,y)=\mp x_{2n+1}+g_\mp(x',y),\label{eq:varphi_mp(x,y)=x_2n+1+g(x',y)}
		\end{align}	
where $g_\mp(x',y)\in\cC(\Omega\times\Omega;\C)$, $\im g_\mp(x',y)\geq 0$ and $x'=(x_1,\cdots,x_{2n})$, so that 
\begin{equation}
S_\mp(x,y)\equiv\int_0^{+\infty}e^{it\varphi_\mp(x,y)}s^{\varphi_\mp}(x,y,t)dt, \ \ s^{\varphi_\mp}(x,y,t)\in S^n_{\rm cl}\left(\Omega\times \Omega\times\R_+,\End(T^{*0,q}X)\right).
\end{equation}
for some $s^{\varphi_\mp}(x,y,t)\in S^n_{\rm cl}\left(\Omega\times \Omega\times\R_+,\End(T^{*0,q}X)\right)$. If two Szeg\H{o} phase functions $\varphi_1^\mp$ and $\varphi_2^\mp$ 
		satisfy \eqref{eq:varphi_mp(x,y)=x_2n+1+g(x',y)}, we can apply the proof of \cite{HM17JDG}*{Theorem 5.4} and deduce that $\varphi_1^\mp-\varphi_2^\mp=O(|x-y|^{+\infty})$. 
		From now on, if we do not specify, we write $\varphi_\mp$ 
		to denote the Szeg\H{o} phase function $\varphi_\mp(x,y)$ satisfying \eqref{eq:varphi_mp(x,y)=x_2n+1+g(x',y)} up to an error of size $O(|x-y|^{+\infty})$. We also notice that $(x,y,t)=O(|x-y|^{+\infty})$  implies that $\int_0^{+\infty} e^{it\varphi_\mp(x,y)}r(x,y,t)dt\equiv 0$, and one can refer the proof to \cite{BouSj75}*{Proposition 1.11} for example.
\end{remark}
	\section{Toeplitz operators for lower energy forms}
The goal of this part is to study the Toeplitz operator $T_{P,\lambda}^{(q)}:=\Pi_\lambda^{(q)}\circ P\circ\Pi_\lambda^{(q)}$ associated by a formally self-adjoint $P\in L^1_\mathrm{cl}(X;T^{*0,q}X)$. We will first recall the notion of Fourier integral operators of Szeg\H{o} type and systematically establish the elementary spectrum results for $T_{P,\lambda}^{(q)}$.
\subsection{Fourier integral operators of Szeg\H{o} type}
From \cite{MeSj74}*{Definition 4.1 \& Theorem 4.2}, we also have the following more general class of equivalent Szeg\H{o} phase functions.
\begin{definition}
		\label{def:Szego Phase functions}
		With the same notations and assumptions in Theorem \ref{thm:Main Semi-Classical Expansion}, for $q=n_-$ and any $\Lambda\in\cC(X,\R_+)$, we let ${\rm Ph}(\mp\Lambda\bm\alpha,\Omega)$, respectively, be the set collecting all functions $\psi_
		\mp(x,y)\in\cC(\Omega\times \Omega)$ with the following effects:
		\begin{align}
		&\mbox{${\rm Im\,}\psi_
			\mp(x,y)\geq 0$},\ \ \mbox{$\psi_
			\mp(x,y)=0\iff y=x$},\ \ 
		\mbox{$d_x\psi_
			\mp(x,x)=-d_y\psi_
			\mp(x,x)=\mp\Lambda(x)\bm\alpha(x)$},
		\end{align}		
		and for $S_\mp$ of \eqref{eq:approximated Szego kernel intro} there is a symbol $s^{\psi_
			\mp}(x,y,t)\in S^n_{\rm cl}\left(\Omega\times \Omega\times\R_+,\End(T^{*0,q}X)\right)$
		with the property that
		\begin{equation}
		S_\mp(x,y)\equiv\int_0^{+\infty}e^{it\psi_
			\mp(x,y)}s^{\psi_
			\mp}(x,y,t)dt.
		\end{equation}
	\end{definition} 
 
We need the following analogue of approximated Szeg\H{o} kernels.
\begin{definition}
		\label{def:FIO Szego type}
		Following Theorem \ref{thm:Main Semi-Classical Expansion}, for $q=n_-$ we let $H:\cCc(\Omega,T^{*0,q}X)\to\cC(\Omega,T^{*0,q}X)$ be a continuous operator. For any $m\in\R$, we say that $H$ is a Fourier integral operator of Szeg\H{o} type of weight $m$ (or order $m-n$) if on $\Omega\times\Omega$ we have
		\begin{align}
		&H(x,y)\equiv H_-(x,y)+H_+(x,y),\ \ H_\mp(x,y)\equiv\int^{+\infty}_0 e^{it\varphi_\mp(x, y)} h^\mp(x, y, t)dt,
		\end{align}
		where $\varphi_\mp(x,y)\in{\rm Ph}(\mp\Lambda\bm\alpha,\Omega)$ for some $\Lambda\in\cC(X,\R_+)$ and we have the following data properly supported in $(x,y)$: $h^\mp(x,y,t)\sim\sum_{j=0}^{+\infty}h_j^\mp(x,y)t^{m-j}$ in $S^{m}_{1,0}\left(\Omega\times \Omega\times\R_+,\End(T^{*0,q}X)\right)$ and $h^+(x,y,t)=0$ if $n_-\neq n_+$. 
  
  We denote the space of Fourier integral operators of Szeg\H{o} type of weight $m$ by $I_\Sigma^m(\Omega;T^{*0,q}X)$ and $I_\Sigma(\Omega;T^{*0,q}X):=\bigcup_{m\in\R}I^m_\Sigma(\Omega;T^{*0,q}X)$. 
	\end{definition}

By the classical formula \cite{BouSj75}*{(1.6)}, the following formula for $s_0^{\psi_
		\mp}(x,x)$ for different choice of Szeg\H{o} phase functions is known, cf.~\cite{HHMS23}*{Theorem 2.13} for example.
	\begin{theorem}
		\label{thm:s_0^psi(x,x)}
		In the situation of Definition \ref{def:Szego Phase functions}, for any $\psi_\mp\in {\rm Ph}(\mp\Lambda\bm\alpha,\Omega)$ we have the transformation rule $s^{\psi_\mp}_0(x,x)=
		\Lambda(x)^{n+1}s_0^\mp(x,x)$.
	\end{theorem}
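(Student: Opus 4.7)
The plan is to reduce to the equivalence of Szegő phase functions from Theorem \ref{thm:division theorem for equivalent Szego phase function} and then to track the leading symbol through a rescaling of the fiber variable. First I would observe that the auxiliary phase $\tilde\psi_\mp(x,y):=\psi_\mp(x,y)/\Lambda(x)$ satisfies $\operatorname{Im}\tilde\psi_\mp\geq 0$, vanishes exactly on the diagonal, and has $d_x\tilde\psi_\mp(x,x)=\mp\bm\alpha(x)$, so it fulfills \eqref{eq:varphi intro}; moreover, the fiber substitution $u:=t\Lambda(x)$ in the representation of $S_\mp$ via $\psi_\mp$ shows that $\tilde\psi_\mp$ also represents $S_\mp$ modulo smoothing, with some classical symbol in $S^n_{\rm cl}$. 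Applying Theorem \ref{thm:division theorem for equivalent Szego phase function} to the pair $(\varphi_\mp,\tilde\psi_\mp)$ then yields some $f_\mp(x,y)\in\cC(\Omega\times\Omega,\C)$ with $f_\mp(x,x)\neq 0$ such that $\varphi_\mp-f_\mp\tilde\psi_\mp=O(|x-y|^{+\infty})$. Differentiating along the diagonal gives $f_\mp(x,x)=1$, hence
\begin{equation*}
\varphi_\mp=F_\mp\psi_\mp+O(|x-y|^{+\infty}),\ \ F_\mp:=f_\mp/\Lambda,\ \ F_\mp(x,x)=1/\Lambda(x).
\end{equation*}

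Second, I would substitute this equivalence into
\begin{equation*}
S_\mp(x,y)\equiv\int_0^{+\infty} e^{it\varphi_\mp(x,y)} s^\mp(x,y,t)\,dt.
\end{equation*}
By the standard fact that an $O(|x-y|^{+\infty})$ perturbation of a Szegő-type phase only alters the kernel by a smoothing operator (cf.~\cite{BouSj75}*{Proposition 1.11} and the last paragraph of Remark \ref{rmk:varphi(x,y)=x_2n+1+g(x',y)}), one may replace $\varphi_\mp$ by $F_\mp\psi_\mp$ modulo smoothing. Since $F_\mp(x,x)=1/\Lambda(x)>0$, the function $F_\mp$ has positive real part near the diagonal, while off-diagonal contributions are already smoothing by Theorem \ref{thm:WF of Szego projection on lower energy forms}. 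The fiber change of variable $s:=tF_\mp(x,y)$, realized by a contour deformation when $F_\mp$ is complex-valued, therefore gives
\begin{equation*}
S_\mp(x,y)\equiv\int_0^{+\infty} e^{is\psi_\mp(x,y)}\,\frac{1}{F_\mp(x,y)}\, s^\mp\!\left(x,y,\frac{s}{F_\mp(x,y)}\right)ds.
\end{equation*}

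Third, comparing with $S_\mp(x,y)\equiv\int_0^{+\infty} e^{is\psi_\mp(x,y)} s^{\psi_\mp}(x,y,s)\,ds$ and invoking the uniqueness of the classical symbol associated with a fixed Szegő-type phase function (a division lemma of the type behind \cite{BouSj75}*{(1.6)}), one obtains the asymptotic identity
\begin{equation*}
s^{\psi_\mp}(x,y,s)\sim\frac{1}{F_\mp(x,y)}\, s^\mp\!\left(x,y,\frac{s}{F_\mp(x,y)}\right)\ \ \text{in } S^n_{\rm cl}.
\end{equation*}
Expanding both sides as classical symbols of order $n$ in $s$ and equating the leading coefficients gives $s_0^{\psi_\mp}(x,y)=F_\mp(x,y)^{-(n+1)}\,s_0^\mp(x,y)$; restricting to $x=y$ and substituting $F_\mp(x,x)=1/\Lambda(x)$ yields the desired $s_0^{\psi_\mp}(x,x)=\Lambda(x)^{n+1}s_0^\mp(x,x)$.

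The main technical obstacle is the rigorous justification of the fiber change of variable when $F_\mp$ is only approximately real and $\psi_\mp$ is complex-valued: one must realize the substitution $s=tF_\mp(x,y)$ as a contour deformation within an almost analytic extension of $F_\mp$, and verify that the resulting oscillatory integrals retain their classical symbol structure in $S^n_{\rm cl}$ after the nonlinear reparametrization of the fiber. This is precisely the stationary-phase-style manipulation underlying \cite{BouSj75}*{(1.6)} and its extensions, which rests on the positivity of $\operatorname{Im}\psi_\mp$ away from the diagonal together with the non-vanishing of $F_\mp$ near it.
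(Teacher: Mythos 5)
Your proof is correct in substance and in fact reconstructs the classical symbolic-transformation argument from Boutet de Monvel--Sj\"ostrand (1.6) that the paper cites without proof; the normalization $\tilde\psi_\mp:=\psi_\mp/\Lambda$, the application of Theorem~\ref{thm:division theorem for equivalent Szego phase function} to obtain $\varphi_\mp=F_\mp\psi_\mp+O(|x-y|^{+\infty})$ with $F_\mp(x,x)=\Lambda(x)^{-1}$, the fiber change of variable, and the identification of the leading coefficient all go through.

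Two remarks on where the write-up is slightly looser than it should be. First, when you say ``one may replace $\varphi_\mp$ by $F_\mp\psi_\mp$ modulo smoothing'' by appealing to the last sentence of Remark~\ref{rmk:varphi(x,y)=x_2n+1+g(x',y)}, that remark concerns symbols vanishing to infinite order at the diagonal, not phase functions; the justification for changing the phase requires an additional Taylor-expansion argument in $h:=\varphi_\mp-F_\mp\psi_\mp$, using that each term $(ith)^k/k!\cdot s^\mp$ is again a classical symbol vanishing to infinite order at the diagonal together with the quadratic lower bound \eqref{eq:Im varphi>C|z-w|^2} on $\operatorname{Im}\psi_\mp$ to control the remainder, in the usual Melin--Sj\"ostrand way. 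Second, off the diagonal the leading coefficient of a Szeg\H{o}-type symbol is only determined modulo terms $O(|x-y|^{+\infty})$, so the displayed off-diagonal identity $s_0^{\psi_\mp}(x,y)=F_\mp(x,y)^{-(n+1)}s_0^\mp(x,y)$ should be read modulo $O(|x-y|^{+\infty})$; this is harmless because the theorem only concerns the restriction to $x=y$, where $F_\mp$ is real positive, uniqueness is exact, and no contour deformation is required, yielding the claimed $\Lambda(x)^{n+1}s_0^\mp(x,x)$.
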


We will later frequently apply the following variant of \cite{GaHs23}*{Lemma 4.1}.
	\begin{theorem}
		\label{thm:division theorem of principal symbol for Szego FIO}   
		In the situation of Definition \ref{def:FIO Szego type}, we consider the operator $H\in I^m_\Sigma(\Omega;T^{*0,q}X)$ and we assume that
		\begin{align}
		& H\equiv (S_-+S_+)\circ H\equiv H\circ (S_-+S_+)\ \ \text{on $\Omega$},\label{eq:assumption microlocal self adjoint by S}\\
		&  \tau_x^{n_-}h_0^-(x,x)\tau_x^{n_-}=0,\ \ \text{$\forall x\in\Omega$},\label{eq:h_0 vanishes at I_0}\\
		&  \tau_x^{n_+} h_0^+(x,x)\tau_x^{n_+}=0\ \ \text{additionally when $n_-=n_+$,\ \ $\forall x\in\Omega$}.\label{eq:h_0 vanishes at J_0}
		\end{align}
		If we write $h_0^\mp(x,y)={\sum_{|{\bf I}|=|{\bf J}|=q}} h^\mp_{\bf I, J}(x,y) \omega_{\bf I}^\wedge(x)\otimes \omega_{\bf J}^{\wedge,*}(y)$ in the strictly increasing index sets, cf.~\eqref{eq:system of the principal symbol intro}, then we have some $\rho^\mp_{\bf I, J}(x,y)\in\cC(\Omega\times\Omega)$ such that
		\begin{equation}
		h^\mp_{\bf I, J}(x,y)-\rho^\mp_{\bf I, J}(x,y) \varphi_\mp(x,y)=O(|x-y|^{+\infty}).
		\end{equation}
	\end{theorem}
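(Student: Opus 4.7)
The strategy is to extract the divisibility from the composition identity \eqref{eq:assumption microlocal self adjoint by S} using complex stationary phase, and then combine it with the vanishing hypotheses via a Weierstrass-type formal division. I would work in a local coordinate patch from Theorem~\ref{thm:tangential Hessian of phase functions varphi_-} and fix Szeg\H{o} phase representatives of the special form $\varphi_\mp(x,y) = \mp x_{2n+1} + g_\mp(x', y)$ provided by Remark~\ref{rmk:varphi(x,y)=x_2n+1+g(x',y)}, so that formal Weierstrass preparation against $\varphi_\mp$ in the variable $x_{2n+1}$ is available at every diagonal point.

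The first main step is to compute the symbols of $S_\mp \circ H$ and $H \circ S_\mp$ as Fourier integral operators of Szeg\H{o} type. By a stationary phase argument of Melin--Sj\"ostrand applied to the composed oscillatory integrals---using that $\im \varphi_\mp \geq 0$ with the quadratic lower bound \eqref{eq:Im varphi>C|z-w|^2}---the cross terms of the form $S_\mp \circ H_\pm$ (with opposite signs) are smoothing, while the surviving compositions carry the phase $\varphi_\mp$. Comparing principal symbols on a common phase representation and using the normalization of $s_0^\mp(x,x)$ from \eqref{eq:s_0^- intro}--\eqref{eq:s_0^+ intro}, the assumption \eqref{eq:assumption microlocal self adjoint by S} forces
\[
h_0^\mp(x,y) - \tau_x^{n_\mp}\, h_0^\mp(x,y)\, \tau_y^{n_\mp}\ \equiv\ 0 \quad \bmod\ \varphi_\mp\cdot \cC(\Omega\times\Omega) + O(|x-y|^{+\infty}).
\]

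Since $\mathcal{N}_x^{n_\mp}$ is one-dimensional, the surviving rank-one core $\tau_x^{n_\mp}\, h_0^\mp(x,y)\, \tau_y^{n_\mp}$ takes the form $c^\mp(x,y)\, \omega_{I_0}^\wedge(x)\otimes \omega_{I_0}^{\wedge,*}(y)$ (resp.\ with $J_0$) for a single scalar function $c^\mp \in \cC(\Omega\times\Omega)$. The hypothesis \eqref{eq:h_0 vanishes at I_0} (resp.~\eqref{eq:h_0 vanishes at J_0}) gives $c^\mp(x,x) = 0$. Iterating the identity $H \equiv S_\mp \circ H \circ S_\mp$ to track subprincipal contributions shows that the full Taylor expansion of $c^\mp$ at each diagonal point lies in the ideal generated by the Taylor expansion of $\varphi_\mp$; a Weierstrass-type division in the variable $x_{2n+1}$ followed by Borel summation across the diagonal then yields $\tilde c^\mp \in \cC(\Omega\times\Omega)$ with $c^\mp(x,y) - \tilde c^\mp(x,y)\varphi_\mp(x,y) = O(|x-y|^{+\infty})$. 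Combining with the previous step and expanding $h_0^\mp$ in the basis $\{\omega_I^\wedge(x)\otimes \omega_J^{\wedge,*}(y)\}$ then delivers the required smooth coefficients $\rho^\mp_{I,J}$.

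The main obstacle is the symbolic calculation in the first step together with the verification that the full Taylor expansion of the rank-one core $c^\mp$ lies in the formal ideal $(\varphi_\mp)$ at every diagonal point---not merely that $c^\mp$ vanishes on the diagonal, which is a much weaker condition. This hinges on a careful iteration of the composition formula to cancel all normal derivatives to the formal hypersurface $\{\varphi_\mp = 0\}$, following the approach of \cite{GaHs23}*{Lemma 4.1} adapted to the Levi non-degenerate regime via the microlocal structure of $\Pi_\lambda^{(q)}$ recorded in Theorem~\ref{thm:Hsiao-Marinescu 17 LEF}.
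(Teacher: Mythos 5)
Your plan correctly identifies the high-level structure (localize, reduce to the rank-one core via the reproducing identity, divide by $\varphi_\mp$), but the step that actually produces the divisibility is asserted rather than proved, and I do not think it works as written. The paper itself offers no proof and cites this as a variant of \cite{GaHs23}*{Lemma 4.1}, so the key mechanism must be supplied.

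The decisive gap is in the passage from $c^\mp(x,x)=0$ to divisibility by $\varphi_\mp$. You write that "iterating the identity $H\equiv S_\mp\circ H\circ S_\mp$ to track subprincipal contributions shows that the full Taylor expansion of $c^\mp$ at each diagonal point lies in the ideal generated by the Taylor expansion of $\varphi_\mp$." Literal iteration is vacuous here: $S_-+S_+$ is idempotent modulo smoothing, so $H\equiv S H S\equiv SSHSS\equiv\cdots$ produces no new constraint on the jets of $h_0^\mp$. Tracking subprincipal contributions of the Melin--Sj\"ostrand composition relates the lower-order coefficients $h_1^\mp,h_2^\mp,\dots$ to $h_0^\mp$, but it does not by itself control the transverse Taylor expansion of $h_0^\mp$ off the diagonal. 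And the vanishing $c^\mp(x,x)=0$ alone is a codimension-$(2n+1)$ condition that does not force membership in the phase ideal modulo $O(|x-y|^{+\infty})$ — compare $x_1-y_1$, which vanishes on the diagonal but is not divisible by $\varphi_\mp$. Your step asserting $h_0^\mp(x,y)-\tau_x^{n_\mp}h_0^\mp(x,y)\tau_y^{n_\mp}\equiv 0\bmod(\varphi_\mp)+O(|x-y|^{+\infty})$ also misrepresents the composite: the leading symbol of $S_\mp H S_\mp$ off the diagonal is a stationary-phase evaluation at complex critical points, not the pointwise two-sided projection of the given $h_0^\mp$.

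The missing ingredient is that hypothesis \eqref{eq:h_0 vanishes at I_0} is a vanishing condition on a smooth function on the real manifold $\Omega$ (namely $x\mapsto\tau_x^{n_\mp}h_0^\mp(x,x)\tau_x^{n_\mp}$), not merely on the diagonal of $\Omega\times\Omega$. The Melin--Sj\"ostrand composite leading symbol of $S_\mp\circ H\circ S_\mp$ factors through an almost-analytic extension of this one-variable reduced symbol, evaluated at a complex critical point lying on the complexified Szeg\H{o} Lagrangian; because $\operatorname{Im}\varphi_\mp$ is positive with the quadratic lower bound \eqref{eq:Im varphi>C|z-w|^2}, the imaginary part of that critical point is $O(|x-y|)$. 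When the reduced symbol vanishes identically on $\Omega$, its almost-analytic extension is $O(|\operatorname{Im}\cdot|^{+\infty})$, so the composite leading symbol is $O(|x-y|^{+\infty})$. Feeding this into $h_0^\mp-\widehat h_0^\mp\in(\varphi_\mp)+O(|x-y|^{+\infty})$ (the uniqueness of Szeg\H{o}-type symbols modulo the phase ideal applied to $H-SHS\equiv 0$, which replaces your imprecise step 3) then yields the claimed divisibility for every component $h^\mp_{I,J}$. Your Weierstrass-plus-Borel endgame is fine once this input is in place, but without it the proof does not close.
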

\subsection{Microlocal analysis of Toeplitz operators}
We have the following microloal structure of Toeplitz operators on lower energy forms, which can be deduced from  Melin--Sj\"ostrand complex stationary phase formula \cite{MeSj74}*{Theorem 2.3 \& p.~156}, and Theorems \ref{thm:Hsiao-Marinescu 17 LEF} and \ref{thm:WF of Szego projection on lower energy forms}. We also refer to \cite{GaHs23}*{Theorem 4.4} for the calculation.
	\begin{theorem}
		\label{thm:BdM-Sj for Toeplitz operators}
		With the same notations and assumptions of Theorem \ref{thm:Main Semi-Classical Expansion}, for $q=n_-$ the operator $T_{P,\lambda}^{(q)}$ is the sum of Fourier integral operators: on $\Omega$ we have $T_{P,\lambda}^{(q)}=T_{\varphi_-}+T_{\varphi_+}+F$, where $F:\mathscr E'(\Omega,T^{*0,q}X)\to \cC(X,T^{*0,q}X)$ is continuous and the Schwartz kernel
  \begin{equation}
      T_{\varphi_\mp}(x,y)=\int_0^{+\infty} e^{it\varphi_\mp(x,y)} t a^\mp(x,y,t)dt\label{eq:Toeplitz FIO}
  \end{equation}
is given by the Szeg\H{o} phase function $\varphi_\mp(x,y)$ in Remark \ref{rmk:varphi(x,y)=x_2n+1+g(x',y)} and the symbol 
\begin{equation}
  \text{$a^\mp(x,y,t)\sim\sum^{+\infty}_{j=0}a_j^\mp(x,y)t^{n-j}$
			in $S^{n}_{1,0}(\Omega\times \Omega\times\mathbb R_+,\End(T^{*0,q}X))$}.
\end{equation}
In fact, $a^\mp(x,y,t)$ is properly supported in the variables $(x,y)$ and we have $a_0^+(x,y,t)=0$ when $n_-\neq n_+$. Moreover, we have
		\begin{align}
		&{a_0^-(x,x)=\frac{\abs{\det\mathcal{L}_{x}}}{2\pi^{n+1}}\frac{v(x)}{m(x)}\tau_x^{n_-}p_0({-\bm\alpha}_x)\tau_x^{n _-}},
		\end{align}
and when $n_-=n_+$ we also have
\begin{equation}
    a_0^+(x,x)=\frac{\abs{\det\mathcal{L}_{x}}}{2\pi^{n+1}}\frac{v(x)}{m(x)}\tau_x^{n_+}p_0({\bm\alpha}_x)\tau_x^{n _+}.
\end{equation}
	\end{theorem}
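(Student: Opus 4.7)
The approach is to insert the microlocal decomposition $\Pi_\lambda^{(q)}\equiv S_-+S_+$ from Theorem \ref{thm:Hsiao-Marinescu 17 LEF} into the sandwich defining $T_{P,\lambda}^{(q)}$ and then analyse the resulting compositions via the complex stationary phase formula of Melin--Sj\"ostrand. Using that $P:\cC(X,T^{*0,q}X)\to\cC(X,T^{*0,q}X)$ is continuous, we obtain on $\Omega\times\Omega$ that
\begin{equation}
T_{P,\lambda}^{(q)}\equiv \sum_{\varepsilon_1,\varepsilon_2\in\{-,+\}} S_{\varepsilon_1}\circ P\circ S_{\varepsilon_2},
\end{equation}
where $S_+\equiv 0$ when $n_-\neq n_+$. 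In the case $n_-=n_+$, I would dispense with the cross terms $S_\mp\circ P\circ S_\pm$ by a wavefront argument: by Theorem \ref{thm:WF of Szego projection on lower energy forms} the wavefront set of $S_\mp$ is contained in $\{(x,\eta,x,-\eta):(x,\eta)\in\Sigma^\mp\}$, and $P$ preserves wavefront since it is pseudodifferential. Hence the wavefront composition for $S_\mp\circ P\circ S_\pm$ forces the fibre variable to lie in $\Sigma^-\cap\Sigma^+=\emptyset$, so these terms are smoothing and can be absorbed into the remainder $F$. This reduces the theorem to the diagonal compositions $S_\mp\circ P\circ S_\mp$.

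\textbf{Composition calculus for the diagonal terms.} To analyse $S_\mp\circ P\circ S_\mp$, I would first apply $P$ under the oscillatory integral \eqref{eq:approximated Szego kernel intro} using the pseudodifferential calculus with complex phase of Melin--Sj\"ostrand, yielding
\begin{equation}
(P\circ S_\mp)(x,y)\equiv \int_0^{+\infty} e^{it\varphi_\mp(x,y)}\,\widetilde a^\mp(x,y,t)\,dt,
\end{equation}
where $\widetilde a^\mp$ is classical of order $n+1$ in $t$ and its diagonal leading term is $t\cdot p_0(x,\mp\bm\alpha_x)s_0^\mp(x,x)$, since $d_x\varphi_\mp(x,x)=\mp\bm\alpha_x$ and $p_0$ is homogeneous of degree one in $\eta$; the extra power of $t$ accounts for the order shift from $n$ to $n+1$. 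I would then compose $S_\mp$ on the left, writing the resulting triple integral and introducing $s=t+t'\in\R_+$, $\sigma=t/s\in(0,1)$. Applying complex stationary phase in the variables $(w,\sigma)$, using the positivity \eqref{eq:Im varphi>C|z-w|^2} and the fact that the two phases $\varphi_\mp(x,w)$ and $\varphi_\mp(w,y)$ add to produce a phase equivalent to $\varphi_\mp(x,y)$ modulo $O(|x-y|^{+\infty})$ (cf.\ Theorem \ref{thm:division theorem for equivalent Szego phase function} and Remark \ref{rmk:varphi(x,y)=x_2n+1+g(x',y)}), the critical set in $(w,\sigma)$ is a smooth non-degenerate one and one arrives at an oscillatory integral of the form \eqref{eq:Toeplitz FIO} with a classical symbol $a^\mp\in S^n_{1,0}(\Omega\times\Omega\times\R_+,\End(T^{*0,q}X))$ that is properly supported in $(x,y)$.

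\textbf{Leading symbol and main obstacle.} The diagonal value $a_0^\mp(x,x)$ is read off from the stationary phase expansion evaluated at the critical point $w=x$: the Gaussian integration produces the constant $\frac{|\det\mathcal{L}_x|}{2\pi^{n+1}}\frac{v(x)}{m(x)}$ from the transversal Hessian (compatibly with \eqref{eq:s_0^- intro}), while the two occurrences of $s_0^\mp$ coming from the left and right copies of $S_\mp$ contribute the projection $\tau_x^{n_\mp}$ on each side of the $\End(T^{*0,q}X)$-valued factor $p_0(x,\mp\bm\alpha_x)$; this yields exactly
\begin{equation}
a_0^\mp(x,x)=\frac{|\det\mathcal{L}_x|}{2\pi^{n+1}}\frac{v(x)}{m(x)}\,\tau_x^{n_\mp}\,p_0(x,\mp\bm\alpha_x)\,\tau_x^{n_\mp}.
\end{equation}
The main technical obstacle is the bookkeeping in the complex stationary phase step: one must track the phase-additivity for Szeg\H{o}-type phases modulo $O(|x-y|^{+\infty})$, justify the regularisation of the $t,t'$-integrals near zero via the $\epsilon$-prescription recalled after Theorem \ref{thm:Hsiao-Marinescu 17 LEF}, and verify that the $\tau^{n_\mp}$-projections arising from the two Szeg\H{o} factors genuinely sandwich the $\End(T^{*0,q}X)$-valued principal symbol $p_0$ without scrambling the exterior-algebra components.
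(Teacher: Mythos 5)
Your proposal is correct and unpacks exactly the argument the paper references for this theorem (the paper itself only cites Melin--Sj\"ostrand, Theorems \ref{thm:Hsiao-Marinescu 17 LEF} and \ref{thm:WF of Szego projection on lower energy forms}, and the parallel computation in \cite{GaHs23}*{Theorem 4.4} without spelling it out). The overall structure --- insert $\Pi_\lambda^{(q)}\equiv S_-+S_+$, discard the cross terms $S_\mp\circ P\circ S_\pm$ via $\Sigma^-\cap\Sigma^+=\emptyset$, and apply complex stationary phase to the diagonal compositions $S_\mp\circ P\circ S_\mp$ --- is the intended route.

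Two small points of bookkeeping. First, the wavefront inclusion for $S_\mp$ individually should be read off from the oscillatory-integral representation \eqref{eq:approximated Szego kernel intro} together with the phase properties \eqref{eq:varphi intro}, not from Theorem \ref{thm:WF of Szego projection on lower energy forms}, which addresses $\Pi_\lambda^{(q)}$ as a whole and for $n_-=n_+$ only locates the wavefront in the union $\Sigma=\Sigma^-\cup\Sigma^+$ rather than in each $\Sigma^\mp$ separately. Second, in the leading-coefficient computation the Gaussian factor coming out of the stationary-phase integral over $(w,\sigma)$ is $c^{-1}$, not $c$, where $c=\frac{|\det\mathcal L_x|}{2\pi^{n+1}}\frac{v(x)}{m(x)}$; combining it with the two copies of $s_0^\mp(x,x)=c\,\tau_x^{n_\mp}$ yields $c^2\tau_x^{n_\mp}p_0(\mp\bm\alpha_x)\tau_x^{n_\mp}\cdot c^{-1}=c\,\tau_x^{n_\mp}p_0(\mp\bm\alpha_x)\tau_x^{n_\mp}$, which is the stated value. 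The idempotence $S_\mp\circ S_\mp\equiv S_\mp$ (the case $P=I$) is the cleanest way to pin down this normalization without recomputing the Hessian determinant. Neither point affects the validity of the overall argument.
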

	We notice that the continuity properties of $P$ and $\Pi_\lambda^{(q)}$ on Sobolev spaces imply that
	$T_{P,\lambda}^{(q)}$ is also a bounded operator between the Sobolev spaces
	$H^{s+1}_{0,q}(X)$ and $H^s_{0,q}(X)$, where $H^s_{0,q}(X):=H^s(X,T^{*0,q}X)$, for all $s\in\mathbb{R}$. We denote this fact by 
	\begin{equation}
	\label{eq:Sobolev boundedness of T_P}
T_{P,\lambda}^{(q)}=O(1)~\text{in}~\mathscr{L}(H^{s+1}_{0,q}(X),H^s_{0,q}(X)),~\forall~s\in\R.
	\end{equation} 

 	Moreover, we still have parametrix type theorem for Levi-elliptic Toeplitz operators although they are not necessarily defined by elliptic pseudodifferential operators.
	\begin{theorem}
		\label{thm:parametetrix of Toeplitz operators for lower energy forms}
		For the same notations and assumptions in Theorem \ref{thm:Main Semi-Classical Expansion} and $q=n_-$, we can always find a formally self-adjoint $Q\in L^{-1}_{\rm cl}(X;T^{*0,q}X)$ such that
		\begin{equation}
		T_{Q,\lambda}^{(q)}\circ T_{P,\lambda}^{(q)}\equiv T_{P,\lambda}^{(q)}\circ T_{Q,\lambda}^{(q)}\equiv \Pi_\lambda^{(q)}~\text{on}~X.
		\end{equation}
	\end{theorem}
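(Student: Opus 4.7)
The plan is to construct $Q$ iteratively, using the Levi-ellipticity of $P$ to produce the leading-order symbol and Theorem~\ref{thm:division theorem of principal symbol for Szego FIO} to reduce the weight of the error at each step. Throughout I treat the nontrivial case $q=n_-$; for $q\notin\{n_-,n_+\}$ the statement is vacuous since $\Pi_\lambda^{(q)}\equiv 0$ by Theorem~\ref{thm:WF of Szego projection on lower energy forms}.

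\emph{Leading order.} The Levi-ellipticity condition \eqref{eq:Levi elliptic n_- intro} says $\tau_x^{n_-}p_0(x,-\bm\alpha_x)\tau_x^{n_-}$ is a strictly positive endomorphism of the one-dimensional fiber $\mathcal{N}_x^{n_-}$, hence invertible there; when $n_-=n_+$, condition \eqref{eq:Levi elliptic n_+ intro} gives the analogous invertibility on $\mathcal{N}_x^{n_+}$ at $\bm\alpha_x$. I would therefore pick $q_0(x,\eta)\in\cC(T^*X\setminus 0,\End(T^{*0,q}X))$, positively homogeneous of degree $-1$ in $\eta$ and pointwise self-adjoint, such that $\tau_x^{n_-}q_0(x,-\bm\alpha_x)p_0(x,-\bm\alpha_x)\tau_x^{n_-}=\tau_x^{n_-}$, and, when $n_-=n_+$, also $\tau_x^{n_+}q_0(x,\bm\alpha_x)p_0(x,\bm\alpha_x)\tau_x^{n_+}=\tau_x^{n_+}$. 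Via a partition of unity on the compact $X$, realize $q_0$ as the principal symbol of a formally self-adjoint $Q_0\in L^{-1}_{\rm cl}(X;T^{*0,q}X)$ (symmetrizing if needed, which does not alter $q_0$).

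\emph{Composition and iteration.} Applying Theorem~\ref{thm:BdM-Sj for Toeplitz operators} (and its evident analogue for $Q_0\in L^{-1}_{\rm cl}$), on each coordinate patch $\Omega$ the operators $T_{Q_0,\lambda}^{(q)}$ and $T_{P,\lambda}^{(q)}$ lie in $I_\Sigma^{n-1}(\Omega;T^{*0,q}X)$ and $I_\Sigma^{n+1}(\Omega;T^{*0,q}X)$ respectively. Melin--Sj\"ostrand complex stationary phase then gives $T_{Q_0,\lambda}^{(q)}\circ T_{P,\lambda}^{(q)}\in I_\Sigma^{n}(\Omega;T^{*0,q}X)$, whose leading diagonal symbols agree with those of $\Pi_\lambda^{(q)}$ along $\mathcal{N}_x^{n_\mp}$ by the choice of $q_0$ and the explicit formulas for $a_0^\mp(x,x)$ and $s_0^\mp(x,x)$ (Theorems~\ref{thm:BdM-Sj for Toeplitz operators} and~\ref{thm:s_0^psi(x,x)}). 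Thus
\[
R_0:=T_{Q_0,\lambda}^{(q)}\circ T_{P,\lambda}^{(q)}-\Pi_\lambda^{(q)}\in I_\Sigma^{n}(\Omega;T^{*0,q}X)
\]
has top symbols that vanish in the Szeg\H{o} directions; Theorem~\ref{thm:division theorem of principal symbol for Szego FIO} then factors them as $\varphi_\mp(x,y)\cdot\rho_\mp(x,y)$ modulo $O(|x-y|^{+\infty})$, and integration by parts in $t$ using $\varphi_\mp e^{it\varphi_\mp}=-i\partial_t e^{it\varphi_\mp}$ shows $R_0\in I_\Sigma^{n-1}$. Next I would choose a formally self-adjoint $Q_1\in L^{-2}_{\rm cl}(X;T^{*0,q}X)$ whose symbol at $\mp\bm\alpha_x$ on $\mathcal{N}_x^{n_\mp}$ is again determined by inversion against $p_0$, so that $T_{Q_1,\lambda}^{(q)}\circ T_{P,\lambda}^{(q)}$ cancels the top symbol of $R_0$. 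Iterating this procedure produces $Q_j\in L^{-(j+1)}_{\rm cl}(X;T^{*0,q}X)$, formally self-adjoint, and an asymptotic sum $Q\sim\sum_{j\geq 0}Q_j$ in $L^{-1}_{\rm cl}$ followed by symmetrization yields $T_{Q,\lambda}^{(q)}\circ T_{P,\lambda}^{(q)}\equiv\Pi_\lambda^{(q)}$ on $X$. Taking formal adjoints, using that $T_{Q,\lambda}^{(q)}$, $T_{P,\lambda}^{(q)}$ and $\Pi_\lambda^{(q)}$ are all formally self-adjoint, delivers the companion identity $T_{P,\lambda}^{(q)}\circ T_{Q,\lambda}^{(q)}\equiv\Pi_\lambda^{(q)}$.

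\emph{Main obstacle.} The technical heart is verifying that the hypothesis \eqref{eq:assumption microlocal self adjoint by S} of Theorem~\ref{thm:division theorem of principal symbol for Szego FIO} holds for the running remainder at every iteration (which follows from $\Pi_\lambda^{(q)}\circ\Pi_\lambda^{(q)}\equiv\Pi_\lambda^{(q)}$ and the fact that each correction is itself a Toeplitz operator, hence absorbs $\Pi_\lambda^{(q)}$ on either side) and that the division by $\varphi_\mp$ genuinely drops the weight by one at each step, so that the scheme closes asymptotically. Self-adjointness is preserved because the obstruction to be inverted at each step is always a self-adjoint endomorphism of the one-dimensional fibers $\mathcal{N}_x^{n_\mp}$, and the positive/negative definiteness furnished by the Levi-ellipticity persists throughout the recursion.
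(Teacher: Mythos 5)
Your overall strategy --- invert the Levi-elliptic principal symbol along the $\mathcal N^{n_\mp}$ directions, compose via Melin--Sj\"ostrand stationary phase, divide the vanishing leading obstruction by $\varphi_\mp$ through Theorem~\ref{thm:division theorem of principal symbol for Szego FIO}, and iterate --- is the same as the paper's, and the weight bookkeeping, the use of Theorem~\ref{thm:BdM-Sj for Toeplitz operators}, and the verification that \eqref{eq:assumption microlocal self adjoint by S} holds at each step are all correctly identified. The one genuine gap is the claim, in your last paragraph, that ``the obstruction to be inverted at each step is always a self-adjoint endomorphism of the one-dimensional fibers $\mathcal N_x^{n_\mp}$'', which you use to choose each correction $Q_j$ formally self-adjoint and thereby deduce the right-parametrix identity by taking formal adjoints. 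This is only automatic at leading order, where $p_{I_0,I_0}(-\bm\alpha_x)>0$ is a positive real number. At step $j\geq 1$ the obstruction is the leading coefficient of $T_{Q_{<j},\lambda}^{(q)}\circ T_{P,\lambda}^{(q)}-\Pi_\lambda^{(q)}$ restricted to $\mathcal N^{n_\mp}$; for that to be Hermitian you would need $T_{Q_{<j},\lambda}^{(q)}\circ T_{P,\lambda}^{(q)}\equiv T_{P,\lambda}^{(q)}\circ T_{Q_{<j},\lambda}^{(q)}$ (to the relevant order), which is exactly the two-sided statement you are trying to establish --- so the argument as written is circular, and indeed the lower-order composition terms generically spoil Hermiticity. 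The adjoint trick you invoke at the end (``$T_P T_Q\equiv\Pi$ follows from $T_Q T_P\equiv\Pi$ by taking $^*$'') is only valid once $Q$ is already known to be formally self-adjoint, which is precisely what has not been secured.

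The paper sidesteps this by dropping the self-adjointness requirement during the recursion: it constructs, separately, a left parametrix $L$ with $T_{L,\lambda}^{(q)}\circ T_{P,\lambda}^{(q)}\equiv\Pi_\lambda^{(q)}$ and a right parametrix $R$ with $T_{P,\lambda}^{(q)}\circ T_{R,\lambda}^{(q)}\equiv\Pi_\lambda^{(q)}$, and then uses the algebraic identity
\begin{equation}
T_{L,\lambda}^{(q)}=T_{L,\lambda}^{(q)}\circ\Pi_\lambda^{(q)}
\equiv T_{L,\lambda}^{(q)}\circ\bigl(T_{P,\lambda}^{(q)}\circ T_{R,\lambda}^{(q)}\bigr)
\equiv\bigl(T_{L,\lambda}^{(q)}\circ T_{P,\lambda}^{(q)}\bigr)\circ T_{R,\lambda}^{(q)}
\equiv\Pi_\lambda^{(q)}\circ T_{R,\lambda}^{(q)}=T_{R,\lambda}^{(q)}
\end{equation}
to conclude that $L$ is already two-sided; only then is $Q:=\tfrac12(L+L^*)$ taken, which leaves both identities intact precisely because $L$ satisfies both. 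Your construction is easily repaired by adopting this two-sided/symmetrize-at-the-end scheme in place of the per-step self-adjointness claim; the rest of the argument then goes through as you describe.
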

	\begin{proof}
First of all, we notice that if we can find some $Q\in L^{-1}_{\rm cl}(X;T^{*0,q}X)$ such that $T_{Q,\lambda}^{(q)}\circ T_{P,\lambda}^{(q)}\equiv T_{P,\lambda}^{(q)}\circ T_{Q,\lambda}^{(q)}\equiv \Pi_\lambda^{(q)}$, then we can replace $Q$ be $\frac{1}{2}(Q+Q^*)$ which is clearly formally self-adjoint.

For the generality of our argument, we demonstrate the case $n_-=n_+$, and the case $n_-\neq n_+$ follows from the same argument with some minor change. In the following we always use the convention \eqref{eq:system of the principal symbol intro}. When $q=n_-$, we have $p_{I_0,I_0}(-\bm\alpha_x)>0$ and we can find a conic neighborhood $\mathcal{C}_1^-$ of $\Sigma^-$ such that $p_{I_0,I_0}(-\bm\alpha_x)>$ on the closure of $\mathcal{C}_1^-$. We take a function $\rho(x,\eta)\in\cC(T^*X)$ that $\rho$ vanishes for small $|\eta|$, $\rho$ is positively homogeneous in $\eta$ of degree zero when $|\eta|\geq 1$, $\rho$ equals to one when $(x,\eta)$ in a conic neighborhood of $\Sigma^-$ and $\rho$ has support in the closure of $\mathcal{C}_1^-$. Then for any $r(x,\eta)\in S^{-1}_{\rm cl}(T^*X\setminus \mathcal C_1^-)$, 
		\begin{align}
		&\ell_{I_0,I_0}:=\rho p_{I_0,I_0}+(1-\rho)r\implies\ell_{I_0,I_0}\in S^{-1}_{\rm cl}(T^*X),\ \ \ell_{I_0,I_0}(-\bm\alpha_x)p_{I_0,I_0}(-\bm\alpha_x)= 1.\label{eq:q_I_0 II}
		\end{align}
		From the similar construction, we have
		\begin{align}
		&\ell_{J_0,J_0}\in S^{-1}_{\rm cl}(T^*X),\ \ \ell_{J_0,J_0}(\bm\alpha_x)p_{J_0,J_0}(\bm\alpha_x)=1.\label{eq:q_J_0 II}
		\end{align}
From the above arguments, we can find $L^{(0)}\in L^{-1}_{\rm cl}(X;T^{*0,q}X)$ with the principal symbol $\ell^{(0)}_0=\sum_{\bf I,J}'\ell_{\bf I,J}\omega_{\bf I}^\wedge\otimes \omega_{\bf J}^{\wedge,*}\in S^{-1}_{\rm cl}(T^*X,\End(T^{*0,q}X))$ such that
  \eqref{eq:q_I_0 II} and \eqref{eq:q_J_0 II} holds. Then for any coordinate patch $\Omega\subset X$, by combining Theorem \ref{thm:BdM-Sj for Toeplitz operators}, Melin--Sj\"ostrand stationary phase theorem, Theorem \ref{thm:tangential Hessian of phase functions varphi_-}, \eqref{eq:s_0^- intro}, \eqref{eq:s_0^+ intro} and Theorem \ref{thm:WF of Szego projection on lower energy forms} we can check that 
  \begin{equation}
      T_{L^{(0)},\lambda}^{(q)}\circ T_{P,\lambda}^{(q)}=I_0^- + I_0^+ + R_0\ \ \text{ on $\Omega$},
  \end{equation}
  where $R_0:\mathscr E'(\Omega,T^{*0,q}X)\to \cC(X,T^{*0,q}X)$ is continuous and $I_0^\mp\in I^n_{\Sigma}(\Omega;T^{*0,q}X)$. In fact, we have
  \begin{equation}
I_0^\mp(x,y)=\int_0^{+\infty}e^{it\varphi_\mp(x,y)}\mathscr S^\mp(x,y,t)dt,
  \end{equation}
  where  $\varphi_\mp(x,y)\in{\rm Ph}(\mp\bm\alpha,\Omega)$, $\mathscr S^\mp(x,y,t)\sim\sum_{j=0}^{+\infty}\mathscr S_j(x,y)t^{n-j}$ in $S^n_{1,0}\left(\Omega\times\Omega\times\R_+,\End(T^{*0,q}X)\right)$. If we write $\mathscr S_0^\mp(x,y)=\sum_{|{\bf I}|=|{\bf J}|=q}' \mathscr S_{\bf I, J}^\mp(x,y) \omega_{\bf I}^\wedge\otimes \omega_{\bf J}^{\wedge,*}$, then we also have
  \begin{equation}
  \mathscr S_{I_0,I_0}^-(x,x)=\mathscr S_{J_0,J_0}^+(x,x)=\frac{\abs{\det\mathcal{L}_{x}}}{2\pi^{n+1}}\frac{v(x)}{m(x)}.  
  \end{equation}
  By Theorem \ref{thm:division theorem of principal symbol for Szego FIO} and the above relations, we can deduce that $T_{L^{(0)},\lambda}^{(q)}\circ T_{P,\lambda}^{(q)}-\Pi_\lambda^{(q)}=H_0+G_0$ on $\Omega$, where $H_0\in I_\Sigma^{n-1}(\Omega;T^{*0,q}X)$ and $G_0:\mathscr E'(\Omega,T^{*0,q}X)\to\cC(X,T^{*0,q}X)$ is continuous.	Then for any $N\in\N$ and $j=1,\cdots,N-1$, with the same method we can construct $L^{(j)}\in L^{-1-j}_{\rm cl}(X;T^{*0,q}X)$ such that
		\begin{equation}
		\sum_{j=0}^{N-1} T^{(q)}_{L^{(j)},\lambda}\circ T_{P,\lambda}^{(q)}-\Pi_\lambda^{(q)}=H_N+G_N\ \ \text{on}~\Omega,
		\end{equation}
  where $H_N\in I_\Sigma^{n-N}(\Omega;T^{*0,q}X)$ and $G_N:\mathscr E'(\Omega,T^{*0,q}X)\to\cC(X,T^{*0,q}X)$ is continuous.
		We can then construct the symbol $\ell\in S^{-1}_{\rm cl}(T^*X,\End(T^{*0,q}X))$ from the asymptotic sums of the complete symbol of $L^{(j)}$, $j=0,1,\cdots$, and we can define $L\in L^{-1}_{\rm cl}(X;T^{*0,q}X)$ by the symbol $\ell$. Since the above argument holds for arbitrary $\Omega$ and $X$ is compact, combining with Theorem \ref{thm:WF of Szego projection on lower energy forms} we can check that $T^{(q)}_{L,\lambda}\circ T_{P,\lambda}^{(q)}-\Pi_\lambda^{(q)}\equiv 0~\text{on}~X$. By the same method above, we also have an $R\in L^{-1}_{\rm cl}(X;T^{*0,q}X)$ such that $T_{P,\lambda}^{(q)}\circ T_{R,\lambda}^{(q)}-\Pi_\lambda^{(q)}\equiv 0\ \ \text{on}~X$. Then we have that $T_{L,\lambda}^{(q)}= T_{L,\lambda}^{(q)}\circ \Pi_\lambda^{(q)}\equiv T_{L,\lambda}^{(q)}\circ\left( T_{P,\lambda}^{(q)}\circ T_{R,\lambda}^{(q)}\right)
		\equiv\left(T_{L,\lambda}^{(q)}\circ T_{P,\lambda}^{(q)}\right)\circ T_{R,\lambda}^{(q)}\equiv \Pi_\lambda^{(q)}\circ T_{R,\lambda}^{(q)}=T_{R,\lambda}^{(q)}$
	on $X$ and we conclude our theorem.
	\end{proof}
	We have the following type of elliptic estimates which now easily follows from Theorem \ref{thm:parametetrix of Toeplitz operators for lower energy forms}. For convenience, we denote $\mathcal H_{b,\lambda}^{(q)}(X):=\ker(I-\Pi^{(q)}_\lambda)$.
	\begin{theorem}
		\label{thm:elliptic estimate for T_P,lambda^q,q=n_-}
		Following Theorem \ref{thm:Main Semi-Classical Expansion}, for $q=n_-$ and every $s\in\R$ we have a constant $C_s>0$ such that $\|u\|_{s+1}\leq C_s\left(\|T_{P,\lambda}^{(q)} u\|_{s}+\|u\|_{s}\right)$, $\forall u\in\mathcal H_{b,\lambda}^{(q)}(X).$ In particular, given a non-zero eigenvalue $\mu\in\dot\R$ 
		of $T_{P,\lambda}^{(q)}$ and $s\in\N_0$, we have a constant
		$c_s>0$ such that 
		\begin{equation}
		\label{eq:eigenform estimates}
		\|u\|_{s}\leq c_s (1+|\mu|)^s\|u\|,~\forall u\in{\rm Ker\,}(T_{P,\lambda}^{(q)}-\mu I).
		\end{equation} 
  In other words, $\Ker(T_{P,\lambda}^{(q)}-\mu I)\subset\Omega^{0,q}(X)$ for all $\mu\in\dot\R$. Moreover,~\eqref{eq:eigenform estimates} 
		holds with \(\mu=0\) for all \(u\in \ker T_{P,\lambda}^{(q)}\cap\mathcal H_{b,\lambda}^{(q)}(X)\).
	\end{theorem}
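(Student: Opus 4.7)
The plan is to reduce everything to the parametrix identity established in Theorem~\ref{thm:parametetrix of Toeplitz operators for lower energy forms} and standard Sobolev boundedness of pseudodifferential/Toeplitz operators. Let $Q \in L^{-1}_{\rm cl}(X;T^{*0,q}X)$ be the formally self-adjoint parametrix from that theorem, so that $T_{Q,\lambda}^{(q)} \circ T_{P,\lambda}^{(q)} = \Pi_\lambda^{(q)} + R$ on $X$ for some smoothing operator $R$. Since $Q$ has order $-1$, the analogue of \eqref{eq:Sobolev boundedness of T_P} gives $T_{Q,\lambda}^{(q)} = O(1)$ in $\mathscr{L}(H^s_{0,q}(X), H^{s+1}_{0,q}(X))$ for every $s \in \R$, and $R$ is of course bounded between any pair of Sobolev spaces.

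For $u \in \mathcal H_{b,\lambda}^{(q)}(X)$ we have $\Pi_\lambda^{(q)} u = u$, hence
\begin{equation*}
u = T_{Q,\lambda}^{(q)}\bigl(T_{P,\lambda}^{(q)} u\bigr) - Ru.
\end{equation*}
Taking the $H^{s+1}$-norm and using the two boundedness statements gives
\begin{equation*}
\|u\|_{s+1} \leq \|T_{Q,\lambda}^{(q)} T_{P,\lambda}^{(q)} u\|_{s+1} + \|Ru\|_{s+1} \leq C_s\bigl(\|T_{P,\lambda}^{(q)} u\|_s + \|u\|_s\bigr),
\end{equation*}
which is the first claimed elliptic estimate.

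For the eigenform estimate, the first observation is that any $u \in \Ker(T_{P,\lambda}^{(q)} - \mu I)$ with $\mu \neq 0$ automatically lies in $\mathcal H_{b,\lambda}^{(q)}(X)$: indeed, $u = \mu^{-1} T_{P,\lambda}^{(q)} u = \mu^{-1} \Pi_\lambda^{(q)} P \Pi_\lambda^{(q)} u \in \range \Pi_\lambda^{(q)}$, so $\Pi_\lambda^{(q)} u = u$. Substituting $T_{P,\lambda}^{(q)} u = \mu u$ into the elliptic estimate yields
\begin{equation*}
\|u\|_{s+1} \leq C_s(1+|\mu|)\|u\|_s \fa s \in \R.
\end{equation*}
Iterating this inequality starting from $s = 0$ produces, for each $s \in \N_0$, a constant $c_s > 0$ with $\|u\|_s \leq c_s(1+|\mu|)^s \|u\|$, giving \eqref{eq:eigenform estimates}. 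Since $u$ lies in every Sobolev space, the Sobolev embedding theorem on the compact manifold $X$ gives $u \in \Omega^{0,q}(X)$. The case $\mu = 0$ and $u \in \ker T_{P,\lambda}^{(q)} \cap \mathcal H_{b,\lambda}^{(q)}(X)$ is even more direct: the first estimate simplifies to $\|u\|_{s+1} \leq C_s \|u\|_s$ and iteration gives the same conclusion.

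The only step requiring real care is the verification that $T_{Q,\lambda}^{(q)}$ has the Sobolev mapping degree actually read off the pseudodifferential order of $Q$; this relies on Theorem~\ref{thm:parametetrix of Toeplitz operators for lower energy forms} producing $Q$ of order $-1$ together with the Sobolev continuity of $\Pi_\lambda^{(q)}$. Apart from that, no new machinery is needed — the result is essentially a formal consequence of the parametrix construction.
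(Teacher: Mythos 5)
Your proof is correct and follows the same route the paper intends (the paper explicitly states that this theorem ``now easily follows from Theorem~\ref{thm:parametetrix of Toeplitz operators for lower energy forms}'' and leaves the details to the reader). The chain of observations — (1) $T_{Q,\lambda}^{(q)} T_{P,\lambda}^{(q)} = \Pi_\lambda^{(q)} + R$ with $R$ smoothing, (2) $T_{Q,\lambda}^{(q)}$ gains one Sobolev derivative because $Q$ has order $-1$ and $\Pi_\lambda^{(q)}$ is bounded on every $H^s$, (3) $\Pi_\lambda^{(q)} u = u$ on $\mathcal H_{b,\lambda}^{(q)}(X)$, (4) eigenvectors with $\mu \neq 0$ automatically lie in $\range\Pi_\lambda^{(q)}$, and (5) iterate the resulting one-step gain — is exactly the intended argument, and you have correctly flagged the one non-trivial input (Sobolev boundedness of $\Pi_\lambda^{(q)}$, which the paper implicitly invokes via Calder\'on--Vaillancourt when stating \eqref{eq:Sobolev boundedness of T_P}).
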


We immediately have the following self-adjoint extension of $T_{P,\lambda}^{(q)}$ in $L^2_{0,q}(X):=L^2(X,T^{*0,q}X)$.
	\begin{theorem}
		\label{thm:T_P is self-adjoint}
		In the context of Theorem \ref{thm:Main Semi-Classical Expansion}, the maximal extension $T_{P,\lambda}^{(q)}:\Dom T_{P,\lambda}^{(q)}\subset L^2_{0,q}(X)\to L^2_{0,q}(X)$, $\Dom T_{P,\lambda}^{(q)}:=\left\{u\in L^2_{0,q}(X):~T_{P,\lambda}^{(q)} u\in L^2_{0,q}(X)\right\}$, is a self-adjoint extension of $T_{P,\lambda}^{(q)}$.	In particular, we have $\Spec T_{P,\lambda}^{(q)}\subset\R$.
	\end{theorem}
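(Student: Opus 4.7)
The plan is to identify the maximal extension $T_{P,\lambda}^{(q)}$ in the statement with the Hilbert space adjoint $T_0^*$ of the restriction $T_0 := T_{P,\lambda}^{(q)}|_{\Omega^{0,q}(X)}$, and then prove essential self-adjointness of $T_0$ via the standard deficiency-index criterion $\ker(T_0^* \pm iI) = \{0\}$.

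First I would verify that $T_0$ is densely defined and symmetric in $L^2_{0,q}(X)$. Since $P$ is formally self-adjoint and $\Pi_\lambda^{(q)}$ is an orthogonal projection, for $u, v \in \Omega^{0,q}(X)$ the computation
\[
(T_0 u | v) = (P \Pi_\lambda^{(q)} u | \Pi_\lambda^{(q)} v) = (\Pi_\lambda^{(q)} u | P \Pi_\lambda^{(q)} v) = (u | T_0 v)
\]
gives symmetry, and the Schwartz kernel of $T_{P,\lambda}^{(q)}$ is therefore Hermitian. Using the Calderon--Vaillancourt estimate for the type $(\tfrac12,\tfrac12)$ operator $\Pi_\lambda^{(q)}$ together with $P \in L^1_{\rm cl}$, one sees that $T_{P,\lambda}^{(q)}$ is continuous $L^2_{0,q}(X) \to H^{-1}_{0,q}(X)$, and the Hermitian character of the kernel then identifies $\Dom T_0^* = \{u \in L^2 : T_{P,\lambda}^{(q)} u \in L^2\}$, which is the maximal extension of the statement. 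When $q \notin \{n_-, n_+\}$, Theorem \ref{thm:WF of Szego projection on lower energy forms} gives $\Pi_\lambda^{(q)} \equiv 0$, so $T_{P,\lambda}^{(q)}$ is smoothing, hence bounded, and self-adjointness follows immediately; the main case is $q = n_-$.

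For $q = n_-$, suppose $u \in L^2_{0,q}(X)$ satisfies $T_{P,\lambda}^{(q)} u = \mp iu$. Since $T_{P,\lambda}^{(q)} u \in \range \Pi_\lambda^{(q)} = \mathcal H_{b,\lambda}^{(q)}(X)$, the eigen-equation forces $\Pi_\lambda^{(q)} u = u$. The parametrix identity $T_{Q,\lambda}^{(q)} \circ T_{P,\lambda}^{(q)} \equiv \Pi_\lambda^{(q)}$ from Theorem \ref{thm:parametetrix of Toeplitz operators for lower energy forms} then yields $u = \mp i T_{Q,\lambda}^{(q)} u + Ru$ with $R$ smoothing. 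Because $T_{Q,\lambda}^{(q)}$ is built from $Q \in L^{-1}_{\rm cl}$ and $\Pi_\lambda^{(q)}$ is Sobolev-bounded on each $H^s$, iterating this identity bootstraps $u$ into $\bigcap_s H^s = \cC$. Now $u \in \Omega^{0,q}(X)$, so symmetry from the first step gives $(T_0 u | u) \in \R$, whereas the eigen-equation gives $(T_0 u | u) = \mp i \|u\|^2$; this forces $u = 0$. Hence $\ker(T_0^* \mp iI) = \{0\}$ and the standard criterion makes $T_0^* = T_{P,\lambda}^{(q)}$ self-adjoint; the inclusion $\Spec T_{P,\lambda}^{(q)} \subset \R$ follows at once.

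The main obstacle will be the Sobolev bootstrap via the parametrix. One must ensure that composing with the non-smoothing projector $\Pi_\lambda^{(q)}$ does not destroy the regularity gain of the order $-1$ pseudodifferential operator $Q$; this relies on the microlocal structure of $\Pi_\lambda^{(q)}$ from Theorem \ref{thm:Hsiao-Marinescu 17 LEF} and the Calderon--Vaillancourt estimate for its type $(\tfrac12,\tfrac12)$ representation, both of which must be kept consistent across the iterations.
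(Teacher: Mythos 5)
Your proof is correct, and it is essentially the argument the paper is pointing to when it says "follows from Theorem \ref{thm:elliptic estimate for T_P,lambda^q,q=n_-} and the standard argument of self-adjoint $L^2$-extension of elliptic and formally self-adjoint operators": you spell out that standard argument as a deficiency-index criterion, and you derive the needed regularity directly from the parametrix of Theorem \ref{thm:parametetrix of Toeplitz operators for lower energy forms} rather than quoting the elliptic estimate of Theorem \ref{thm:elliptic estimate for T_P,lambda^q,q=n_-}. Since the elliptic estimate is itself stated to follow from that parametrix, your bootstrap
\[
u = \mp i\, T_{Q,\lambda}^{(q)}u - Ru,\qquad \Pi_\lambda^{(q)}u=u,\ \ R\ \text{smoothing},
\]
together with the $H^s$-boundedness of $\Pi_\lambda^{(q)}$ (type $(\tfrac12,\tfrac12)$, Calderon--Vaillancourt) and the order $-1$ gain from $Q$, reproduces exactly the special case of that estimate you need to conclude $u\in\Omega^{0,q}(X)$. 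The reduction to $\Pi_\lambda^{(q)}u=u$ from the eigen-equation, the identification of the maximal extension with $T_0^*$ via formal self-adjointness and the continuity $T_{P,\lambda}^{(q)}:L^2\to H^{-1}$, and the treatment of $q\notin\{n_-,n_+\}$ (where $\Pi_\lambda^{(q)}$ is smoothing, hence $T_{P,\lambda}^{(q)}$ is bounded) are all correct and in line with the paper. The only cosmetic remark is that at the end you should phrase the conclusion as $T_0$ essentially self-adjoint with $\overline{T_0}=T_0^*$ equal to the maximal extension, rather than writing $T_0^*=T_{P,\lambda}^{(q)}$, since the latter symbol denotes the formal expression; this does not affect the validity of the argument.
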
 
\begin{proof}
    For $q=n_-$, this statement follows from Theorem \ref{thm:elliptic estimate for T_P,lambda^q,q=n_-} and the standard argument of self-adjoint $L^2$-extension of   
    elliptic and formally self-adjoint operators. For $q\notin\{n_-,n_+\}$, since $\Pi_\lambda^{(q)}\equiv 0$ on the compact manifold $X$, it is a compact operator and we can verify this statement by standard spectral theory. We notice that in both cases our argument relies on the compactness of $X$.
\end{proof}

 We have the following analogue of \cite{BG81}*{Proposition 2.14}. The proof can be deduced from standard technique of elliptic estimate and Rellich compact embedding lemma.
	\begin{theorem}
		\label{thm:Spec(T_P,lambda^q, q=n_-)}
		With the same notations and assumptions in Theorem \ref{thm:Main Semi-Classical Expansion}, for $q=n_-$, the set $\operatorname{Spec}(T_{P,\lambda}^{(q)})$ consists only by eigenvalues, where the non-zero eigenvalues all have finite multiplicity. For
   any $c>0$, the set ${\rm Spec\,}T^{(q)}_{P,\lambda}\cap[c,+\infty)\cap (-\infty,-c]$ is a discrete subset of $\R$. Also, the accumulation points of $\operatorname{Spec}(T_{P,\lambda}^{(q)})$ is the subset of $\{-\infty,+\infty\}$.
	\end{theorem}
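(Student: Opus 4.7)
The plan is to reduce the spectral analysis to the closed subspace $H := \mathcal{H}_{b,\lambda}^{(q)}(X)$ and establish there that $T_{P,\lambda}^{(q)}$ has compact resolvent. Since $T_{P,\lambda}^{(q)} = \Pi_\lambda^{(q)} \circ P \circ \Pi_\lambda^{(q)}$ factors through $\Pi_\lambda^{(q)}$, it vanishes identically on the orthogonal complement $H^\perp = \ker \Pi_\lambda^{(q)}$, accounting for $0$ as an eigenvalue (of possibly infinite multiplicity if $H^\perp \neq \{0\}$), and it stabilizes $H$. By Theorem \ref{thm:T_P is self-adjoint} combined with the orthogonal invariance, the restriction $\widetilde T := T_{P,\lambda}^{(q)}|_H$ with domain $\Dom T_{P,\lambda}^{(q)} \cap H$ is a self-adjoint operator on $H$.

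The heart of the argument is to show $\widetilde T$ has compact resolvent. First, I claim that any $u \in \Dom \widetilde T$ lies in $H^1_{0,q}(X)$: by the parametrix identity from Theorem \ref{thm:parametetrix of Toeplitz operators for lower energy forms}, $u = T_{Q,\lambda}^{(q)} \widetilde T u - R u$ with $R$ smoothing, and since $Q \in L^{-1}_{\rm cl}$ gives $T_{Q,\lambda}^{(q)} : L^2 \to H^1$ continuously, the right-hand side lies in $H^1$. Then Theorem \ref{thm:elliptic estimate for T_P,lambda^q,q=n_-} at $s = 0$ upgrades this to the a priori bound $\|u\|_1 \leq C \bigl( \|\widetilde T u\| + \|u\| \bigr)$ for all $u \in \Dom \widetilde T$, so the graph norm of $\widetilde T$ dominates the $H^1$-norm.

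Since $X$ is compact, Rellich's theorem provides the compact embedding $H^1_{0,q}(X) \hookrightarrow L^2_{0,q}(X)$; composing with the continuous inclusion $(\Dom \widetilde T, \|\cdot\|_{\rm graph}) \hookrightarrow H^1_{0,q}(X)$ from the elliptic estimate shows that $(\Dom \widetilde T, \|\cdot\|_{\rm graph}) \hookrightarrow H$ is compact, which is equivalent to compactness of the resolvent $(\widetilde T - iI)^{-1} : H \to H$. The standard spectral theorem for self-adjoint operators with compact resolvent then yields that $\Spec \widetilde T$ consists solely of eigenvalues of finite multiplicity with $\pm \infty$ as the only possible accumulation points. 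Combined with the action on $H^\perp$, this gives $\Spec T_{P,\lambda}^{(q)} = \Spec \widetilde T \cup \{0\}$, implying all the claimed properties: pure point spectrum, finite multiplicity of every non-zero eigenvalue, discreteness of $\Spec T_{P,\lambda}^{(q)} \cap \bigl([c, +\infty) \cup (-\infty, -c]\bigr)$ for every $c > 0$, and accumulation only at $\pm \infty$.

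I expect the only delicate step to be the bootstrap $\Dom \widetilde T \subset H^1_{0,q}(X)$, since Theorem \ref{thm:elliptic estimate for T_P,lambda^q,q=n_-} is an a priori inequality that must be transferred to arbitrary elements of the maximal domain; this is precisely where the parametrix $T_{Q,\lambda}^{(q)}$ and its pseudodifferential mapping property on Sobolev scales are used. Once this regularity is in place, Rellich compactness and abstract spectral theory close the argument with no further geometric input.
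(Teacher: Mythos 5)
Your argument is correct and fills in exactly the route the paper indicates for this theorem (elliptic estimate via the parametrix of Theorem \ref{thm:parametetrix of Toeplitz operators for lower energy forms} combined with Rellich compact embedding). The bootstrap $\Dom \widetilde T\subset H^1_{0,q}(X)$ via $T_{Q,\lambda}^{(q)}$ and the passage to compact resolvent on the invariant subspace $\mathcal H_{b,\lambda}^{(q)}(X)$ are precisely the "standard technique" the author alludes to, so this is the same proof in expanded form.
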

From the above spectral theorems, we have the following formula.
 \begin{theorem}
 \label{thm:formula of chi(k^-1 T_P)}
 With the same notations and assumptions in Theorem \ref{thm:Main Semi-Classical Expansion}, for $q=n_-$ we can find an $L^2$-orthonormal system $\{f_j\}_{j\in J}$ such that $T_{P,\lambda}^{(q)}f_j=\lambda_j f_j$ and
     \begin{equation}
             \chi(k^{-1}T_{P,\lambda}^{(q)})(x,y)=\sum_{k^{-1}\lambda_j\in \supp\chi}\chi(k^{-1}\lambda_j)f_j(x)\otimes f_j^*(y)\in T_x^{*0,q}X\otimes(T_y^{*0,q}X)^*.
     \end{equation}
 \end{theorem}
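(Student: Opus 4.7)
The plan is to invoke the spectral theorem for the self-adjoint operator $T_{P,\lambda}^{(q)}$ (Theorem \ref{thm:T_P is self-adjoint}) together with the discreteness of the spectrum away from zero (Theorem \ref{thm:Spec(T_P,lambda^q, q=n_-)}). Since the spectrum consists entirely of eigenvalues whose only possible accumulation points are $\pm\infty$, I would first fix an $L^2$-orthonormal system $\{f_j\}_{j\in J}$ of eigenforms $T_{P,\lambda}^{(q)}f_j = \lambda_j f_j$ adapted to the eigenspace decomposition. By Theorem \ref{thm:elliptic estimate for T_P,lambda^q,q=n_-}, each eigenform corresponding to a non-zero eigenvalue lies in $\Omega^{0,q}(X)$, so smoothness of the individual summands will be automatic.

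Next I would use that $\chi\in\cCc(\R\setminus\{0\})$, so $\supp\chi$ is compact in $\dot\R$ and hence contained in some $[-M,-c]\cup[c,M]$ with $0<c<M$. For fixed $k>0$ this forces
\begin{equation*}
\{j\in J:k^{-1}\lambda_j\in\supp\chi\}\subset\{j\in J:|\lambda_j|\in[kc,kM]\},
\end{equation*}
which is a \emph{finite} set by Theorem \ref{thm:Spec(T_P,lambda^q, q=n_-)}, because ${\rm Spec}\,T_{P,\lambda}^{(q)}\cap([-kM,-kc]\cup[kc,kM])$ is discrete and bounded, and each non-zero eigenvalue has finite multiplicity. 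The bounded Borel functional calculus then yields
\begin{equation*}
\chi(k^{-1}T_{P,\lambda}^{(q)})=\sum_{k^{-1}\lambda_j\in\supp\chi}\chi(k^{-1}\lambda_j)\,\Pi_{\lambda_j},
\end{equation*}
a finite sum of orthogonal projections onto finite-dimensional subspaces of $\Omega^{0,q}(X)$.

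Finally, expressing each projection $\Pi_{\lambda_j}$ via the $L^2$-orthonormal eigenforms in $\Ker(T_{P,\lambda}^{(q)}-\lambda_j I)$ gives the kernel representation claimed in the theorem. The main (and essentially only) point to verify carefully is that the sum is genuinely finite, so that no convergence issue arises and the Schwartz kernel is in fact smooth on $X\times X$; this is exactly what the combination $\chi\in\cCc(\dot\R)$ plus discreteness away from zero provides. Beyond this bookkeeping, the statement is a direct consequence of the structural results for $T_{P,\lambda}^{(q)}$ already established.
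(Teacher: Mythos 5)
Your argument is correct and fills in exactly the details the paper leaves implicit when it says the formula follows ``from the above spectral theorems'': self-adjointness (Theorem \ref{thm:T_P is self-adjoint}), discreteness and finite multiplicity away from zero (Theorem \ref{thm:Spec(T_P,lambda^q, q=n_-)}), and smoothness of eigenforms via the elliptic estimate (Theorem \ref{thm:elliptic estimate for T_P,lambda^q,q=n_-}), combined with $\supp\chi\Subset\dot\R$ to make the sum finite. This is the same route the paper takes.
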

  We remark that, using different calculus, we obtain \cite{BG81}*{Proposition 2.14} by combining Theorem \ref{thm:Spec(T_P,lambda^q, q=n_-)} and the following result.
\begin{theorem}
\label{thm:bdd from below}
Following Theorem \ref{thm:Main Semi-Classical Expansion}, when $q=n_-\neq n_+$, if we further assume that $p_0|_\Sigma$ is positive definite, then the set ${\rm Spec}(T_{P,\lambda}^{(q)})$ is bounded from below.
\end{theorem}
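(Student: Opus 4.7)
The plan is to reduce to the sharp Gårding inequality for systems by first replacing $P$ with a pseudodifferential operator $\tilde P$ whose principal symbol is Hermitian positive definite for large $|\eta|$, and then absorbing the discrepancy $P-\tilde P$ into a Toeplitz operator that turns out to be $L^2$-bounded. Concretely, the hypothesis that $p_0|_\Sigma$ is positive definite, together with continuity and homogeneity of $p_0$, gives an open conic neighborhood $\mathcal V$ of $\Sigma$ in $T^*X\setminus\{0\}$ on which $p_0$ is positive definite. Choose a cutoff $\chi\in\cC(T^*X)$, positively homogeneous of degree zero for $|\eta|\geq 1$, equal to $1$ on a smaller conic neighborhood of $\Sigma$ and with conic support in $\mathcal V$, together with a positive $\psi\in S^1_{\rm cl}(T^*X)$. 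I would set
\[
\tilde p_0(x,\eta):=\chi(x,\eta)\,p_0(x,\eta)+(1-\chi(x,\eta))\,\psi(\eta)\,\mathrm{Id}_{T^{*0,q}X}\in S^1_{\rm cl}(T^*X,\End(T^{*0,q}X)),
\]
which is Hermitian and positive definite for $|\eta|$ large, and pick a formally self-adjoint $\tilde P\in L^1_{\rm cl}(X;T^{*0,q}X)$ with principal symbol $\tilde p_0$. The sharp Gårding inequality for systems (Hörmander vol.~III, \S18.6) then yields a $C_0>0$ with $(\tilde P v,v)\geq -C_0\|v\|^2$ for every $v\in\Omega^{0,q}(X)$.

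Next I would analyze the error $R:=P-\tilde P\in L^1_{\rm cl}(X;T^{*0,q}X)$, whose principal symbol $r_0=p_0-\tilde p_0$ vanishes identically on the smaller conic neighborhood of $\Sigma$ where $\chi\equiv 1$. Applying Theorem \ref{thm:BdM-Sj for Toeplitz operators} to $R$ in place of $P$, and using that $n_-\neq n_+$ makes the $\varphi_+$ branch absent, the kernel $T_{R,\lambda}^{(q)}(x,y)$ is modulo a smoothing term a single Szegő-type FIO along $\varphi_-$, whose leading on-diagonal symbol is
\[
\frac{|\det\mathcal L_x|}{2\pi^{n+1}}\frac{v(x)}{m(x)}\,\tau_x^{n_-}\,r_0(-\bm\alpha_x)\,\tau_x^{n_-}=0
\]
since $(x,-\bm\alpha_x)\in\Sigma^-\subset\Sigma$ and $r_0$ vanishes on a neighborhood of $\Sigma$. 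Invoking the division-type principal-symbol result Theorem \ref{thm:division theorem of principal symbol for Szego FIO}, whose microlocal self-adjointness hypothesis \eqref{eq:assumption microlocal self adjoint by S} is inherited from $(\Pi_\lambda^{(q)})^2\equiv\Pi_\lambda^{(q)}$, this vanishing drops the Szegő weight by one unit, so $T_{R,\lambda}^{(q)}$ is a Szegő-type FIO of weight at most $n$, i.e.\ of order at most zero, and hence is bounded on $L^2_{0,q}(X)$ with some constant $C_1>0$.

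Combining both estimates on smooth sections $u\in\Omega^{0,q}(X)$, then extending to $\Dom T_{P,\lambda}^{(q)}$ by a standard density argument based on \eqref{eq:Sobolev boundedness of T_P}, and using self-adjointness of $\Pi_\lambda^{(q)}$, gives
\[
(T_{P,\lambda}^{(q)}u,u)=(\tilde P\,\Pi_\lambda^{(q)}u,\Pi_\lambda^{(q)}u)+(T_{R,\lambda}^{(q)}u,u)\geq -C_0\|\Pi_\lambda^{(q)}u\|^2-C_1\|u\|^2\geq -(C_0+C_1)\|u\|^2,
\]
which by the self-adjointness of $T_{P,\lambda}^{(q)}$ from Theorem \ref{thm:T_P is self-adjoint} forces $\Spec T_{P,\lambda}^{(q)}\subset[-(C_0+C_1),+\infty)$. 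The main obstacle I anticipate is the weight-drop step: one must verify cleanly, via integration by parts in $t$ against $\varphi_-$ and the factorization provided by Theorem \ref{thm:division theorem of principal symbol for Szego FIO}, that the vanishing of $r_0$ on a full conic neighborhood of $\Sigma^-$ really produces a weight-one decrease in the Szegő expansion and that no subprincipal residue obstructs the $L^2$-boundedness of $T_{R,\lambda}^{(q)}$.
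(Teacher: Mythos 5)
Your proposal is correct and follows the same overall strategy as the paper: replace $P$ by a pseudodifferential operator that agrees with $P$ microlocally near the characteristic variety but has a globally positive-definite principal symbol, apply a G\aa{}rding-type inequality to that modification, and control the discrepancy as an $L^2$-bounded perturbation. The difference lies in how the discrepancy is controlled. The paper constructs $\mathscr P:=F\circ P+(I-F)\circ\mathcal P$ with a microlocal cutoff $F$ that is the identity on a conic neighborhood of $\Sigma^-$, so that $P-\mathscr P=(I-F)\circ(P-\mathcal P)$ has essential support away from $\Sigma^-$; since ${\rm WF}(\Pi_\lambda^{(q)})$ lies over $\Sigma^-$ (for $q=n_-\neq n_+$), the compression $\Pi_\lambda^{(q)}\circ(P-\mathscr P)\circ\Pi_\lambda^{(q)}$ is outright smoothing, which makes the boundedness immediate. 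You, on the other hand, only arrange the principal symbols to agree near $\Sigma$; the resulting $R=P-\tilde P$ is still genuinely first order near $\Sigma$ and you must invoke the Toeplitz FIO calculus --- specifically Theorem~\ref{thm:BdM-Sj for Toeplitz operators} and the division Theorem~\ref{thm:division theorem of principal symbol for Szego FIO}, followed by the integration-by-parts weight drop --- to see that $T_{R,\lambda}^{(q)}$ has Szeg\H{o} weight $n$ and is therefore $L^2$-bounded. This is more work than the paper's smoothing observation but is logically sound; the verification you flagged as a concern is exactly the content of Theorem~\ref{thm:division theorem of principal symbol for Szego FIO} together with one partial integration in $t$, so no genuine gap remains. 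Two small remarks: since the paper matches the full operator rather than just the principal symbol near $\Sigma^-$, its argument avoids the division theorem entirely; and since you are in the case $q=n_-\neq n_+$ you only need positivity of $p_0$ on $\Sigma^-$ rather than all of $\Sigma$, so the hypothesis is used a bit wastefully --- this matches the paper's construction, which is built around $\Sigma^-$ alone.
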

   \begin{proof}
We can find a conic neighborhood $\mathcal{C}_1^-$ of $\Sigma^-$ such that the principal symbol of $P$ is also positive definite on the closure of $\mathcal{C}_1^-$. We let $\mathcal{C}_2$ be another conic neighborhood of $\Sigma^-$ such that $\mathcal{C}_2\Subset\mathcal{C}_1$ and take a suitable $F\in L^0_{\rm cl}(X;T^{*0,q}X)$ such that $F\equiv 0$ outside $\mathcal{C}_1$ and $F\equiv I$ on $\mathcal{C}_2$. By choosing a suitable $\mathcal P\in L^1_{\rm cl}(X;T^{*0,q}X)$ which has the principal symbol positive definite on $T^*X$, it is not difficult to find an operator $\mathscr P$ given by $\mathscr P:=F\circ P+(I-F)\circ \mathcal P$ such that the principal symbol of $\mathscr P$ is also positive definite on $T^*X$ and $T_{P,\lambda}^{(q)}= T_{\mathscr P,\lambda}^{(q)}+F$ on $X$, where $F\equiv 0$ on $X$. We have a constant $c_0>0$ such that $|(Fu |u)|\leq \|F u\|\cdot\|u\|\leq c_0\|u\|^2$ holds for all $u\in \Omega^{0,q}(X)$. Also, because the principal symbol of $\mathscr P$ is positive definite on whole $T^*X$, we can apply weak G{\aa}rding inequality and find some constants $c_1,C>0$ such that
		\begin{equation}
		(\mathscr P u|u)\geq \frac{1}{C}\|u\|_{\frac{1}{2}}-C\|u\|\geq -c_1\|u\|,\ \ \forall u\in\Omega^{0,q}(X).
		\end{equation}
		 Now for $\mu\neq 0$ and $u\in\ker(T_{P,\lambda}^{(q)}-\mu I)\cap\Omega^{0,q}(X)$, by $u=\Pi_\lambda^{(q)} u$ and the above discussions, We have a constant $c_2>0$ such that
		\begin{equation}
		(T_{P,\lambda}^{(q)} u| u)=(\Pi_\lambda^{(q)}\circ \mathscr{P}\circ \Pi_\lambda^{(q)} u|u)+(F u|u)\geq -c_2 \|u\|^2
		\end{equation}
		for all $u\in\Omega^{0,q}(X)$, which implies that $\mu\geq -c_2>-\infty$ in this context.
   \end{proof}


	\subsection{Expansion of resolvent type Toeplitz operators}
	In this section we always assume $q=n_-$. With respect to \eqref{eq:system of the principal symbol intro}, we recall that we use the convention
	\begin{align}
	&p_0(x,\eta)=\sum_{|I|=|J|=q} p_{I,J}(x,\eta) \omega_I^\wedge\otimes \omega_J^{\wedge,*}~\text{for strictly increasing}~I,J~,\\
	&I_0:=\{1,\cdots,q\}\leftrightarrow\mu_1<0,\cdots,\mu_q<0;\ \ 
	J_0:=\{q+1,\cdots,n\}\leftrightarrow\mu_{q+1}>0,\cdots,\mu_n>0,\\
	&\{\mu_1,\cdots,\mu_n\}~\text{is the set of eigenvalues of the Levi form}~\mathcal{L}:=
	\left.-\frac{d\bm\alpha}{2i}\middle|\right._{T^{1,0}X}.
	\end{align}
	We also recall that we assume
\begin{equation}
    p_{I_0,I_0}(-\bm\alpha)>0;\ \ \text{and additionally}~p_{J_0,J_0}(\bm\alpha)<0~\text{when}~n_-=n_+.
\end{equation}
 
In the expansion of $(z-T_{P,\lambda}^{(q)})^{-1}\circ\Pi_\lambda^{(q)}$, we will come across various types of smoothing operators that are dependent on $z$. These operators will appear as part of the remainder of the expansion. Subsequently, we will demonstrate that when the expansion of $(z-T_{P,\lambda}^{(q)})^{-1}\circ\Pi_\lambda^{(q)}$
 is incorporated into the Helffer–Sj\"ostrand formula, the terms that involve these operators contribute solely as $k$
-negligible operators.

 From now on, we let 
 \begin{equation}
 \label{eq: cut off tau}
 \tau\in\cC(\R),~\tau(t)=0\;\;\text{for $|t|\leq 1$},
		~\tau(t)=1\;\;\text{for $|t|\geq 2$}.
		\end{equation}
	\begin{definition}
		\label{def:z-depnednt smoothing operator type 1}
		In the situation of Theorem \ref{thm:Main Semi-Classical Expansion}, for $q=n_-$ we denote by $\mathcal E_z(\Omega;T^{*0,q}X)$ the set
		of finite linear combinations of the operators with kernels
		\begin{equation}
		\int_0^{+\infty} e(x,y,t)
		\frac{z^{M_2}}{(z-tp(x))^{M_1}}\tau(\varepsilon t)dt
		\end{equation}	
		over $\C$, where the symbol $e(x,y,t)\in 
		S^{-\infty}(\Omega\times \Omega\times{\R}_+,\End(T^{*0,q}X))$ is
		properly supported in the variables $(x,y)$, $p(x)\in\cC(X,\dot\R)$, and $M_1,M_2\in\N_0$.
	\end{definition} 
	\begin{definition}
		\label{def:z-depnednt smoothing operator type 2}
		In the situation of Theorem \ref{thm:Main Semi-Classical Expansion}, for $q=n_-$ we denote by $\mathcal F_z(\Omega;T^{*0,q}X)$
		the set of finite linear combinations of the operators with kernels
		\begin{equation}
		\int_0^{+\infty} f(x,y,t)\frac{z^{M_2}}{(z-tp(x))^{M_1}}
		\tau(\varepsilon t)dt
		\end{equation}
		over $\C$, where $f(x,y,t)\in 
		S^{-\infty}(X\times \Omega\times{\R}_+,\End(T^{*0,q}X))$, $p(x)\in\cC(X,\dot\R)$, and $M_1,M_2\in\N_0$.
	\end{definition} 
	\begin{definition}
		\label{def:z-depnednt smoothing operator type 3}
		In the situation of Theorem \ref{thm:Main Semi-Classical Expansion}, for $q=n_-$ we denote by $\mathcal{G}_z(\Omega;T^{*0,q}X)$ the set of finite linear combination of the operators with kernels
		\begin{equation}
		\label{eq:mathcal G_z}
		\int_0^{+\infty} e^{it\psi(x,y)}g(x,y,t)
		\frac{z^{M_2}}{(z-tp(x))^{M_1}}\tau(\varepsilon t)dt,
		\end{equation}
		where $g(x,y,t)=O(|x-y|^{+\infty})$, $g(x,y,t)\in S^m_{\rm cl}(\Omega\times \Omega\times{\R}_+,\End(T^{*0,q}X))$ for some $m\in\mathbb R$, 
		$g(x,y,t)$ is properly supported in the variables $(x,y)$, $p(x)\in\cC(X,\dot\R)$, $M_1,M_2\in\N_0$, 
		and $\psi\in {\Ph}(\Lambda\bm\alpha,\Omega)$ for some $\Lambda\in\cC(X,{\dot\R})$.
	\end{definition} 
	\begin{definition}
 \label{def:z-depnednt smoothing operator type 4}
		In the situation of Theorem \ref{thm:Main Semi-Classical Expansion}, for $q=n_-$ we denote by $\mathcal{R}_z(\Omega;T^{*0,q}X)$ the set of finite linear combination of the operators with kernels
		\begin{equation}
		\label{eq:mathcal R_z}
		\int_0^{+\infty}\int_0^{+\infty}\int_\Omega e^{it\psi_\mp(x,w)+i\sigma\psi_\pm(w,y)}r_1(x,w,t)\circ r_2(w,y,\sigma)
		\frac{z^{M_2}}{(z-tp(x))^{M_1}}\tau(\varepsilon t)m(w) dw d\sigma dt,
		\end{equation}
  or
  \begin{equation}
      \int_0^{+\infty}\int_0^{+\infty}\int_\Omega e^{it\psi_\mp(x,w)+i\sigma\psi_\pm(w,y)}r_1(x,w,t)\circ r_2(w,y,\sigma)
		\frac{z^{M_2}}{(z-\sigma p(w))^{M_1}}\tau(\varepsilon \sigma)m(w) dw  d\sigma dt,
  \end{equation}
		where $r_1(x,w,t)\in S^{m_1}_{\rm cl}(\Omega\times \Omega\times{\R}_+,\End(T^{*0,q}X))$, $r_2(w,y,\sigma)\in S^m_{\rm cl}(\Omega\times \Omega\times{\R}_+,\End(T^{*0,q}X))$, $m_1,m_2\in\mathbb R$, 
		$r_1(x,w,t)$ is properly supported in $(x,w)$, $r_2(w,y,\sigma)$ is properly supported in $(w,y)$, $p(x)\in\cC(X,\dot\R)$, $M_1,M_2\in\N_0$, 
		and $\psi_\mp\in {\Ph}(\mp\Lambda\bm\alpha,\Omega)$ for some $\Lambda\in\cC(X,\R_+)$.
	\end{definition}
	\begin{definition}
		\label{def:L^-infty_z}
		Following Theorem \ref{thm:Main Semi-Classical Expansion}, for $q=n_-$ we define the notation $L^{-\infty}_z(\Omega;T^{*0,q}X)$ by the set collecting all elements of the form $\sum_{j\in J}c_j u_j$, where $|J|<+\infty$, $c_j\in\C$, and
  \begin{equation}
      u_j\in \mathcal E_z(\Omega;T^{*0,q}X)\cup
		\mathcal F_z(\Omega;T^{*0,q}X)\cup \mathcal{G}_z(\Omega;T^{*0,q}X)\cup \mathcal R_z(\Omega;T^{*0,q}X).
  \end{equation}
	\end{definition}
	We are ready to construct the parametrix type Fourier integral operator for the operator $(z-T_{P,\lambda}^{(q)})$. 
	\begin{theorem}
		\label{thm:leading FIO of (z-T_P)^-1 Pi I}
		With the notations and assumptions in Theorem \ref{thm:Main Semi-Classical Expansion}, \eqref{eq:full symbol intro I}, \eqref{eq:approximated Szego kernel intro} and \eqref{eq:system of the principal symbol intro}, we let $q=n_-$, $z\notin{\rm Spec}(T_{P,\lambda}^{(q)})\setminus\{0\}$, $\tau\in\cC(\R_+)$ of \eqref{eq: cut off tau} and take a fixed constant $\varepsilon>0$ so that $\tau(\varepsilon t)\chi(t)=\chi(t)$. Then the Fourier integral operator $A_{z,0}:\cCc(\Omega,T^{*0,q}X)\to\cCc(\Omega,T^{*0,q}X)$, where
	\begin{equation}
	\label{eq:A_z,0}
	A_{z,0}(x,y)
	:=\int_0^{+\infty}e^{it\varphi_-(x,y)}\frac{s_0^-(x,y)}{z-t p_{I_0,I_0}(-\bm\alpha_x)}t^n \tau(\varepsilon t) dt
	+\int_0^{+\infty}e^{it\varphi_+(x,y)}\frac{s_0^+(x,y)}{z-t p_{J_0,J_0}(\bm\alpha_x)}t^n\tau(\varepsilon t) dt,
	\end{equation}
  depends on $z$ smoothly, and  up to a kernel associated by an element in $L^{-\infty}_z(\Omega;T^{*0,q}X)$ we have
		\begin{multline}
		\left((z-T_{P,\lambda}^{(q)})\circ(S_-+S_+)\circ A_{z,0}\circ(S_-+S_+)-\Pi_\lambda^{(q)}\right)(x,y)\label{eq:(z-T_P)(S A_z S)-Pi theorem}\\
	\equiv\int_0^{+\infty}e^{it\Psi_-(x,y)}\frac{r_1^-(x,y,t;z)}{(z-t)^2}\tau(\varepsilon t)dt+\int_0^{+\infty}e^{it\Psi_+(x,y)}\frac{r_1^+(x,y,t;z)}{(z+t)^2}\tau(\varepsilon t)dt,
		\end{multline}
where $\Psi_-\in{\rm Ph}(p_{I_0,I_0}^{-1}(-\bm\alpha)(-\bm\alpha),\Omega)$, $\Psi_+\in{\rm Ph}(p_{J_0,J_0}^{-1}(-\bm\alpha)\bm\alpha,\Omega)$, and we have the following data properly supported in $(x,y)$: $r_1^+(x,y,t;z)=0$ when $n_-=n_+$, $r_1^\mp(x,y,t;z)=\sum_{|\alpha|+|\beta|\leq 2}r^\mp_{1,\alpha,\beta}(x,y,t)t^\alpha z^\beta$, $ r^\mp_{1,\alpha,\beta}(x,y,t)\in S^{n-1}_{\rm cl}(\Omega\times\Omega\times\R_+,\End(T^{*0,q}X))$. 

Also, up to a kernel associated to an element in $L^{-\infty}_z(\Omega;T^{*0,q}X)$ we have
		\begin{align}
		&\Bigr((S_-+S_+)\circ A_{z,0}\circ(S_-+S_+)\Bigr)(x,y)\nonumber\\
		\equiv&\int_0^{+\infty}e^{it\Psi_-(x,y)}\frac{\alpha^{-}(x,y,t)}{z-t}\tau(\varepsilon t) dt
		+\int_0^{+\infty}e^{it\Psi_+(x,y)}\frac{\alpha^{+}(x,y,t)}{z+t}\tau(\varepsilon t) dt,
		\end{align}
where $\Psi_\mp(x,y)$ are the same as we just mentioned, $\alpha^\mp(x,y,t)$ and $\alpha_j^\mp(x,y)$ are properly supported in the variables $(x,y)$, $j\in\N_0$, $\alpha^+(x,y,t)=0$ when $n_-\neq n_+$, $\alpha^\mp(x,y,t)\sim\sum_{j=0}^{+\infty}\alpha_j^\mp(x,y)t^{n-j}$ in $S^{n}_{\rm cl}(\Omega\times\Omega\times\R_+,\End(T^{*0,q}X))$,
    \begin{equation}
    \alpha^{-}_0(x,x)=\frac{|\det\mathcal{L}_x|}{2\pi^{n+1}}\frac{v(x)}{m(x)}~p^{-n-1}_{I_0,I_0}(-\bm\alpha_x),
\end{equation}
  and when $n_-=n_+$ we additionally have
  \begin{equation}
      \alpha^{+}_0(x,x)=\frac{|\det\mathcal{L}_x|}{2\pi^{n+1}}\frac{v(x)}{m(x)}~p^{-n-1}_{J_0,J_0}(-\bm\alpha_x).
  \end{equation}
	\end{theorem}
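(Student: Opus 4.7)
The strategy is to compute both composite operators in~\eqref{eq:(z-T_P)(S A_z S)-Pi theorem} by iterated application of the Melin--Sj\"ostrand complex stationary phase formula, using Theorem~\ref{thm:BdM-Sj for Toeplitz operators} to express $T_{P,\lambda}^{(q)}$ as a sum of Szeg\H{o}-type Fourier integral operators and Theorem~\ref{thm:division theorem of principal symbol for Szego FIO} to extract the extra power of the resolvent. Write $A_{z,0}=A_{z,0}^-+A_{z,0}^+$ with $A_{z,0}^\mp$ denoting the two summands in~\eqref{eq:A_z,0}. The substitution $t\mapsto t/p_{I_0,I_0}(-\bm\alpha_x)$ (resp.\ $t\mapsto -t/p_{J_0,J_0}(\bm\alpha_x)$) converts $A_{z,0}^\mp$ into an integral with denominator $(z-t)$ (resp.\ $(z+t)$) and new phase $\Psi_\mp$ of the type announced in the statement; Levi-ellipticity~\eqref{eq:Levi elliptic n_- intro}--\eqref{eq:Levi elliptic n_+ intro} ensures that the rescaling factor has constant nonzero sign.

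First I would compute $(S_-+S_+)\circ A_{z,0}\circ (S_-+S_+)$. Expanding yields four terms $S_\varepsilon\circ A_{z,0}^\delta\circ S_{\varepsilon'}$ with $\varepsilon,\varepsilon',\delta\in\{-,+\}$. By Theorem~\ref{thm:WF of Szego projection on lower energy forms} the cross terms (where not all three signs agree) have essentially disjoint wave-front sets along the diagonal and produce kernels of the mixed-phase iterated shape defining $\mathcal R_z(\Omega;T^{*0,q}X)$. For the two matched pieces $S_\mp\circ A_{z,0}^\mp\circ S_\mp$, complex stationary phase applied in the spirit of Theorem~\ref{thm:BdM-Sj for Toeplitz operators} identifies the composition as a Szeg\H{o}-type Fourier integral operator with phase equivalent to $\Psi_\mp$ and weight $n$, while the transformation rule of Theorem~\ref{thm:s_0^psi(x,x)} delivers the explicit leading coefficient $\alpha_0^\mp(x,x)=\tfrac{|\det\mathcal L_x|}{2\pi^{n+1}}\tfrac{v(x)}{m(x)}\,p^{-n-1}_{*,*}(\mp\bm\alpha_x)$ (the power $-n-1$ of the rescaling Jacobian being precisely the one dictated by the transformation rule). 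The residual contributions split naturally: rapidly decreasing $t$-symbols enter $\mathcal E_z$, off-diagonal remainders from the Szeg\H{o} smoothing error enter $\mathcal F_z$, and $O(|x-y|^{+\infty})$ pieces from the stationary phase expansion enter $\mathcal G_z$, so everything residual lies in $L^{-\infty}_z(\Omega;T^{*0,q}X)$.

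Next I would apply $(z-T_{P,\lambda}^{(q)})$ on the left and use $\Pi_\lambda^{(q)}\equiv S_-+S_+$. Theorem~\ref{thm:BdM-Sj for Toeplitz operators} gives $T_{P,\lambda}^{(q)}\equiv T_{\varphi_-}+T_{\varphi_+}$ with principal symbol $ta_0^\mp(x,x)=t\,p_{*,*}(\mp\bm\alpha_x)\,s_0^\mp(x,x)$ on the diagonal, so a further stationary phase composition between $T_{\varphi_\mp}$ and the output of the first step produces, at leading order, exactly the factor $t\,p_{*,*}(\mp\bm\alpha_x)\cdot\alpha_0^\mp$. Combined with the $z$-multiplication of the same expression this yields the algebraic cancellation $\bigl(z-tp_{*,*}(\mp\bm\alpha_x)\bigr)\cdot\frac{s_0^\mp(x,x)}{z-tp_{*,*}(\mp\bm\alpha_x)}=s_0^\mp(x,x)$, whose $t$-integration against $t^n\tau(\varepsilon t)$ recovers the Szeg\H{o} kernel $\Pi_\lambda^{(q)}(x,y)$ modulo smoothing. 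All subsequent residual symbols vanish on $\Sigma^\mp$ in the tangential directions by Levi-ellipticity, hence by Theorem~\ref{thm:division theorem of principal symbol for Szego FIO} they are divisible by $\varphi_\mp(x,y)$; using $\varphi_\mp e^{it\varphi_\mp}=-i\partial_t e^{it\varphi_\mp}$ and integrating by parts in $t$ transfers $\partial_t$ onto the resolvent, raising the exponent to $2$ and generating precisely the denominators $(z\mp t)^{-2}$ and numerators $r_1^\mp(x,y,t;z)=\sum_{|\alpha|+|\beta|\le 2}r_{1,\alpha,\beta}^\mp(x,y,t)t^\alpha z^\beta$ of the stated form.

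The main obstacle will be the careful bookkeeping at the division step: one must confirm that at each iteration the symbols stay within the narrow classes of Definitions~\ref{def:z-depnednt smoothing operator type 1}--\ref{def:z-depnednt smoothing operator type 4} and that no unwanted resolvent powers $(z-tp(x))^{-M}$ with $M\ge 3$ nor $z$-dependence beyond degree $2$ appear. The Levi-ellipticity hypotheses~\eqref{eq:Levi elliptic n_- intro}--\eqref{eq:Levi elliptic n_+ intro} are exactly what guarantee that the composed symbol satisfies the vanishing hypotheses~\eqref{eq:h_0 vanishes at I_0}--\eqref{eq:h_0 vanishes at J_0} required by Theorem~\ref{thm:division theorem of principal symbol for Szego FIO}, so every residual order vanishes at precisely the rate needed to close the induction. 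I expect this symbolic verification, together with the matching of each residual term to the appropriate $\mathcal E_z,\mathcal F_z,\mathcal G_z,\mathcal R_z$ class, to be the main technical work.
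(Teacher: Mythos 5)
Your proposal follows essentially the same route as the paper's proof: expand $(S_-+S_+)\circ A_{z,0}\circ(S_-+S_+)$ into four terms, send the mismatched pieces to $\mathcal R_z$, apply Melin--Sj\"ostrand complex stationary phase twice after the rescaling $t\mapsto p^{-1}_{I_0,I_0}(-\bm\alpha)\,t$ to convert the denominator to $(z-t)$, then invoke Theorem~\ref{thm:division theorem of principal symbol for Szego FIO} and integrate by parts in $t$ to raise the resolvent power.

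One point in your sketch is imprecise and worth repairing before it becomes a stall. You attribute the vanishing hypotheses \eqref{eq:h_0 vanishes at I_0}--\eqref{eq:h_0 vanishes at J_0} directly to Levi-ellipticity. What Levi-ellipticity actually buys is the sign definiteness making the $t$-rescaling legitimate and $\Psi_\mp$ a genuine element of $\mathrm{Ph}(\mp p^{-1}(\mp\bm\alpha)\bm\alpha,\Omega)$; the vanishing itself comes from an \emph{algebraic cancellation} forced by the construction of $A_{z,0}$, whose leading symbol $s_0^\mp/(z-tp)$ is tuned so that the composed leading coefficients $\mathds S_0^\mp$ and $\mathds A_0^\mp$ agree on the diagonal with $s_0^{\Psi_\mp}$ (via the transformation rule of Theorem~\ref{thm:s_0^psi(x,x)}). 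Moreover, you cannot apply the division theorem directly to the $z$-dependent symbol $(z\mathds S_0^\mp - t\mathds A_0^\mp)/(z\mp t) - s_0^{\Psi_\mp}$; the paper's mechanism is to evaluate the whole expression and its $z$-derivative at $z=0$ (the operators $\mathds I$ and $\mathds{II}$), apply Theorem~\ref{thm:division theorem of principal symbol for Szego FIO} to each separately to obtain two divisibility relations of $O(|x-y|^{+\infty})$ type, and only then combine and integrate by parts in $t$. Without this Taylor-in-$z$ bookkeeping the extraction of the $(z\mp t)^{-2}$ factor from a single application of the division theorem does not go through.
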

	\begin{proof}
	For the generality of our argument, we assume $n_-=n_+$, and the case $n_-\neq n_+$ also follows from our proof with some minor change. We first notice that for any $u\in\cCc(\Omega,T^{*0,q}X)$, by Theorem \ref{thm:Hsiao-Marinescu 17 LEF} and \eqref{eq:Pi_lambda^q is smoothing away from diagonal}, after shrinking $\Omega$, there are operators $E,F:\mathscr E'(\Omega;T^{*0,q}X)\to \cC(X;T^{*0,q}X)$ so that
\begin{multline}
    (z-T_{P,\lambda}^{(q)})\circ(S_-+S_+)u
		=(z-T_{P,\lambda}^{(q)})\circ(\Pi_\lambda^{(q)}+E)u
		=(z\Pi_\lambda^{(q)}-T_{P,\lambda}^{(q)})u+(z\Pi_\lambda^{(q)}-T_{P,\lambda}^{(q)})\circ Eu\\
		=\int_0^{+\infty} e^{it\varphi_-(x,y)}\left(z s^-(x,y,t)-t a^-(x,y,t)\right)u(y) m(y) dy dt\\
		+\int_0^{+\infty} e^{it\varphi_+(x,y)}\left(z s^+(x,y,t)-t a^+(x,y,t)\right)u(y) m(y) dy dt
		+(zE-F)u+(z\Pi_\lambda^{(q)}-T_{P,\lambda}^{(q)})\circ Eu.
\end{multline}
		We notice that the operators $zE-F$ and $(z\Pi_\lambda^{(q)}-T_{P,\lambda}^{(q)})\circ E$ are in $L^{-\infty}_z(\Omega;T^{*0,q}X)$. 
		
		On the other hand, we have $A_{z,0}\circ(S_-+S_+)=B_{z,0}^{-,-}+B_{z,0}^{+,+}+B_{z,0}^{-,+}+B_{z,0}^{+,-}$,	where
		\begin{align}
		&B_{z,0}^{-,\mp}(x,y)\\
		=&\int_\Omega \left(\int_0^{+\infty}e^{it\varphi_-(x,w)}\frac{s_0^-(x,w)}{z-t p_{I_0,I_0}(-\bm\alpha_x)}t^n \tau(\varepsilon t) dt\right)\circ S_\mp(w,y) m(w)dw\nonumber\\
		=&\int_0^{+\infty}\frac{\tau(\varepsilon t) 
			t^n}{z-t p_{I_0,I_0}(-\bm\alpha_x)}
		\left(\int_0^{+\infty}\int_\Omega e^{it(\varphi_-(x,w)+\sigma\varphi_\mp(w,y))}s_0^-(x,w)\circ s^\mp(w,y,t\sigma)m(w)dwd\sigma\right) dt,
		\end{align}
		and 
		\begin{align}
		&B_{z,0}^{+,\mp}(x,y)\\
		=&\int_\Omega \left(\int_0^{+\infty}e^{it\varphi_+(x,w)}\frac{s_0^+(x,w)}{z-t p_{J_0,J_0}(\bm\alpha_x)}t^n \tau(\varepsilon t) dt\right)\circ S_\mp(w,y) m(w)dw\nonumber\\
		=&\int_0^{+\infty}\frac{\tau(\varepsilon t) 
			t^n}{z-t p_{J_0,J_0}(\bm\alpha_x)}
		\left(\int_0^{+\infty}\int_\Omega e^{it(\varphi_+(x,w)+\sigma\varphi_\mp(w,y))}s_0^+(x,w)\circ s^\mp(w,y,t\sigma)m(w)dwd\sigma\right) dt.
		\end{align}
		By Definition \ref{def:L^-infty_z}, the operator $B_{z,0}^{\mp,\pm}$ is in $L^{-\infty}_z(\Omega;T^{*0,q}X)$. Also, using Melin--Sj\"ostrand complex stationary phase formula and Theorem \ref{thm:tangential Hessian of phase functions varphi_-}, cf.~also 
\cite{BouSj75}*{\S 4}  or \cite{Hs10}*{pp.~76-77}, we have
		\begin{align}
		&B_{z,0}^{-,-}(x,y)=\int_0^{+\infty}e^{it\varphi_-(x,y)}\frac{\tau(\varepsilon t) 
			t^n}{z-t p_{I_0,I_0}(-\bm \alpha_x)}b^{-,0}(x,y,t) dt+E_z^{-,0}(x,y),\\
		&B_{z,0}^{+,+}(x,y)=\int_0^{+\infty}e^{it\varphi_+(x,y)}\frac{\tau(\varepsilon t) 
			t^n}{z-t p_{J_0,J_0}(\bm \alpha_x)}b^{+,0}(x,y,t)dt+E_z^{+,0}(x,y),
		\end{align}
		where $E_z^{\mp,0}\in L^{-\infty}_z(\Omega;T^{*0,q}X)$ and
		\begin{align}
		&b^{\mp,0}(x,y,t)\sim\sum_{j=0}^{+\infty}b^{\mp,0}_j(x,y)t^{-j}~\text{in}~S^0_{\rm cl}(\Omega\times\Omega\times\R_+,\End(T^{*0,q}X)),\ \ b^{\mp,0}_0(x,x)=s_0^\mp(x,x).
		\end{align}
		
		Combining the calculation before, up to some element in $L^{-\infty}_z(\Omega;T^{*0,q}X)$, we can check that
		\begin{equation}
		\left((z-T_{P,\lambda}^{(q)})\circ (S_-+S_+)\right)\circ \left(A_{z,0}\circ (S_-+S_+)\right)=C_{z,0}^{-,-}+C_{z,0}^{+,+},
		\end{equation}
		where
		\begin{align}
		C_{z,0}^{-,-}(x,y)
		=&\int_0^{+\infty}\int_0^{+\infty}\int_\Omega e^{i\gamma\varphi_-(x,w)+i\beta\varphi_-(w,y)}\tau(\varepsilon \beta)\beta^n\\
		\times&\frac{z s^-(x,w,\gamma)- \gamma a^-(x,w,\gamma)}{z-\beta p_{I_0,I_0}(-\bm\alpha_w)}\circ b^{-,0}(w,y,\beta)m(w)dwd\gamma d\beta\nonumber,
		\end{align}
		and
		\begin{align}
		C_{z,0}^{+,+}(x,y)
		=&\int_0^{+\infty}\int_0^{+\infty}\int_\Omega e^{i\gamma\varphi_+(x,w)+i\beta\varphi_+(w,y)}\tau(\varepsilon \beta)\beta^n\\
		\times&\frac{z s^+(x,w,\gamma)- \gamma a^+(x,w,\gamma)}{z-\beta p_{J_0,J_0}(\bm\alpha_w)}\circ b^{+,0}(w,y,\beta)m(w)dwd\gamma d\beta.\nonumber
		\end{align}
		For $C_{z,0}^{-,-}$, we recall that $p_{I_0,I_0}(-\alpha)>0$ and we can apply the change of variables
		\begin{align}
		&\beta=p_{I_0,I_0}^{-1}(-\bm\alpha_w)t,~t\geq 0;\ \ \gamma=p_{I_0,I_0}^{-1}(-\bm\alpha_w)t\sigma,~\sigma\geq 0,
		\end{align}
		and we have
		\begin{multline}
C_{z,0}^{-,-}(x,y)
		=\int_0^{+\infty}\int_0^{+\infty}\int_\Omega \exp\left(it\cdot p^{-1}_{I_0,I_0}(-\bm\alpha_w)(\sigma\varphi_-(x,w)+\varphi_-(w,y)\right)\label{eq:C_z,0^-,-(x,y) I}\\
		\times\tau(\varepsilon p_{I_0,I_0}^{-1}(-\bm\alpha_w)t)(p_{I_0,I_0}^{-1}(-\bm\alpha_w)t)^n\\   
		\times\frac{z\cdot s^-(x,w,p_{I_0,I_0}^{-1}(-\bm\alpha_w)t\sigma) - p_{I_0,I_0}^{-1}(-\bm\alpha_w)t\sigma\cdot a^-(x,w,p_{I_0,I_0}^{-1}(-\bm\alpha_w)t\sigma)}{z-t}
\circ b^{-,0}(w,y,p_{I_0,I_0}^{-1}(-\bm\alpha_w)t)\\ p_{I_0,I_0}^{-2}(-\bm\alpha_w)t~m(w)dwd\sigma dt.
		\end{multline}
We recall that $s^-,a^-\in S^n_{\rm cl}(\Omega\times\Omega\times\R_+,\End(T^{*0,q}X))$, $b^{-,0}\in S^0_{\rm cl}(\Omega\times\Omega\times\R_+,\End(T^{*0,q}X))$, and the leading coefficients $s^-_0,a^-_0,b^{-,0}_0$ in their symbol asymptotic expansion satisfies
		\begin{align}
		\label{eq:relation between a_0^-, s_0^- and b_0^-}
		a^-_0(x,x)=p_{I_0,I_0}(-\bm\alpha_x)s_0^-(x,x)=p_{I_0,I_0}(-\bm\alpha_x)b_0^{-,0}(x,x).
		\end{align}	
  	We notice that
		\begin{multline}
		\label{eq:w=x=y in eq:leading term of C^-,-_z,0 I}
		\left.\frac{p_{I_0,I_0}^{-n}(-\bm\alpha_w) t^n\sigma^n (z s^-_0-p_{I_0,I_0}^{-1}(-\bm\alpha_w)^{-1} t\sigma a^-_0)(x,w)}{z-t}\circ  b_0^{-,0}(w,y)
  \middle|\right._{\substack{w=x=y\\ \sigma=1}}\\=p_{I_0,I_0}^{-n}(-\alpha_x)t^{n}\sigma^n s_0^-(x,x)\circ s_0^-(x,x).
		\end{multline}
 For the complex-valued function 
		\begin{equation}
		\label{eq:Phi_-}
		\Phi_-(w,\sigma;x,y):=p^{-1}_{I_0,I_0}(-\bm\alpha_w)(\sigma\varphi_-(x,w)+\varphi_-(w,y)),
		\end{equation}
		by \eqref{eq:varphi intro}, we have $(d_w \Phi)(x,1;x,x)_-=(d_\sigma\Phi_-)(x,1;x,x)=0$.	Moreover, by  the local coordinates of Theorem \ref{thm:tangential Hessian of phase functions varphi_-}, we have
\begin{equation}
  \left.\det\frac{1}{2\pi i}\begin{bmatrix}
		\left(\frac{\pr^2\Phi_-}{\pr w_j\pr w_k}\right)_{j,k=1}^{2n+1} & \left(\frac{\pr^2\Phi_-}{\pr w_j\pr \sigma}\right)_{j=1}^{2n+1}\\
		\left(\frac{\pr\Phi_-}{\pr\sigma\pr w_j}\right)_{j=1}^{2n+1} & \frac{\pr\Phi_-}{\pr\sigma^2}
		\end{bmatrix}\middle|\right._{\substack{w=x=y=x_0\\ \sigma=1}}= p^{-2n-2}_{I_0,I_0}(-\bm\alpha_{x_0})\frac{2^{2n-2}}{\pi^{2n+2}}|\mu_1\cdots\mu_n|^2.\label{eq:value of det H(Phi_-)/2 pi i}
\end{equation}
So by Melin--Sj\"ostrand complex stationary phase formula \cite{MeSj74}*{Theorem 2.3}, up to a kernel associated to an element in $L^{-\infty}_z(\Omega;T^{*0,q}X)$, we can write
		\begin{align}
		C_{z,0}^{-,-}(x,y)
		=&\int_0^{+\infty} e^{it\Psi_-(x,y)}\frac{z\mathds S^-_0(x,y)-t\mathds A^-_0(x,y)}{z-t}t^n \tau(\varepsilon t)dt\label{eq:C_z,0^-,-(x,y)}\\
		+&\int_0^{+\infty} e^{it\Psi_-(x,y)}\frac{z\mathds E^-_1(x,y,t)-t\mathds F^-_1(x,y,t)}{z-t}\tau(\varepsilon t)dt,\nonumber
		\end{align}
where $\Psi_-(x,y)$ is the critical value \cite{MeSj74}*{Lemma 2.1} for the almost analytic extension $\Td\Phi_-(\Td w,\Td\sigma;\Td x,\Td y)$ of \eqref{eq:Phi_-}, and $\mathds E_1^-(x,y,t)$ and $\mathds F_1^-(x,y,t)$ are in $S^{n-1}_{\rm cl}(\Omega\times\Omega\times\R_+,\End(T^{*0,q}X))$. Moreover, by \eqref{eq:varphi intro}, we have
\begin{equation}
\Psi_-(x,y)\in{\rm Ph}\left(-p_{I_0,I_0}^{-1}(\bm\alpha)(-\bm\alpha)\right);    
\end{equation}
by \eqref{eq:relation between a_0^-, s_0^- and b_0^-}, \eqref{eq:value of det H(Phi_-)/2 pi i}, and our convention that the volume induced by Hermitian metric satisfies $v(0)=2^n$, 
 we have
\begin{align}
		&\mathds S_0^-(x,x)=\mathds A_0^-(x,x)=p_{I_0,I_0}^{n+1}(-\bm\alpha_x)s_0^-(x,x)\label{eq:mathds S_0^-=A_0^-}.
		\end{align}		

		Similarly, since we assume that $p_{J_0,J_0}(-\bm\alpha)=-p_{J_0,J_0}(\bm\alpha)>0$ when $n_-=n_+$, we can also write
		\begin{multline}
		C_{z,0}^{+,+}(x,y)
		=\int_0^{+\infty}\int_0^{+\infty}\int_\Omega \exp\left(it\cdot p^{-1}_{J_0,J_0}(-\bm\alpha_w)(\sigma\varphi_+(x,w)+\varphi_+(w,y)\right)\\
		\times\tau(\varepsilon p_{J_0,J_0}^{-1}(-\bm\alpha_w)t)(p_{J_0,J_0}^{-1}(-\bm\alpha_w)t)^n\\   
		\times \frac{z\cdot s^+(x,w,p_{J_0,J_0}^{-1}(-\bm\alpha_w)t\sigma) - p_{J_0,J_0}^{-1}(-\bm\alpha_w)t\sigma\cdot a^+(x,w,p_{J_0,J_0}^{-1}(-\bm\alpha_w)t\sigma)}{z+t}
		\circ b^{+,0}(w,y,p_{J_0,J_0}^{-1}(-\bm\alpha_w)t)\\ p_{J_0,J_0}^{-2}(-\bm\alpha_w)t~m(w)dwd\sigma dt.
		\end{multline}
		Then by the same argument for $C_{z,0}^{-,-}$, up to a kernel associated to an element in $L^{-\infty}_z(\Omega;T^{*0,q}X)$, we can write
		\begin{align}
		C_{z,0}^{+,+}(x,y)
		=&\int_0^{+\infty} e^{it\Psi_+(x,y)}\frac{z\mathds S^+_0(x,y)+t\mathds A^+_0(x,y)}{z+t}t^n \tau(\varepsilon t)dt\label{eq:C_z,0^+,+(x,y)}\\
		+&\int_0^{+\infty} e^{it\Psi_+(x,y)}\frac{z\mathds E^+_1(x,y,t)+t\mathds F^+_1(x,y,t)}{z+t}\tau(\varepsilon t)dt,\nonumber
		\end{align}
		where $\mathds E_1^+(x,y,t),~\mathds F_1^+(x,y,t)\in S^{n-1}_{\rm cl}(\Omega\times\Omega\times\R_+,\End(T^{*0,q}X))$, and
		\begin{align}
		&\Psi_+(x,y)\in {\rm Ph}(p_{J_0,J_0}^{-1}(-\bm\alpha)\bm\alpha),\ \ \mathds S_0^+(x,x)=\mathds A_0^+(x,x)=p_{J_0,J_0}^{n+1}(\bm\alpha_x)s_0^+(x,x).\label{eq:mathds S_0^+=A_0^+}
		\end{align}
		
		We notice that in terms of Definition \ref{def:Szego Phase functions} and Theorem \ref{thm:s_0^psi(x,x)}, we can write
		\begin{equation}
		S_\mp(x,y)\equiv\int_0^{+\infty}e^{it\Psi_\mp(x,y)} \left(s^{\Psi_\mp}_0(x,y) t^n+s_1^{\Psi_\mp}(x,y,t)\right)\tau(\varepsilon t)dt,
		\end{equation}
		where $s_1^{\Psi_\mp}(x,y,t)\in S^{n-1}_{\rm cl}(\Omega\times\Omega\times\R_+,\End (T^{*0,q}X))$ and
		\begin{equation}
		\label{eq:s_0^Psi_-}
		s_0^{\Psi_-}(x,x)=p_{I_0,I_0}^{-n-1}(-\bm\alpha_x)s_0^-(x,x).
		\end{equation}
		Then, up to a kernel associated with an element in $L^{-\infty}_z(\Omega;T^{*0,q}X)$ we have
		\begin{align}
		&\left((z-T_{P,\lambda}^{(q)})\circ(S_-+S_+)\circ A_{z,0}\circ(S_-+S_+)-\Pi_\lambda^{(q)}\right)(x,y)\nonumber\\
		\equiv&\int_0^{+\infty}e^{it\Psi_-(x,y)}\left(\frac{z\mathds S_0^-(x,y)-t\mathds A_0^-(x,y)}{z-t}-s^{\Psi_-}_0(x,y)\right)t^n\tau(\varepsilon t)dt\label{eq:z-FIO Psi_- I}\\
		+&\int_0^{+\infty}e^{it\Psi_-(x,y)}\left(\frac{z\mathds S_0^+(x,y)+t\mathds A_0^+(x,y)}{z+t}-s^{\Psi_+}_0(x,y)\right)t^n\tau(\varepsilon t)dt\nonumber\\
		+&\int_0^{+\infty} e^{it\Psi_-(x,y)}\frac{z(\mathds E^-_1-s_1^{\Psi_-})(x,y,t)-t(\mathds F^-_1-s_1^{\Psi_-})(x,y,t)}{z-t}\tau(\varepsilon t)dt\nonumber\\
		+&\int_0^{+\infty} e^{it\Psi_+(x,y)}\frac{z(\mathds E^+_1-s_1^{\Psi_-})(x,y,t)+t(\mathds F^+_1-s_1^{\Psi_-})(x,y,t)}{z+t}\tau(\varepsilon t)dt,\nonumber
		\end{align}
where $\mathds E_1^\mp(x,y,t),~\mathds F_1^\mp(x,y,t),~s_1^{\Psi_\mp}(x,y,t)\in S^{n-1}_{\rm cl}(\Omega\times\Omega\times\R_+,\End(T^{*0,q}X))$.	
	
	The final step of the proof is to apply Theorem \ref{thm:division theorem of principal symbol for Szego FIO} to reduce the above formula. For the purpose we first let 
		\begin{equation}
		\label{eq:mathds I_0 I}
		\mathds I:=\left(\left.(z-T_{P,\lambda}^{(q)})\circ(S_-+S_+)\circ A_{z,0}\circ(S_-+S_+)-\Pi_\lambda^{(q)}\right)\middle|\right._{z=0}.
		\end{equation}
		From the previous calculation, we can check that up to a smoothing kernel on $\Omega\times\Omega$ we have
		\begin{align}
		\mathds I(x,y)
		\equiv&\int_0^{+\infty}e^{it\Psi_-(x,y)}(\mathds A^-_0-s^{\Psi_-}_0)(x,y)t^n\tau(\varepsilon t)dt\\
		+&\int_0^{+\infty}e^{it\Psi_+(x,y)}(\mathds A^+_0-s^{\Psi_+}_0)(x,y)t^n\tau(\varepsilon t)dt\nonumber\\
		+&\int_0^{+\infty} e^{it\Psi_-(x,y)}(\mathds F^-_1-s_1^{\Psi_-})(x,y,t)\tau(\varepsilon t)dt
		+\int_0^{+\infty} e^{it\Psi_+(x,y)}(\mathds F^+_1-s_1^{\Psi_-})(x,y,t)\tau(\varepsilon t)dt.\nonumber
		\end{align}
		By \eqref{eq:mathds I_0 I}, \eqref{eq:mathds S_0^-=A_0^-}, \eqref{eq:mathds S_0^+=A_0^+}, \eqref{eq:s_0^Psi_-} and the above formula, we can see that $I_0$ satisfies all the assumptions in Theorem \ref{thm:division theorem for equivalent Szego phase function} and we have
		\begin{align}
		&(\mathds A^\mp_0-s_0^{\Psi_\mp})(x,y)-f_0^\mp(x,y)\Psi_\mp(x,y)=O(|x-y|^{+\infty}),\label{eq:mathds A^mp_0-s^Psi_mp_0}.
		\end{align}
		On the other hand, if we let
		\begin{equation}
		\label{eq:mathds II}
		\mathds{II}:=\frac{\partial}{\partial z}\left(\left.(z-T_{P,\lambda}^{(q)})\circ(S_-+S_+)\circ A_{z,0}\circ(S_-+S_+)-\Pi_\lambda^{(q)}\right)\middle|\right._{z=0},
		\end{equation}
		then directly from \eqref{eq:z-FIO Psi_- I} we have
		\begin{align}
		&\mathds {II}(x,y)\\
		\equiv&\int_0^{+\infty}e^{it\Psi_-(x,y)}(-\mathds S^-_0+\mathds A^-_0)(x,y)t^{n-1}\tau(\varepsilon t)dt
		+\int_0^{+\infty}e^{it\Psi_+(x,y)}(\mathds S^+_0-\mathds A^+_0)(x,y)t^{n-1}\tau(\varepsilon t)dt\nonumber\\
		+&\int_0^{+\infty} e^{it\Psi_-(x,y)}(-\mathds E^-_1+\mathds F_1^-)(x,y,t)t^{-1}\tau(\varepsilon t)dt+\int_0^{+\infty} e^{it\Psi_+(x,y)}(\mathds E^+_1-\mathds F_1^+)(x,y,t)t^{-1}\tau(\varepsilon t)dt.\nonumber
		\end{align}
		Again by \eqref{eq:mathds I_0 I}, \eqref{eq:mathds S_0^-=A_0^-}, \eqref{eq:mathds S_0^+=A_0^+}, \eqref{eq:s_0^Psi_-} and the above formula, we can see $\mathds{II}$ satisfies all the assumptions in Theorem \ref{thm:division theorem for equivalent Szego phase function} and we have
		\begin{align}
		&(-\mathds S^-_0+\mathds A^-_0)(x,y)-f_1^-(x,y)\Psi_-(x,y)=O(|x-y|^{+\infty}),\label{eq:mathds -S^-_0+A^-_0 I}\\
		&(\mathds S^+_0-\mathds A^+_0)(x,y)-f_1^+(x,y)\Psi_+(x,y)=O(|x-y|^{+\infty}).\label{eq:mathds S^+_0-A^+_0 I}
		\end{align}
		Then by \eqref{eq:mathds A^mp_0-s^Psi_mp_0}, \eqref{eq:mathds -S^-_0+A^-_0 I} and \eqref{eq:mathds S^+_0-A^+_0 I}, up to a kernel associated to $L^{-\infty}_z(\Omega;T^{*0,q}X)$ we can write
		\begin{align}
		&\int_0^{+\infty}e^{it\Psi_-(x,y)}\left(\frac{z\mathds S_0^-(x,y)-t\mathds A_0^-(x,y)}{z-t}-s^{\Psi_-}_0(x,y)\right)t^n\tau(\varepsilon t)dt\nonumber\\
		+&\int_0^{+\infty}e^{it\Psi_-(x,y)}\left(\frac{z\mathds S_0^+(x,y)+t\mathds A_0^+(x,y)}{z+t}-s^{\Psi_+}_0(x,y)\right)t^n\tau(\varepsilon t)dt\nonumber\\
		\equiv&\int_0^{+\infty}e^{it\Psi_-(x,y)}\frac{g_1^-(x,y,t;z)}{(z-t)^2}t^{n-1}\tau(\varepsilon t)dt
		+\int_0^{+\infty}e^{it\Psi_-(x,y)}\frac{g_1^+(x,y,t;z)}{(z+t)^2}t^{n-1}\tau(\varepsilon t)dt,\nonumber
		\end{align}
		where $g_1^\mp(x,y,t;z)=\sum_{|\alpha|+|\beta|\leq 1}g^\mp_{1,\alpha,\beta}(x,y)t^\alpha z^\beta$.
  
  Combining all the calculation above, we can conclude our theorem.
	\end{proof}
	Now we can state and prove the most important result in this section.
	\begin{theorem} 
		\label{thm:expansion of (z-T_P)^-1 Pi} 
		With the assumptions and notations of Theorem \ref{thm:leading FIO of (z-T_P)^-1 Pi I}, for every $N\in\N_0$, we have Fourier integral operators $A_{z,0},A_{z,1},\cdots A_{z,N},R_{z,N+1}:\cCc(\Omega;T^{*0,q}X)\to\cCc(\Omega;T^{*0,q}X)$, which smoothly depend on $z$, such that up to an element in $L^{-\infty}_z(\Omega;T^{*0,q}X)$ we have
		\begin{equation}
		(z-T_{P,\lambda}^{(q)})\circ   \sum_{j=0}^{N}(S_-+S_+)\circ A_{z,j} \circ(S_-+S_+)\equiv \Pi_\lambda^{(q)}+R_{z,N+1}.
		\end{equation}
 In fact, for each $j=0,\cdots,N$, up to a kernel associated to an element in $L^{-\infty}_z(\Omega;T^{*0,q}X)$, we have
\begin{multline}
\label{eq:(S_-+S_+) circ A_z,j circ (S_-+S_+)(x,y)}
    (S_-+S_+)\circ A_{z,j}\circ(S_-+S_+)(x,y)\\
		\equiv\int_0^{+\infty}e^{it\Psi_-(x,y)}\frac{\sum_{|\beta|+|\gamma|\leq 2j}\alpha_{\beta,\gamma}^{-,j}(x,y,t)t^\beta z^\gamma}{\left(z-t\right)^{2j+1}}\tau(\varepsilon t) dt\\
		+\int_0^{+\infty}e^{it\Psi_+(x,y)}\frac{\sum_{|\beta|+|\gamma|\leq 2j} \alpha_{\beta,\gamma}^{+,j}(x,y,t)t^\beta z^\gamma }{\left(z+t \right)^{2j+1}}\tau(\varepsilon t) dt,
\end{multline}
		 where $\alpha_{\beta,\gamma}^\mp(x,y,t)$ is properly supported in the variables $(x,y)$, $\alpha_{\beta,\gamma}^\mp(x,y,t)=0$ when $n_-\neq n_+$, $\alpha_{\beta,\gamma}^\mp(x,y,t)\in S^{n-j}_{\rm cl}\left(\Omega\times\Omega\times\R_+,\End(T^{*0,q}X)\right)$. Also, up to a kernel associated to an element in $L^{-\infty}_z(\Omega;T^{*0,q}X)$, we have
\begin{multline}
    R_{z,N+1}(x,y)
		\equiv\int_0^{+\infty}e^{it\Psi_-(x,y)}\frac{\sum_{|\beta|+|\gamma|\leq 2N+2}R_{\beta,\gamma}^{-,N+1}(x,y,t)t^\beta z^\gamma}{\left(z-t\right)^{2N+2}}\tau(\varepsilon t) dt\\
		+\int_0^{+\infty}e^{it\Psi_+(x,y)}\frac{\sum_{|\beta|+|\gamma|\leq 2N+2} R_{\beta,\gamma}^{+,N+1}(x,y,t)t^\beta z^\gamma }{\left(z+t\right)^{2N+2}}\tau(\varepsilon t) dt,
\end{multline}
where $R_{\beta,\gamma}^{\mp,N+1}(x,y,t)$ is properly supported in the variables $(x,y)$, $ R_{\beta,\gamma}^{+,N+1}(x,y,t)=0$ when $n_-\neq n_+$, $R_{\beta,\gamma}^{\mp,N+1}(x,y,t)\in S^{n-N-1}_{\rm cl}(\Omega\times\Omega\times\R_+,\End(T^{*0,q}X))$.
	\end{theorem}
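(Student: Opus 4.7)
\emph{Plan.} I would argue by induction on $N$. The base case $N=0$ is furnished by Theorem~\ref{thm:leading FIO of (z-T_P)^-1 Pi I}: the operator $A_{z,0}$ defined in~\eqref{eq:A_z,0} has the form stated in~\eqref{eq:(S_-+S_+) circ A_z,j circ (S_-+S_+)(x,y)} for $j=0$, and the remainder in~\eqref{eq:(z-T_P)(S A_z S)-Pi theorem} is precisely $R_{z,1}$. Assume inductively that we have already produced $A_{z,0},\ldots,A_{z,N}$ of the prescribed shape, together with a remainder $R_{z,N+1}$ of the claimed form, satisfying
$$(z-T_{P,\lambda}^{(q)})\circ\sum_{j=0}^{N}(S_-+S_+)\circ A_{z,j}\circ(S_-+S_+)\equiv\Pi_\lambda^{(q)}+R_{z,N+1}$$
modulo $L^{-\infty}_z(\Omega;T^{*0,q}X)$. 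The inductive step amounts to constructing $A_{z,N+1}$ so that $(z-T_{P,\lambda}^{(q)})\circ(S_-+S_+)\circ A_{z,N+1}\circ(S_-+S_+)+R_{z,N+1}\equiv R_{z,N+2}$ modulo $L^{-\infty}_z(\Omega;T^{*0,q}X)$, with $R_{z,N+2}$ of the next-level form.

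\emph{Construction and verification of the inductive step.} Mimicking~\eqref{eq:A_z,0}, first invoke Theorem~\ref{thm:division theorem for equivalent Szego phase function} to rewrite the phases $\Psi_\mp$ appearing in $R_{z,N+1}$ as $\varphi_\mp$ modulo $O(|x-y|^{+\infty})$. Then define $A_{z,N+1}$ as a finite linear combination of integrals
$$\int_0^{+\infty}e^{it\varphi_\mp(x,y)}\frac{\widetilde R^{\mp}_{\beta,\gamma}(x,y,t)\,t^\beta z^\gamma}{\bigl(z\mp tp_{\bullet,\bullet}(\mp\bm\alpha_x)\bigr)^{M}}\,\tau(\varepsilon t)\,dt,\qquad |\beta|+|\gamma|\le 2N+2,$$
where $p_{\bullet,\bullet}$ denotes $p_{I_0,I_0}$ or $p_{J_0,J_0}$, the coefficients $\widetilde R^{\mp}_{\beta,\gamma}\in S^{n-N-1}_{\rm cl}$ are determined algebraically from those of $R_{z,N+1}$ by matching leading symbols at the diagonal, and $M$ is chosen so that the ensuing stationary-phase calculation produces the denominator $(z\mp t)^{2N+3}$ of~\eqref{eq:(S_-+S_+) circ A_z,j circ (S_-+S_+)(x,y)} at level $j=N+1$. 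Now compute $(z-T_{P,\lambda}^{(q)})\circ(S_-+S_+)\circ A_{z,N+1}\circ(S_-+S_+)$ verbatim as in the proof of Theorem~\ref{thm:leading FIO of (z-T_P)^-1 Pi I}: expand $(z-T_{P,\lambda}^{(q)})\circ(S_-+S_+)$ via Theorem~\ref{thm:BdM-Sj for Toeplitz operators}, change variables via $\beta=p^{-1}t$, $\gamma=p^{-1}t\sigma$, and apply Melin--Sj\"ostrand complex stationary phase with the phase $\Phi_\mp$ of~\eqref{eq:Phi_-}. Adding $R_{z,N+1}$ to the result, the value at $z=0$ and its finitely many $z$-derivatives at $z=0$ vanish on the diagonal---this is the higher-order analogue of \eqref{eq:mathds I_0 I}--\eqref{eq:mathds S^+_0-A^+_0 I}, inherited ultimately from the identity $a_0^\mp(x,x)=p_{\bullet,\bullet}(\mp\bm\alpha_x)s_0^\mp(x,x)$. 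Theorem~\ref{thm:division theorem of principal symbol for Szego FIO} then writes the leading numerator coefficient as $\Psi_\mp$ times a lower-order symbol modulo $O(|x-y|^{+\infty})$, and one integration by parts via $e^{it\Psi_\mp}\Psi_\mp=\tfrac{1}{i}\partial_t e^{it\Psi_\mp}$ converts this extra factor $\Psi_\mp$ into one further factor of $(z\mp t)^{-1}$ in the denominator, yielding precisely the required form of $R_{z,N+2}$.

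\emph{Main obstacle.} The technical heart of the induction is the bookkeeping of denominator exponents $(z\mp t)^{k}$, numerator polynomial degrees in $(t,z)$, and symbol orders through repeated applications of Melin--Sj\"ostrand stationary phase and Theorem~\ref{thm:division theorem of principal symbol for Szego FIO}. The key inductive invariant is that, for each relevant $\ell$, the $\ell$-th $z$-derivative of the accumulated remainder evaluated at $z=0$ continues to satisfy the vanishing-on-the-diagonal hypothesis of Theorem~\ref{thm:division theorem of principal symbol for Szego FIO}, generalizing \eqref{eq:mathds I_0 I}--\eqref{eq:mathds S^+_0-A^+_0 I}. Once this invariant is maintained, the rest of the argument is formal manipulation of symbols.
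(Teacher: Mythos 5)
Your proposal follows the same inductive strategy as the paper's proof and the mathematical skeleton is correct: base case from Theorem~\ref{thm:leading FIO of (z-T_P)^-1 Pi I}, construct $A_{z,N+1}$ by reading off the leading symbol of $R_{z,N+1}$ against a denominator $(z\mp tp_{\bullet,\bullet}(\mp\bm\alpha_x))^{2N+3}$, apply Melin--Sj\"ostrand stationary phase, verify the diagonal-vanishing hypotheses of Theorem~\ref{thm:division theorem of principal symbol for Szego FIO} recursively for the $z$-derivatives at $z=0$, and finish by integration by parts in $t$. Two small points where your sketch is looser than the paper. First, the paper does not rewrite $R_{z,N+1}$'s phase $\Psi_\mp$ back to $\varphi_\mp$; it simply defines $A_{z,N+1}$ with $\varphi_\mp$ phases directly, with the coefficient formula $\alpha_{\beta,\gamma}^{-,N+1}:=-R_{\beta,\gamma,0}^{-,N+1}\cdot p_{I_0,I_0}^{(n-N-1+\beta)+1}(-\bm\alpha_x)t^{n-N-1}$, where the specific power of $p_{I_0,I_0}$ absorbs the Jacobian that the stationary-phase change of variables will reintroduce. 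Your description of ``matching leading symbols at the diagonal'' captures this but hides where the power comes from. Second, the statement that one integration by parts ``converts this extra factor $\Psi_\mp$ into one further factor of $(z\mp t)^{-1}$'' is a shorthand: $\partial_t$ hits not only $(z-t)^{-(2N+3)}$ (producing the $(z-t)^{-(2N+4)}$ term you want) but also the remaining symbol (lower order, absorbed) and $\tau(\varepsilon t)$ (compactly supported, hence smoothing), and all these terms plus the original ones of denominator power $2N+3$ must be recombined over the common denominator $(z-t)^{2N+4}$, raising the numerator degree bound to $2N+4$ exactly as required. Neither point is a gap, but both deserve to be made explicit, as they are precisely the bookkeeping you correctly flag as the technical heart.
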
 
	\begin{proof}
	From Theorem \ref{thm:leading FIO of (z-T_P)^-1 Pi I}, we already have Fourier integral operators $A_{z,0}$ and $R_{z,1}$ with all the properties we need. Especially, the properties of \eqref{eq:(S_-+S_+) circ A_z,j circ (S_-+S_+)(x,y)} are verified from the calculation of Melin--Sj\"ostrand stationary phase method applied in \eqref{eq:C_z,0^-,-(x,y)} and \eqref{eq:C_z,0^+,+(x,y)}. This suggests us to use induction to prove our theorem. Now we assume our theorem holds for some $N=N_0\in\N_0$. We denote
		\begin{align}
		R_{z,N_0+1}(x,y)
		:=&\int_0^{+\infty}e^{it\Psi_-(x,y)}\frac{\sum_{|\beta|+|\gamma|\leq 2N_0+2}R_{\beta,\gamma}^{-,N_0+1}(x,y,t)t^\beta z^\gamma}{\left(z-t\right)^{2N_0+2}}\tau(\varepsilon t) dt\\
		+&\int_0^{+\infty}e^{it\Psi_+(x,y)}\frac{\sum_{|\beta|+|\gamma|\leq 2N_0+2} R_{\beta,\gamma}^{+,N_0+1}(x,y,t)t^\beta z^\gamma }{\left(z+t \right)^{2N_0+2}}\tau(\varepsilon t) dt\nonumber,
		\end{align}
	where $R_{\beta,\gamma}^{\mp,N_0+1}(x,y,t)\sim\sum_{\ell=0}^{+\infty}R_{\beta,\gamma,\ell}^{\mp,N_0+1}(x,y)t^{n-N_0-1-\ell}~\text{in $S^{n-N-1}_{1,0}(\Omega\times\Omega\times\R_+,\End(T^{*0,q}X))$}$. Now, for any $z\notin{\rm Spec}(T_{P,\lambda}^{(q)})\setminus\{0\}$ we consider the operator $A_{z,N_0+1}:\cCc(\Omega)\to\cCc(\Omega)$ determined by the oscillatory integral
		\begin{align}
		A_{z,N_0+1}(x,y)
		:=&\int_0^{+\infty}e^{it\varphi_-(x,y)}\frac{\sum_{|\beta|+|\gamma|\leq 2N_0+2}\alpha_{\beta,\gamma}^{-,N_0+1}(x,y,t)t^\beta z^\gamma}{\left(z-t p_{I_0,I_0}(-\alpha_x)\right)^{2N_0+3}}\tau(\varepsilon t) dt\label{eq:A_z,N_0+1(x,y)}\\
		+&\int_0^{+\infty}e^{it\varphi_+(x,y)}\frac{\sum_{|\beta|+|\gamma|\leq 2N_0+2}\alpha_{\beta,\gamma}^{+,N_0+1}(x,y,t)t^\beta z^\gamma }{\left(z-t p_{J_0,J_0}(\alpha_x)\right)^{2N_0+3}}\tau(\varepsilon t) dt,\nonumber
		\end{align}
		where we have the following symbols properly supported in the variables $(x,y)$:
		\begin{align}
		&\alpha_{\beta,\gamma}^{-,N_0+1}(x,y,t):=- R_{\beta,\gamma,0}^{-,N_0+1}(x,y)\cdot p_{I_0,I_0}^{(n-N_0-1+\beta)+1}(-\alpha_x) t^{n-N_0-1},\\
		&\alpha_{\beta,\gamma}^{+,N_0+1}(x,y,t):=-R_{\beta,\gamma,0}^{+,N_0+1}(x,y)\cdot p_{J_0,J_0}^{(n-N_0-1+\beta)+1}(-\alpha_x) t^{n-N_0-1}.
		\end{align}
		From our construction, it is clear that
		\begin{align}
		&\alpha_{\beta,\gamma}^{+,j}(x,y,t)=0~\text{when}~n_-\neq n_+,   
		\end{align}
		and by the same stationary phase method of Melin--Sj\"ostrand applied in \eqref{eq:C_z,0^-,-(x,y)} and \eqref{eq:C_z,0^+,+(x,y)}, up to a kernel associated to an element in $L^{-\infty}_z(\Omega;T^{*0,q}X)$ we can check that
		\begin{multline}
			\left((z-T_{P,\lambda}^{(q)})\circ   (S_-+S_+)\circ A_{z,N_0+1} \circ(S_-+S_+)+
		R_{z,N_0+1}\right)(x,y)\\
		\equiv\int_0^{+\infty}e^{it\Psi_-(x,y)}\tau(\varepsilon t)\\
        \frac{\sum_{|\beta|+|\gamma|\leq 2N_0+2}\left(z(\mathds S^{-,N_0+1}_{\beta,\gamma}+R^{-,N_0+1}_{\beta,\gamma})-t (\mathds A^{-,N_0+1}_{\beta,\gamma}+R^{-,N_0+1}_{\beta,\gamma})\right)(x,y,t)t^\beta z^\gamma}{(z-t)^{2N_0+3}}dt\\
		+\int_0^{+\infty}e^{it\Psi_+(x,y)}\tau(\varepsilon t)\\
        \frac{\sum_{|\beta|+|\gamma|\leq 2N_0+2}\left(z(\mathds S^{+,N_0+1}_{\beta,\gamma}+R^{+,N_0+1}_{\beta,\gamma})+t(\mathds A^{+,N_0+1}_{\beta,\gamma}+R^{+,N_0+1}_{\beta,\gamma})\right)(x,y,t)t^\beta z^\gamma}{(z+t)^{2N_0+3}}dt,
		\end{multline}
  where we have $\mathds S^{\mp,N_0+1}_{\beta,\gamma}\sim\sum_{\ell=0}^{+\infty}\mathds S^{\mp,N_0+1}_{\beta,\gamma,\ell}(x,y)t^{n-N_0-1-\ell}$, $\mathds A^{\mp,N_0+1}_{\beta,\gamma}\sim\sum_{\ell=0}^{+\infty}\mathds A^{\mp,N_0+1}_{\beta,\gamma,\ell}(x,y)t^{n-N_0-1-\ell}$ in $S^{n-N_0-1}_{1,0}(\Omega\times\Omega\times\R_+,\End(T^{*0,q}X))$,
and we have the leading term relation
		\begin{equation}
		\label{eq:mathds S, A and -R}
		\mathds S^{\mp,N_0+1}_{\beta,\gamma,0}(x,x)=\mathds A^{\mp,N_0+1}_{\beta,\gamma,0}(x,x)=-R^{\mp,N_0+1}_{\beta,\gamma,0}(x,x).
		\end{equation}
	This implies that up to a kernel associated to an element in $L^{-\infty}_z(\Omega;T^{*0,q}X)$ we have
		\begin{align}
  &\left((z-T_{P,\lambda}^{(q)})\circ   (S_-+S_+)\circ A_{z,N_0+1} \circ(S_-+S_+)+
		R_{z,N_0+1}\right)(x,y)\\
		\equiv&\int_0^{+\infty}e^{it\Psi_-(x,y)}\tau(\varepsilon t)\frac{\sum_{|\beta'|+|\gamma'|\leq 2N_0+3}\mathds B^{-,N_0+1}_{\beta',\gamma'}(x,y,t)t^{\beta'} z^{\gamma'}}{(z-t)^{2N_0+3}}dt\label{eq:B minus}\\
		+&\int_0^{+\infty}e^{it\Psi_+(x,y)}\tau(\varepsilon t)\frac{\sum_{|\beta'|+|\gamma'|\leq 2N_0+3}\mathds B^{+,N_0+1}_{\beta',\gamma'}(x,y,t)t^{\beta'} z^{\gamma'}}{(z+t)^{2N_0+3}}dt,\nonumber
		\end{align}
		where $\mathds B^{\mp,N_0+1}_{\beta',\gamma'}(x,y,t)\sim\sum_{\ell=0}^{+\infty}\mathds B^{\mp,N_0+1}_{\beta',\gamma',\ell}(x,y)t^{n-N_0-1-\ell}$ in $S^{n-N_0-1}_{1,0}(\Omega\times\Omega\times\R_+,\End(T^{*0,q}X))$. By \eqref{eq:mathds S, A and -R}, we also have the leading term relation
		\begin{equation}
		\mathds B^{\mp,N_0+1}_{\beta',\gamma',0}(x,x)=0,\ \ \text{for all $(\beta',\gamma')\in\N_0^2$ such that $|\beta'|+|\gamma'|\leq 2N_0+3$.}
		\end{equation}
		On the other hand, we notice that by induction hypothesis we have
		\begin{equation}
		\label{eq:R_z^N_0+1}
		(z-T_{P,\lambda}^{(q)})\circ   \sum_{j=0}^{N_0}(S_-+S_+)\circ A_{z,j} \circ(S_-+S_+)- \Pi_\lambda^{(q)}
		\equiv R_{z,N_0+1}
		\end{equation}
		up to an element in $L^{-\infty}_z(\Omega;T^{*0,q}X)$. Thus, we consider the operator
		\begin{equation}
		\label{eq:mathds I_0}
		\mathds I_0:=\left.\left((z-T_{P,\lambda}^{(q)})\circ   (S_-+S_+)\circ A_{z,N_0+1} \circ(S_-+S_+)+
		R_{z,N_0+1}\right)\middle|\right._{z=0}.
		\end{equation}
		We notice that up to a kernel associated with an element in $L^{-\infty}_z(\Omega;T^{*0,q}X)$, we have
		\begin{align}
		\mathds I_0(x,y)
		\equiv&\int_0^{+\infty}e^{it\Psi_-(x,y)}\frac{\sum_{|\beta'|\leq 2N_0+3}\mathds B^{-,N_0+1}_{\beta',0}(x,y,t)t^\beta}{(-t)^{2N_0+3}}\tau(\varepsilon t)dt\\
		+&\int_0^{+\infty}e^{it\Psi_+(x,y)}\frac{\sum_{|\beta'|\leq 2N_0+3}\mathds B^{+,N_0+1}_{\beta',0}(x,y,t)t^\beta}{t^{2N_0+3}}\tau(\varepsilon t)dt,\nonumber
		\end{align}
		and from \eqref{eq:mathds S, A and -R} and \eqref{eq:R_z^N_0+1} we can check that
		$\mathds I_0$ satisfies all the assumptions in Theorem \ref{thm:division theorem of principal symbol for Szego FIO}. This implies that
		\begin{align}
		&\mathds B^{\mp,N_0+1}_{2N_0+3,0,0}(x,y)-f^{\mp,N_0+1}_{2N_0+3,0}(x,y)\Psi_\mp(x,y)=O(|x-y|^{+\infty}),\label{eq:2N_0+3,0 mp}.
		\end{align}
		Next, we consider
		\begin{equation}
		\label{eq:mathds I_1}
		\mathds I_1:=\left.\left(\frac{\pr }{\pr z}(z-T_{P,\lambda}^{(q)})\circ(S_-+S_+)\circ A_{z,N_0+1} \circ(S_-+S_+)+\frac{\pr }{\pr z}
		R_{z,N_0+1}\right)\middle|\right._{z=0}.
		\end{equation}
		we can apply the same argument for $\mathds I_1$ and check that
		\begin{equation}
		\left(\mathds B^{\mp,N_0+1}_{2N_0+2,1,0}+(2N_0+3)\mathds B^{\mp,N_0+1}_{2N_0+3,0,0}\right)(x,y)
		-g^{\mp,N_0+1}_{2N_0+2,1}(x,y)\Psi_\mp(x,y)
  =O(|x-y|^{+\infty}).
		\end{equation}
		From \eqref{eq:2N_0+3,0 mp}, we immediately have
		\begin{align}
		&\mathds B^{\mp,N_0+1}_{2N_0+2,1,0}(x,y)-f^{\mp,N_0+1}_{2N_0+2,1}(x,y)\Psi_\mp(x,y)=O(|x-y|^{+\infty}).
		\end{align}
	Then inductively for $j=2,3,\cdots,2N_0+3$, we also consider
		\begin{equation}
		\label{eq:mathds I_ell}
		\mathds I_\ell:=\left.\left(\frac{\pr^j }{\pr z^j}(z-T_{P,\lambda}^{(q)})\circ(S_-+S_+)\circ A_{z,N_0+1} \circ(S_-+S_+)+\frac{\pr^j}{\pr z^j}
		R_{z,N_0+1}\right)\middle|\right._{z=0}.
		\end{equation}
		We can then use the same method above to recursively verify that for $j=2,3,\cdots,2N_0+3$ we also have
		\begin{align}
		&\mathds B^{\mp,N_0+1}_{2N_0+3-j,j,0}(x,y)-f^{\mp,N_0+1}_{2N_0+3-j,j}(x,y)\Psi_\mp(x,y)=O(|x-y|^{+\infty}).
		\end{align}
		These relations enable us to apply integration by parts in $t$ in \eqref{eq:B minus}, and after some arrangement we can see that
		up to a kernel associated to an element in $L^{-\infty}_z(\Omega;T^{*0,q}X)$ we have
		\begin{align}
		& \left((z-T_{P,\lambda}^{(q)})\circ   \sum_{j=0}^{N_0+1}(S_-+S_+)\circ A_{z,j} \circ(S_-+S_+)- \Pi_\lambda^{(q)}\right)(x,y)\nonumber\\
		\equiv&\int_0^{+\infty}e^{it\Psi_-(x,y)}\frac{\sum_{|\beta|+|\gamma|\leq 2N_0+4}R_{\beta,\gamma}^{-,N_0+2}(x,y,t)t^\beta z^\gamma}{\left(z-t\right)^{2N_0+4}}\tau(\varepsilon t) dt\\
		+&\int_0^{+\infty}e^{it\Psi_+(x,y)}\frac{\sum_{|\beta|+|\gamma|\leq 2N_0+4} R_{\beta,\gamma}^{+,N+1}(x,y,t)t^\beta z^\gamma }{\left(z+t \right)^{2N_0+4}}\tau(\varepsilon t) dt,\nonumber
		\end{align}
		where $R_{\beta,\gamma}^{\mp,N_0+2}(x,y,t)\in S^{n-N_0-2}_{\rm cl}(\Omega\times\Omega\times\R_+,\End(T^{*0,q}X))$ is properly supported in the variables $(x,y)$ and $R_{\beta,\gamma}^{+,N_0+2}(x,y,t)=0$ when $n_-\neq n_+$.
  
		This completes the induction and the proof of our theorem.
	\end{proof}	
\section{Semi-classical asymptotic expansion for the spectral operator}
In this part we prove Theorems \ref{thm:Main Semi-Classical Expansion} and \ref{thm:Main Leading Term}. We recall that we assume $X$ is compact. From Theorem \ref{thm:WF of Szego projection on lower energy forms}, when $q\notin\{n_-,n_+\}$ we have $\Pi_\lambda^{(q)}\in L^{-\infty}(X;T^{*0,q}X)$, and so does $T_{P,\lambda}^{(q)}$. By standard functional analysis, this implies that $T_{P,\lambda}^{(q)}$ is a compact operator on $X$, and it is known that in this situation the spectrum is a bounded set in $\R$ ,~cf.~\cite{Dav95}*{Theorem 4.2.2} for example. As we assume that $\chi\in\cCc(\dot\R)$, when $k\to+\infty$ we can conclude that:
	\begin{equation}
 \label{eq:conclusion for q not being n_- or n_+}
	\text{If}~q\notin\{n_-,n_+\},~\chi(k^{-1}T_{P,\lambda}^{(q)})=0\ \ \text{on}~X.
	\end{equation}
	The main difficulty is the case $q=n_-$. Since $\chi\in\cCc({\dot\R})$, it is known that $\chi(k^{-1}T_{P,\lambda}^{(q)})= \chi(k^{-1}T_{P,\lambda}^{(q)})\circ\Pi_\lambda^{(q)}$.	Our strategy is to apply $\chi(k^{-1}T_{P,\lambda}^{(q)})=\chi(k^{-1}T_{P,\lambda}^{(q)})\circ\Pi_\lambda^{(q)}$ and Helffer--Sj\"ostrand formula
	\begin{equation}    \chi(k^{-1}T_{P,\lambda}^{(q)})
	=\int_\C\frac{\pr\Td\chi(z)}{\pr\ol z}(z-k^{-1}T_{P,\lambda}^{(q)})^{-1}\frac{dz\wedge d\ol z}{2\pi i}
	=\int_\C\frac{\pr\Td\chi(\frac{z}{k})}{\pr\ol z}(z-T_{P,\lambda}^{(q)})^{-1}\frac{dz\wedge d\ol z}{2\pi i},
	\end{equation}
	and solve the full asymptotic expansion of the Schwartz kernel $\chi(k^{-1}T_{P,\lambda}^{(q)})(x,y)$ as $k\to+\infty$.

	\subsection{Helffer--Sj\"ostrand formula and the semi-classical estimates}
	In this section, we establish the semi-classical estimate for the operator
	\begin{align}
\int_\C\frac{\pr\Td\chi(\frac{z}{k})}{\pr\ol z}(z-T_{P,\lambda}^{(q)})^{-1}\circ\Pi_{\lambda}^{(q)}\frac{dz\wedge d\ol z}{2\pi i},\ \ k\to+\infty.
	\end{align}
	in the operator level as $k\to+\infty$. To simplify the discussion we define some notations.
\begin{definition}
\label{def:z-FIO Szego type order m}
With the notations and assumptions in Theorem \ref{thm:expansion of (z-T_P)^-1 Pi}, for $N,\beta\in\N$, we use the notation $S^{-N,\beta}_{\Sigma,z}(\Omega;T^{*0,q}X)$ for the space collecting the finite sum of $z$-dependent Szeg\H{o} type Fourier integral operators $H_z$ with the kernels
\begin{multline}
		H_z(x,y):=\int_0^{+\infty}e^{it\psi_-(x,y)}\frac{\sum_{\alpha+\gamma\leq\beta}z^\gamma h^-_{\alpha,\gamma}(x,y,t)}{(z-t)^\beta}\tau(\varepsilon t)dt\\
		+\int_0^{+\infty}e^{it\psi_+(x,y)}\frac{\sum_{\alpha+\gamma\leq\beta}z^\gamma h^+_{\alpha,\gamma}(x,y,t)}{(z+t)^\beta}\tau(\varepsilon t)dt,
		\end{multline}
  where $h^\mp_{\alpha,\gamma}(x,y,t)\in S^{n-N+\alpha}_{\rm cl}(\Omega\times\Omega\times\R_+,\End(T^{*0,q}X))$, $h^\mp_{\alpha,\gamma}(x,y,t)$ is 
  properly supported in the variables $(x,y)$, $h^+_{\alpha,\gamma}(x,y,t)=0$ when $n_-\neq n_+$, $\psi_\mp\in{\rm Ph}(\mp\Lambda\bm\alpha,\Omega)$ and $\Lambda\in\cC(X,\R_+)$.
\end{definition}
	\begin{definition}
		\label{def:properly supported semi-classical FIO}
		In the situation of Definition \ref{def:z-FIO Szego type order m},
		for any $m\in\mathbb Z$, we let $\mathcal I^{-m}_{\Sigma,k}(\Omega;T^{*0,q}X)$ 
		be the set of all $k$-dependent continuous operators of the form
		$H_{(k)}: \cCc(\Omega,T^{*0,q}X)\to\cC(X,T^{*0,q}X)$ such that 
		the distribution kernel of $H_{(k)}$ satisfies 
		\begin{equation}
		H_{(k)}(x,y)=\int^{+\infty}_0 e^{ikt\psi_-(x,y)}h^-(x,y,t,k)dt
		+\int^{+\infty}_0 e^{ikt\psi_+(x,y)}h^+(x,y,t,k)dt
		+G_k(x,y),
		\end{equation}			
		where $G_k=O(k^{-\infty})~\text{on}~X\times \Omega$, $h^+(x,y,t,k)=0$ if $n_-\neq n_+$,  $h^\mp (x,y,t,k)$ is properly supported in $(x,y)$, $ h^\mp (x,y,t,k)\sim\sum_{j=0}^{+\infty}h_j^\mp(x,y,t,k)$ in $S^{n+1-m}_{\rm loc}\left(1;\Omega\times \Omega\times\R_+,\End(T^{*0,q}X)\right)$, $h_j^\mp (x,y,t,k)$ is properly supported in $(x,y)$ and in $S^{n+1-m-j}_{\rm loc}\left(1;\Omega\times \Omega\times\R_+,\End(T^{*0,q}X)\right)$ for each $j\in\N_0$, and $h_j^\mp(x,y,t,k_0)$ is in $S^{n-m-j}_{1,0}\left(\Omega\times \Omega\times\R_+,\End(T^{*0,q}X)\right)$ for each $k_0>0$ .
\end{definition}

Let us first prove the following result.	
\begin{lemma}
\label{lem:semi-classical integral H_(k,m)}
For $H_z\in S^{-N}_{\Sigma,z}(\Omega;T^{*0,q}X)$ in Definition \ref{def:z-FIO Szego type order m}, we actually have
		\begin{equation}
		\int_{\mathbb C}\frac{\partial\widetilde\chi(\frac{z}{k})}
		{\partial\overline z} H_z\frac{dz\wedge d\overline z}{2\pi i}\in\mathcal{I}^{-(N-1)}_{\Sigma,k}(\Omega;T^{*0,q}X).
		\end{equation}
\end{lemma}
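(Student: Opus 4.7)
The plan is to evaluate the inner $z$-integral against $H_z$ explicitly using Stokes' theorem in the almost-analytic framework, and then to pass to the semi-classical form via the rescaling $t=k\sigma$.

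First, for each $\gamma\in\mathbb N_0$, $\beta\in\mathbb N$ and $t>0$, the key identity to establish is
\begin{equation*}
\int_{\mathbb C}\frac{\partial\widetilde\chi(z/k)}{\partial\bar z}\frac{z^\gamma}{(z\mp t)^\beta}\frac{dz\wedge d\bar z}{2\pi i}
=\frac{1}{(\beta-1)!}\,\partial_z^{\beta-1}\bigl(\widetilde\chi(z/k)\,z^\gamma\bigr)\big|_{z=\pm t},
\end{equation*}
which I would obtain by Stokes' theorem on $\{\delta\leq|\Im z|\leq R\}$ minus a small disc around $\pm t$, letting $\delta\to 0$ and $R\to\infty$, using that $\partial_{\bar z}\widetilde\chi(z/k)$ vanishes to infinite order on $\mathbb R$ and $\widetilde\chi(\cdot/k)$ has compact support. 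Since $z=\pm t\in\mathbb R$, expanding by Leibniz and using $\partial_z^j\widetilde\chi(z/k)|_{z=\pm t}=k^{-j}\chi^{(j)}(\pm t/k)$ produces a finite linear combination of terms $c_j\,k^{-j}\chi^{(j)}(\pm t/k)(\pm t)^{\gamma-\beta+1+j}$ with $j\in\{\max(0,\beta-1-\gamma),\ldots,\beta-1\}$.

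Next, exchanging the $z$- and $t$-integrations, which is legitimate by the rapid decay of $\partial_{\bar z}\widetilde\chi$ off $\mathbb R$, the Schwartz kernel of the operator in question becomes a finite sum of integrals of the form
\begin{equation*}
\int_0^{+\infty} e^{it\psi_\mp(x,y)}\,h^\mp_{\alpha,\gamma}(x,y,t)\,k^{-j}\chi^{(j)}(\pm t/k)(\pm t)^{\gamma-\beta+1+j}\,\tau(\varepsilon t)\,dt,
\end{equation*}
and the substitution $t=k\sigma$ converts the phase into $ik\sigma\psi_\mp(x,y)$ with amplitude $k^{\gamma-\beta+2}h^\mp_{\alpha,\gamma}(x,y,k\sigma)\chi^{(j)}(\pm\sigma)\sigma^{\gamma-\beta+1+j}\tau(\varepsilon k\sigma)$. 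Inserting the classical expansion $h^\mp_{\alpha,\gamma}(x,y,t)\sim\sum_{\ell\geq 0} h^{\mp}_{\alpha,\gamma,\ell}(x,y)\,t^{n-N+\alpha-\ell}$ and evaluating at $t=k\sigma$ yields an asymptotic expansion in decreasing powers of $k$ with leading order $k^{n-N+\alpha+\gamma-\beta+2}$; since $\alpha+\gamma\leq\beta$ by Definition \ref{def:z-FIO Szego type order m}, this is bounded by $k^{n+1-(N-1)}$, which is exactly the order required for $\mathcal I^{-(N-1)}_{\Sigma,k}$. Compact support of $\chi^{(j)}$ away from $0$ confines $\sigma$ to a fixed compact subset of $\mathbb R_+$, providing uniform $S^{n+1-(N-1)}_{\rm loc}(1;\cdots)$ bounds in $k\geq 1$; properness in $(x,y)$, the vanishing $h^+=0$ when $n_-\neq n_+$, and $\psi_\mp\in\Ph(\mp\Lambda\bm\alpha,\Omega)$ are inherited verbatim from $H_z$.

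The hardest part will be the bookkeeping of the two sign conventions and of the cutoff $\tau(\varepsilon k\sigma)$. The factor $\chi^{(j)}(-\sigma)$ arising from the $\psi_+$-contour vanishes for $\sigma>0$ unless $\supp\chi$ meets $\mathbb R_-$, and $1-\tau(\varepsilon k\sigma)$ is supported in $\sigma\in[0,2/(\varepsilon k)]$, which is disjoint from $\supp\chi^{(j)}(\pm\,\cdot)\subset\dot{\mathbb R}$ once $k$ is large; these contributions collapse into a $k$-negligible remainder $G_k(x,y)$. Once this is handled, the semi-classical asymptotic sum is assembled by the standard Borel construction in $S^{n+1-(N-1)}_{\rm loc}(1;\cdots)$, thus verifying membership in $\mathcal I^{-(N-1)}_{\Sigma,k}(\Omega;T^{*0,q}X)$.
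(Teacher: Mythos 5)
Your proposal is correct and follows essentially the same route as the paper: both reduce the Helffer--Sj\"ostrand integral against $(z\mp t)^{-\beta}$ to a residue-type evaluation at $z=\pm t$, then rescale $t=k\sigma$ and read off the $S^{n+1-(N-1)}_{\rm loc}(1;\cdots)$ expansion from the classical expansion of $h^\mp_{\alpha,\gamma}$. The only organizational difference is that you invoke the higher-order Cauchy--Pompeiu identity in $z$ directly, whereas the paper first integrates by parts $(\beta-1)$ times in $t$ (moving derivatives off the pole), applies Fubini, and then uses the first-order Cauchy--Pompeiu formula; after Leibniz these yield the same finite sum of terms $k^{-j}\chi^{(j)}(\pm t/k)t^{\gamma-\beta+1+j}$ and the same leading power $k^{n-N+2}$.
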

\begin{proof}
Without loss of generality, we take $q=n_-\neq n_+$, and the case $n_-=n_+$ can be deduced from the same argument with some minor changes. By using integration by parts to the variable $t$ in the oscillatory integral $H_z(x,y)$, we can write 
		\begin{multline}
		\label{eq:HS formula for A_{k,m} minus}
		\int_{\mathbb C}\frac{\pr\Td\chi(\frac{z}{k})}{\pr\ol z}\left(\int_0^{+\infty}e^{it\psi_-(x,y)}\frac{\sum_{\alpha+\gamma\leq\beta}z^\gamma h_{\alpha,\gamma}^{-}(x,y,t)}{\left(z-t\right)^\beta}\tau(\varepsilon t) dt\right)\frac{dz\wedge d\overline z}{2\pi i}\\
		=\int_{\mathbb C}\frac{\pr\Td\chi(\frac{z}{k})}{\pr\ol z}
		\frac{(-1)^{\beta-1}}{(\beta-1)!}
		\int_0^{+\infty}\left(\frac{\pr }{\pr t}\right)^{\beta-1}\left(e^{it\psi_-(x,y)}\sum_{\alpha+\gamma\leq \beta}z^\gamma h_{\alpha,\gamma}^{-}(x,y,t)\tau(\varepsilon t)\right)\frac{1}{z-t} dt\frac{dz\wedge d\overline z}{2\pi i}.
		\end{multline}
		Then, by the oscillatory integral version of Fubini theorem, we can write the last integral by
		\begin{equation}
		\label{eq:partial inegration in t for H_{k} minus}
\int_0^{+\infty}\frac{(-1)^{\beta-1}}{(\beta-1)!}
		\int_{\mathbb C}\frac{\pr\Td\chi(\frac{z}{k})}{\pr\ol z}
		\left(\frac{\pr }{\pr t}\right)^{\beta-1}\left(e^{it\psi_-(x,y)}\sum_{\alpha+\gamma\leq \beta}z^\gamma h_{\alpha,\gamma}^{-}(x,y,t)\tau(\varepsilon t)\right)\frac{1}{z-t} \frac{dz\wedge d\overline z}{2\pi i}dt.
		\end{equation}
		By Cauchy--Pompeiu formula, we have
		\begin{multline}
		\int_{\mathbb C}\frac{\pr\Td\chi(\frac{z}{k})}{\pr\ol z}
		\left(\frac{\pr }{\pr t}\right)^{\beta-1}\left(e^{it\psi_-(x,y)}\sum_{\alpha+\gamma\leq \beta}\tau(\varepsilon t)h_{\alpha,\gamma}^{-}(x,y,t) z^\gamma\right)\frac{1}{z-t}\frac{dz\wedge d\overline z}{2\pi i}\\
		=\chi\left(\frac{t}{k}\right)\sum_{\alpha+\gamma\leq\beta}t^\gamma\left(\frac{\pr }{\pr t}\right)^{\beta-1}\left(e^{it\psi_-(x,y)}\tau(\varepsilon t)h_{\alpha,\gamma}^{-}(x,y,t)\right).
		\end{multline}
	By the above calculation and changing $k\mapsto kt$, we know \eqref{eq:partial inegration in t for H_{k} minus} equals to
		\begin{equation}
	\int_0^{+\infty}e^{ikt\psi_-(x,y)}\sum_{\alpha+\gamma\leq \beta-1}\frac{\tau(\varepsilon kt)h_{\alpha,\gamma}^{-}(x,y,kt)}{(\beta-1)!} k^{1+\gamma-(\beta-1)}\frac{\pr^{\beta-1}}{\pr t^{\beta-1}}(t^\gamma\chi(t)) dt.\label{eq:HS formula H_z minus}
		\end{equation}
		
By $h_{\alpha,\gamma}^{-}(x,y,t)\in S^{n-N+\alpha}_{\rm cl}\left(\Omega\times\Omega\times\R_+,\End(T^{*0,q}X)\right)$, we have the asymptotic expansion
\begin{multline}
      k^{1+\gamma-(\beta-1)}h_{\alpha,\gamma}^{-}(x,y,kt)\sim\sum_{j=0}^{+\infty} k^{n+1-(N-1)-j} h_{\alpha,\gamma,j}^{-}(x,y,t)\\
      \text{in $S^{n+1-(N-1)}_{\rm loc}\left(1;\Omega\times\Omega\times\R_+,\End(T^{*0,q}X)\right)$}.
\end{multline}	
		
We recall that $\varepsilon>0$ is a fixed number such that $\tau(\varepsilon t)\chi(t)=\chi(t)$ whenever $t\in\supp\chi$. By our assumption on $\chi$, when $k>0$ is large enough, we can see the products between $\tau(\varepsilon k t)$ and derivatives of $\chi(t)$ are always non-zero. Then for $t>0$, we also have $(\tau(\varepsilon t)-\tau(k\varepsilon t))\chi(t)\in S^{-\infty}_{\rm loc}(1;\R_+)$. By the definition of $\mathcal{I}^{-(N-1)}_{\Sigma,k}(\Omega;T^{*0,q}X)$, we hence complete the proof of our theorem.
\end{proof}
We need the following statement.
\begin{theorem}
\label{thm:negligible estimate for L_z}
For any $L_z\in L^{-\infty}_z(\Omega;T^{*0,q}X)$ in Definition \ref{def:L^-infty_z} we have
\begin{equation}
\label{eq:semi-classical integral of L_z}
    \int_\C\frac{\pr\Td\chi(\frac{z}{k})}{\pr\ol z}L_z\frac{dz\wedge d\ol z}{2\pi i}=\OK\ \ \text{on $X\times\Omega$}.
\end{equation}
\end{theorem}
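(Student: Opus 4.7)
The plan is to verify \eqref{eq:semi-classical integral of L_z} separately for the four elementary classes constituting $L^{-\infty}_z(\Omega;T^{*0,q}X)$ and then conclude by $\C$-linearity. In all four cases, the skeleton of the argument is the Helffer--Sj\"ostrand/Cauchy--Pompeiu reduction already used in Lemma~\ref{lem:semi-classical integral H_(k,m)}: writing
\[
\frac{1}{(z-tp(x))^{M_1}}=\frac{1}{(M_1-1)!\,p(x)^{M_1-1}}\,\frac{\partial^{M_1-1}}{\partial t^{M_1-1}}\frac{1}{z-tp(x)},
\]
integrating by parts $M_1-1$ times in $t$, and then applying Cauchy--Pompeiu, one may read $z^{M_2}\tilde\chi(z/k)$ as an almost analytic extension of $t^{M_2}\chi(t/k)$, so that the $z$-integration collapses into a $t$-integral carrying a factor of $\chi^{(\ell)}(tp(x)/k)$. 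Since $p\in\cC(X,\dot\R)$ is nowhere zero and $X$ is compact, this factor localizes $t$ uniformly to an interval $[c_1 k,c_2 k]$ with $0<c_1<c_2$ depending only on $p$ and $\supp\chi$.

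For $\mathcal{E}_z$ and $\mathcal{F}_z$, the amplitude surviving the reduction is $\partial_t^{M_1-1}[e(x,y,t)\tau(\varepsilon t)]$ with $e\in S^{-\infty}$. Every $t$- and $(x,y)$-derivative of $e$ remains in $S^{-\infty}$, so the bound $|e(x,y,t)|\leq C_N(1+t)^{-N}$ holds for every $N$, and on the localization $t\in[c_1 k,c_2 k]$ this readily produces $O(k^{-\infty})$ estimates uniformly together with all $(x,y)$-derivatives. For $\mathcal{G}_z$, the additional factor $e^{it\psi(x,y)}g(x,y,t)$ is estimated through the Szeg\H{o}-phase bound $\operatorname{Im}\psi(x,y)\geq C|x-y|^2$ from \eqref{eq:Im varphi>C|z-w|^2} together with the infinite-order vanishing $g=O(|x-y|^{+\infty})$. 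Using the elementary inequality $|x-y|^N e^{-Ct|x-y|^2}\leq \bigl(N/(2Cet)\bigr)^{N/2}$ one obtains
\[
|e^{it\psi(x,y)}g(x,y,t)|\leq C_N\,t^{m-N/2}(1+t)^m,
\]
which on $t\sim k$ is $O(k^{-\infty})$ as $N\to+\infty$; derivatives in $x,y$ generate at most powers of $t|d\psi|\sim k$ from the phase, absorbed by enlarging $N$.

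The remaining and most delicate case is $\mathcal{R}_z$. Here the first step is to recognize the inner integration $\int_0^{+\infty}\!\int_\Omega e^{it\psi_\mp(x,w)+i\sigma\psi_\pm(w,y)}r_1(x,w,t)\circ r_2(w,y,\sigma)m(w)\,dw\,d\sigma$ (and the analogous integral for the second variant in Definition~\ref{def:z-depnednt smoothing operator type 4}) as a composition of two Szeg\H{o}-type Fourier integral kernels whose phases $\psi_\mp$ and $\psi_\pm$ carry \emph{opposite} characteristic directions. By Melin--Sj\"ostrand complex stationary phase applied exactly as in the proof of Theorem~\ref{thm:leading FIO of (z-T_P)^-1 Pi I}---where the cross pairs $(-,+)$ and $(+,-)$ were identified precisely as contributions to $L^{-\infty}_z(\Omega;T^{*0,q}X)$---and by the wave-front considerations underlying Theorem~\ref{thm:Hsiao-Marinescu 17 LEF}, such a composition is $O(|x-y|^{+\infty})$ modulo smoothing, uniformly in the remaining symbol variable. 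After this reduction the $\mathcal{R}_z$-kernel fits the $\mathcal{G}_z$ template already handled in the previous paragraph.

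The main obstacle is this last step: one must carry out the stationary phase reduction in $(w,\sigma)$ (or $(w,t)$) while preserving both the rational $z$-dependence and the proper support properties of $r_1,r_2$, and one must check that the resulting $O(|x-y|^{+\infty})$ estimate for the composed amplitude is uniform as the remaining symbolic variable ranges over $[c_1 k,c_2 k]$. Both uniformities are provided by the standard calculus of Szeg\H{o} phase functions developed in \S 3 and the identity $\int_0^{+\infty}e^{it\varphi_\mp(x,y)}r(x,y,t)dt\equiv 0$ for $r=O(|x-y|^{+\infty})$ recalled in Remark~\ref{rmk:varphi(x,y)=x_2n+1+g(x',y)}, but verifying them carefully is the only non-bookkeeping ingredient beyond Lemma~\ref{lem:semi-classical integral H_(k,m)}.
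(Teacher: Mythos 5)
Your treatment of $\mathcal{E}_z$ and $\mathcal{F}_z$ matches the paper. For $\mathcal{G}_z$ you take a genuinely different route: the paper uses Malgrange preparation to factor $\psi(x,y)=f(x,y)(y_{2n+1}+\psi_0(x,y'))$, Taylor-expands the amplitude in $y_{2n+1}$ and splits into $S^{-\infty}$ pieces (reducing to the $\mathcal{E}_z$ case) plus a remainder killed by integration by parts in $t$. Your direct Gaussian estimate, using ${\rm Im}\,\psi\geq C|x-y|^2$ and the Taylor bound $|g|\leq C_{K,N}|x-y|^N(1+t)^m$ so that $|e^{it\psi}g|\leq C_N t^{m-N/2}$, is arguably cleaner; derivatives only cost powers of $t$, which the $t^{-N/2}$ gain absorbs, so this is a valid alternative. (Minor typo: the displayed bound $t^{m-N/2}(1+t)^m$ double-counts the power $m$.)

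The genuine gap is in $\mathcal{R}_z$. You propose to first perform the $(w,\sigma)$ integration by ``Melin--Sj\"ostrand complex stationary phase applied exactly as in the proof of Theorem~\ref{thm:leading FIO of (z-T_P)^-1 Pi I}'' and cite that ``the cross pairs $(-,+)$ and $(+,-)$ were identified precisely as contributions to $L^{-\infty}_z$.'' This is circular: in the proof of Theorem~\ref{thm:leading FIO of (z-T_P)^-1 Pi I}, the cross terms $B^{\mp,\pm}_{z,0}$ are declared to lie in $L^{-\infty}_z$ \emph{because they are by definition in $\mathcal{R}_z$}; no stationary-phase analysis is carried out on them there, and no $O(|x-y|^{+\infty})$ statement is proved. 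Estimating $\mathcal{R}_z$ is precisely what the present theorem must do. Moreover, Melin--Sj\"ostrand stationary phase is inapplicable here: in the cross phase $t\psi_\mp(x,w)+\sigma\psi_\pm(w,y)$, the $w$-gradients of $\psi_\mp(x,\cdot)$ and $\psi_\pm(\cdot,y)$ at the diagonal \emph{reinforce} rather than cancel — both equal $\Lambda(x)\bm\alpha(x)$ by the conventions of Definition~\ref{def:Szego Phase functions}, so $d_w\bigl(t\psi_\mp+\sigma\psi_\pm\bigr)=(t+\sigma)\Lambda\bm\alpha+O(|x-w|+|w-y|)\neq 0$. There is no critical point, so no stationary-phase expansion, and consequently no resulting ``composed amplitude'' to which a $\mathcal{G}_z$-type Gaussian argument could be applied. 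The paper's actual mechanism is non-stationary phase: when $|x-w|>\delta$ integrate by parts in $t$, when $|w-y|>\delta$ integrate by parts in $\sigma$, and near the diagonal integrate by parts in $w$ to gain arbitrary powers of $(t+\sigma)^{-1}$; after the Cauchy--Pompeiu reduction this produces arbitrary powers of $k^{-1}$ since $t\sim k$ on $\supp\chi(tp(x)/k)$. Replacing your ``Melin--Sj\"ostrand $\Rightarrow$ reduce to $\mathcal{G}_z$'' step by this direct integration-by-parts argument would repair the proof.
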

\begin{proof}
First of all, for the case
\begin{equation}
L_z(x,y)=\int_0^{+\infty} e(x,y,t)\frac{1}{(z-t)^{M_1}}\tau(\varepsilon t)dt,
		\end{equation}	
 where $M_1\in\N$ and the symbol $e(x,y,t)\in 
		S^{-\infty}(\Omega\times \Omega\times{\R}_+,\End(T^{*0,q}X))$ is
		properly supported in the variables $(x,y)$,
from the proof of Lemma \ref{lem:semi-classical integral H_(k,m)} and especially the first line of \eqref{eq:HS formula H_z minus}, we can find some $E(x,y,t)\in S^{-\infty}(\Omega\times\Omega\times\R_+,\End(T^{*0,q}X))$ properly supported in the variables $(x,y)$ such that
\begin{equation}
     \int_\C\frac{\pr\Td\chi(\frac{z}{k})}{\pr\ol z}L_z(x,y)\frac{dz\wedge d\ol z}{2\pi i}=\int_0^{+\infty}E(x,y,t)\chi(k^{-1}t)dt=\OK\ \ \text{on $X\times\Omega$}.
\end{equation}
After applying some minor modification, this method also works for any $L_z\in\mathcal{E}_z(\Omega;T^{*0,q}X)$ and any $L_z\in\mathcal{F}_z(\Omega;T^{*0,q}X)$.

Second, we consider
\begin{equation}
\label{eq:special mathcal G_z}
		L_z(x,y)=\int_0^{+\infty} e^{it\psi(x,y)}g(x,y,t)
		\frac{1}{(z-t)^{M_1}}\tau(\varepsilon t)dt,
		\end{equation}
		where $g(x,y,t)=O(|x-y|^{+\infty})$, $g(x,y,t)\in S^m_{\rm cl}(\Omega\times \Omega\times{\R}_+,\End(T^{*0,q}X))$ for some $m\in\mathbb R$, 
		$g(x,y,t)$ is properly supported in the variables $(x,y)$, $M_1 \in\N_0$, 
		and $\psi\in {\Ph}(-\Lambda\bm\alpha,\Omega)$ for some $\Lambda\in\cC(X,{\R_+})$. Again by the first line of \eqref{eq:HS formula H_z minus}, we can can find a $G(x,y,t)=O(|x-y|^{+\infty})$ properly supported in the variables $(x,y)$ and $G(x,y,t)\in S^{m_1}_{\rm cl}(\Omega\times \Omega\times{\R}_+,\End(T^{*0,q}X))$ for some $m_1\in\R$ such that
\begin{equation}
     \int_\C\frac{\pr\Td\chi(\frac{z}{k})}{\pr\ol z}L_z(x,y)\frac{dz\wedge d\ol z}{2\pi i}=\int_0^{+\infty}e^{it\psi(x,y)}G(x,y,t)\chi(k^{-1}t)dt.
\end{equation}
We notice that for any point $p\in \Omega$, we have $\frac{\partial\psi}{\partial y_{2n+1}}(p,p)>0$ from our assumption. From the Malgrange preparation theorem, we can check that near $(p,p)$ we have
		\begin{equation}
		\label{eq:division of psi in special mathcal G_z}
		\psi(x,y)=f(x,y)(y_{2n+1}+\psi_0(x,y')),
		\end{equation}
		 where $\psi_0$ and $g$ are smooth functions near $(p,p)$, $f(p,p)>0$, $\Im\psi\geq 0$ around $(p,p)$ and $y':=(y_1,\cdots,y_{2n})$. 
		When $\Omega$ is small enough, we may assume that \eqref{eq:division of psi in special mathcal G_z} holds on $\Omega\times \Omega$ and as \eqref{eq:Im varphi>C|z-w|^2} we also have 
		\begin{equation}
		\label{eq: Im psi=O(|x'-y'|^2) in L_z}
		{\rm Im\,}\psi(x,y)\geq C|x'-y'|^2\ \ \mbox{on $\Omega\times \Omega$}, 
		\end{equation}
		where $C>0$ is a constant. We let $\widetilde G(x,y,t)$ be an almost analytic extension of $G(x,y,t)$ in the $y_{2n+1}$ variable. For every $N\in\N$, by using Taylor expansion at $\Td y_{2n+1}=-\psi_0(x,y')$, we have 
		\begin{multline}
  \label{eq:Taylor expansion of G(x,y,t)}
		G(x,y,t)=\widetilde G(x,(y',\Td y_{2n+1}),t)|_{\Td y_{2n+1}=y_{2n+1}}
		=\sum^N_{j=0}G_j(x,y',t)(y_{2n+1}+\psi_0(x,y'))^j\\
  +(y_{2n+1}+\psi_0(x,y'))^{N+1}R_{N+1}(x,y,t),
		\end{multline}
		where $G_j(x,y',t)\in S^{m_1}_{{\rm cl\,}}(\Omega\times \Omega\times\mathbb R_+,\End(T^{*0,q}X))$, $j=1,\cdots,N$, and $R_{N+1}(x,y',t)\in S^{m_1}_{{\rm cl\,}}(\Omega\times \Omega\times\mathbb R_+,\End(T^{*0,q}X))$. On one hand, since $G(x,y,t)=O(|x-y|^{+\infty})$, by taking $(N+1)$-times derivatives of $y_{2n+1}$ in \eqref{eq:Taylor expansion of G(x,y,t)} we can first check that 
  \begin{equation}
      R_{N+1}(x,y,t)=O(|x-y|^{+\infty}).
  \end{equation}
  Then similarly we can check that
            \begin{equation}
		\label{eq:G_j(x,y,t) in mathcal G_z}
		G_j(x,y',t)=O(|x'-y'|^{+\infty}),\ \ j=N,N-1,\cdots,0.
		\end{equation}
  We then let
		\begin{equation}
		\mathds G_N(x,y,t):=e^{it\psi(x,y)}\sum^N_{j=0}(y_{2n+1}+\psi_0(x,y'))^j G_j(x,y',t),    
		\end{equation}
		and consider the operator $\mathds G_{(k,N)}$ defined by kernel
		\begin{equation}
		\mathds G_{(k,N)}(x,y):=\int_0^{+\infty} \mathds G_N(x,y,t)\chi(k^{-1}t)dt.
		\end{equation}
		From \eqref{eq: Im psi=O(|x'-y'|^2) in L_z} and \eqref{eq:G_j(x,y,t) in mathcal G_z}, we can check that 
		\begin{align}
		&e^{it\psi(x,y)}G_j(x,y',t)\in S^{-\infty}(\Omega\times \Omega\times\mathbb R_+,\End(T^{*0,q}X)),~j=1,\cdots,N,\\
		&\mathds G_N(x,y,t)\in S^{-\infty}(\Omega\times \Omega\times\mathbb R_+,\End(T^{*0,q}X)).
		\end{align}
		Then by the result we just have proved in the first step we have
		\begin{equation}
		\mathds G_{(k,N)}=O(k^{-\infty})\ \ \text{on $X\times\Omega$}.
		\end{equation}
Also, we notice that by $(N+1)$-times of partial integration we have
\begin{multline}
    \int_0^{+\infty}e^{it\psi(x,y)}(y_{2n+1}+\psi_0(x,y'))^{N+1}R_{N+1}(x,y,t)\chi(k^{-1}t)dt\\
    = \sum_{j=0}^{N+1}\int_0^{+\infty}e^{it\psi(x,y)}\gamma_{N+1,j}(x,y,t)k^{-(N+1-j)}\frac{\partial^{N+1-j}\chi}{\partial t}(k^{-1}t)dt,
\end{multline}
where $\gamma_{N+1,j}(x,y,t)\in S^{m-N-1+j}_{\rm cl}(\Omega\times\Omega\times\R_+,\End (T^{*0,q}X))$ for each $j=0,\cdots,N+1$. By taking the Taylor expansion \eqref{eq:Taylor expansion of G(x,y,t)} of $G(x,y,t)$ to arbitrary high order $N$ and by the condition that $G(x,y,t)$ is properly supported in $(x,y)$, the above arguments imply that for $L_z$ in the form of \eqref{eq:special mathcal G_z} we have
\begin{equation}
    \int_\C \frac{\pr\Td\chi(\frac{z}{k})}{\pr\ol z}L_z(x,y)\frac{dz\wedge d\ol z}{2\pi i}=\OK\ \ \text{on $X\times\Omega$}.
\end{equation}
 With some minor change of the argument we just used, this method also works for any $L_z\in\mathcal{G}_z(\Omega;T^{*0,q}X)$.

Finally, we notice that for $L_z\in \mathcal{R}_z(\Omega;T^{*0,q}X)$ of the form
		\begin{multline}
		L_z(x,y)=\int_0^{+\infty}\int_0^{+\infty}\int_\Omega e^{it\psi_-(x,w)+i\sigma\psi_+(w,y)}r_1(x,w,t)\circ r_2(w,y,\sigma)
		\frac{z^{M_2}}{(z-t)^{M_1}}\tau(\varepsilon t)m(w) dw d\sigma dt,
		\end{multline}
where  $\psi_\mp\in {\Ph}(\mp\Lambda\bm\alpha,\Omega)$ for some $\Lambda\in\cC(X,{\R_+})$ and $r_1,r_2$ are H\"ormander symbols, by the properties that $\psi(x,w)=0$ when $x=w$, $\psi(w,y)=0$ when $w=y$, $d_w\psi_-(x,w)=d_w\psi_+(w,y)$ at $w=x=y$, $t\geq 0$ and $\sigma\geq 0$, we have the following observation: given a suitably small $\delta>0$, when $|x-w|>\delta$ we can apply arbitrary times of partial integration in $t$;  when $|w-y|>\delta$ we can apply arbitrary times of partial integration in $\sigma$; when $|x-y|<2\delta$ we can apply arbitrary times of partial integration in $w$. Then by this observation and the proof of the previous lemma we can check that 
\begin{equation}
    \int_\C \frac{\pr\Td\chi(\frac{z}{k})}{\pr\ol z}L_z(x,y)\frac{dz\wedge d\ol z}{2\pi i}=\OK\ \ \text{ on $X\times\Omega$},
\end{equation}
and with some minor changes the same argument also works for general $L_z\in\mathcal{R}_z(\Omega;T^{*0,q}X)$.
\end{proof}
The next theorem follows directly from  Theorems  \ref{thm:leading FIO of (z-T_P)^-1 Pi I}, \ref{thm:expansion of (z-T_P)^-1 Pi} and \ref{thm:negligible estimate for L_z}.
	\begin{theorem}
		\label{thm:principal term of chi_k(T_P) I}
		With the same notations and assumptions in Theorem \ref{thm:expansion of (z-T_P)^-1 Pi}, for any $m\in\N_0$ we have
  \begin{equation}
     \int_{\mathbb C}\frac{\partial\widetilde\chi_k}
		{\partial\overline z}\mathscr A_{z,m}\frac{dz\wedge d\overline z}{2\pi i}  :=\int_{\mathbb C}\frac{\partial\widetilde\chi_k}
		{\partial\overline z}\mathscr (S_-+S_+)\circ A_{z,m}\circ (S_-+S_+)\frac{dz\wedge d\overline z}{2\pi i}=A_{(k,m)}\in \mathcal{I}^{-m}_{\Sigma,k}(\Omega;T^{*0,q}X),
  \end{equation}
  and up to an $k$-negligible kernel on $X\times\Omega$ we have
		\begin{equation}
		A_{(k,m)}(x,y)\equiv\int^{+\infty}_0 e^{ikt\Psi_-(x,y)}a^{-,m}(x,y,t,k)dt
		+\int^{+\infty}_0 e^{ikt\Psi_+(x,y)}a^{+,m}(x,y,t,k)dt,
		\end{equation} 
	where
		\begin{align}
		&\Psi_-\in {\Ph}(p_{I_0,I_0}^{-1}(-\bm\alpha)(-\bm\alpha),\Omega),\ \ \Psi_+\in {\Ph}(p_{J_0,J_0}^{-1}(-\bm\alpha)\bm\alpha,\Omega),
		\end{align}
and we have the following data properly supported in $(x,y)$:
\begin{equation}
		\mbox{$a^{\mp,m}(x,y,t,k)\sim\sum^{+\infty}_{j=0}a^{\mp,m}_j(x,y,t)k^{n+1-m-j}$ }
  \mbox{in $S^{n+1-m}_{{\rm loc\,}}(1;\Omega\times \Omega\times\mathbb R_+,\End(T^{*0,q}X))$},
		\end{equation}
		\begin{align}
  &\forall j\in\N_0,~a^{\mp,m}_j(x,y,t)\neq 0\implies t\in\supp\chi;\ \ 
a^{\mp,m}(x,y,t)\neq 0\implies t\in\supp\chi\ ,
		\end{align}
		and
		\begin{equation}
		a^+(x,y,t,k)=0~\text{when}~n_-\neq n_+\ .
		\end{equation}
  Moreover, for $m=0$ we have
\begin{equation}
    a^{-,0}_0(x,x,t)=\frac{|\det\mathcal{L}_x|}{2\pi^{n+1}}\frac{v(x)}{m(x)}~p^{-n-1}_{I_0,I_0}(-\bm\alpha_x)~\chi(t)t^n,
\end{equation}
  and when $n_-=n_+$ we also have
		\begin{align}
		&a^{+,0}_0(x,x,t)=\frac{|\det\mathcal{L}_x|}{2\pi^{n+1}}\frac{v(x)}{m(x)}~p^{-n-1}_{J_0,J_0}(-\bm\alpha_x)~\chi(-t)t^n.
		\end{align}
	\end{theorem}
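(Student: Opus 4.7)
The plan is to substitute the expansion of Theorem~\ref{thm:expansion of (z-T_P)^-1 Pi} into the Helffer--Sj\"ostrand contour integral and evaluate each summand termwise using Lemma~\ref{lem:semi-classical integral H_(k,m)} for the Szeg\H{o}-type piece and Theorem~\ref{thm:negligible estimate for L_z} for the residual. Concretely, Theorem~\ref{thm:expansion of (z-T_P)^-1 Pi} presents $\mathscr{A}_{z,m}$, modulo an element of $L^{-\infty}_z(\Omega;T^{*0,q}X)$, as a kernel of the form in Definition~\ref{def:z-FIO Szego type order m} with denominator exponent $2m+1$ and Szeg\H{o} phases $\Psi_\mp$; the numerator coefficients $\alpha^{\mp,m}_{\beta,\gamma}(x,y,t)\,t^\beta$ are classical symbols of order $n-m+\beta$, and the index constraint is $|\beta|+|\gamma|\leq 2m$. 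After renaming, this exhibits $\mathscr{A}_{z,m}$ as an element of $S^{-m,2m+1}_{\Sigma,z}(\Omega;T^{*0,q}X)$, and Theorem~\ref{thm:negligible estimate for L_z} shows that the $L^{-\infty}_z$ residual contributes $O(k^{-\infty})$ on $X\times\Omega$ once inserted into the Helffer--Sj\"ostrand formula.

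Applying Lemma~\ref{lem:semi-classical integral H_(k,m)} to the Szeg\H{o}-type piece yields a priori an element of $\mathcal{I}^{-(m-1)}_{\Sigma,k}$. To sharpen this to the claimed $\mathcal{I}^{-m}_{\Sigma,k}$, I will revisit the power count inside the proof of Lemma~\ref{lem:semi-classical integral H_(k,m)}: after Cauchy--Pompeiu and $(2m)$-fold integration by parts in $t$, the generic $k$-weight is $k^{1+\gamma-2m}\,h^\mp_{\beta,\gamma}(x,y,kt)$, whose leading size is $k^{n-3m+1+\beta+\gamma}$. The generic maximum $\beta+\gamma=2m+1$ would give $k^{n-m+2}$, but this configuration is excluded here by $|\beta|+|\gamma|\leq 2m$, so the true maximum is $k^{n+1-m}$, which is exactly the order prescribed by Definition~\ref{def:properly supported semi-classical FIO} for $\mathcal{I}^{-m}_{\Sigma,k}$. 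The support property $a^{\mp,m}_j(x,y,t)\neq 0\Rightarrow t\in\supp\chi$ is immediate from the same calculation, because the resulting symbol is a linear combination of factors $\partial_t^{2m}(t^\gamma\chi(t))$ after the rescaling $t\mapsto kt$.

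For the explicit identification of $a^{\mp,0}_0$ at $m=0$, the denominator is $(z\mp t)^1$, so no integration by parts is needed and Cauchy--Pompeiu gives directly
\begin{equation*}
\int_{\mathbb C}\frac{\partial\widetilde\chi(z/k)}{\partial\bar z}\,\frac{1}{z\mp t}\,\frac{dz\wedge d\bar z}{2\pi i}=\chi(\pm t/k).
\end{equation*}
Combining this with the explicit leading values from Theorem~\ref{thm:leading FIO of (z-T_P)^-1 Pi I}, namely $\alpha^{-}_0(x,x)=\frac{|\det\mathcal{L}_x|}{2\pi^{n+1}}\frac{v(x)}{m(x)}p_{I_0,I_0}^{-n-1}(-\bm\alpha_x)$ and (when $n_-=n_+$) $\alpha^{+}_0(x,x)=\frac{|\det\mathcal{L}_x|}{2\pi^{n+1}}\frac{v(x)}{m(x)}p_{J_0,J_0}^{-n-1}(-\bm\alpha_x)$, and then substituting $t\mapsto kt$ to recover the semi-classical phase $e^{ikt\Psi_\mp}$ together with the Jacobian factor $k$, one reads off $a^{-,0}_0(x,x,t)=\chi(t)\,t^n\,\alpha^{-}_0(x,x)$ and $a^{+,0}_0(x,x,t)=\chi(-t)\,t^n\,\alpha^{+}_0(x,x)$, matching the stated formulas.

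The main technical obstacle will be the refined order count in the second paragraph: one must argue carefully that the structural constraint $|\beta|+|\gamma|\leq 2m$ arising from Theorem~\ref{thm:expansion of (z-T_P)^-1 Pi} really does improve the a priori bound of Lemma~\ref{lem:semi-classical integral H_(k,m)} by exactly one order, by tracking the Cauchy--Pompeiu and integration-by-parts steps term by term rather than citing the lemma as a black box. Everything else is a direct consequence of Theorems~\ref{thm:leading FIO of (z-T_P)^-1 Pi I} and \ref{thm:expansion of (z-T_P)^-1 Pi}, Lemma~\ref{lem:semi-classical integral H_(k,m)}, and Theorem~\ref{thm:negligible estimate for L_z}.
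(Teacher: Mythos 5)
Your proposal is correct and follows the same route the paper implicitly prescribes (substitute Theorem \ref{thm:expansion of (z-T_P)^-1 Pi} into the Helffer--Sj\"ostrand formula, use Theorem \ref{thm:negligible estimate for L_z} for the $L^{-\infty}_z$ residual, and unwind the Cauchy--Pompeiu/partial-integration machinery of Lemma \ref{lem:semi-classical integral H_(k,m)} for the Szeg\H{o} piece). You also correctly spot, and correctly resolve, the one genuine subtlety the paper's terse citation glosses over: applying Lemma \ref{lem:semi-classical integral H_(k,m)} as a black box to $\mathscr A_{z,m}\in S^{-m,2m+1}_{\Sigma,z}$ only gives $\mathcal{I}^{-(m-1)}_{\Sigma,k}$, and the extra order of decay comes precisely from the constraint $|\beta|+|\gamma|\le 2m$ (strictly less than the denominator power $2m+1$) in Theorem \ref{thm:expansion of (z-T_P)^-1 Pi}, which you verify by redoing the power count inside the lemma's proof. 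One small imprecision: after rescaling $t\mapsto kt$, the symbol on the $\Psi_+$ side involves $\partial_t^{2m}\bigl(t^\gamma\chi(-t)\bigr)$ rather than $\partial_t^{2m}\bigl(t^\gamma\chi(t)\bigr)$, since Cauchy--Pompeiu for $(z+t)^{-1}$ evaluates at $z=-t$; you actually use this correctly in your $m=0$ computation of $a^{+,0}_0(x,x,t)=\chi(-t)\,t^n\,\alpha^+_0(x,x)$, so the slip is only in the general-$m$ support remark and does not affect the substance.
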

	From Theorem \ref{thm:principal term of chi_k(T_P) I}, now we have
	\begin{align}
	\chi(k^{-1}T_{P,\lambda}^{(q)})
	=\sum_{m=0}^N A_{(k,m)}+R_{(k,N+1)}+F_{(k,N+1)}\label{eq:operator level expansion of chi_k(T_P)},
	\end{align}
	where $R_{z,N+1}$ is as described in Theorem \ref{thm:expansion of (z-T_P)^-1 Pi} and $F_{z,N+1}\in L^{-\infty}_z(\Omega;T^{*0,q}X)$. We define
	\begin{align}
	&R_{(k,N+1)}:=\int_{\mathbb C}\frac{\partial\widetilde\chi_k}
	{\partial\overline z}(z-T_{P,\lambda}^{(q)})^{-1}\circ R_{z,N+1}\frac{dz\wedge d\overline z}{2\pi i},\\
	&F_{(k,N+1)}:=\int_{\mathbb C}\frac{\partial\widetilde\chi_k}
	{\partial\overline z}(z-T_{P,\lambda}^{(q)})^{-1}\circ F_{z,N+1}\frac{dz\wedge d\overline z}{2\pi i}.
	\end{align}
	We are going to show that for any $N\in\N_0$ we have 
 \begin{align}
	&     F_{(k,N_0+1)}=O(k^{-N})~\text{in}~\mathscr L(H^{-N}_{\rm comp}(\Omega,T^{*0,q}X),H^{N}(X,T^{*0,q}X))
	\end{align}
  and for any $N_1,N_2\in\N_0$ we can find an $N_0>0$ large enough such that
	\begin{align}
 \label{eq:operator level R_{(k,N_0+1)}}
	&     R_{(k,N_0+1)}=O(k^{-N_1})~\text{in}~\mathscr L(H^{-N_2}_{\rm comp}(\Omega,T^{*0,q}X),H^{N_2}(X,T^{*0,q}X)).
	\end{align}
	
	To proceed further, we need the following resolvent estimate.
	\begin{theorem}
		\label{thm:resolvent estimate for T_P}
		In the situation of Theorem \ref{thm:Main Semi-Classical Expansion}, for $q=n_-$ and any $s\in\N_0$ we have
		\begin{equation}
		\Pi_\lambda^{(q)}\circ(z-T_{P,\lambda}^{(q)})^{-1}=O\left(\frac{|z|^s}{|\im z|}\right)~\text{in}~\mathscr L(H^s(X,T^{*0,q}X),H^{s+1}(X,T^{*0,q}X)).
		\end{equation}
	\end{theorem}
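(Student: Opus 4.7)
The plan is to reduce to the $\Pi_\lambda^{(q)}$-invariant subspace $\mathcal{H}_{b,\lambda}^{(q)}(X)$, on which $T_{P,\lambda}^{(q)}$ is self-adjoint (Theorem~\ref{thm:T_P is self-adjoint}) and satisfies the elliptic estimate with gain of one derivative (Theorem~\ref{thm:elliptic estimate for T_P,lambda^q,q=n_-}), and then to iterate that elliptic estimate starting from the spectral-theoretic $L^2$ bound.

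First, given $f \in H^s(X, T^{*0,q}X)$, I would set $v := \Pi_\lambda^{(q)}(z - T_{P,\lambda}^{(q)})^{-1} f$. From $T_{P,\lambda}^{(q)} = \Pi_\lambda^{(q)} P \Pi_\lambda^{(q)}$ and $(\Pi_\lambda^{(q)})^2 = \Pi_\lambda^{(q)}$, one has $\Pi_\lambda^{(q)} T_{P,\lambda}^{(q)} = T_{P,\lambda}^{(q)} = T_{P,\lambda}^{(q)} \Pi_\lambda^{(q)}$, so applying $\Pi_\lambda^{(q)}$ to $(z-T_{P,\lambda}^{(q)})(z-T_{P,\lambda}^{(q)})^{-1} f = f$ yields $(z-T_{P,\lambda}^{(q)}) v = \Pi_\lambda^{(q)} f$ with $v \in \mathcal{H}_{b,\lambda}^{(q)}(X)$, equivalently $T_{P,\lambda}^{(q)} v = zv - \Pi_\lambda^{(q)} f$. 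Self-adjointness of $T_{P,\lambda}^{(q)}$ and the standard resolvent estimate for self-adjoint operators then produce the base bound $\|v\|_0 \leq |\im z|^{-1}\|\Pi_\lambda^{(q)} f\|_0 \leq |\im z|^{-1}\|f\|_0$, using that $\Pi_\lambda^{(q)}$ is an $L^2$-orthogonal projection.

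The induction on $s$ then proceeds as follows. Feeding $T_{P,\lambda}^{(q)} v = zv - \Pi_\lambda^{(q)} f$ into Theorem~\ref{thm:elliptic estimate for T_P,lambda^q,q=n_-}, which applies precisely because $v \in \mathcal{H}_{b,\lambda}^{(q)}(X)$, gives
\begin{equation*}
\|v\|_{s+1} \leq C_s\bigl(\|T_{P,\lambda}^{(q)} v\|_s + \|v\|_s\bigr) \leq C_s\bigl((|z|+1)\|v\|_s + \|\Pi_\lambda^{(q)} f\|_s\bigr).
\end{equation*}
By Theorem~\ref{thm:Hsiao-Marinescu 17 LEF}, $\Pi_\lambda^{(q)}$ differs from an operator in $L^0_{1/2,1/2}$ by a smoothing remainder, so the Calder\'on--Vaillancourt theorem yields $\|\Pi_\lambda^{(q)} f\|_s \leq C\|f\|_s$ on every Sobolev scale. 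Combining this with the inductive control of $\|v\|_s$ by the analogous bound at level $s-1$, and starting from the $L^2$ base case of the previous step, one obtains the claimed Sobolev resolvent estimate.

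The main technical point is the careful accounting of the powers of $|z|$ through the iteration. Each application of the elliptic estimate introduces a factor of $|z|$ via the substitution $T_{P,\lambda}^{(q)} v = zv - \Pi_\lambda^{(q)} f$, whereas the single $1/|\im z|$ enters only once, at the $L^2$ level via self-adjointness. The Sobolev continuity of $\Pi_\lambda^{(q)}$ at every scale is essential to this bookkeeping: it guarantees that the inhomogeneity $\Pi_\lambda^{(q)} f$ contributes purely at the level $\|f\|_s$ at each inductive step, with no additional $|z|$-dependence, so that the target $|z|^s/|\im z|$ growth (rather than something worse) can be extracted.
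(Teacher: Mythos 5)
Your proof is correct, and it reaches the conclusion by a genuinely different route than the paper. The paper's argument starts from the parametrix Toeplitz operator $T_{Q,\lambda}^{(q)}$ constructed in Theorem~\ref{thm:parametetrix of Toeplitz operators for lower energy forms}, rewrites the identity $T_{Q,\lambda}^{(q)}\circ(z-T_{P,\lambda}^{(q)})=zT_{Q,\lambda}^{(q)}-\Pi_\lambda^{(q)}+F$ as
\begin{equation*}
\Pi_\lambda^{(q)}\circ(z-T_{P,\lambda}^{(q)})^{-1}=-T_{Q,\lambda}^{(q)}+z\,T_{Q,\lambda}^{(q)}\circ\Pi_\lambda^{(q)}\circ(z-T_{P,\lambda}^{(q)})^{-1}+F\circ(z-T_{P,\lambda}^{(q)})^{-1},
\end{equation*}
and then reads off the $H^s\to H^{s+1}$ bound by iterating this algebraic identity, using that $T_{Q,\lambda}^{(q)}\in\mathscr L(H^s,H^{s+1})$, $F$ smoothing, and the $L^2$ resolvent bound. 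You instead invoke the elliptic a priori estimate of Theorem~\ref{thm:elliptic estimate for T_P,lambda^q,q=n_-} (which the paper itself derives from that same parametrix) and run a direct induction on $s$ starting from the self-adjoint $L^2$ bound, feeding in $T_{P,\lambda}^{(q)}v=zv-\Pi_\lambda^{(q)}f$ at each step. Both arguments rest on the same two pillars --- the existence of a gain-one parametrix and the self-adjoint resolvent estimate --- and both produce the factor $|z|$ once per Sobolev level and $1/|\im z|$ exactly once. What the paper's route buys is that everything stays at the level of composites of manifestly bounded operators, so there is no regularity issue to address: one never has to know beforehand that $\Pi_\lambda^{(q)}(z-T_{P,\lambda}^{(q)})^{-1}f$ lies in $H^{s+1}$. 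Your route is shorter and more transparent about the bookkeeping, but it does lean on the convention that Theorem~\ref{thm:elliptic estimate for T_P,lambda^q,q=n_-} is an elliptic \emph{regularity} statement (if $u\in\mathcal H_{b,\lambda}^{(q)}(X)\cap H^s$ and $T_{P,\lambda}^{(q)}u\in H^s$, then $u\in H^{s+1}$), not merely an a priori inequality valid for $u$ already known to be smooth; this is how the paper uses that theorem elsewhere (to get smoothness of eigenforms), so the reading is legitimate, but a one-line remark to that effect would close the gap. A small additional point: your step $\|v\|_{s+1}\le C_s((|z|+1)\|v\|_s+\|\Pi_\lambda^{(q)}f\|_s)$, combined with the inductive bound, yields $|z|^s/|\im z|$ only up to lower-order corrections; absorbing those into a single power of $|z|$ uses $|z|/|\im z|\ge 1$ and, for the constant term, $|z|$ bounded away from zero, which holds in the Helffer--Sj\"ostrand application since $\chi\in\cCc(\dot\R)$. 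The paper's estimate implicitly makes the same normalization.
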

	\begin{proof}
By Theorem \ref{thm:parametetrix of Toeplitz operators for lower energy forms}, we have $Q\in L^{-1}_{\rm cl}(X;T^{*0,q}X)$ and $F\in L^{-\infty}(X;T^{*0,q}X)$ such that
		\begin{equation}
		T_{Q,\lambda}^{(q)}\circ (z-T_{P,\lambda}^{(q)})=z T_{Q,\lambda}^{(q)}-\Pi_\lambda^{(q)}+F,
		\end{equation}
		where $T_{Q,\lambda}^{(q)}:=\Pi_\lambda^{(q)}\circ Q\circ \Pi_\lambda^{(q)}$. This implies that
		\begin{equation}
		\label{eq:Pi circ (z-T_P)^-1}
		\Pi_\lambda^{(q)}\circ(z-T_{P,\lambda}^{(q)})^{-1}=-T_{Q,\lambda}^{(q)}+z T_{Q,\lambda}^{(q)}\circ (z-T_{P,\lambda}^{(q)})^{-1}+F\circ (z-T_{P,\lambda}^{(q)})^{-1}.
		\end{equation}
		From  Theorem \ref{thm:T_P is self-adjoint} and the spectral theory of self-adjoint operators, we have
		\begin{equation}
		(z-T_{P,\lambda}^{(q)})^{-1}=O\left(\frac{1}{|\im z|}\right)~\text{in}~\mathscr L(L^2(X,T^{*0,q}X),L^2(X,T^{*0,q}X)).
		\end{equation}
		By the above estimate, \eqref{eq:Sobolev boundedness of T_P} and \eqref{eq:Pi circ (z-T_P)^-1}, we immediately have
		\begin{equation}
		\Pi_\lambda^{(q)}\circ(z-T_{P,\lambda}^{(q)})^{-1}=O\left(\frac{|z|}{|\im z|}\right)~\text{in}~\mathscr L(L^2(X,T^{*0,q}X),H^1(X,T^{*0,q}X)).
		\end{equation}
		We can put this estimate back to \eqref{eq:Pi circ (z-T_P)^-1}, then by $T_{Q,\lambda}^{(q)}\circ (z-T_{P,\lambda}^{(q)})^{-1}=T_{Q,\lambda}^{(q)}\circ \Pi_\lambda^{(q)}\circ(z-T_{P,\lambda}^{(q)})^{-1}$ and the same argument and estimate we just used, we have
		\begin{equation}
		\Pi_\lambda^{(q)}\circ(z-T_{P,\lambda}^{(q)})^{-1}=O\left(\frac{|z|^2}{|\im z|}\right)~\text{in}~\mathscr L(H^1(X,T^{*0,q}X),H^2(X,T^{*0,q}X)).
		\end{equation}
		We can hence bootstrap and get our theorem.
	\end{proof}
 Now we can prove the following.
	\begin{theorem}
		\label{thm:HS formula for mathcal E_z is OK}
		For any operator $E_z\in\mathcal{E}_z(\Omega;T^{*0,q}X)$ of Definition \ref{def:z-depnednt smoothing operator type 1} and $N\in\N_0$ we have
		\begin{equation}
\int_{\mathbb C}\frac{\partial\widetilde\chi_k}
{\partial\overline z}(z-T_{P,\lambda}^{(q)})^{-1}\circ E_{z}\frac{dz\wedge d\overline z}{2\pi i}
=O(k^{-N})~\text{in}~\mathscr L\left(H^{-N}_{\rm comp}(\Omega,T^{*0,q}X),H^{N}(X,T^{*0,q}X)\right).
		\end{equation}
	\end{theorem}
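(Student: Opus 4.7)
The strategy is to use the Riesz-type decomposition of the resolvent adapted to $\Pi_\lambda^{(q)}$. Since $T_{P,\lambda}^{(q)}$ commutes with $\Pi_\lambda^{(q)}$ and annihilates $\ker\Pi_\lambda^{(q)}$, one has
\[
(z - T_{P,\lambda}^{(q)})^{-1} = \Pi_\lambda^{(q)}(z - T_{P,\lambda}^{(q)})^{-1}\Pi_\lambda^{(q)} + \frac{1}{z}(I - \Pi_\lambda^{(q)}),
\]
which splits the operator to be estimated into two pieces, treated separately. Two standard properties of the almost analytic extension are used throughout: $\supp\widetilde\chi_k\subset\{\abs{z}\sim k\}$ and $\abs{\frac{\partial\widetilde\chi_k}{\partial\overline z}(z)}\leq C_K k^{-K-1}\abs{\Im z}^K$ for every $K\in\N_0$.

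For the piece coming from $\frac{1}{z}(I-\Pi_\lambda^{(q)})\circ E_z$, I would first apply Fubini to interchange the $z$- and $t$-integrations in the kernel of $E_z$. The inner $z$-integral
\[
\int_{\mathbb C}\frac{\partial\widetilde\chi_k}{\partial\overline z}\,\frac{z^{M_2-1}}{(z-tp(x))^{M_1}}\,\frac{dz\wedge d\overline z}{2\pi i}
\]
is computed by expanding $z^{M_2-1}$ in powers of $(z-tp(x))$ at $z = tp(x)$ and applying the generalized Cauchy--Pompeiu formula; the result is a finite linear combination of terms proportional to $(tp(x))^{M_2-1-j}\chi_k^{(M_1-j-1)}(tp(x))$ for $0\leq j<\min(M_1,M_2)$. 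Each such factor vanishes outside $tp(x)\in k\cdot\supp\chi$, forcing $t\sim k$. Combined with $\abs{\partial^\alpha_x\partial^\beta_y e(x,y,t)}\leq C_{\alpha,\beta,L}(1+t)^{-L}$ for every $L$ (since $e\in S^{-\infty}$), the subsequent $t$-integration produces a kernel which is $O(k^{-\infty})$ in any $C^m$-seminorm on compacta, and hence in the required operator norm.

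For the piece coming from $\Pi_\lambda^{(q)}(z-T_{P,\lambda}^{(q)})^{-1}\Pi_\lambda^{(q)}\circ E_z$, the plan is to combine the resolvent estimate of Theorem~\ref{thm:resolvent estimate for T_P} with a refined kernel bound on $E_z$ obtained by splitting the defining $t$-integral into a neighborhood of the pole $t=\Re z/p(x)$ and its complement. On the near-pole region one has $t\sim\abs{z}$, so $\abs{e(x,y,t)}\leq C_L(1+t)^{-L}\sim (1+\abs{z})^{-L}$; on the complement, additional partial integrations in $t$ reduce the $\abs{\Im z}^{-M_1}$ loss. One then establishes, for every $L$, an estimate of the form
\[
\norm{E_z}_{H^{-N}\to H^N}\leq C_{N,L}\,(1+\abs{z})^{-L}\,\abs{z}^{M_2}\,\abs{\Im z}^{-(M_1+N)}.
\]
Since $(1+\abs{z})^{-L}\sim k^{-L}$ on $\supp\widetilde\chi_k$, composing with the bound $\norm{\Pi_\lambda^{(q)}(z-T_{P,\lambda}^{(q)})^{-1}}_{H^N\to H^{N+1}}\leq C\abs{z}^N/\abs{\Im z}$ and pairing with the almost-analytic estimate on $\partial\widetilde\chi_k/\partial\overline z$ yields an integrand bounded by $C_{N,L,K}\,k^{N+M_2-L-K-1}\abs{\Im z}^{K-M_1-N-1}$. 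Integrating over $\supp\widetilde\chi_k$ (of two-dimensional area $O(k^2)$), choosing $K$ so that the $\Im z$-integral converges and $L$ sufficiently large, gives the desired $O(k^{-N})$ in the required operator norm.

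The principal technical obstacle is the sharpened bound on $\norm{E_z}_{H^{-N}\to H^N}$ with the crucial $(1+\abs{z})^{-L}$ prefactor: the naive estimate without this prefactor only gives polynomial growth in $k$, and recovering decay requires carefully matching the location of the pole $t=\Re z/p(x)$ to the rapid $t$-decay of $e$, together with partial integration in $t$ to convert high powers of the pole into tamer expressions. Once this sharper estimate is in hand, the Helffer--Sj\"ostrand integration over the two-dimensional support of $\widetilde\chi_k$ combined with the almost-analytic vanishing of $\partial\widetilde\chi_k/\partial\overline z$ on $\mathbb R$ absorbs the polynomial $\abs{z}$- and $\abs{\Im z}^{-1}$-losses and delivers the $k^{-\infty}$ decay in every Sobolev range.
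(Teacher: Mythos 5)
Your decomposition $(z-T_{P,\lambda}^{(q)})^{-1}=\Pi_\lambda^{(q)}(z-T_{P,\lambda}^{(q)})^{-1}\Pi_\lambda^{(q)}+z^{-1}(I-\Pi_\lambda^{(q)})$ is algebraically valid, and your treatment of the $z^{-1}(I-\Pi_\lambda^{(q)})\circ E_z$ piece (Fubini, reduction of the pole order by partial integration in $t$, Cauchy--Pompeiu, then rapid decay of $e$ on the support of $\chi(t/k)$) is sound and parallels what the paper does for its finite-sum terms. However, the argument for the second piece rests on the claimed estimate
\begin{equation*}
\norm{E_z}_{H^{-N}\to H^N}\leq C_{N,L}\,(1+\abs{z})^{-L}\,\abs{z}^{M_2}\,\abs{\Im z}^{-(M_1+N)}
\qquad\text{for every }L\in\N,
\end{equation*}
and this estimate is false. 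The obstruction is the region of bounded $t$: take, say, $e(x,y,t)=\eta(t)f(x,y)$ with $\eta\in\cCc\bigl((2/\varepsilon,3/\varepsilon)\bigr)$, so $\tau(\varepsilon t)\equiv 1$ on $\supp\eta$, and set $z=ik$, $p\equiv 1$. Then $\abs{(ik-t)^{-M_1}}\sim k^{-M_1}$ uniformly in $t\in\supp\eta$, and the kernel is of size $\sim k^{M_2-M_1}$, which contradicts the claimed bound for any $L>0$ since there $\abs{z}=\abs{\Im z}=k$. Your near-pole/far-from-pole split only leverages the $S^{-\infty}$ decay of $e$ where $t\sim\abs{z}$; on the far-from-pole region with $t$ bounded the symbol $e$ is not small, no number of integrations by parts in $t$ produces any extra negative power of $\abs{z}$ (each partial integration trades a power of the pole for a $t$-derivative of $e$, not an inverse power of $z$), and the leftover $\abs{z}^{M_2-M_1}$ ruins the balance once composed with the resolvent gain $\abs{z}^s/\abs{\Im z}$.

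The paper circumvents exactly this by a different algebraic identity: on $\supp\widetilde\chi_k$ (where $z\neq 0$) one writes, for any $M\in\N$,
\begin{equation*}
(z-T_{P,\lambda}^{(q)})^{-1}=z^{-M}\,T_{P,\lambda}^{(q),M}\circ(z-T_{P,\lambda}^{(q)})^{-1}+\sum_{j=0}^{M-1}z^{-1-j}\,T_{P,\lambda}^{(q),j}.
\end{equation*}
The finite sum is handled exactly as your first piece. For the remainder, the crucial device is to factor $T_{P,\lambda}^{(q),M}=T_{P,\lambda}^{(q),M/2}\circ T_{P,\lambda}^{(q),M/2}$ (taking $M\equiv 0\pmod 4$), insert $\Pi_\lambda^{(q)}$, and apply Theorem~\ref{thm:resolvent estimate for T_P} at an intermediate Sobolev level; then the explicit $z^{-M}$ factor absorbs both the polynomial $\abs{z}^{M_2}$ loss from $E_z$ \emph{and} the $\abs{z}^{s}$ loss from the resolvent, while the remaining $\abs{\Im z}^{-1}$ and $\abs{\Im z}^{-M_1}$ are absorbed by the almost-analytic estimate $\abs{\partial\widetilde\chi_k/\partial\overline z}=O\bigl(k^{-K}\abs{\Im z}^K\bigr)$. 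Since $M$ is free, one gets decay $O(k^{-M/4+M_2-M_1})$ at order $H^{-M/4}_{\rm comp}\to H^{M/4}$, hence the full conclusion. This extra factor of $z^{-M}$ with $M$ arbitrary, paired with the split $T^{M/2}\circ T^{M/2}$, is the missing idea in your argument; no pointwise refinement of the $E_z$-estimate alone can substitute for it.
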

	\begin{proof}
		For simplicity, we assume that the kernel of $E_z$ is given by 
		\begin{equation}
		E_z(x,y)=\int_0^{+\infty} e(x,y,t)
		\frac{z^{M_2}}{(z-t)^{M_1}}\tau(\varepsilon t)dt,
		\end{equation}	
	where $e(x,y,t)\in 
		S^{-\infty}(\Omega\times \Omega\times{\R}_+,\End(T^{*0,q}X)$
		properly supported in the variables $(x,y)$, and $M_1,M_2\in\N_0$. The general situation can also be deduced from some modification of the argument below. 
		
		When $z\notin\Spec(T_{P,\lambda}^{(q)})\setminus\{0\}$, we have
        \begin{equation}
            (z-T_{P,\lambda}^{(q)})^{-1}=z^{-M}{T_{P,\lambda}^{(q),M}}\circ(z-T_{P,\lambda}^{(q)})^{-1}+\sum_{j=0}^{M-1}z^{-1-j}{T_{P,\lambda}^{(q),j}},
        \end{equation}
		where $M\in\N$ is arbitrary, $T_{P,\lambda}^{(q),j}:=T_{P,\lambda}^{(q)}\circ\cdots\circ T_{P,\lambda}^{(q)}$	is the $j$-times composition between $T_{P,\lambda}^{(q)}$ and $T_{P,\lambda}^{(q),0}:=I$. We notice that $z\neq 0$ when $z\in\supp\Td\chi(\frac{\cdot}{k})$. We also notice that from $\Pi_\lambda^{(q)}\circ\Pi_\lambda^{(q)}=\Pi_\lambda^{(q)}$ and $[\Pi_\lambda^{(q)},T_{P,\lambda}^{(q)}]=0$ we have $\left[\Pi_\lambda^{(q)},(z-T_{P,\lambda}^{(q)})^{-1}\right]=0$.
	
Then, on one hand, for the integral
		\begin{equation}
\int_{\mathbb C}\frac{\partial\widetilde\chi(\frac{z}{k})}
{\partial\overline z}z^{-1-j}~T_{P,\lambda}^{(q),j}\circ E_z\frac{dz\wedge d\overline z}{2\pi i}=T_{P,\lambda}^{(q),j}\circ\int_{\mathbb C}\frac{\partial\widetilde\chi(\frac{z}{k})}
{\partial\overline z}z^{-1-j}~ E_z\frac{dz\wedge d\overline z}{2\pi i},
		\end{equation}
		by Fubini theorem and $(M_1-1)$-times of partial integration to $t$ in the sense of oscillatory integrals, we have
		\begin{multline}
\int_{\mathbb C}\frac{\partial\widetilde\chi(\frac{z}{k})}
{\partial\overline z}z^{-1-j}~ E_z(x,y)\frac{dz\wedge d\overline z}{2\pi i}\\
=\int_{\mathbb C}\frac{\partial\widetilde\chi(\frac{z}{k})}
{\partial\overline z}z^{-1-j}\int_0^{+\infty} e(x,y,t)\frac{z^{M_2}}{(z-t)^{M_1}} \tau(\varepsilon t)dt\frac{dz\wedge d\overline z}{2\pi i}\\
=\int_0^{+\infty}\int_{\mathbb C}\frac{\partial}
{\partial\overline z}\left(\widetilde\chi\left(\frac{z}{k}\right)z^{-1-j+M_2}\right)(z-t)^{-1}\frac{dz\wedge d\overline z}{2\pi i}\delta(x,y,t)dt,
		\end{multline}
	where $\delta(x,y,t)$ is in $S^{-\infty}(\Omega\times \Omega\times{\R}_+,\End(T^{*0,q}X))$ and properly supported in $(x,y)$, and $\delta(x,y,t)\neq 0$ if and only if $t>\varepsilon$. So we can apply Cauchy--Pompeiu formula and get
	\begin{multline}
	\int_0^{+\infty}\int_{\mathbb C}\frac{\partial}
{\partial\overline z}\left(\widetilde\chi\left(\frac{z}{k}\right)z^{-1-j+M_2}\right)(z-t)^{-1}\frac{dz\wedge d\overline z}{2\pi i}\delta(x,y,t)dt\\
=\int_0^{+\infty}\chi\left(\frac{t}{k}\right)t^{-1-j+M_2}\delta(x,y,t)dt
=\OK.
	\end{multline}
By the Sobolev-boundedness of $T_{P,\lambda}^{(q)}$, we know that this part of integral satisfies the estimate we want.
	
On the other hand, for arbitrary $M\in\N_0$ such that $M\equiv 0\mod 4$, we have the integral
		\begin{multline}
\int_{\mathbb C}\frac{\partial\widetilde\chi(\frac{z}{k})}
{\partial\overline z} z^{-M}~T_{P,\lambda}^{(q),M}\circ(z-T_{P,\lambda}^{(q)})^{-1} \circ E_z\frac{dz\wedge d\overline z}{2\pi i}\\
=T_{P,\lambda}^{(q),\frac{M}{2}}\circ\int_{\mathbb C}\frac{\partial\widetilde\chi(\frac{z}{k})}
{\partial\overline z}z^{-M}~\Pi_{\lambda}^{(q)}\circ(z-T_{P,\lambda}^{(q)})^{-1}\circ T_{P,\lambda}^{(q),\frac{M}{2}} \circ E_z\frac{dz\wedge d\overline z}{2\pi i}.
\end{multline}
By the Sobolev estimate of $T_{P,\lambda}^{(q)}$, the estimate that $|\frac{\partial\widetilde\chi(\frac{z}{k})}
{\partial\overline z}|=O(k^{-\infty}|\im z|^{+\infty})$, the estimate of Theorem \ref{thm:resolvent estimate for T_P}, again the Sobolev estimate of $T_{P,\lambda}^{(q)}$, and the direct estimate of $E_z$, then for any $M\in\N_0$
 we have
\begin{multline}
T_{P,\lambda}^{(q),\frac{M}{2}}\circ\int_{\mathbb C}\frac{\partial\widetilde\chi(\frac{z}{k})}
{\partial\overline z}z^{-M}~\Pi_{\lambda}^{(q)}\circ(z-T_{P,\lambda}^{(q)})^{-1}\circ T_{P,\lambda}^{(q),\frac{M}{2}} \circ E_z\frac{dz\wedge d\overline z}{2\pi i}\\
=O\left(\sup_{k^{-1}z\in \supp\Td\chi} k^2\cdot \frac{|\im z|^{1+M_1}}{k^{1+M_1}}\cdot |z|^{-M}\cdot\frac{|z|^{\frac{M}{2}+\frac{M}{4}-1}}{|\im z|}\cdot\frac{|z|^{M_2}}{|\im z|^{M_1}}\right)\\
=O(k^{-\frac{M}{4}+M_2-M_1})~\text{in}~\mathscr L\left(H^{-\frac{M}{4}}_{\rm comp}(\Omega,T^{*0,q}X),H^{\frac{M}{4}}(X,T^{*0,q}X)\right).
\end{multline}
Combining all the estimates above we complete the proof.
	\end{proof}
We would like to note that during the proof of the previous theorem, the step where we split $T_{P,
 \lambda}^{M}$ into $T_{P,\lambda}^{\frac{M}{2}}\circ T_{P,\lambda}^{\frac{M}{2}}$ is crucial. This step is designed to prevent the argument from breaking down when we apply Theorem \ref{thm:resolvent estimate for T_P}. Specifically, it helps us avoid a situation where the term $|z|^s$ contributes an excessive power of $k$, which can occur when $s$ is too large.\
 
 With the same proof, we also have the following.
	\begin{theorem}
        \label{thm:HS formula for mathcal F_z is OK}
		For any operator  $F_z\in\mathcal F_z(\Omega;T^{*0,q}X)$ of Definition \ref{def:z-depnednt smoothing operator type 2} and $N\in\N_0$ we have
		\begin{equation}
		\int_{\mathbb C}\frac{\partial\widetilde\chi_k}
		{\partial\overline z}(z-T_{P,\lambda}^{(q)})^{-1}\circ F_{z}\frac{dz\wedge d\overline z}{2\pi i}
		=O(k^{-N})~\text{in}~\mathscr L\left(H^{-N}_{\rm comp}(\Omega,T^{*0,q}X),H^{N}(X,T^{*0,q}X)\right).
		\end{equation}
	\end{theorem}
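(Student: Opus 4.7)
The plan is to mimic, essentially verbatim, the proof of Theorem \ref{thm:HS formula for mathcal E_z is OK}. The only structural distinction between the classes $\mathcal{E}_z$ and $\mathcal{F}_z$ is that for $F_z \in \mathcal{F}_z$ the symbol $f(x,y,t)$ lives in $S^{-\infty}(X \times \Omega \times \R_+, \End(T^{*0,q}X))$ rather than being properly supported in $(x,y) \in \Omega \times \Omega$. Since every estimate in the previous proof depends only on the $S^{-\infty}$ regularity of the symbol together with the Sobolev-boundedness and spectral properties of $T_{P,\lambda}^{(q)}$, the needed modifications will be purely bookkeeping.

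First, by linearity I would reduce to the single model kernel
\begin{equation*}
F_z(x,y) = \int_0^{+\infty} f(x,y,t)\, \frac{z^{M_2}}{(z - tp(x))^{M_1}}\, \tau(\varepsilon t)\, dt
\end{equation*}
and invoke, for any $M \in \N$, the resolvent identity
\begin{equation*}
(z - T_{P,\lambda}^{(q)})^{-1} = z^{-M}\, T_{P,\lambda}^{(q),M} \circ (z - T_{P,\lambda}^{(q)})^{-1} + \sum_{j=0}^{M-1} z^{-1-j}\, T_{P,\lambda}^{(q),j}.
\end{equation*}
For each polynomial-in-$z$ summand I would use Fubini for oscillatory integrals and $(M_1 - 1)$-fold integration by parts in $t$ (the boundary at $t=0$ is killed by $\tau(\varepsilon t)$ and the $S^{-\infty}$ symbol decays at $t=+\infty$), then collapse the inner complex integral by Cauchy--Pompeiu to a factor of $\chi(tp(x)/k)(tp(x))^{M_2-1-j}$. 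The integrand remains $S^{-\infty}$ in $t$, so the contribution is $\OK$ on $X \times \Omega$, and left-composition with the Sobolev-bounded $T_{P,\lambda}^{(q),j}$ preserves the required estimate via \eqref{eq:Sobolev boundedness of T_P}. For the $z^{-M}$ remainder, with $M = 4N'$ and using $[\Pi_\lambda^{(q)}, (z-T_{P,\lambda}^{(q)})^{-1}]=0$, I would factor
\begin{equation*}
T_{P,\lambda}^{(q),M/2} \circ \int_\C \frac{\pr \Td\chi(z/k)}{\pr \ol z}\, z^{-M}\, \Pi_\lambda^{(q)}\, (z - T_{P,\lambda}^{(q)})^{-1}\, T_{P,\lambda}^{(q),M/2}\, F_z\, \frac{dz \wedge d\ol z}{2\pi i},
\end{equation*}
and combine the almost-analytic bound $|\pr \Td\chi(z/k)/\pr \ol z| = O_N(k^{-\infty} |\im z|^{+\infty})$, the resolvent estimate of Theorem \ref{thm:resolvent estimate for T_P}, the Sobolev boundedness of iterated $T_{P,\lambda}^{(q)}$, and the direct bound $F_z = O(|z|^{M_2}\, |\im z|^{-M_1})$ to obtain an $O(k^{-M/4 + M_2 - M_1})$ estimate in $\mathscr L(H^{-M/4}_{\rm comp}(\Omega, T^{*0,q}X), H^{M/4}(X, T^{*0,q}X))$, which is made as strong as desired by enlarging $M$.

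The only point I expect to require genuine attention is the Sobolev estimate $F_z = O(|z|^{M_2}\,|\im z|^{-M_1})$ as a map $H^{-N}_{\rm comp}(\Omega, T^{*0,q}X) \to H^N(X, T^{*0,q}X)$: whereas in the $\mathcal{E}_z$ case this was immediate from the proper support of the symbol, here $f$ is merely $S^{-\infty}$ on $X \times \Omega$. However, since $X$ is compact and $f$ is smoothing in both variables, the kernel $F_z(x,y)$ is smooth with all derivatives in $(x,y)$ bounded uniformly on $X \times \Omega$ by a constant multiple of $|z|^{M_2}\,|\im z|^{-M_1}$; this supplies the desired mapping bound without any need for proper support, and the rest of the previous argument then goes through unchanged.
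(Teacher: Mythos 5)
Your proposal is correct and takes the same route as the paper, which itself disposes of this theorem with the single remark ``With the same proof, we also have the following.'' The one new point worth checking --- that the direct Sobolev mapping bound $F_z=O\bigl(|z|^{M_2}|\operatorname{Im}z|^{-M_1}\bigr)$ in $\mathscr L\bigl(H^{-N}_{\rm comp}(\Omega,T^{*0,q}X),H^{N}(X,T^{*0,q}X)\bigr)$ survives the loss of proper support because $f\in S^{-\infty}(X\times\Omega\times\R_+,\End(T^{*0,q}X))$ and $X$ is compact --- is exactly what you supply in your final paragraph, after which the rest of the argument from Theorem~\ref{thm:HS formula for mathcal E_z is OK} goes through unchanged.
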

\begin{theorem}
    \label{thm:HS formula for mathcal R_z is OK}
    	For any operator $\mathscr R_z\in\mathcal{R}_z(\Omega;T^{*0,q}X)$ of Definition \ref{def:z-depnednt smoothing operator type 4} and $N\in\N_0$ we have
		\begin{equation}
		\int_\C\frac{\partial\widetilde\chi_k}
		{\partial\overline z}(z-T_{P,\lambda}^{(q)})^{-1}\circ \mathscr R_z \frac{dz\wedge d\overline z}{2\pi i}
		=O(k^{-N})~\text{in}~\mathscr L\left(H^{-N}_{\rm comp}(\Omega,T^{*0,q}X),H^{N}(X,T^{*0,q}X)\right).
		\end{equation}
\end{theorem}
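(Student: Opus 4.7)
The plan is to adapt the strategy of Theorem~\ref{thm:HS formula for mathcal E_z is OK}, combining it with the region-splitting integration-by-parts scheme that appears in the proof of Theorem~\ref{thm:negligible estimate for L_z}, in order to handle the extra oscillatory structure of $\mathscr R_z$. For concreteness I will carry out the argument for $\mathscr R_z$ of the first form in Definition~\ref{def:z-depnednt smoothing operator type 4}, with denominator $(z-tp(x))^{M_1}$; the second form, where the denominator is $(z-\sigma p(w))^{M_1}$, should follow from a parallel argument after swapping the roles of $t$ and $\sigma$. As in Theorem~\ref{thm:HS formula for mathcal E_z is OK}, the first step is to fix a large even integer $M$ and insert the resolvent identity
$$(z-T_{P,\lambda}^{(q)})^{-1}=z^{-M}T_{P,\lambda}^{(q),M}\circ(z-T_{P,\lambda}^{(q)})^{-1}+\sum_{j=0}^{M-1}z^{-1-j}T_{P,\lambda}^{(q),j},$$
splitting the integral accordingly into a polynomial piece and a remainder piece.

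For each polynomial term $z^{-1-j}T_{P,\lambda}^{(q),j}\circ\mathscr R_z$, I will push $T_{P,\lambda}^{(q),j}$ outside the $z$-integral and swap the $z$-integration with the $(t,\sigma,w)$-integrations by Fubini. After integrating by parts $(M_1-1)$ times in $t$ (the boundary contributions vanish thanks to the cut-off $\tau(\varepsilon t)$), the Cauchy--Pompeiu formula collapses the $z$-integral into $\chi(tp(x)/k)$ times a polynomial factor in $t$, exactly as in the proof of Theorem~\ref{thm:HS formula for mathcal E_z is OK}. The resulting kernel is then a double oscillatory integral with the two Szeg\H{o}-type phases $\psi_\mp(x,w)$ and $\psi_\pm(w,y)$, restricted to the region $t\sim k$. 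At this point I apply the region-wise integration-by-parts device from Theorem~\ref{thm:negligible estimate for L_z}: on $\{|x-w|>\delta\}$ repeated integration by parts in $t$ lowers the symbol order, on $\{|w-y|>\delta\}$ the same works in $\sigma$, and on $\{|x-y|<2\delta\}$ one integrates by parts in $w$ using the non-vanishing of $d_w[t\psi_\mp(x,w)+\sigma\psi_\pm(w,y)]$ away from the stationary point $w=x=y$, $\sigma\sim t$. Combined with the localisation $t\sim k$, each pass yields an extra negative power of $k$, and after enough iterations the kernel is $\OK$ on $X\times\Omega$; the Sobolev boundedness \eqref{eq:Sobolev boundedness of T_P} of the Toeplitz powers then upgrades this to the required operator estimate.

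For the remainder term $z^{-M}T_{P,\lambda}^{(q),M}\circ(z-T_{P,\lambda}^{(q)})^{-1}\circ\mathscr R_z$, I follow the factorisation trick of Theorem~\ref{thm:HS formula for mathcal E_z is OK}: split $T_{P,\lambda}^{(q),M}=T_{P,\lambda}^{(q),M/2}\circ T_{P,\lambda}^{(q),M/2}$ and move one factor to the left of the $z$-integral, pairing the other with $\mathscr R_z$ inside. The quantitative control then chains together the estimate $|\partial\widetilde\chi(z/k)/\partial\overline z|=O(k^{-\infty}|\Im z|^{+\infty})$, the factor $|z|^{-M}$, the Sobolev boundedness of $T_{P,\lambda}^{(q)}$, the resolvent bound of Theorem~\ref{thm:resolvent estimate for T_P} giving $O(|z|^s/|\Im z|)$, and a direct Sobolev estimate $\mathscr R_z=O(|z|^{M_2}|\Im z|^{-M_1})$. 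The latter is obtained by extracting $(z-tp(x))^{-M_1}=O(|\Im z|^{-M_1})$ and viewing the remaining $r_1,r_2$ factors as composable Szeg\H{o}-type Fourier integral operators with uniform mapping properties through the intermediate $w$-integration. Choosing $M$ large enough relative to $N$, $M_1$, $M_2$ then delivers the bound $O(k^{-N})$ in the claimed Sobolev operator norm.

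The main obstacle will be the region-wise integration-by-parts step in the polynomial piece, specifically verifying quantitatively uniform control on $\{|x-y|<2\delta\}$ where one integrates by parts in $w$. One must show that $d_w[t\psi_\mp(x,w)+\sigma\psi_\pm(w,y)]$ stays lower-bounded on a suitable neighbourhood of the diagonal away from the stationary point, with the bound polynomial in $t+\sigma$, so that iterating produces uniform negative powers of $k$ after exploiting $\chi(tp(x)/k)$. Equivalently, outside the shrinking diagonal $\mathscr R_z$ must be controlled as a genuine composition of two Szeg\H{o}-type Fourier integral operators, which is where one uses the facts that $\psi_\mp\in\mathrm{Ph}(\mp\Lambda\bm\alpha,\Omega)$ and that $d_w\psi_-(x,w)=-d_w\psi_+(w,y)$ at $w=x=y$. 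Once this uniform stationary-phase bookkeeping is in place, the remaining arguments are routine adaptations of the proofs of Theorems~\ref{thm:HS formula for mathcal E_z is OK} and \ref{thm:HS formula for mathcal F_z is OK}.
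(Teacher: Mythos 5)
Your overall architecture — resolvent identity, Fubini, Cauchy--Pompeiu, region-wise integration by parts, and the $M/2$-splitting for the remainder — is the same as the paper's (whose proof itself simply estimates $\mathscr R_z$ and refers back to Theorem \ref{thm:HS formula for mathcal E_z is OK} using $|z-t|^{-1}\leq |z|\,|\Im z|^{-1}t^{-1}$). However, there is a genuine conceptual error in the step you flag as your ``main obstacle,'' and it is a sign error that changes the nature of the argument.

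You write that $d_w\psi_-(x,w)=-d_w\psi_+(w,y)$ at $w=x=y$ and therefore treat $w=x=y$, $\sigma\sim t$ as a stationary point that must be avoided. This is wrong. From the definition $\psi_\mp\in{\rm Ph}(\mp\Lambda\bm\alpha,\Omega)$, one has $d_y\psi_-(x,x)=+\Lambda\bm\alpha$ and $d_x\psi_+(x,x)=+\Lambda\bm\alpha$, so
\begin{equation}
d_w\bigl[t\psi_-(x,w)+\sigma\psi_+(w,y)\bigr]\big|_{w=x=y}=(t+\sigma)\,\Lambda(x)\bm\alpha(x)\neq 0
\end{equation}
for all $(t,\sigma)\neq(0,0)$. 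The two $d_w$-gradients \emph{reinforce}, not cancel — this is exactly the observation recorded in the proof of Theorem \ref{thm:negligible estimate for L_z}, where the identity is stated with a $+$ sign. Consequently there is \emph{no} stationary point in $w$ anywhere near the diagonal, and the ``uniform stationary-phase bookkeeping'' you describe as an obstacle is a non-problem: near the diagonal the $w$-phase gradient is everywhere comparable to $(t+\sigma)\Lambda\bm\alpha$, so integration by parts in $w$ is available unconditionally and turns the kernel into a smoothing one with explicit polynomial control in $t,\sigma$. This is what makes the cross-term class $\mathcal R_z$ negligible in the first place; your proposal, as written, would direct you toward an unnecessary (and potentially dead-end) stationary-phase analysis. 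Correct the sign and the rest of your outline goes through as a routine adaptation of Theorems \ref{thm:HS formula for mathcal E_z is OK} and \ref{thm:HS formula for mathcal F_z is OK}. A minor further point: the paper uses the sharper estimate $|z-tp(x)|^{-1}\leq |z|\,|\Im z|^{-1}t^{-1}|p(x)|^{-1}$ rather than the cruder $O(|\Im z|^{-1})$; the extra $t^{-1}$ decay is what keeps the $t$-integral under control uniformly, so you should use that bound as well.
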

\begin{proof}
For $\mathscr R_z$ in the form of \eqref{eq:mathcal R_z} and a very small $\epsilon>0$, we notice that for the function $it\psi_\mp(x,w)+i\sigma\psi_\pm(w,y)$, when $|x-w|>\epsilon$, $|w-y|>\epsilon$ and $|x-w|,|w-y|<\epsilon$, we can apply arbitrary times of partial integration in $t$, $\sigma$ and $w$, respectively. Along with the elementary estimate that $|z-t|^{-1}\leq |z|\cdot|\im z|^{-1}t^{-1}$ when $t>0$, we can estimate $\mathscr R_z$ and we can
apply the same argument in Theorem \ref{thm:HS formula for mathcal E_z is OK} to get our theorem.
\end{proof}
 The next kind of remainder estimate needs more work.
	\begin{theorem}
		\label{thm:HS formula for mathcal G_z is OK}
		For any operator $G_z\in\mathcal{G}_z(\Omega;T^{*0,q}X)$ of Definition \ref{def:z-depnednt smoothing operator type 3} and $N\in\N_0$, we have
		\begin{equation}
		\int_\C\frac{\partial\widetilde\chi_k}
		{\partial\overline z}(z-T_{P,\lambda}^{(q)})^{-1}\circ G_z \frac{dz\wedge d\overline z}{2\pi i}
		=O(k^{-N})~\text{in}~\mathscr L\left(H^{-N}_{\rm comp}(\Omega,T^{*0,q}X),H^{N}(X,T^{*0,q}X)\right).
		\end{equation}
	\end{theorem}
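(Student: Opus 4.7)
The strategy parallels that of Theorem~\ref{thm:HS formula for mathcal E_z is OK}; the new feature to handle is the Szeg\H{o}-type oscillation $e^{it\psi}$ together with the infinite-order vanishing $g(x,y,t)=O(|x-y|^{+\infty})$. Since $z\neq 0$ on $\supp(\partial\widetilde\chi(\cdot/k)/\partial\overline z)$ for $k\gg 1$, I would split the resolvent as
\begin{equation*}
(z-T_{P,\lambda}^{(q)})^{-1}=z^{-M}T_{P,\lambda}^{(q),M}\circ(z-T_{P,\lambda}^{(q)})^{-1}+\sum_{j=0}^{M-1}z^{-1-j}T_{P,\lambda}^{(q),j}
\end{equation*}
for some $M\in 4\N$ to be chosen large, and treat the finite sum and the remainder separately.

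For the finite sum part, since $T_{P,\lambda}^{(q),j}$ commutes out of the $z$-integral and is continuous on every Sobolev space by \eqref{eq:Sobolev boundedness of T_P}, it suffices to estimate the kernel of $\int_\C(\partial\widetilde\chi_k/\partial\overline z)\,z^{-1-j}G_z\,dz\wedge d\overline z/(2\pi i)$. Performing $(M_1-1)$ integrations by parts in $t$ via the identity $\partial_t(z-tp(x))^{-(M_1-1)}=(M_1-1)p(x)(z-tp(x))^{-M_1}$, interchanging the order of integration, and applying Cauchy--Pompeiu in $z$, one arrives at a kernel of the form
\begin{equation*}
\int_0^{+\infty}e^{it\psi(x,y)}\widetilde g(x,y,t)\,\chi\!\left(\tfrac{tp(x)}{k}\right)(tp(x))^{M_2-1-j}\frac{dt}{p(x)^{M_1-1}(M_1-1)!},
\end{equation*}
where $\widetilde g\in S^{m'}_{\rm cl}$ still satisfies $\widetilde g=O(|x-y|^{+\infty})$ (multiplication by powers of $\psi$ preserves this, since $\psi(x,x)=0$). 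This is exactly the situation treated in the second step of the proof of Theorem~\ref{thm:negligible estimate for L_z}: Malgrange's preparation theorem combined with a Taylor expansion of $\widetilde g$ in $y_{2n+1}$ at $-\psi_0(x,y')$, the substitution $(y_{2n+1}+\psi_0)e^{it\psi}=(if)^{-1}\partial_t e^{it\psi}$ to convert each power of $(y_{2n+1}+\psi_0)$ into $t^{-1}\sim k^{-1}$ on $\supp\chi(\cdot/k)$, and the bound $\im\psi\geq C|x'-y'|^2$ on the Taylor remainder, yield smooth-kernel estimates $O(k^{-\infty})$ on $X\times\Omega$. These immediately give $O(k^{-N})$ bounds in $\mathscr L(H^{-N}_{\rm comp},H^N)$, and composition with $T_{P,\lambda}^{(q),j}$ preserves this.

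For the remainder term $z^{-M}T_{P,\lambda}^{(q),M}\circ(z-T_{P,\lambda}^{(q)})^{-1}\circ G_z$, I would split symmetrically as
\begin{equation*}
T_{P,\lambda}^{(q),M/2}\circ\bigl[\Pi_\lambda^{(q)}\circ(z-T_{P,\lambda}^{(q)})^{-1}\bigr]\circ T_{P,\lambda}^{(q),M/2}\circ G_z,
\end{equation*}
and chain together $|\partial\widetilde\chi(z/k)/\partial\overline z|=O(k^{-\infty}|\im z|^{+\infty})$, the Sobolev boundedness \eqref{eq:Sobolev boundedness of T_P} of $T_{P,\lambda}^{(q)}$, the resolvent estimate of Theorem~\ref{thm:resolvent estimate for T_P}, and a polynomial $|z|/|\im z|$-bound of $G_z:H^{-N}_{\rm comp}\to H^{N'}$. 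The final estimate comes out in the form $k^{C_1+M_1+M_2-M/4}$, which is $O(k^{-N})$ once $M$ is taken large in terms of $N,M_1,M_2$. The main technical point, and the natural obstacle, is the last ingredient: one must verify that $G_z$ has a smooth kernel and acts between the required Sobolev spaces with operator norm polynomially controlled by $|z|/|\im z|$. This uses the $O(|x-y|^{+\infty})$-decay of $g$ and the same Malgrange--Taylor scheme as in Theorem~\ref{thm:negligible estimate for L_z}, now carried out with explicit $z$-dependent constants.
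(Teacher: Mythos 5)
Your proposal is essentially correct and rests on the same two pillars as the paper's proof: (i) the Malgrange preparation / Taylor expansion in $y_{2n+1}$, which together with $\operatorname{Im}\psi\geq C|x'-y'|^2$ and the $O(|x-y|^{+\infty})$-vanishing of $g$ converts the Szeg\H{o}-type oscillation into either an $S^{-\infty}$ amplitude or powers of $t^{-1}$ via $(y_{2n+1}+\psi_0)e^{it\psi}=(if)^{-1}\partial_t e^{it\psi}$; and (ii) the resolvent split $(z-T_{P,\lambda}^{(q)})^{-1}=z^{-M}T_{P,\lambda}^{(q),M}(z-T_{P,\lambda}^{(q)})^{-1}+\sum_{j<M}z^{-1-j}T_{P,\lambda}^{(q),j}$ with the symmetric insertion $T^{M/2}\circ\Pi_\lambda^{(q)}(z-T_{P,\lambda}^{(q)})^{-1}\circ T^{M/2}$ to avoid an excessive power of $|z|\sim k$ from the resolvent estimate of Theorem~\ref{thm:resolvent estimate for T_P}.

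The only genuine difference is the order in which you deploy the two pillars. The paper first Taylor-expands $g$ to write $G_z=\mathds G_{z,N}+\zeta_{z,N}$, handles the body $\mathds G_{z,N}$ by noting it lies in $\mathcal E_z(\Omega;T^{*0,q}X)$ (so Theorem~\ref{thm:HS formula for mathcal E_z is OK} applies directly), and only then runs the resolvent split and the $z$-dependent kernel estimate on the remainder $\zeta_{z,N}$; this allows them to state a single clean bound, displayed in the proof, with the Taylor order $N$ coupled to $M$ via $2M=N+M_1-m$. You instead split the resolvent first, then Taylor-expand inside each piece. Both routes close, and yours is arguably a cleaner way to see why the exponents work out, but be aware that in the remainder term the Taylor order needed to give $G_z$ a smooth kernel mapping $H^{-N}_{\rm comp}\to H^{N'}$ does scale with $M$ (since you need $N'\gtrsim M/2$ for the composition $T^{M/2}\circ G_z$ to land in $L^2$ before the resolvent hits it), so the ``polynomial $|z|/|\im z|$-bound'' you invoke has degree growing with $M$; the computation still works because $z^{-M}$ wins, exactly as in the paper's bookkeeping, but the phrase ``$M$ taken large in terms of $N,M_1,M_2$'' understates that the hidden Taylor order must be chosen simultaneously. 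One small correction: you write that each power of $(y_{2n+1}+\psi_0)$ is converted to ``$t^{-1}\sim k^{-1}$'' — more precisely, $(N+1)$ integrations by parts in $t$ distribute between derivatives of $\chi(t/k)$, giving $k^{-(N+1-\ell)}$, and derivatives of the symbol, lowering its $t$-order, so the net gain is $O(k^{m-N})$; the conclusion is the same once $N$ is taken large.
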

	\begin{proof}
	For simplicity, we prove the case for $q=n_-=0$ and
		\begin{equation}
		G_z(x,y)=\int_0^{+\infty} e^{it\psi(x,y)}g(x,y,t)
		\frac{z^{M_2}}{(z-t)^{M_1}}\tau(\varepsilon t)dt,
		\end{equation}
		where $g(x,y,t)=O(|x-y|^{+\infty})$ is in $S^m_{\rm cl}(\Omega\times \Omega\times{\R}_+,\End(T^{*0,q}X))$ for some $m\in\mathbb R$, $g(x,y,t)$ is properly supported in the variables $(x,y)$, 
		$M_1,M_2\in\N_0$, 
		and $\psi\in {\Ph}(-\Lambda\bm\alpha,\Omega)$ for some $\Lambda\in\cC(X,\R_+)$. The general situation can be deduced from some modification of the following argument. 
		
		As in the proof of Theorem \ref{thm:negligible estimate for L_z}, we may assume that
		\begin{equation}
		\label{eq:division of psi in mathcal G_z}
		\psi(x,y)=f(x,y)(y_{2n+1}+\psi_0(x,y'))\ \ \text{on $\Omega\times\Omega$}.
		\end{equation}
	Also, as \eqref{eq:Im varphi>C|z-w|^2} we may assume that 
		\begin{equation}
		\label{eq: Im psi=O(|x'-y'|^2) in mathcal G_z}
		{\rm Im\,}\psi(x,y)\geq C|x'-y'|^2\ \ \mbox{on $\Omega\times \Omega$}, 
		\end{equation}
		where $C>0$ is a constant. We let $\widetilde g(x,y,t)$ be an almost analytic extension of $g(x,y,t)$ in the $y_{2n+1}$ variables. For every $N\in\N$, by Taylor expansion we have 
		\begin{equation}
		g(x,y,t)
		=\sum^N_{j=0}g_j(x,y',t)(y_{2n+1}+\psi_0(x,y'))^j+(y_{2n+1}+\psi_0(x,y'))^{N+1}r_{N+1}(x,y,t),
		\end{equation}
		where $g_j(x,y',t),r_{N+1}(x,y',t)\in S^m_{{\rm cl\,}}(\Omega\times \Omega\times\mathbb R_+,\End(T^{*0,q}X))$, $j=1,\cdots,N$.
		
		On one hand, since $g(x,y,t)=O(|x-y|^{+\infty})$, we can check that 
\begin{equation}
    r_{N+1}(x,y,t)=O(|x-y|^{+\infty})~\text{and}~g_j(x,y',t)=O(|x'-y'|^{+\infty}),\ \ j=N,N-1,\cdots,0.\label{eq:g_j(x,y,t) in mathcal G_z}
\end{equation}
	From \eqref{eq: Im psi=O(|x'-y'|^2) in mathcal G_z} and \eqref{eq:g_j(x,y,t) in mathcal G_z}, we can check that 
		\begin{equation}
		\mathds G_N(x,y,t):=e^{it\psi(x,y)}\sum^N_{j=0}(y_{2n+1}+\psi_0(x,y'))^j g_j(x,y',t)\in S^{-\infty}(\Omega\times \Omega\times\mathbb R_+,\End(T^{*0,q}X)).
		\end{equation}
	We consider the operator $G_{z,N}$ by kernel
		\begin{equation}
		\mathds G_{z,N}(x,y):=\int^{+\infty}_0 e^{it\psi(x,y)}\mathds G_N(x,y,t)\frac{z^{M_2}}{(z-t)^{M_1}}\tau(\varepsilon t)dt.
		\end{equation}
Then we have $\mathds G_{z,N}\in\mathcal{E}_z(\Omega;T^{*0,q}X)$, and Theorem~\ref{thm:HS formula for mathcal E_z is OK} implies that
		\begin{equation}
		\label{eq:HS formula for G_N in mathcal G_z}
		\int\frac{\partial\widetilde\chi(\frac{z}{k})}{\partial\overline z}(z-T_{P,\lambda}^{(q)})^{-1}
		\circ\mathds G_{z,N}\frac{dz\wedge d\overline z}{2\pi i}
		=O(k^{-\infty})\ \ \text{on $X\times\Omega$}.
		\end{equation}
		
		On the other hand, for the operator $\zeta_{z,N}$ associated by the kernel
		\begin{equation}
		\label{eq:zeta_z,N(x,y) I}
		\zeta_{z,N}(x,y):=
		\int^{+\infty}_0 e^{it\psi(x,y)}(y_{2n+1}+\psi_0(x,y'))^{N+1}r_{N+1}(x,y,t)\frac{z^{M_2}}{(z-t)^{M_1}}\tau(\varepsilon t)dt,
		\end{equation}
		by integration by parts in $t$, we can also write
		\begin{equation}
		\label{eq:zeta_z,N(x,y) II}
		\zeta_{z,N}(x,y)=\int_0^{+\infty}e^{it\psi(x,y)}z^{M_2}\frac{\pr^{N+1}}{\pr t^{N+1}}\left((z-t)^{-M_1}\cdot r^f_{N+1}(x,y,t)\cdot\tau(\varepsilon t)\right)dt,
		\end{equation}
		where we have $r^f_{N+1}(x,y,t)\in  S^m_{{\rm cl\,}}(\Omega\times \Omega\times\mathbb R_+,\End(T^{*0,q}X))$. Now, for the operator
		\begin{equation}
		\zeta_{(k,N)}:=\int_{\C\setminus \Spec(T_{P,\lambda}^{(q)})}\frac{\partial\widetilde\chi(\frac{z}{k})}
		{\partial\overline z}(z-T_{P,\lambda}^{(q)})^{-1}\circ\zeta_{z,N} \frac{dz\wedge d\overline z}{2\pi i},
		\end{equation}
		we recall that when $z\notin\Spec(T_{P,\lambda}^{(q)})$ and for any $M\in\N$ we can write
		\begin{multline}
		\label{eq:HS formula zeta_(k,n)}
		\zeta_{(k,N)}=\int_{\C\setminus \Spec(T_{P,\lambda}^{(q)})}\frac{\partial\widetilde\chi(\frac{z}{k})}
		{\partial\overline z}\frac{T_{P,\lambda}^{(q),M}}{z^M}\circ(z-T_{P,\lambda}^{(q)})^{-1}\circ\zeta_{z,N} \frac{dz\wedge d\overline z}{2\pi i}\\
		+\sum_{j=0}^{M-1}{T_{P,\lambda}^{(q),j}}\circ\int_{\C\setminus \Spec(T_{P,\lambda}^{(q)})}\frac{\partial\widetilde\chi(\frac{z}{k})}
		{\partial\overline z} {z^{-1-j}}~\zeta_{z,N} \frac{dz\wedge d\overline z}{2\pi i}.
		\end{multline}
By \eqref{eq:g_j(x,y,t) in mathcal G_z} and Theorem \ref{thm:negligible estimate for L_z}, for all $M\in\N$ we have
\begin{equation}
    \sum_{j=0}^{M-1}{T_{P,\lambda}^{(q),j}}\circ\int_{\C\setminus \Spec(T_{P,\lambda}^{(q)})}\frac{\partial\widetilde\chi(\frac{z}{k})}
		{\partial\overline z} {z^{-1-j}}~\zeta_{z,N} \frac{dz\wedge d\overline z}{2\pi i}=\OK\ \ \text{on $X\times\Omega$}.
\end{equation}

  It remains to handle the estimate for the first part of the integral in \eqref{eq:HS formula zeta_(k,n)} for large $N$. For this purpose we need to choose some suitably large number $M$ in \eqref{eq:HS formula zeta_(k,n)}. When $N\in\N$ is large enough and $N+M_1-m\equiv 0\mod 4$, we take $2M:={N+M_1-m}$. Then, from the formula of $\zeta_{z,N}$ in \eqref{eq:zeta_z,N(x,y) II}, the observation that $\frac{\pr}{\pr t}\tau(\varepsilon t)=O(t^{-\infty})$, and the elementary estimate $|z-t|^{-1}\leq |t\im z|^{-1}|z|$ in our situation, up to a kernel associated by an element in $\mathcal{E}_z(\Omega;T^{*0,q}X)$ we can write 
		\begin{equation}
		\zeta_{z,N}(x,y)=\int_0^{+\infty}e^{it\psi(x,y)}R^f_{N+1}(x,y,t,z)\tau(\varepsilon t)dt,
		\end{equation}
		where for all multi-indices $\alpha,\beta,\gamma$ and for all $x,y\in K\Subset\Omega$ and $t\in\R_+$ such that $\tau(\varepsilon t)>0$, we have some constant $ c_{K,\alpha,\beta,\gamma}>0$ such that
		\begin{equation}
		\label{eq:zeta_z,N(x,y) III}
		|\pr_x^\alpha\pr_y^\beta\pr_t^\gamma R^f_{N+1}(x,y,t,z)|\leq c_{K,\alpha,\beta,\gamma} \frac{|z|^{M_1+M_2+(N+1)}}{|\im z|^{M_1+(N+1)}}t^{m-M_1-(N+1)}.
		\end{equation}
		
Then, by combining all the above estimates and Theorem \ref{thm:HS formula for mathcal E_z is OK}, for any $N\in\N$ which is arbitrarily large enough such that $N+M_1-m\equiv 0\mod 4$, then the number $2M:={N+M_1-m}$ is consequently  arbitrarily large and we have
		\begin{align}
		&\int_{\C\setminus \Spec(T_{P,\lambda}^{(q)})}\frac{\partial\widetilde\chi(\frac{z}{k})}
		{\partial\overline z}z^{-M}~T_{P,\lambda}^{(q),M}\circ(z-T_{P,\lambda}^{(q)})^{-1}\circ\zeta_{z,N}\frac{dz\wedge d\overline z}{2\pi i}\nonumber\\
		=&\int_{\C\setminus \Spec(T_{P,\lambda}^{(q)})}\frac{\partial\widetilde\chi(\frac{z}{k})}
		{\partial\overline z}z^{-M}~T_{P,\lambda}^{(q),\frac{M}{2}}\circ\left(\Pi_\lambda^{(q)}\circ (z-T_{P,\lambda}^{(q)})^{-1}\right)\circ T_{P,\lambda}^{(q),\frac{M}{2}}\circ\zeta_{z,N}\frac{dz\wedge d\overline z}{2\pi i}\nonumber\\
		=&O\left(\sup_{k^{-1}z\in \supp\Td\chi}
		k^2\cdot\frac{|\im z|^{1+M_1+(N+1)}}{k^{1+M_1+(N+1)}}\cdot k^{-M}\cdot \frac{|z|^{
				\frac{M}{4}+\frac{M}{2}-1}}{|\im z|}\cdot\frac{|z|^{M_1+M_2+(N+1)}}{|\im z|^{M_1+(N+1)}}\right)\nonumber\\
		=&O(k^{-\frac{M}{4}+M_2})\ \ \text{in $\mathscr L\left(H^{-\frac{3M}{4}}_{\rm comp}(\Omega,T^{*0,q}X),H^{\frac{M}{2}}(X,T^{*0,q}X)\right)$}.\label{eq:k power decay I}
		\end{align}
		Here we recall that for $\chi\in\cCc({\dot\R})$ we can take 
		$\Td\chi$ such that $\Td\chi\in\cCc(\dot\C)$, so there is a constant $C>0$ such that $\frac{k}{C}<|z|<C k$ when $k^{-1}z\in \supp\Td\chi$.
		
  Combining all the estimates above, we finish our proof.
	\end{proof} 
From Theorems \ref{thm:HS formula for mathcal E_z is OK}, \ref{thm:HS formula for mathcal F_z is OK}, \ref{thm:HS formula for mathcal G_z is OK} and \ref{thm:HS formula for mathcal R_z is OK}, we can conclude the remainder estimates contributing by the elements of $L^{-\infty}_z(\Omega;T^{*0,q}X)$ as follows.
	\begin{theorem}
 \label{thm:F_(k,N+1) is OK}
	   For any operator $L_z\in L^{-\infty}_z(\Omega;T^{*0,q}X)$ of Definition \ref{def:L^-infty_z} and $N\in\N_0$ we have 
 \begin{align}
	&L_{(k)}:=\int_{\mathbb C}\frac{\partial\widetilde\chi_k}
	{\partial\overline z}(z-T_{P,\lambda}^{(q)})^{-1}\circ L_{z}\frac{dz\wedge d\overline z}{2\pi i}=O(k^{-N})~\text{in}~\mathscr L(H^{-N}_{\rm comp}(\Omega,T^{*0,q}X),H^{N}(X,T^{*0,q}X)).
	\end{align}
	\end{theorem}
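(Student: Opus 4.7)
By Definition~\ref{def:L^-infty_z}, any element $L_z\in L^{-\infty}_z(\Omega;T^{*0,q}X)$ is a finite linear combination $L_z=\sum_{j\in J}c_j u_j$ with $|J|<+\infty$, $c_j\in\C$, and
\begin{equation}
u_j\in \mathcal E_z(\Omega;T^{*0,q}X)\cup\mathcal F_z(\Omega;T^{*0,q}X)\cup \mathcal G_z(\Omega;T^{*0,q}X)\cup \mathcal R_z(\Omega;T^{*0,q}X).
\end{equation}
The plan is to exploit the linearity of composition and of the Helffer--Sj\"ostrand integral against $\frac{\pr\widetilde\chi_k}{\pr\overline z}\,\frac{dz\wedge d\overline z}{2\pi i}$ to decompose $L_{(k)}$ into a finite sum of the four operator types that have already been controlled individually.

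More precisely, since composition with $(z-T_{P,\lambda}^{(q)})^{-1}$ distributes over finite sums and since the integral is linear, we have
\begin{equation}
L_{(k)}=\sum_{j\in J}c_j\int_{\mathbb C}\frac{\pr\widetilde\chi_k}{\pr\overline z}(z-T_{P,\lambda}^{(q)})^{-1}\circ u_j\,\frac{dz\wedge d\overline z}{2\pi i}.
\end{equation}
For each index $j\in J$, the summand belongs to one of four previously treated cases, for which the required $k$-decay estimate in $\mathscr L(H^{-N}_{\rm comp}(\Omega,T^{*0,q}X),H^N(X,T^{*0,q}X))$ is supplied by one of Theorems~\ref{thm:HS formula for mathcal E_z is OK}, \ref{thm:HS formula for mathcal F_z is OK}, \ref{thm:HS formula for mathcal G_z is OK}, or \ref{thm:HS formula for mathcal R_z is OK}, respectively. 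Since $|J|<+\infty$ and $N\in\N_0$ is arbitrary, summing the finitely many estimates of order $O(k^{-N})$ and absorbing the constants $|c_j|$ gives an $O(k^{-N})$ bound for $L_{(k)}$ with the same operator norm, which is the desired conclusion.

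There is essentially no obstacle once the four preparatory theorems are in hand, since the statement is a direct packaging of them through linearity. The real work occurred earlier: proving the resolvent estimate of Theorem~\ref{thm:resolvent estimate for T_P}, the careful splitting $T_{P,\lambda}^{(q),M}=T_{P,\lambda}^{(q),M/2}\circ T_{P,\lambda}^{(q),M/2}$ to avoid the growth of $|z|^s$ overwhelming the $|\im z|^{-1}$ singularity, and the Malgrange preparation/Taylor expansion argument used in the $\mathcal G_z$ case to exploit $g(x,y,t)=O(|x-y|^{+\infty})$. The present statement simply consolidates those four estimates into a single one adapted to the decomposition of $L^{-\infty}_z(\Omega;T^{*0,q}X)$.
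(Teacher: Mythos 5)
Your proof is correct and follows exactly the same route as the paper: the statement is stated precisely as the consolidation of Theorems~\ref{thm:HS formula for mathcal E_z is OK}, \ref{thm:HS formula for mathcal F_z is OK}, \ref{thm:HS formula for mathcal G_z is OK}, and \ref{thm:HS formula for mathcal R_z is OK} via the finite-linear-combination structure of Definition~\ref{def:L^-infty_z} and the linearity of the Helffer--Sj\"ostrand integral. No further argument is needed.
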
	
 The only remainder estimate remains to be checked is the following.
	\begin{theorem}
 \label{thm:R_(k,N+1) is OK for N large}
		With the same notations and assumptions in Theorem \ref{thm:expansion of (z-T_P)^-1 Pi}, for the operator
		\begin{equation}
		 R_{(k,N+1)}:=\int_\C\frac{\pr\Td\chi(\frac{z}{k})}{\pr\ol z}(z-T_{P,\lambda}^{(q)})^{-1}\circ\Pi_\lambda^{(q)}\circ R_{z,N+1}\frac{dz\wedge d\ol z}{2\pi i},  
		\end{equation}
 and for any $N_1,N_2\in\N_0$, we can find an $N_0>0$ large enough such that
	\begin{align}
	&     R_{(k,N_0+1)}=O(k^{-N_1})~\text{in}~\mathscr L(H^{-N_2}_{\rm comp}(\Omega,T^{*0,q}X),H^{N_2}(X,T^{*0,q}X)).
	\end{align}	
 \end{theorem}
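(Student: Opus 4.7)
We combine the strategies of Theorems \ref{thm:HS formula for mathcal E_z is OK} and \ref{thm:HS formula for mathcal G_z is OK}. Since the $L^{-\infty}_z$-error implicit in the representation of $R_{z,N+1}$ is already absorbed by Theorem \ref{thm:F_(k,N+1) is OK}, we may work with the explicit Szeg\H{o}-type oscillatory-integral form of $R_{z,N+1}$. Fix an integer $M$ with $M \equiv 0 \pmod 4$, to be chosen large, and split the resolvent via
\[
(z - T_{P,\lambda}^{(q)})^{-1} = z^{-M}\,T_{P,\lambda}^{(q),M}\,(z - T_{P,\lambda}^{(q)})^{-1} + \sum_{j=0}^{M-1} z^{-1-j}\,T_{P,\lambda}^{(q),j},
\]
valid wherever the integrand is defined, namely on $\dot\C \setminus \Spec(T_{P,\lambda}^{(q)})$; note that $\widetilde\chi(z/k)$ is supported in $\dot\C$ since $\chi \in \cCc(\dot\R)$. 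This decomposes $R_{(k,N+1)}$ into a polynomial part and a resolvent part.

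For the polynomial part, Fubini's theorem combined with the Cauchy--Pompeiu formula (valid because $z^{\gamma-1-j}$ is holomorphic on $\supp\widetilde\chi(\cdot/k) \subset \dot\C$) evaluates the $z$-contour integral as
\[
\int_\C \frac{\partial\widetilde\chi(z/k)}{\partial\bar z}\,\frac{z^{\gamma-1-j}}{(z\mp t)^{2N+2}}\,\frac{dz\wedge d\bar z}{2\pi i} = \frac{\mp 1}{(2N+1)!}\,\partial_t^{2N+1}\bigl(t^{\gamma-1-j}\chi(t/k)\bigr).
\]
Expanding by Leibniz and rescaling $t = ks$ as in Lemma \ref{lem:semi-classical integral H_(k,m)}, each resulting contribution becomes a semi-classical Szeg\H{o}-type Fourier integral operator in $\mathcal I^{-m_j}_{\Sigma,k}(\Omega;T^{*0,q}X)$ whose composite symbol has order at most $(n-N-1) + |\beta| - (2N+1-\ell) \leq n - N + O(1)$, with $\ell \leq \min(2N+1,\gamma-1-j)$ the Leibniz index. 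Iterating \eqref{eq:Sobolev boundedness of T_P} shows $T_{P,\lambda}^{(q),j}: H^{s+j}(X,T^{*0,q}X) \to H^s(X,T^{*0,q}X)$ is bounded, so this term contributes to $R_{(k,N+1)}$ an operator in $\mathscr L(H^{-N_2}_{\rm comp}(\Omega,T^{*0,q}X),H^{N_2}(X,T^{*0,q}X))$ with norm $O(k^{n-N+O(N_2+M)})$, which is $O(k^{-N_1})$ once $N = N_0$ is chosen large relative to $N_1,N_2,M$.

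For the resolvent part, the commutativity $[\Pi_\lambda^{(q)},T_{P,\lambda}^{(q)}] = [\Pi_\lambda^{(q)},(z-T_{P,\lambda}^{(q)})^{-1}] = 0$ lets us factor
\[
z^{-M}\,T_{P,\lambda}^{(q),M}\,(z - T_{P,\lambda}^{(q)})^{-1} = z^{-M}\,T_{P,\lambda}^{(q),M/2} \circ \Pi_\lambda^{(q)}(z - T_{P,\lambda}^{(q)})^{-1} \circ T_{P,\lambda}^{(q),M/2}.
\]
Theorem \ref{thm:resolvent estimate for T_P} gives $\|\Pi_\lambda^{(q)}(z - T_{P,\lambda}^{(q)})^{-1}\|_{H^s \to H^{s+1}} = O(|z|^s/|\im z|)$, iterated \eqref{eq:Sobolev boundedness of T_P} controls the Sobolev loss $M$ from the outer $T_{P,\lambda}^{(q),M/2}$ factors, and the direct estimate
\[
\|R_{z,N+1}\|_{H^{-N_2}_{\rm comp}(\Omega,T^{*0,q}X) \to H^{s}(X,T^{*0,q}X)} = O\Bigl(\frac{|z|^{O(N)}}{|\im z|}\Bigr)
\]
is obtained by first performing $(2N+1)$-fold integration by parts in $t$, which reduces $(z \mp t)^{-(2N+2)}$ to $(z \mp t)^{-1}$, and then using $|(z \mp t)^{-1}| \leq |\im z|^{-1}$ together with the standard Sobolev boundedness of the resulting Szeg\H{o}-type FIO (with the high-order symbol absorbed into Sobolev loss via $s$). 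Combining with $|\partial_{\bar z}\widetilde\chi(z/k)| \leq C_{M'}|\im z|^{M'}/k^{M'}$ for any $M' \in \N_0$ and $|z| \leq Ck$ on $\supp\widetilde\chi(\cdot/k)$, then choosing $M'$ large enough to cancel the $|\im z|$-singularities and $M$ large relative to $N_1+N_2+N$, yields the bound $O(k^{-N_1})$ in $\mathscr L(H^{-N_2}_{\rm comp},H^{N_2})$.

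The principal technical obstacle is this direct operator-norm estimate for $R_{z,N+1}$: one must balance the denominator $(z \mp t)^{-(2N+2)}$, the polynomial prefactors $z^\gamma t^\beta$ with $|\beta|+|\gamma|\leq 2N+2$, and the classical symbol of order $n-N-1$ against the Szeg\H{o}-phase oscillation $e^{it\Psi_\mp}$. This is structurally analogous to the estimate of $\zeta_{z,N}$ in the proof of Theorem \ref{thm:HS formula for mathcal G_z is OK}, with the polynomial factors in $t$ and $z$ now playing the role of $(y_{2n+1}+\psi_0)^{N+1}$.
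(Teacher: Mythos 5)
Your overall strategy is the one the paper uses: split $(z-T_{P,\lambda}^{(q)})^{-1}$ into $z^{-M}T_{P,\lambda}^{(q),M}(z-T_{P,\lambda}^{(q)})^{-1}+\sum_{j}z^{-1-j}T_{P,\lambda}^{(q),j}$, evaluate the polynomial part via Cauchy--Pompeiu (getting high $t$-derivatives of $\chi(t/k)$ applied to a symbol of very negative order $n-N-1$, hence a negligible $k$-power), and estimate the resolvent part via the factorization $T^{M/2}\circ\Pi_\lambda^{(q)}(z-T)^{-1}\circ T^{M/2}$ together with Theorem~\ref{thm:resolvent estimate for T_P}, the rapid decay of $\partial_{\bar z}\widetilde\chi$, and a direct Sobolev estimate for $R_{z,N+1}$.

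However, there is a genuine gap in the resolvent part. You claim
$\|R_{z,N+1}\|_{H^{-N_2}_{\rm comp}\to H^s}=O(|z|^{O(N)}/|\im z|)$
by first integrating by parts $(2N+1)$ times to reduce $(z\mp t)^{-(2N+2)}$ to $(z\mp t)^{-1}$ and then bounding the latter by $|\im z|^{-1}$, and you then propose to choose $M$ \emph{large relative to $N_1+N_2+N$} so that $z^{-M}\sim k^{-M}$ kills the leftover $|z|^{O(N)}\sim k^{O(N)}$. This cannot work as stated: the two outer $T_{P,\lambda}^{(q),M/2}$ factors cost $M$ derivatives, the resolvent gains only one, and the only source of Sobolev gain is the smoothing order of $R_{z,N+1}$, which is of size $\sim N+1$ (its classical symbol has order $n-N-1$). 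Hence the Sobolev chain forces $M\lesssim N$, while your estimate forces $M\gtrsim N$, a circularity. The point you describe as "the principal technical obstacle" --- balancing $(z\mp t)^{-(2N+2)}$ against $z^\gamma t^\beta$ with $|\beta|+|\gamma|\le 2N+2$ --- is precisely what must be exploited, but the unbalanced form $|z|^{O(N)}/|\im z|$ throws that structure away.

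The correct estimate (which is what makes the paper's analogy with \eqref{eq:k power decay I} go through) retains the full power $(z\mp t)^{-(2N+2)}$ and uses either the elementary bound $|z-t|^{-1}\le |z|/(t|\im z|)$ or the binomial expansion $z^\gamma=(z-t+t)^\gamma$; because $|\beta|+|\gamma|\le 2N+2$, every resulting term $t^a(z\mp t)^{-b}$ has $a\le b$, so the operator norm of $R_{z,N+1}$ is $O\bigl((|z|/|\im z|)^{2N+2}\bigr)$ with \emph{balanced} exponents and no residual unmatched $|z|$-power. One then cancels the entire $|\im z|^{-(2N+2)}$-singularity using $|\partial_{\bar z}\widetilde\chi(z/k)|\le C_{N'}|\im z|^{N'}k^{-N'}$ with $N'\sim 2N$, after which the only remaining $k$-power is governed by $z^{-M}$ against the resolvent factor $|z|^{s_1}$, giving $O(k^{N_2-M/2})$. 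It thus suffices to take $M\gtrsim N_1+N_2$ (independently of $N$), and then $N_0\gtrsim 2N_2+M$ handles the Sobolev constraint --- this is the dependence the statement (and the paper) envisages.
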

\begin{proof}
For simplicity, we only prove the case for $n_-\neq n_+$, and the situation $n_-=n_+$ can be deduced from the same argument with some minor change. For all $M\in\N$, we recall that we can write
\begin{align}
    R_{(k,N+1)}
    =&\int_{\C\setminus \Spec(T_{P,\lambda}^{(q)})}\frac{\partial\widetilde\chi(\frac{z}{k})}
		{\partial\overline z}z^{-M} T_{P,\lambda}^{(q),M}\circ(z-T_{P,\lambda}^{(q)})^{-1}\circ R_{z,N+1} \frac{dz\wedge d\overline z}{2\pi i}\\
		+&\sum_{j=0}^{M-1}{T_{P,\lambda}^{(q),j}}\circ\int_{\C\setminus \Spec(T_{P,\lambda}^{(q)})}\frac{\partial\widetilde\chi(\frac{z}{k})}
		{\partial\overline z} {z^{-1-j}}~R_{z,N+1} \frac{dz\wedge d\overline z}{2\pi i}.\nonumber
\end{align}
We recall that here we have
\begin{equation}
    R_{z,N+1}(x,y)
		=\int_0^{+\infty}e^{it\Psi_-(x,y)}\frac{\sum_{|\beta|+|\gamma|\leq 2N+2}R_{\beta,\gamma}^{-,N+1}(x,y,t)t^\beta z^\gamma}{\left(z-t\right)^{2N+2}}\tau(\varepsilon t) dt,
\end{equation}
where $\Psi_-\in{\rm Ph}(p_{I_0,I_0}^{-1}(-\bm\alpha)(-\bm\alpha),\Omega)$, $R_{\beta,\gamma}^{-,N+1}(x,y,t)\in S^{n-N-1}_{\rm cl}(\Omega\times\Omega\times\R_+,\End(T^{*0,q}X))$, $R_{\beta,\gamma}^{-,N+1}(x,y,t)$ is properly supported in the variables $(x,y)$.

On one hand, by partial integration in $t$ and Cauchy--Pompieu formula, on $X\times\Omega$ we have
\begin{multline}
     \int_{\C\setminus \Spec(T_{P,\lambda}^{(q)})}\frac{\partial\widetilde\chi(\frac{z}{k})}
		{\partial\overline z} {z^{-1-j}}~R_{z,N+1}(x,y) \frac{dz\wedge d\overline z}{2\pi i}\\-\frac{1}{(2N+1)!}\sum_{\beta+\gamma\leq 2N+2}\int_0^{+\infty}e^{it\Psi_-(x,y)}R_{\beta,\gamma}^{-,N+1}(x,y,t)t^\beta\tau(\varepsilon t)
  \partial_t^{2N+1}\left(\chi\left(\frac{t}{k}\right)t^{-1-j+\gamma}\right)dt=\OK.
\end{multline}
We notice that for any given $N_1,N_2\in\N_0$, when $N$ is large enough, the operator, which is decided by 
\begin{equation}
    \int_0^{+\infty}e^{it\Psi_-(x,y)}R_{\beta,\gamma}^{-,N+1}(x,y,t)t^\beta\tau(\varepsilon t)
  \partial_t^{2N+1}\left(\chi\left(\frac{t}{k}\right)t^{-1-j+\gamma}\right)dt,
\end{equation}
is in $O(k^{-N_1})~\text{in}~\mathscr L(H^{-N_2}_{\rm comp}(\Omega,T^{*0,q}X),H^{N_2}(X,T^{*0,q}X))$.

On the other hand, for the integral
\begin{equation}
    \int_{\C\setminus \Spec(T_{P,\lambda}^{(q)})}\frac{\partial\widetilde\chi(\frac{z}{k})}
		{\partial\overline z}z^{-M} T_{P,\lambda}^{(q),M}\circ(z-T_{P,\lambda}^{(q)})^{-1}\circ R_{z,N+1} \frac{dz\wedge d\overline z}{2\pi i},
\end{equation}
when $M\equiv 0\mod 2$ we can rewrite it by
\begin{equation}
\label{eq:non-trivial part for semi-classical estimate of H_z,N+1}
    \int_{\C\setminus \Spec(T_{P,\lambda}^{(q)})}\frac{\partial\widetilde\chi(\frac{z}{k})}
		{\partial\overline z}z^{-M} T_{P,\lambda}^{(q),\frac{M}{2}}\circ\Pi_\lambda^{(q)}\circ(z-T_{P,\lambda}^{(q)})^{-1}\circ T_{P,\lambda}^{(q),\frac{M}{2}}\circ R_{z,N+1} \frac{dz\wedge d\overline z}{2\pi i}.
\end{equation}
Here the number $M\in\N$ is arbitrary and we can do the same estimate as in \eqref{eq:k power decay I}: by applying the Sobolev continuity estimate in the order of $T_{P,\lambda}^{(q),\frac{M}{2}}$, $\Pi_\lambda^{(q)}\circ (z-T_{P,\lambda}^{(q)})^{-1}$, $T_{P,\lambda}^{(q),\frac{M}{2}}$ and $R_{z,N+1}$,
and using $\left|\frac{\pr\Td\chi(\frac{z}{k})}{\pr \ol z}\right|=O\left( k^{-N}|\im z|^N\right)$ and the estimate that $k<|z|<2k$ when $z\in\supp\Td\chi(k^{-1}z)$ and $k$ is large, we can check that
 for any $N_1,N_2\in\N_0$, we can find a suitable and large $N_0>0$ and another large number $M>0$ depending on $N_0$ such that \eqref{eq:non-trivial part for semi-classical estimate of H_z,N+1} is $  O(k^{-N_1})~\text{in}~\mathscr L(H^{-N_2}_{\rm comp}(\Omega,T^{*0,q}X),H^{N_2}(X,T^{*0,q}X))$.
 
 Combing all the estimates above, we complete the proof of our theorem.
\end{proof}
By Theorems \ref{thm:principal term of chi_k(T_P) I}, \ref{thm:F_(k,N+1) is OK} and \ref{thm:R_(k,N+1) is OK for N large} and taking the asymptotic sum of the symbols of $A_{(k,m)}$, $m\in\N_0$, we can apply the standard semi-classical analysis and get the following result.
\begin{theorem}
\label{eq:operator expansion of chi_k(T_P)}
In the situation of Theorem \ref{thm:Main Semi-Classical Expansion}, for $q=n_-$, we have an ${{A}}_{(k)}\in\mathcal{I}^{0}_{\Sigma,k}(\Omega;T^{*0,q}X)$ such that for any $N_0\in\N$ we have
	\begin{align}
	\chi(k^{-1}T_{P,\lambda}^{(q)})
	-A_{(k)}=O(k^{-N_0})~\text{in}~\mathscr L(H^{-N_0}_{\rm comp}(\Omega,T^{*0,q}X),H^{N_0}(X,T^{*0,q}X)).
	\end{align}
\end{theorem}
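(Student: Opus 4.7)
The starting point is the decomposition obtained immediately after Theorem \ref{thm:principal term of chi_k(T_P) I}:
\begin{equation*}
\chi(k^{-1}T_{P,\lambda}^{(q)}) = \sum_{m=0}^{N} A_{(k,m)} + R_{(k,N+1)} + F_{(k,N+1)}, \quad N \in \N_0,
\end{equation*}
where each $A_{(k,m)}$ is, up to a $k$-negligible kernel on $X \times \Omega$, the sum of two oscillatory integrals with the common phases $kt\Psi_\mp(x,y)$ and amplitudes $a^{\mp,m}(x,y,t,k)$ of semi-classical order $n+1-m$ concentrated in $t \in \supp\chi$. Because the phases $\Psi_\mp$ do not depend on $m$ by the construction in Theorem \ref{thm:expansion of (z-T_P)^-1 Pi}, all contributions live in one common oscillatory-integral class.

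My first task is to collapse the whole series $\sum_m A_{(k,m)}$ into one Fourier integral operator in $\mathcal{I}^0_{\Sigma,k}(\Omega;T^{*0,q}X)$. Since the symbols $a^{\mp,m}$ are properly supported in $(x,y)$ and lie in semi-classical symbol spaces of strictly decreasing order $S^{n+1-m}_{\rm loc}(1;\Omega\times\Omega\times\R_+,\End(T^{*0,q}X))$, I would invoke the standard Borel-type asymptotic summation to produce properly supported amplitudes
\begin{equation*}
a^\mp(x,y,t,k) \sim \sum_{m=0}^{+\infty} a^{\mp,m}(x,y,t,k) \text{ in } S^{n+1}_{\rm loc}(1;\Omega\times\Omega\times\R_+,\End(T^{*0,q}X)),
\end{equation*}
with $a^+ \equiv 0$ when $n_-\neq n_+$. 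Then I set
\begin{equation*}
A_{(k)}(x,y) := \int_0^{+\infty} e^{ikt\Psi_-(x,y)} a^-(x,y,t,k)\,dt + \int_0^{+\infty} e^{ikt\Psi_+(x,y)} a^+(x,y,t,k)\,dt,
\end{equation*}
which belongs to $\mathcal{I}^0_{\Sigma,k}(\Omega;T^{*0,q}X)$ by Definition \ref{def:properly supported semi-classical FIO}.

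To establish the remainder estimate, given $N_0 \in \N$ I would write
\begin{equation*}
\chi(k^{-1}T_{P,\lambda}^{(q)}) - A_{(k)} = R_{(k,N+1)} + F_{(k,N+1)} + \Bigl(\sum_{m=0}^{N} A_{(k,m)} - A_{(k)}\Bigr)
\end{equation*}
and pick $N = N_*$ sufficiently large. Theorems \ref{thm:R_(k,N+1) is OK for N large} and \ref{thm:F_(k,N+1) is OK} then provide a choice of $N_*$ (depending on $N_0$) for which the first two summands are $O(k^{-N_0})$ in $\mathscr{L}(H^{-N_0}_{\rm comp}(\Omega,T^{*0,q}X),H^{N_0}(X,T^{*0,q}X))$. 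The bracketed remainder is a Szeg\H{o} type oscillatory integral whose amplitude, by Borel summation, lies in $S^{n-N_*}_{\rm loc}(1;\Omega\times\Omega\times\R_+,\End(T^{*0,q}X))$, and its Sobolev mapping norm is controlled by repeated integration by parts in $t$ (each step contributing a factor of $k^{-1}$) together with the Gaussian damping $\im\Psi_\mp(x,y) \geq C|x'-y'|^2$ from \eqref{eq:Im varphi>C|z-w|^2} and the compactness of $X$.

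The main obstacle is the bookkeeping in this last Sobolev estimate: each $x$- or $y$-derivative hitting $e^{ikt\Psi_\mp}$ produces a factor of $kt \cdot \pr\Psi_\mp$, so to gain $N_0$ derivatives on either side while keeping a net $O(k^{-N_0})$ decay requires $N_*$ to exceed a threshold growing linearly in $N_0$ and in the dimension $n$. This is the semi-classical counterpart of the exponent-balancing already performed in the proofs of Theorems \ref{thm:HS formula for mathcal E_z is OK} and \ref{thm:HS formula for mathcal G_z is OK}, and requires no new microlocal input beyond the standard continuity properties of semi-classical Fourier integral operators with complex phases of non-negative imaginary part.
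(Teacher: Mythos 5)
Your proposal is essentially the paper's argument, which it compresses into a single sentence after Theorem \ref{thm:R_(k,N+1) is OK for N large}: take the asymptotic sum of the symbols $a^{\mp,m}$, invoke Theorems \ref{thm:principal term of chi_k(T_P) I}, \ref{thm:F_(k,N+1) is OK}, \ref{thm:R_(k,N+1) is OK for N large}, and finish by standard semi-classical analysis. You identify correctly the three ingredients — Borel summation of the amplitudes (this is valid because the phases $\Psi_\mp$ are the same for every $m$ and the proper support / $t$-support properties persist under asymptotic summation, so $A_{(k)}\in\mathcal{I}^0_{\Sigma,k}$), the two already-proved remainder estimates, and a final Sobolev bound for $\sum_{m\le N_*}A_{(k,m)}-A_{(k)}$. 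Your closing bookkeeping remark is also the right one: since each $x$- or $y$-derivative of the kernel of a Szeg\H{o}-type oscillatory integral with amplitude in $S^{n-N_*}_{\rm loc}(1;\cdot)$ and compact $t$-support contributes at most one extra power of $k$, the $C^{2N_0+2n+1}$-norm of the kernel is $O(k^{n+c+2N_0-N_*})$, and by the Schur/Sobolev criterion the operator norm in $\mathscr{L}(H^{-N_0}_{\rm comp},H^{N_0})$ is controlled by such kernel norms; taking $N_*$ larger than an affine function of $N_0$ and $n$ closes the estimate, exactly paralleling the exponent balancing in Theorems \ref{thm:HS formula for mathcal E_z is OK} and \ref{thm:HS formula for mathcal G_z is OK}. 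No gap.
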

By the proof of Sobolev inequality, we can treat the Dirac delta as in the Sobolev space of negative power. Then by the proof of \cite{Hoe03}*{Theorem 5.2.6}, we can immediately deduce the local picture of Theorems \ref{thm:Main Semi-Classical Expansion},~\ref{thm:Main Leading Term} at $q=n_-$ from Theorem~\ref{eq:operator expansion of chi_k(T_P)}. The far away asymptotics of Theorem \ref{thm:Main Semi-Classical Expansion} also follows from the properly supported property of the semi-classical Fourier integral operator in the last theorem. 
\begin{proof}[Proof of Corollary \ref{coro:Semi-Classical Expansion}]
On any coordinate patch $(\Omega_1,x)$, by Theorem \ref{thm:Main Semi-Classical Expansion} and the property that $\varphi_\mp(x,x)=0$, we have
	\begin{equation}
		\chi(k^{-1}T_{P,\lambda}^{(q)})(x,x)\sim\sum_{j=0}^{+\infty}k^{n+1-j} \left(A_j^-(x)+A_j^+(x)\right)
  \text{in}~S^{n+1}_{\rm loc}\left(1;\Omega_1,\End(T^{*0,q}X)\right).
		\end{equation}      
		where $A_j^\mp(x)\in\cC\left(\Omega_1,\End(T^{*0,q}X)\right)$, $A_j^\mp(x)=\int_0^{+\infty}A_j^\mp(x,x,t)dt$, and the local description of $A_0^\mp(x)$ is explicit through Theorem~\ref{thm:Main Leading Term}. We let $(\Omega_2,y)$ be another coordinate patch
with $\Omega_1\cap \Omega_2\neq\emptyset$, and by the same reasoning we have
		\begin{equation}
		\chi(k^{-1}T_{P,\lambda}^{(q)})(y,y)\sim\sum_{j=0}^{+\infty}k^{n+1-j} \left(B_j^-(y)+B_j^+(y)\right)
  \text{in}~S^{n+1}_{\rm loc}\left(1;\Omega_2,\End(T^{*0,q}X)\right).
		\end{equation}      
		where $B_j^\mp(x)\in\cC(\Omega_2,\End(T^{*0,q}X)$. Then on $\Omega_1\cap \Omega_2$ we have
		\begin{equation}
		\label{eq: A_j and B_j}
		\sum_{j=0}^{+\infty} (A_j^- + A_j^+)(\cdot)k^{n+1-j}\sim
		\sum_{j=0}^{+\infty} (B_j^- + B_j^+)(\cdot)k^{n+1-j}~
  \text{in}~S^{n+1}_{\operatorname{loc}}\left(1;\Omega_1\cap \Omega_2,\End(T^{*0,q}X)\right).
		\end{equation}
The relation \eqref{eq: A_j and B_j} shows that 
$A_j^- + A_j^+=B_j^- + B_j^+$ on $\Omega_1\cap \Omega_2$ and  our corollary follows. 
\end{proof}

\begin{proof}[Proof of Corollary \ref{coro:Scaled Spectral Measures}]
   Since measures are distributions of order zero, it suffices to prove that $\mu_k^{(q)}\to\mu_{+\infty}^{(q)}$ in the sense of distribution: for all $\chi\in\cCc(\dot\R)$, as $k\to+\infty$ we have
\begin{multline}
\langle\mu_k^{(q)},\chi\rangle=\frac{\sum_{j}\chi(k^{-1}\lambda_j)}{k^{n+1}}=\frac{\int_X\sum_I'\left\langle\chi(k^{-1}T_{P,\lambda}^{(q)})(x,x)T_I(x)\middle|T_I(x)\right\rangle dm(x)}{k^{n+1}}\\
=\int_X (A_0^-+A_0^+)(x)dm(x)=\mathcal{C}_P^{(q)}\int_{\R} \chi(t)t^n dt=\langle\mu_{+\infty}^{(q)},\chi\rangle.
\end{multline}
\end{proof}
\subsection{An example with the globally free circle action}
We explain the role of Corollary~\ref{coro:Semi-Classical Expansion} through a very important result for quantization introduced by Boutet de Monvel--Guillemin \cite{BG81}*{\S\S 13-14}. We assume that $(M, J)$ is a complex manifold with complex structure $J$ and $L$ is a holomorphic line bundle bundles over $M$ with a smooth Hermitian metric $h^L$. We assume that $\nabla^L$ 
is the holomorphic Hermitian connections, also known as Chern connections, on $(L, h^L)$ and
moreover, with respect to the Chern curvature $R^L:=(\nabla^L)^2$ the two form $\omega:=\frac{i}{2\pi}R^L$ defines a symplectic form on $M$. Therefore under this context the signature $(n_-,n_+)$ of
the curvature $R^L$ (the number of negative and positive eigenvalues) with respect
to any Riemannian metric compatible with $J$ will be the same. We let $g^{TX}$ be any
Riemannian metric on $TX$ compatible with $J$. We let $\ol\partial^{L^m,*}$
be the formal adjoint of the Dolbeault operator $\ol\partial^{L^m}$ on the
Dolbeault complex $\Omega^{0,q}(M,L^m)$, $q=0,\cdots,n:=\dim_\C M$, with the scalar product induced by $g^{TX}$ and $h^L$. We set $\mathbf D_m:=\sqrt{2}  \left(\ol\partial^{L^m}+\ol\partial^{L^m,*}\right)$ and denote by $\Box^{L^m}:=\ol\partial^{L^m,*}\circ\ol\partial^{L^m}+\ol\partial^{L^m}\circ\ol\partial^{L^m,*}
$ the Kodaira Lapalcian. It is clear that $\mathbf D_m^2=2\Box^{L^m}$ is twice the Kodaira Laplacian and preserves the $\Z$-grading
of $\Omega^{0,\cdot}(M,L^m)$. By standard Hodge theory, we know that for any $q,m\in\N$, $\ker \mathbf D_m|_{\Omega^{0,q}(M,L^m)}=\ker\mathbf D_m^2|_{\Omega^{0,q}(M,L^m)}=H^{0,q}(M,L^m)$, where $H^{0,q}(M,L^m)$ is the Dolbeault cohomology, $q=0\cdots,n$. For $\mathbf D_m^2$, from \cite{MaMar06IJM}*{Theorem 1.5} we have the Bochner--Kodaira--Nakano type formula and we have the following vanishing theorem: when $m\in\N$ is large we have $H^{0,q}(M,L^m)=0$ for $q\neq n_-$. The vanishing result above is Andreotti--Grauert’s coarse vanishing
theorem \cite{AnGr62}*{\S 23}, and the original proof is by using the cohomology finiteness theorem for the disc bundle of $L^*$.

When $q=n_-$, the situation is more interesting from the point of view of semi-classical analysis. We let $B_m^{(q)}:L^2_{0,q}(M,L^m)\to H^{0,q}(M,L^m)$ be the Bergman projection for $m$-power of $L$ on $(0,q)$ forms. The Schwartz kernel $B_m^{(q)}(p',p'')$ associated to $B_m^{(q)}$ is called the Bergman kernel, which is a smooth kernel by standard Hodge theory. It is well known that $B_m^{(q)}(p',p'')$ admits the full asymptotic expansion \cites{MaMar06IJM,MM07,HM14}, which is locally uniform. Combining with the identification $\mathscr L(L^{*,m},L^m)$ as $\C$, for $q=n_-$ and $m\in\N$ large enough we have the global expansion
\begin{align}
    &B_m^{(q)}(p')\sim\sum_{j=0}^{+\infty} m^{n-j}b_{j}^-(p')~\text{in}~S^{n}_{\rm loc}(1;M,\End T^{*0,q}M),\label{eq:BKE}\\
    &B_{-m}^{(n-q)}(p')\sim\sum_{j=0}^{+\infty} m^{n-j}b_{j}^+(p')~\text{in}~S^{n}_{\rm loc}(1;M,\End T^{*0,n-q}M),\label{eq:BKE for dual line bundle}
\end{align}
 where we use the convention $L^{-m}:=L^{*,m}$ for high power of dual line bundles and use the fact that $R^{L^*}$ has the constant signature $(n-q,q)$. 
 
 The principal circle bundle $X=\{v\in L^*:|v|_{h^*}=1\}$ is called Grauert tube \cite{Grau62}. Since $M$ is a complex manifold, the circle action is globally free. The connection $1$-form $\bm\alpha$ on $X$ associated to the Chern connection $\nabla^L$ is a contact form on $X$, and $X$ is the CR manifold we consider in this paper. The corresponding Reeb vector field $\mathcal{T}$ is the infinitesimal generator $\partial_\vartheta$ 
		of the $S^1$-action on $X$.	In this case, $\bm\alpha(-i\partial_\vartheta)=1$, $\partial_\vartheta$ 
		commutes with the Szeg\H{o} projection on functions, and ${\rm Spec}(-i\partial_\vartheta)=\Z$. Also, in this context we have the torsion free relation $[-i\partial_\vartheta,W]\subset \Gamma(T^{0,1}X)$ for all $W\in \Gamma(T^{0,1}X)$. Using the Lie derivatives, we can extend $-i\partial_\vartheta$ acting on $(0,q)$-forms. Now, we consider the Toeplitz operator $T_\vartheta^{(q)}:=\Pi^{(q)}\circ (-i\partial_\vartheta)\circ\Pi^{(q)}$ on $X$, which reflect the dynamics of the Reeb vector field. For $\chi\in\cCc(\R)$, by the canonical relation between the holomorphic sections of $M$ and the Fourier components of CR function on $X$, we have
	\begin{equation}
	\label{eq:chi_k(T_P)(x,x) on circle bundle}
	\chi(k^{-1}T_\vartheta^{(q)})(x,x)=\frac{1}{2\pi}\sum_{m\in\Z}\chi\left(\frac{m}{k}\right) B_m^{(q)}\circ\pi_M(x),
	\end{equation}
	where $\pi_M:X\to M$ is the natural projection such that $\pi_M(x)=p'$. For simplicity, from now on we write $\mathbf B_m^{(q)}:=B_m^{(q)}\circ\pi_M$ and $\mathbf b_j^\mp:=b_j^\mp\circ\pi_M$. Because of the term $\chi\left(\frac{m}{k}\right)$, when $k\to+\infty$, we only have to take consideration of $m$ satisfying $|m|\to+\infty$ in \eqref{eq:chi_k(T_P)(x,x) on circle bundle}. By \cite{HM17JDG}*{Theorem 1.12}, we can apply Corollary~\ref{coro:Semi-Classical Expansion} to \eqref{eq:chi_k(T_P)(x,x) on circle bundle}. The following discussion is to give a relatively elementary method to obtain the semi-classical expansion of \eqref{eq:chi_k(T_P)(x,x) on circle bundle} in this specific set-up.
    
    When $q\notin\{n_-,n_+\}$, by the vanishing theorem of Andreotti–-Grauert and \eqref{eq:chi_k(T_P)(x,x) on circle bundle}, we have $\chi(k^{-1}T_\vartheta^{(q)})(x,x)=0$ as $k\to+\infty$.
 
 When $q=n_-$, we have to split the discussion into $n_-\neq n_+$ and $n_-= n_+$. When $n_-\neq n_+$, again from the vanishing theorem of Andreotti–-Grauert we have $ \mathbf B^{(q)}_{-m}=0$, $m\to+\infty$, and when $q=n_-=n_+$, however, we have expansion result \eqref{eq:BKE for dual line bundle} for $\mathbf B^{(n-q)}_{-m}=\mathbf B^{(q)}_{-m}$. 
 
 For the generality of our calculation, from now on we consider $q=n_-=n_+$. In this case we notice that for all $N\in\N$, we have
 \begin{multline}
  \label{eq:Toeplitz and Bergman}
        \sum_{m\in\Z_{\lessgtr 0}}\chi\left(\frac{m}{k}\right)\mathbf B_m^{(q)}(x)
    -\sum_{m\in\Z_{\lessgtr 0}}\chi\left(\frac{m}{k}\right)\sum_{j=0}^N |m|^{n-j} \mathbf b_j^\pm(x)\\
    =\sum_{m\in\Z_{\lessgtr 0}}\chi\left(\frac{m}{k}\right)\mathbf B_m(x)
		-\sum_{j=0}^{N}k^{n+1-j}\sum_{m\in\Z_{\lessgtr 0}}k^{-1}\chi\left(\frac{m}{k}\right)\left(\frac{|m|}{k}\right)^{n-j} \mathbf b_j^\pm(x).
 \end{multline}
 On one hand, as $k\to+\infty$, \eqref{eq:BKE} and  \eqref{eq:BKE for dual line bundle} imply that for any $\ell\in\N$ we have
\begin{multline}
 \label{eq:Toeplitz and Bergman I.1}
    \left\|\sum_{m\in \Z_{\lessgtr 0}}\chi\left(\frac{m}{k}\right)\mathbf B_m^{(q)}(x)
    -\sum_{m\in\Z_{\lessgtr 0}}\chi\left(\frac{m}{k}\right)\sum_{j=0}^N |m|^{n-j} \mathbf b_j^\pm(x)\right\|_{C^\ell}\\
    \leq c^\pm_{\ell,N}\sum_{m\in \Z_{\lessgtr 0}}\left|\chi\left(\frac{m}{k}\right)\right|m^{n-N-1}=O(k^{n-N}).
\end{multline}
 for some constant $c^\pm_{\ell,N}>0$. On the other hand, by the Poisson summation formula, 
		cf.~\cite{Hoe03}*{Theorem 7.2.1} for example, for any $\tau\in\cCc(\R)$ we have
		\begin{equation}
		\label{eq:Possion summation formula}
		k^{-1}\sum_{m\in\Z}\tau\left(\frac{m}{k}\right)=\sum_{m\in\Z}\int_{\R}e^{-it(2\pi k m)}\tau(t)dt.
		\end{equation}
		In the right-hand side of the above equation, when $m\neq 0$ we can apply arbitrary times of integration by parts in $t$, and when $m=0$ we just have a number $\displaystyle\int_{\R}\tau(t)dt$. Accordingly, for any $N\in\N$, we can find a constant $C_N>0$ such that
		\begin{equation}
  \label{eq:estimate of Riemann sum}
		\left|k^{-1}\sum_{m\in\Z}\tau\left(\frac{m}{k}\right)-\int_\R\tau(t)dt\right|<C_N k^{-N}.
		\end{equation}
By \eqref{eq:Toeplitz and Bergman I.1} and \eqref{eq:estimate of Riemann sum} and triangle inequality, we immediately obtain
\begin{multline}
\label{eq:Expansion of chi(k^-1 T^(q))}
    \chi(k^{-1}T_\vartheta^{(q)})(x,x)\sim\sum_{j=0}^{+\infty} k^{n+1-j}\int_0^{+\infty}\chi(t)t^{n-j}\frac{dt}{2\pi}~\mathbf{b}_j^-(x)\\
    +\sum_{j=0}^{+\infty} k^{n+1-j}\int_0^{+\infty}\chi(-t)t^{n-j}\frac{dt}{2\pi}~\mathbf{b}_j^+(x)\ \ \text{in}\ \ S^{n+1}_{\rm loc}(1;M,\End L(T^{*0,q}M)):\ \ q=n_-=n_+.
\end{multline}
When $q=n_-\neq n_+$, from the same argument above we can see that the component contributed by the positive eigenvalues of $R^L$ in \eqref{eq:Expansion of chi(k^-1 T^(q))} will be $\OK$. 

For the related results about spectral asymptotics for Toeplitz operators on strictly pseudoconvex circle bundles, readers can refer to \cites{BPU98,P09,CRa23} for example. 

\subsection{An example with the locally free circle action}
We explain the role of Theorem~\ref{thm:Main Semi-Classical Expansion} on CR manifolds with transversal CR  circle action. An significant example of such CR manifold is the quasi-regular Sasakian manifold, and one important semi-classical expansion in this field is the asymptotic expansion for a weighted Bergman kernel of ample line bundles on orbifolds with cyclic quotient singularities \cite{RT11_1}. Despite the appearance of the singular points, such expansion is locally uniformly on the whole orbifold. We also refer to \cite{DLM11} for the case on general complex orbifolds. Such kind of asymptotics is fundamental in Sasakian geometry \cites{RT11_2,CS18,LP24}.

It is well known, cf.~\cite{BoGa08} for example, that the circle bundle of ample line bundles on orbifolds with cyclic quotient singularities is a quasi-regular Sasakian manifold. In other words, such circle bundle is the CR manifold $(X,T^{1,0}X)$ we consider in this paper with a transversal CR circle action. The related asymptotics result was also solved in \cite{HeHsLi18}.

From now on, we denote the circle action on $(X,T^{1,0}X)$ by $e^{i\theta}$ with the fundamental vector field $T$. The circle action is said to be CR if $[T,\Gamma(T^{1,0}X)]\subset\Gamma(T^{1,0}X)$, and is said to be transversal if we have $\C TX=T^{1,0}X\oplus T^{0,1}X\oplus\C T$. Then $T$ is a Reeb vector field on $X$ preserving $T^{1,0}X$, and by the transversal condition we know that such circle action is locally free. By Lie derivatives, we define $Tu:=\mathscr L_T u$, $\forall u\in\Omega^{0,q}(X)$. We have a rigid Hermitian metric $\langle\,\cdot\,|\,\cdot\,\rangle$ on $\C TX$ such that $T^{1,0}X\perp T^{0,1}X$, $T\perp (T^{1,0}X\oplus T^{0,1}X)$, $\langle\,T\,|\,T\,\rangle=1$ holds. We take the volume form on $X$ by this rigid Hermitian metric and define the $L^2$-inner product $(\,\cdot\,|\,\cdot\,)$ on $\Omega^{0,q}(X)$. The associated standard $L^2$-space is denoted by $L^2_{0,q}(X)$. For $q\in\{0,\cdots,n\}$ and $m\in\mathbb Z$, we put
\begin{equation}
\label{eq:Fourier component}
    \Omega^{0,q}_m(X):=\left\{u\in\Omega^{0,q}(X):\,
Tu=-imu\right\},\ \ H^q_{b,m}(X):=\set{u\in \Omega^{0,q}_m(X):\, \Box_b^{(q)}u=0}.
\end{equation}
Since $\Box_b^{(q)}+T^2$ is elliptic, we have $d_m^{(q)}:={\rm dim\,}H^q_{b,m}(X)<+\infty$. The $m$-th Fourier component of the Szeg\H{o} kernel is given by $S_m^{(q)}(x,y):=\sum_{j=1}^{d_m}f_{j,m}^{(q)}(x)\otimes{f_{j,m}^{(q),*}}(y)$, where $\{f^{(q)}_{1,m},\cdots,f^{(q)}_{d_m,m}\}$ forms an orthonormal basis for $H^q_{b,m}(X)$, and $S_m^{(q)}(x,y)$ is the Schwartz kernel of the orthogonal projection $S_m^{(q)}:L^2_{0,q}(X)\to H^q_{b,m}(X)$. 

For $x\in X$, we say that the period of $x$ is $\frac{2\pi}{r}$, $r\in\mathbb N$, if $e^{i\theta}\circ x\neq x$ for every $0<\theta<\frac{2\pi}{r}$ and $e^{i\frac{2\pi}{r}}\circ x=x$. For each $r\in\mathbb N$, we put $X_r=\set{x\in X:\, \text{the period of}\  x \ \text{is}\  \frac{2\pi}{r}}$ and $p:=\min\set{r\in\mathbb N:\, X_r\neq\emptyset}$. It is well-known from Duistermaat--Heckman~\cite{DH82} that if $X$ is connected, then $X_p$ is an open and dense subset of $X$ and we call $X_{p}$ the regular part of $X$. We may assume that $X=X_{p_1}\bigcup X_{p_2}\bigcup\cdots\bigcup X_{p_t}$, $p=:p_1<p_2<\cdots<p_t$. The semi-classical analysis on the singular part of $X$ is more difficult, and by \cite{HeHsLi18}*{Theorem 1.1} it turns out that the asymptotics of $S_m^{(q)}(x,x)$ holds differently on each $X_r$ when $q=n_-=0$.

We notice that  $\Omega^{0,q}(X)=\bigoplus_{m\in\Z}\Omega^{0,q}_m(X)$. Let $\{u_m\}_{m\in\Z}$, where $u_m\in H^q_{b,m}(X)$, be an orthonormal system of $\Omega^{0,q}(X)$. If we have $\Pi^{(q)}(-iT)\Pi^{(q)} u=\lambda_j u$, where $\lambda_j\neq 0$ and the $L^2$-inner product $(u|u_m)\neq 0$ for some $u_m\in H^{(q)}_{b,m}(X)$, then we can check that $\lambda_j=m$ and $u=u_m$. This implies that the positive spectrum of $\Pi^{(q)}(-iT)\Pi^{(q)}$ is the subset of $\N$. The converse side of the inclusion is almost by definition. So in this case  we actually have
\begin{equation}
\label{eq:eq:chi_k(T_P)(x,y) on quasi-regular}
    \chi(k^{-1}\Pi^{(q)}(-iT)\Pi^{(q)})=\sum_{m\in\Z}\chi(k^{-1}m)S_m^{(q)}.
\end{equation}
By \cite{HM17JDG}*{Theorem 1.12}, we can apply Theorem~\ref{thm:Main Semi-Classical Expansion} to find that \eqref{eq:eq:chi_k(T_P)(x,y) on quasi-regular} has the asymptotic expansion, which is locally uniformly on $X\times X$, as $k\to+\infty$. The following discussion is to give a relatively elementary method to obtain the semi-classical expansion of $\sum_{m\in\Z}\chi(k^{-1}m)S_m^{(q)}(x,y)$ in this specific set-up.

We need the classical result of Baouendi--Rothschild--Treves~\cite{BRT85}: For every point $x_0\in X$, we can find local coordinates $x=(x_1,\cdots,x_{2n+1})$, $z_j=x_{2j-1}+ix_{2j},j=1,\cdots,n$,  defined in some small neighborhood $D=\{(z, x_{2n+1}): \abs{z}<\delta, |x_{2n+1}|<\varepsilon_0\}$ of $x_0$, where $\delta>0$ and $0<\varepsilon_0<\pi$, such that $(z(x_0),x_{2n+1}(x_0))=(0,0)$ and
\begin{equation}
\label{eq:BRT vector fields}
\begin{split}
&T=-\frac{\pr}{\pr x_{2n+1}},\ \ Z_j=\frac{\partial}{\partial z_j}-i\frac{\partial\varphi}{\partial z_j}(z)\frac{\partial}{\partial x_{2n+1}},j=1,\cdots,n,
\end{split}
\end{equation}
where $Z_j(x), j=1,\cdots, n$, form a basis of $T_x^{1,0}X$, for each $x\in D$ and $\varphi(z)\in \mathscr C^\infty(D,\mathbb R)$ independent of $\theta$. We call $(D,(z,x_{2n+1}),\varphi)$ the BRT trivialization, $x=(z,x_{2n+1})$ the canonical coordinates and $\set{Z_j}^n_{j=1}$ the BRT frames.

From now on, we fix $m\in\mathbb{Z}$. We let $B:=(D,(z,\theta),\varphi)$ be a
BRT trivialization. We may assume that $D=U\times]-\varepsilon,\varepsilon[$,
where $\varepsilon>0$ and $U$ is an open set of $\mathbb{C}^n$.  Consider $
L\rightarrow U$ be a trivial line bundle with non-trivial Hermitian fiber
metric $\left\vert 1\right\vert^2_{h^L}=e^{-2\varphi}$. We let $\langle\,\cdot\,,\,\cdot\,
\rangle$ be the Hermitian metric on $\mathbb{C }TU$ given by
\begin{equation}
    \langle\,\frac{\partial}{\partial z_j}\,,\,\frac{\partial}{\partial z_k}
\,\rangle=\langle\,\frac{\partial}{\partial z_j}-i\frac{\partial\varphi}{
\partial z_j}(z)\frac{\partial}{\partial x_{2n+1}}\,|\,\frac{\partial}{\partial
z_k}-i\frac{\partial\varphi}{\partial z_k}(z)\frac{\partial}{\partial x_{2n+1}}
\,\rangle,\ \ 
j,k=1,2,\cdots,n
\end{equation}
The Hermitian metric $\langle\,\cdot\,,\,\cdot\,\rangle$ induces Hermitian metrics on $T^{*p,q}U$
bundle of $(p,q)$ forms on $U$, $p,q=0,1,\cdots,n$, also denoted by $\langle\,\cdot\,,\,\cdot\,\rangle$. Let $dv_U$ be the volume form on $U$ induced by $\langle\,\cdot\,,\,\cdot\,\rangle$. Note that $dv_X=dv_U(x)d\theta$ on $D$. Let $(\,\cdot\,,\,\cdot\,)_m$ be the $L^2$ inner product on $\Omega^{0,q}(U,L^m)$ induced by $\langle\,\cdot\,,\,\cdot\,\rangle$ and $h^{L^m}$, $q=0,1,2,\cdots,n$. We let $\Box_{B,m}^{(q)}$ be the Kodaira Laplacian on acting on $\Omega^{0,q}(U,L^m)$. We need the results of approximate Bergman kernel \cite{HM14}*{Theorems 3.11 \& 3.12}: we have a properly supported smoothing operator $P_{B,m}^{(q)}:\Omega^{0,q}(U,L^m)\To \Omega^{0,q}(U,L^m)$ such that $P_{B,m}^{(q)}\circ e^{-m\varphi}\circ\Box_{B,m}^{(q)}\circ e^{m\varphi}=O(m^{-\infty})$ on $U\times U$ and $e^{m\varphi}P_{B,m}^{(q)}e^{-m\varphi}u=u$, where $u\in \Omega^{0,q}(U)$ and $\Box_{B,m}^{(q)}u=0$. As in the last section, for the generality of our discussion, from now on we assume that $q=n_-=n_+$. We let $P_{B,m}^\pm(z,w)$ to be the distribution kernel of $P_{B,m}^{(q)}(z,w)$ when $m\in\Z_{\lessgtr 0}$, and for $|m|\gg 1$ we have
\begin{equation}\label{eq:BKE_forms}
\begin{split}
&P_{B,m}^\mp(z,w)=e^{i|m|\Psi_B^\mp(z,w)}b_B^\mp(z,w,m)+
O(m^{-\infty}),\\
&\exists\, c>0:\ {\rm Im\,}\Psi_B^\mp\geq c\abs{z-w}^2\,,\ \Psi_B^\mp(z,w)=0\Leftrightarrow z=w,\\
&b_B^\mp(z,w,m)\sim\sum^\infty_{j=0}b^\mp_{B,j}(z, w)|m|^{n-j}\text{ in }S^{n}_{{\rm loc\,}}
(1;U\times U,\End T^{*0,q}X).
\end{split}
\end{equation}
In fact, we have $\Psi_B^\mp(z,w)=i\sum^n_{j=1}\abs{\mu_j}\abs{z_j-w_j}^2+i\sum^n_{j=1}\mu_j(\ol
z_jw_j-z_j\ol w_j)+O(\abs{(z,w)}^3)$, where $\mu_1,\cdots,\mu_n$ are the eigenvalues of the Levi form and $b_B(z,w,m)$ and $b_{B,j}(z,w)$, $j=0,1,2,\cdots$, are all properly supported. 

Now, we write $X=D_1\bigcup D_2\bigcup\cdots\bigcup D_N$, where $B_j:=(D_j,(z,x_{2n+1}),\varphi_j)$ is a
BRT trivialization. We may assume that for each $j$, $
D_j=U_j\times]-4\delta,4\delta[\subset\mathbb{C}^n\times
\mathbb{R}$, $U_j=\left\{z\in\mathbb{C}
^n:\, \left\vert z\right\vert<\gamma_j\right\}$ and $\delta>0$ be a suitably small constant. For each $j$, we put
\begin{equation}
\label{eq:widehat D_j}
\widehat
D_j=\widehat U_j\times(-\frac{\delta}{2},\frac{\delta}{2}),    
\end{equation}
where
$\widehat U_j=\left\{z\in\mathbb{C}^n:\, \left\vert z\right\vert<\widehat\gamma_j\right\}\subset U_j$. We can take $\widehat\gamma_j$ suitably small such that
\begin{equation}
\label{eq:Psi B_j near the diagonal}
    \mbox{$|\Re\Psi_{B_j}(z,w)|<\frac{\delta}{2}$, $|\Im\Psi_{B_j}(z,w)|<\frac{\delta}{2}$, on $\widehat{U}_j\times\widehat{U}_j$.}
\end{equation}
 We may suppose that $X=\widehat D_1\bigcup\widehat
D_2\bigcup\cdots\bigcup\widehat D_N$. We let $\chi_j\in \cCc(\widehat D_j)$, $
j=1,2,\cdots,N$, with $\sum^N_{j=1}\chi_j=1$ on $X$. For a fixed $j\in\{1,2,\cdots,N\}$, we choose
\begin{equation}
\label{eq:cut off sigma_j}
    \mbox{$\sigma_j\in \cCc((-\frac{\delta}{2}
, \frac{\delta}{2}))$ with $\int_\R\sigma_j(\theta)d\theta=1$.}
\end{equation}
For $|m|\gg 1$, we let $
P_{B_j,m}^{(q)}(z,w)$ be the approximated Bergman kernel on $U_j\times U_j$ and take the continuous operator $H_{j,m}^\mp:\Omega^{0,q}(X)\rightarrow \Omega^{0,q}(X)$ given by
\begin{equation}  \label{eq:H_j,m^mp}
\begin{split}
H^\mp_{j,m}(x,y):=
\chi_j(x)e^{\mp imx_{2n+1}}P^\mp_{B_j,m}(z,w)e^{\pm im y_{2n+1}}\sigma_j(\eta):\ \ m\in\Z_{\lessgtr 0}
\end{split}
\end{equation}
where $x=(z,\theta)$, $y=(w,\eta)\in\mathbb{C}^{n}\times\mathbb{R}$. Since $P_{B_j,m}^{(q)}$ is properly supported, $H^\mp_{j,m}$ is well-defined. For the orthogonal projection $Q_m^{(q)}:L^2(X,T^{*0,q}X)\To L^2_m(X,T^{*0,q}X)$ with respect to $(\,\cdot\,|\,\cdot\,)$, we consider $\Gamma_m^\mp:=\sum^N_{j=1}H_{j,m}^\mp\circ
Q_m^{(q)}:\Omega^{0,q}(X)\rightarrow\Omega^{0,q}(X)$, and let $\Gamma_m^\mp(x,y)\in \mathscr C^\infty(X\times X,\End T^{*0,q}X)$ be the distribution
kernel of $\Gamma_m^\mp$. We can check that for $|m|\gg 1$, we have
\begin{equation}\label{eq:Gamma_m integral}
\begin{split}
\Gamma^\mp_m(x,y)=\sum^N_{j=1}\int^\pi_{-\pi}H^\mp_{j,m}(x,e^{iu}\circ
y)e^{imu}\frac{du}{2\pi}=\sum^N_{j=1}\int^\pi_{-\pi}e^{im\widehat
\Psi^\mp_{B_j}(x,e^{iu}\circ y)}\widehat b^\mp_{B_j}(x,e^{iu}\circ y,m)e^{imu}\frac{du}{2\pi},
\end{split}
\end{equation}
where on $D_j$ we have
\begin{equation}
\label{eq:Phase of Gamma_m}
   \widehat\Psi^\mp_{B_j}(x,y)=\mp x_{2n+1}\pm y_{2n+1}+\Psi^\mp_{B_j}(z,w),
\end{equation}
\begin{equation}
\label{eq:symbol of Gamma_m}
     \widehat b^\mp_{B_j}(x,y,m)=\chi_j(x) b^\mp_{B_j}(z,w,m)\sigma_j(\eta).
\end{equation}
By the slight modification of the proof of \cite{HeHsLi18}*{Theorem 4.13}, on $X\times X$ we have
\begin{eqnarray}
    \mbox{$\Gamma_m^{-}=S_m^{(q)}+O(m^{-\infty})$: $m>0$;\ \ $\Gamma_m^{+}=S_m^{(q)}+O(|m|^{-\infty})$: $m<0$.}
\end{eqnarray}

For simplicity, in the following we only discuss the $\mathscr C^0$-estimate of $\Gamma_m^\mp(x,y)$, and the general $\mathscr C^\ell$-estimate follows from the straightforward modification. Since $\chi\in\cCc(\R)$, we have $\chi=\chi_-+\chi_+$, where $\chi_\pm\in\cCc(\R_{\lessgtr 0})$. Combining with \eqref{eq:Gamma_m integral} and \eqref{eq:BKE_forms}, for $k>0$ large enough and any $M\in\N$ we have a constant $C_M>0$ independent of $k$ such that
\begin{equation}
\label{eq:estimate of weighted Gamma_m}
\begin{split}
     &\left|\sum_{m\in\Z_{\lessgtr 0}}\chi_\pm(k^{-1}m)\Gamma_m^\pm(x,y)\right.\\
&\left.-\frac{1}{2\pi}\sum^N_{j=1}\int^{\pi}_{-\pi}\sum_{m\in\Z_{\lessgtr 0}}\chi_\pm(k^{-1}m)e^{i|m|\widehat
\Psi^\pm_{B_j}(x,e^{iu}\circ y)}\sum_{\ell=0}^{M-1}\widehat b^\mp_{B_j,\ell}(x,e^{iu}\circ y)|m|^{n-\ell} e^{imu}du\right|\leq C_M k^{1+n-M}.
\end{split}
\end{equation}
On the other hand, as before, using $\chi_\pm\in\cCc(\R_{\lessgtr 0})$ and Poisson summation formula we have
\begin{equation}
\label{eq:weighted sum of Gamma_m}
    \begin{split}
    &k^{-1}\sum_{m\in\Z}\chi_\mp(\pm k^{-1}m)e^{ik (k^{-1}m)\widehat
\Psi^\mp_{B_j}(x,e^{iu}\circ y)}\sum_{\ell=0}^{M-1}\widehat b^\mp_{B_j,\ell}(x,e^{iu}\circ y)k^{1+n-\ell}(k^{-1}m)^{n-\ell} e^{\pm ik (k^{-1}m)u}\\
&=\sum_{m\in\Z}\int_\R e^{ikt(-2\pi m\pm u+\widehat
\Psi^\mp_{B_j}(x,e^{iu}\circ y))}\chi_\mp(\pm t)\sum_{\ell=1}^{M-1}\widehat b^\mp_{B_j,\ell}(x,e^{iu}\circ y)k^{n+1-\ell}t^{n-\ell}  dt.
\end{split}
\end{equation}
By the properly supported property of the approximated Bergman kernels \eqref{eq:symbol of Gamma_m}, for the latter discussion we may assume that $(x,e^{iu}\circ y)\in \widehat{D}_j\times\widehat{D}_j$. We notice that when $m\neq 0$ and $u\in[-\pi,\pi)$, by \eqref{eq:widehat D_j}, \eqref{eq:Psi B_j near the diagonal} and \eqref{eq:Phase of Gamma_m}, we have $|-2\pi m+u+\widehat{\Psi}|\geq \pi-2\delta$, so we can apply arbitrary times of integration by parts in $t$ and find \eqref{eq:weighted sum of Gamma_m} equals to
\begin{equation}
\int_\R e^{ikt(\pm u+\widehat
\Psi^\mp_{B_j}(x,e^{iu}\circ y))}\sum_{\ell=1}^{M-1}\widehat b^\mp_{B_j,\ell}(x,e^{iu}\circ y)k^{n+1-\ell}t^{n-\ell}\chi_\mp(\pm t)  dt+O(k^{-\infty}).
\end{equation}
Accordingly, to estimate \eqref{eq:estimate of weighted Gamma_m}, it suffices to calculate the sum of integrals
\begin{equation}
\begin{split}
    &\sum_{j=1}^N\int_{2\delta}^{2\pi-2\delta}\int_\R e^{ikt(\widehat
\Psi^\mp_{B_j}(x,e^{iu}\circ y)\pm u)}\sum_{\ell=1}^{M-1}\widehat b^\mp_{B_j,\ell}(x,e^{iu}\circ y)k^{n+1-\ell}t^{n-\ell}\chi_\mp(\pm t) dt \frac{du}{2\pi}\\
&+\sum^N_{j=1}\int^{2\delta}_{-2\delta}\int_\R e^{ikt(\widehat
\Psi^\mp_{B_j}(x,e^{iu}\circ y)\pm u)}\sum_{\ell=1}^{M-1}\widehat b^\mp_{B_j,\ell}(x,e^{iu}\circ y)k^{n+1-\ell}t^{n-\ell}\chi_\mp(\pm t) dt \frac{du}{2\pi}.
\end{split}
\end{equation}
For the first integral above, by \eqref{eq:widehat D_j}, \eqref{eq:Psi B_j near the diagonal} and \eqref{eq:Phase of Gamma_m} again, for $u\in[2\delta,2\pi-2\delta]$ we have
\begin{equation}
    \begin{split}
        &\mbox{$\Re (\widehat\Psi^\mp_{B_j}(x,y)\pm u)=\mp x_{2n+1}\pm y_{2n+1}\pm u+\Re\Psi^\mp_{B_j}(z,w)>\frac{\delta}{2}$}.
    \end{split}
\end{equation}
Hence, we can apply any times of integration by parts in $t$, and the first integral above is $k$-negligible. We remark that this term is quite complicated if we do not take the weighted sum over $m$, and for each fixed and large $m$ it will contribute some exponential error term dues to the periodicity of the set $X_q$, cf.~\cite{HeHsLi18}*{Theorem 1.1}. As for the second integral above, again by the properly supported property of the approximated Bergman kernels \eqref{eq:symbol of Gamma_m}, we may assume that for each $j$ the calculation is applied within $D_j\times D_j$, and by \eqref{eq:BRT vector fields}, \eqref{eq:Phase of Gamma_m}, \eqref{eq:symbol of Gamma_m} and \eqref{eq:cut off sigma_j}, we get
\begin{equation}
\label{eq:FIO_QRS}
    \begin{split}
    &\sum^N_{j=1}\int_\R\int^{2\delta}_{-2\delta} e^{ikt[\mp y_{2n+1}+\widehat
\Psi^\mp_{B_j}(x,(w,-u))\pm u]}\sum_{\ell=1}^{M-1}\widehat b^\mp_{B_j,\ell}(x,(w,-u))k^{n+1-\ell}t^{n-\ell}\chi_\mp(\pm t)  \frac{du}{2\pi} dt\\
=&\sum^N_{j=1}\int_\R\ e^{ikt(\mp x_{2n+1}\pm y_{2n+1}+\Psi^\mp_{B_j}(z,w))}\sum_{\ell=1}^{M-1}k^{n+1-\ell}t^{n-\ell}\chi_\mp(\pm t)\int_{-2\delta}^{2\delta}\widehat b^\mp_{B_j,\ell}(x,(w,-u))\frac{du}{2\pi} dt\\
=&\sum^N_{j=1}\chi_j(x)\sum_{\ell=1}^{M-1}k^{n+1-\ell}\frac{b^\mp_{B_j,\ell}(z,w)}{2\pi}\int_\R\ e^{ikt(\mp x_{2n+1}\pm y_{2n+1}+\Psi^\mp_{B_j}(z,w))}t^{n-\ell}\chi_\mp(\pm t) dt.
    \end{split}
\end{equation}
Combining all the above argument and taking the asymptotic sum for $M\to+\infty$, in view of \eqref{eq:BKE_forms}, when $q=n_-=n_+$ we obtain
\begin{multline}
\label{eq:asymptotic expansion of chi_k(T_P)_QRS}
\sum_{m\in\Z}\chi(k^{-1}m)S_m^{(q)}(x,y)
=\int_0^{+\infty} 
e^{ikt(-x_{2n+1}+y_{2n+1}+\Phi^-(z,w))}{A}^-(z,w,t,k)dt\\
+\int_0^{+\infty} 
e^{ikt(x_{2n+1}-y_{2n+1}+\Phi^+(z,w))}{A}^+(z,w,t,k)dt+O\left(k^{-\infty}\right)~\text{on}~D\times D,
\end{multline}
as a refinement of Theorem \ref{thm:Main Semi-Classical Expansion} in this specific set-up. The coefficients of the expansion of ${A}^\mp(z,w,t,k)$ satisfy $2\pi{A}^\mp_{j}(z,z,t)=b_{B,j}^\mp(z,z)\chi(\pm t)t^{n-j}$, $j\in\N_0$. We refer $b^\mp_{B,1}(z,z)$ to \cites{Hs16,Lu15}.

 We refer to \cite{HHL20} for the on-diagonal expansion on irregular Sasakian manifolds. We also remark that when $(X,T^{1,0}X)$ is an irregular Sasakian manifold, in general the Fourier component of the $L^2$-function on $X$ is represented by the real (not necessarily integer) eigenvalue of $-iT$. Hence the argument here cannot be applied directly to irregular Sasakian cases because the localized Bergman kernel is only defined by high integer power of line bundles and we apply Poisson summation formula.


  \addtocontents{toc}{\protect\setcounter{tocdepth}{0}}
  \section*{Acknowledgement}
  This paper is part of the thesis of the author at Universität zu Köln, and he wishes to express profound gratitude to George Marinescu, Chin-Yu Hsiao and  Hendrik Herrmann for their  invaluable suggestions concerning the problem addressed in this work. 
  \addtocontents{toc}{\protect\setcounter{tocdepth}{2}}
\bibliography{ref}
\end{document}